\documentclass[reqno]{amsart}
\pdfoutput=1

%
\usepackage[mathscr]{euscript}
\usepackage{amssymb}
\usepackage{graphicx}
\usepackage[shortlabels]{enumitem}
\usepackage{mathtools}
\usepackage{booktabs}
\usepackage[dvipsnames]{xcolor}
\usepackage{tikz}
\usepackage{tikz-cd}
\usepackage{tikz-3dplot}
\usetikzlibrary{decorations.pathreplacing}
\usetikzlibrary{decorations.markings}
\usetikzlibrary{positioning,cd,arrows, patterns}
\usetikzlibrary{arrows.meta}
\usepackage{pgfplots}
\usepackage{subfigure}
\usepackage{ragged2e}
\usepackage{a4wide}
\usepackage{comment}
\usepackage{hyperref}
\hypersetup{colorlinks}
\usepackage{amsbsy}
\usepackage{xargs}
\usepackage{multirow}
\usepackage{array}
\usepackage{float}
\usepackage{colonequals}
\hypersetup{
colorlinks=true,       
linkcolor=blue,          
citecolor=red,        
filecolor=violet,      
urlcolor=violet       
}
\usepackage{caption} 
\captionsetup[table]{skip=10pt}

\makeatletter
\def\paragraph{\@startsection{paragraph}{4}%
  \z@\z@{-\fontdimen2\font}%
  {\normalfont\bfseries}}
\makeatother


\theoremstyle{plain}
\newtheorem{theorem}{Theorem}[section]
\newtheorem{lemma}[theorem]{Lemma}
\newtheorem{proposition}[theorem]{Proposition}
\newtheorem{corollary}[theorem]{Corollary}

\theoremstyle{remark}
\newtheorem{remark}[theorem]{Remark}
\newtheorem{question}[theorem]{Question}

\theoremstyle{definition}
\newtheorem{definition}[theorem]{Definition}
\newtheorem{assumption}[theorem]{Assumption}
\newtheorem{example}[theorem]{Example}

\DeclareMathOperator{\mmon}{\mu mon}
\DeclareMathOperator{\cl}{Cl}
\DeclareMathOperator{\Sh}{\mathscr{S}\mathsf{h}}
\DeclareMathOperator{\Loc}{\mathscr{L}\mathsf{oc}}
\DeclareMathOperator{\Spec}{Spec}

\newcommand\bbA{\mathbb{A}}
\newcommand\R{\mathbb{R}}
\newcommand\bbC{\mathbb{C}}
\newcommand\D{\mathbb{D}}
\newcommand\Z{\mathbb{Z}}
\newcommand\N{\mathbb{N}}
\newcommand\bS{\mathbb{S}}
\newcommand\F{\mathbb{F}}

\newcommand\bfa{\mathbf{a}}
\newcommand\bfb{\mathbf{b}}
\newcommand\bfc{\mathbf{c}}
\newcommand\bfd{\mathbf{d}}
\newcommand\bfe{\mathbf{e}}
\newcommand\bfs{\mathbf{s}}
\newcommand\bfx{\mathbf{x}}
\newcommand\bfy{\mathbf{y}}
\newcommand\bfz{\mathbf{z}}

\newcommand\cA{\mathcal{A}}
\newcommand\cC{\mathcal{C}}
\newcommand\cF{\mathcal{F}}
\newcommand\cL{\mathcal{L}}
\newcommand\cM{\mathcal{M}}
\newcommand\cX{\mathcal{X}}

\newcommand\sfI{\mathsf{I}}
\newcommand\sfT{\mathsf{T}}
\newcommand\sfY{\mathsf{Y}}


\newcommand{\clusterfont}{\mathcal}
\newcommand{\dynkinfont}{\mathsf}
\newcommand{\ngraphfont}{\mathscr}

\newcommand{\quiver}{\clusterfont{Q}}
\newcommand{\qbasis}{\clusterfont{\tilde{B}}}
\newcommand{\qbasispr}{\clusterfont{B}}
\newcommand{\qcoxeter}{\mutation_{\quiver}}
\DeclareMathOperator{\exchange}{Ex}
\DeclareMathOperator{\pr}{pr}
\newcommand{\qbpr}{\clusterfont{B}}
\newcommand{\exchangesub}[2]{\exchange({#1},{#2})}
\DeclareMathAlphabet{\mathpzc}{OT1}{pzc}{m}{it}
\newcommand{\Atori}{\mathpzc{A}}
\newcommand{\Xtori}{\mathpzc{X}}

\newcommand{\ngraph}{\ngraphfont{G}}
\newcommand{\nbasis}{\ngraphfont{B}}
\newcommand{\ncoxeter}{\mutation_\ngraph}
\newcommand{\coxeterpadding}{\ngraphfont{C}}
\newcommand{\nannulus}{\ngraphfont{A}}
\newcommand{\nbasistilde}{\tilde{\nbasis}}

\newcommand{\dynA}{\dynkinfont{A}}
\newcommand{\dynB}{\dynkinfont{B}}
\newcommand{\dynC}{\dynkinfont{C}}
\newcommand{\dynD}{\dynkinfont{D}}
\newcommand{\dynE}{\dynkinfont{E}}
\newcommand{\dynF}{\dynkinfont{F}}
\newcommand{\dynG}{\dynkinfont{G}}
\newcommand{\dynX}{\dynkinfont{Z}}
\newcommand{\dynY}{{\dynkinfont{Z}^G}}

\newcommand{\dynADE}{\dynkinfont{ADE}}
\newcommand{\dynBCFG}{\dynkinfont{BCFG}}

\newcommand{\exdynA}{\widetilde{\dynA}}
\newcommand{\exdynB}{\widetilde{\dynB}}
\newcommand{\exdynC}{\widetilde{\dynC}}
\newcommand{\exdynD}{\widetilde{\dynD}}
\newcommand{\exdynE}{\widetilde{\dynE}}
\newcommand{\exdynF}{\widetilde{\dynF}}
\newcommand{\exdynG}{\widetilde{\dynG}}
\newcommand{\exdynX}{\widetilde{\dynX}}

\newcommand{\facet}{\mathscr{F}}
\newcommand{\seed}{\Sigma}
\newcommand{\initialseed}{\Sigma_{t_0}}
\newcommand{\flags}{\mathcal{F}_\legendrian}
\newcommand{\mutation}{\mu}
\DeclareMathOperator{\Fan}{Fan}
\DeclareMathOperator{\re}{re}

\newcommand{\annulus}{\mathbb{A}}
\newcommand{\disk}{\mathbb{D}}
\newcommand{\sphere}{\mathbb{S}}

\newcommand{\legendrian}{\lambda}
\newcommand{\Legendrian}{\Lambda}

\newcommand{\Ngraphs}{\mathscr{N}\mathsf{graphs}}

\newcommand{\boundary}{\partial}
\newcommand{\cycle}{\gamma}

\newcommand{\field}{\mathbb{F}}

\newcommand{\Roots}{\Phi}
\newcommand{\SRoots}{\Pi}
\newcommand{\alposRoots}{\Roots_{\ge -1}}

\newcommand{\Move}[1]{{\rm{(#1)}}}

\newcommand{\wavefront}{\Gamma}

\newcommand{\boundellipse}[3]
{(#1) ellipse (#2 and #3)}

\newcommand{\1}{\sigma_1}
\newcommand{\2}{\sigma_2}
\newcommand{\3}{\sigma_3}
\newcommand{\4}{\sigma_4}
\newcommand{\5}{\sigma_{1,3}}

\newcommand{\reducedto}{\succ}

\newcommand{\brick}{\mathsf{brick}}
\newcommand{\degen}{\mathsf{degen}}

\newcommand{\yjnote}[1]{\marginpar{\RaggedRight\hsize1.7cm\tiny{\color{cyan}\raggedright #1}}}
\newcommand{\ejnote}[1]{\marginpar{\RaggedRight\hsize1.7cm\tiny{\color{magenta}\raggedright #1}}}
\newcommand{\bhnote}[1]{\marginpar{\RaggedRight\hsize1.7cm\tiny{\color{blue}\raggedright #1}}}

\colorlet{cyclecolor1}{orange}
\colorlet{cyclecolor2}{green}
\colorlet{cyclecolor3}{black}
\colorlet{cyclecolor4}{violet}
\def\cyclecolornamefirst{orange}
\def\cyclecolornamesecond{green}

\numberwithin{equation}{section}

\tikzset{ynode/.style = {circle, fill=cyclecolor1, inner sep = 2pt, opacity = 0.5}}
\tikzset{gnode/.style = {circle, fill=cyclecolor2, inner sep = 2pt, opacity = 0.5}}
\tikzset{bnode/.style = {circle, fill=cyclecolor3, inner sep = 2pt, opacity = 0.5}}
\tikzset{vnode/.style = {circle, fill=cyclecolor4, inner sep = 2pt, opacity = 0.5}}
\tikzstyle{Dnode}=[draw, circle, inner sep = 0.07cm]
\tikzstyle{Fnode}=[draw, rectangle, inner sep = 0.1cm]
\tikzstyle{double line} = [
decoration={
    markings,
    mark=at position 0.55 with {\arrow[line width = 0.5pt,scale=1]{angle 90}}},
	double distance = 1.5pt, 
	double=\pgfkeysvalueof{/tikz/commutative diagrams/background color},
	postaction={decorate}
]

\tikzstyle{triple line} = [
decoration={
    markings,
    mark=at position 0.55 with {\arrow[line width = 0.5pt,scale=1]{angle 90}}},
	double distance = 2pt, 
	double=\pgfkeysvalueof{/tikz/commutative diagrams/background color},
	postaction={decorate}
]

\tikzset{Dble/.style args={#1 and #2}{%
decoration={%
markings,
mark=at position 0 with {%
    \coordinate (Axx) at (0,.5\pgflinewidth);%
    \coordinate (Bxx) at (0,-.5\pgflinewidth);},%
mark=at position 1 with {%
    \coordinate (Ayy) at (0,.5\pgflinewidth);%
    \coordinate (Byy) at (0,-.5\pgflinewidth);%
    \draw [#1] (Axx)--(Ayy) ;%
    \draw [#2] (Bxx)--(Byy) ;}%
},%
postaction={decorate}}
}

\tikzset{
  on each segment/.style={
    decorate,
    decoration={
      show path construction,
      moveto code={},
      lineto code={
        \path [#1]
        (\tikzinputsegmentfirst) -- (\tikzinputsegmentlast);
      },
      curveto code={
        \path [#1] (\tikzinputsegmentfirst)
        .. controls
        (\tikzinputsegmentsupporta) and (\tikzinputsegmentsupportb)
        ..
        (\tikzinputsegmentlast);
      },
      closepath code={
        \path [#1]
        (\tikzinputsegmentfirst) -- (\tikzinputsegmentlast);
      },
    },
  },
  mid arrow/.style={postaction={decorate,decoration={
        markings,
        mark=at position .5 with {\arrow[#1]{stealth}}
      }}},
} 

\newcommand*\circled[1]{\tikz[baseline=(char.base)]{
\node[shape=circle,draw,inner sep=0.8pt] (char) {#1};}}

\makeatletter 
\tikzset{curlybrace/.style={rounded corners=2pt,line cap=round}}%
\pgfkeys{
/curlybrace/.cd,%
tip angle/.code     =  \def\cb@angle{#1},
/curlybrace/.unknown/.code ={\let\searchname=\pgfkeyscurrentname
                          \pgfkeysalso{\searchname/.try=#1,
                          /tikz/\searchname/.retry=#1}}}  
\def\curlybrace{\pgfutil@ifnextchar[{\curly@brace}{\curly@brace[]}}%

\def\curly@brace[#1]#2#3#4{%
\pgfkeys{/curlybrace/.cd,
tip angle = 0.75}%
\pgfqkeys{/curlybrace}{#1}%
\ifnum 1>#4 \def\cbrd{0.05} \else \def\cbrd{0.075} \fi
\draw[/curlybrace/.cd,curlybrace,#1]  (#2:#4-\cbrd) -- (#2:#4) arc (#2:{(#2+#3)/2-\cb@angle}:#4) --({(#2+#3)/2}:#4+\cbrd) coordinate (curlybracetipn);
\draw[/curlybrace/.cd,curlybrace,#1] ({(#2+#3)/2}:#4+\cbrd) -- ({(#2+#3)/2+\cb@angle}:#4) arc ({(#2+#3)/2+\cb@angle} :#3:#4) --(#3:#4-\cbrd);
}
\makeatother

\title{Lagrangian fillings for Legendrian links of finite or affine Dynkin type}

\author{Byung Hee An}
\email{anbyhee@knu.ac.kr}
\address{Department of Mathematics Education, Kyungpook National University, Republic of Korea}

\author{Youngjin Bae}
\email{yjbae@inu.ac.kr}
\address{Department of Mathematics, Incheon National University, Republic of Korea}

\author{Eunjeong Lee}
\email{eunjeong.lee@chungbuk.ac.kr}
\address{Center for Geometry and Physics, Institute for Basic Science (IBS), Pohang 37673, Republic of Korea}
\curraddr{Department of Mathematics, 
	Chungbuk National University,
	Cheongju 28644, Republic of Korea}

\keywords{Legendrian link, Lagrangian filling, Cluster algebra}
\subjclass[2010]{Primary: 53D10, 13F60. Secondary: 57R17.}

\begin{document}

\begin{abstract}
We prove that there are at least as many exact embedded Lagrangian fillings 
as seeds for Legendrian links of finite type $\dynADE$ or affine type $\exdynD \exdynE$. We also provide 
as many Lagrangian fillings with rotational symmetry as seeds of type $\dynB$, $\dynG_2$, $\exdynG_2$, $\exdynB$, or $\exdynC_2$, and with conjugation symmetry as seeds of type $\dynF_4$, $\dynC$, $\dynE_6^{(2)}$, $\exdynF_4$, or $\dynA_5^{(2)}$.
These families are the first known Legendrian links with (infinitely many) exact Lagrangian fillings (with symmetry) that exhaust all seeds in the corresponding cluster structures beyond type $\dynA \dynD$.
Furthermore, we show that the $N$-graph realization of (twice of) Coxeter mutation of type $\exdynD\exdynE$ corresponds to a Legendrian loop of the corresponding Legendrian links. Especially, the loop of type $\exdynD$ coincides with the one considered by Casals and Ng.
\end{abstract}

\maketitle

\tableofcontents

\section{Introduction}

\subsection{Background}
Legendrian knots are central objects in the study of 3-dimensional
contact manifolds. Classification of Legendrian knots is important in its
own right and also plays a prominent role in classifying 4-dimensional
Weinstein manifolds.

Classical Legendrian knot invariants are Thurston--Bennequin number and rotation
number~\cite{Gei2008} which distinguish the pair of Legendrian knots with the
same knot type. There are non-classical invariants including the Legendrian
contact algebra via the method of Floer theory~\cite{EGH2000, Che2002}, and
the space of constructible sheaves using microlocal
analysis~\cite{GKS2012,STZ2017}. These non-classical invariants distinguish
the Chekanov pair, a pair of Legendrian knots of type $m5_2$ having the same
classical invariants.

Recently, the study of exact Lagrangian fillings for Legendrian links has
been extremely plentiful. In the context of Legendrian contact algebra, an exact
Lagrangian filling gives an augmentation through the functorial view 
point~\cite{EHK2016}. There are several level of equivalence between 
augmentations
and the constructible sheaves for Legendrian links from counting to
categorical equivalence~\cite{NRSSZ2015}. 
Using these idea of
augmentations and constructible sheaves, people construct infinitely many fillings for certain Legendrian links~\cite{CG2020, GSW2020b,
CZ2020}. Here is the summarized list of methods of constructing Lagrangian
fillings for Legendrian links:
\begin{enumerate}
\item Decomposable Lagrangian fillings via pinching sequences and Legendrian loops \cite{EHK2016, Kal2006, CN2021}.
\item Alternating Legendrians and its conjugate Lagrangian fillings
\cite{STWZ2019}. 
\item Legendrian weaves via $N$-graphs and Legendrian mutations \cite{TZ2018, CZ2020}. 
\item Donaldson--Thomas transformation on augmentation varieties
\cite{SW2019, GSW2020a, GSW2020b}.
\end{enumerate}

Cluster algebras, introduced by Fomin and Zelevinsky~\cite{FZ1_2002}, play a crucial role in
the above constructions and applications. More precisely, the space of
augmentations and the moduli of constructible sheaves of microlocal rank one adapted to
Legendrian links admit structures of cluster pattern and $Y$-pattern, respectively~\cite{STWZ2019, SW2019, GSW2020a}. 
Note that a $Y$-seed of cluster algebra consists of a quiver whose vertices are
decorated with variables, called \emph{coefficients}. An involutory operation at each vertex,
called \emph{mutation}, generates all seeds of the $Y$-pattern.
The main point is to identify the
mutation in the $Y$-pattern and an operation in the space of Lagrangian
fillings. This geometric operation is deeply related to the Lagrangian surgery~\cite{Pol1991} and
the wall-crossing phenomenon~\cite{Aur2007}.

Indeed, a Legendrian torus link of type $(2,n)$ admits as many exact
Lagrangian fillings as Catalan number up to exact Lagrangian isotopy \cite{Pan2017, STWZ2019,
TZ2018}. Interestingly enough, the Catalan number is the number of seeds in a
cluster pattern of Dynkin type $\dynA_{n-1}$. 
There are also Legendrian links corresponding to finite Dynkin type $\dynD\dynE$, and affine Dynkin type $\exdynD\exdynE$ \cite{GSW2020b}.
A conjecture by Casals \cite[Conjecture~5.1]{Cas2020} says that the number of
distinct exact embedded Lagrangian fillings (up to exact Lagrangian isotopy) for
Legendrian links of type $\dynADE$ is exactly the same as the number of seeds
of the corresponding cluster algebras.

Furthermore, it is also conjectured by Casals \cite[Conjecture~5.4]{Cas2020} that for Legendrian links of type $\dynA_{2n-1}, \dynD_{n+1}, \dynE_6$ and $\dynD_4$, Lagrangian fillings having certain $\Z/2\Z$ or $\Z/3\Z$-symmetry form the cluster patterns of type $\dynB_n, \dynC_n, \dynF_4$ and $\dynG_2$, which are Dynkin diagrams obtained by \emph{folding} as explained in~\cite{FZ_Ysystem03}.

\subsection{The results}
\subsubsection{Lagrangian fillings for Legendrians of type $\dynADE$ or $\exdynD\exdynE$}
Our main result is that there are at least as many Lagrangian fillings for
Legendrian links of finite type as seeds in the corresponding cluster structures. We deal with $N$-graphs introduced by Casals and Zaslow \cite{CZ2020} to
construct the Lagrangian fillings. An $N$-graph $\ngraph$ on $\disk^2$ gives
a Legendrian surface $\Legendrian(\ngraph)$ in $J^1\disk^2$ while the
boundary $\boundary \ngraph$ on $\sphere^1$ induces a Legendrian link~$\legendrian(\boundary \ngraph)$. Then projection of $\Legendrian(\ngraph)$
along the Reeb direction becomes a Lagrangian filling of~$\legendrian(\boundary \ngraph)$.

As mentioned above, we interpret an $N$-graph as a $Y$-seed in the corresponding
$Y$-pattern. A one-cycle in the Legendrian surface
$\Legendrian(\ngraph)$ corresponds to a vertex of the quiver, and a signed
intersection between one-cycles gives an arrow between corresponding
vertices. From constructible sheaves adapted to $\Legendrian(\ngraph)$, one
can assign a monodromy to each one-cycle which becomes the coefficient at each vertex.

There is an operation so called a \emph{Legendrian mutation}
$\mutation_\cycle$ on an $N$-graph $\ngraph$ along one-cycle $[\cycle]\in
H_1(\Legendrian(\ngraph))$ which is the counterpart of the mutation on the
$Y$-pattern, see Proposition~\ref{proposition:equivariance of mutations}. The
delicate and challenging part is that we do not know whether Legendrian
mutations are always possible or not. Simply put, this is because the
mutation in cluster side is algebraic, whereas the Legendrian mutation is
rather geometric.

The main idea of our construction is to consider $N$-graphs $\ngraph(a,b,c)$ and $\ngraph(\exdynD_n)$ bounding Legendrian links $\legendrian(a,b,c)$ and $\legendrian(\exdynD_n)$, respectively.

\begin{align*}
\legendrian(a,b,c)=

\end{align*}
Note that the above Legendrians $\legendrian(a,b,c)$ and $\legendrian(\exdynD_n)$ can be obtained by ($-1$)-closure of the following braids, respectively,
\begin{align*}
\beta(a,b,c)&=\sigma_2\sigma_1^{a+1}\sigma_2\sigma_1^{b+1}\sigma_2\sigma_1^{c+1},&
\beta(\exdynD_n)&=\left(\sigma_2\sigma_1^3\sigma_2\sigma_1^3\sigma_2\sigma_1^k\sigma_3\right)\cdot\left(\sigma_2\sigma_1^3\sigma_2\sigma_1^3\sigma_2\sigma_1^\ell\sigma_3\right),
\end{align*}
where $k=\lfloor \frac{n-3}2\rfloor$ and $\ell=\lfloor \frac{n-4}2\rfloor$, see Section~\ref{sec:N-graph of finite or affine type}.
Those braids provide boundary data of the following $N$-graphs which represent exact Lagrangian fillings of corresponding Legendrian links:

\begin{figure}[ht]
\subfigure[$(\ngraph(a,b,c),\nbasis(a,b,c))$\label{N-graph(a,b,c)}]{
$

$}
\caption{Pairs of $N$-graphs and tuples of cycles}
\label{fig:N-graphs of (a,b,c) and Dn}
\end{figure}

\noindent Here, the \colorbox{cyclecolor1!50!}{\cyclecolornamefirst}- and \colorbox{cyclecolor2!50!}{\cyclecolornamesecond}-shaded edges indicate a tuple of one-cycles $\nbasis$ in the corresponding Legendrian surface.
See~\S\ref{sec:1-cycles in Legendrian weaves} for the detail.

The Legendrians $\legendrian(a,b,c), \legendrian(\exdynD_n)$ are the rainbow closure of \emph{positive braids}. 
By the work of Shen--Weng \cite{SW2019}, it is direct to check that 
the corresponding cluster structure of Legendrian $\legendrian(\dynX)$ is 
indeed of type $\dynX$ for $\dynX\in\{\dynA,\dynD,\dynE,\exdynD,\exdynE\}$. More precisely, the coordinate 
ring of the moduli space $\cM_1(\legendrian(\dynX))$ of microlocal rank one 
sheaves in $\Sh^\bullet_{\legendrian(\dynX)}(\R^2)$ admits the aforementioned 
$Y$-pattern structure.

The (candidate) Legendrians of type $\exdynA$ are not the rainbow 
closure of positive braids, in general. 
Indeed, Casals--Ng~\cite{CN2021} considered a Legendrian link of type 
$\exdynA_{1,1}$ which is not the rainbow closure of a  positive braid. 
So we can not directly apply the subsequent argument to Legendrians of type 
$\exdynA$.

To prove the realizability of each $Y$-seed in the corresponding $Y$-pattern, we use an induction argument on the rank of the type $\dynX$. 
More precisely, for each $Y$-pattern, we consider the \emph{exchange graph}, whose vertices are the $Y$-seeds and whose edges connect the vertices related by a single mutation. 
It has been known that the exchange graph of a $Y$-pattern is determined by the Dynkin type $\dynX$ of the $Y$-pattern when $\dynX$ is finite or affine (cf. Propositions~\ref{thm_exchange_graph_Dynkin} and~\ref{prop_Y-pattern_exchange_graph}). Because of this, we denote by $\exchange(\Roots(\dynX))$ the exchange graph of a $Y$-pattern of type $\dynX$.
Here, $\Roots(\dynX)$ is the root system of type $\dynX$. Note that when $\dynX$ is of finite type, the exchange graph $\exchange(\Roots(\dynX))$ becomes the one-skeleton of a polytope, called the (\emph{generalized}) \emph{associahedron} (see Figures~\ref{fig_asso_A3_intro} and~\ref{fig_asso_D4}).
\begin{figure}[ht]
\tdplotsetmaincoords{110}{-30}

\caption{The type $\dynA_3$ associahedron}\label{fig_asso_A3_intro}
\end{figure}

A (fixed) sequence of mutations corresponding to a chosen Coxeter element provides an action on the exchange graph. We call this specific sequence of mutations a \emph{Coxeter mutation} $\mutation_{\quiver}$. The orbit of the initial seed is called \emph{bipartite belt}. The green dots in Figure~\ref{fig_asso_A3_intro} present the elements of the bipartite belt.
We notice that the facets meeting at the initial seed correspond to the exchange graphs $\exchange(\Roots(\dynX\setminus \{i\}))$. In Figure~\ref{fig_asso_A3_intro}, there are two pentagons and one square intersecting a green dot. Indeed, a pentagon is the type $\dynA_2$ generalized associahedron; a square is the type $\dynA_1 \times \dynA_1$ generalized associahedron. Moreover, by applying the Coxeter mutation on these facets iteratively, one can obtain all facets in the associahedron. 
Even though we do not have a polytope model for the exchange graph of affine type, similar properties hold, that is, one can reach any $Y$-seed in the exchange graph from the initial seed by taking Coxeter mutations and then applying a certain sequence of mutations omitting at least one vertex. 

The following good properties of the above pairs $(\ngraph(a,b,c),\nbasis(a,b,c))$ and $(\ngraph(\exdynD_{n}),\nbasis(\exdynD_{n}))$ play a crucial role in interpreting the Coxeter mutation $\qcoxeter$ in terms of $N$-graphs:
\begin{enumerate}
\item The geometric and algebraic intersection numbers between chosen one-cycles coincide. 
\item The corresponding quivers $\quiver(a,b,c)$, $\quiver(\exdynD_n)$ are bipartite, see~\S\ref{sec:N-graphs and seeds} for the details. 
\end{enumerate}
The property (2) naturally splits $\nbasis$ into two subsets $\nbasis_+$ and $\nbasis_-$.
In Figure~\ref{fig:N-graphs of (a,b,c) and Dn}, they consist of \colorbox{cyclecolor1!50!}{\cyclecolornamefirst}- and \colorbox{cyclecolor2!50!}{\cyclecolornamesecond}-shaded edges, respectively. 
Then the property (1) enables us to perform the \emph{Legendrian Coxeter mutation}, which is the $N$-graph realization of the Coxeter mutation defiend by the sequence of Legendrian mutations:
\[
\ncoxeter=\prod_{\cycle \in \nbasis_+} \mutation_{\cycle}\cdot\prod_{\cycle\in \nbasis_-} \mutation_{\cycle}.
\]
Then the resulting $N$-graphs $\ncoxeter(\ngraph(a,b,c),\nbasis(a,b,c))$ and $\ncoxeter(\ngraph(\exdynD_n),\nbasis(\exdynD_n))$ become the
$N$-graphs shown in Figure~\ref{figure:intro_Legendrian Coxeter mutation} up to a sequence of Move~\Move{II} in Figure~\ref{fig:move1-6}.
\begin{figure}[ht]
\subfigure[$\ncoxeter(\ngraph(a,b,c),\nbasis(a,b,c))$\label{ncoxeter_n(a,b,c)}]{
$

$}
\caption{After applying Legendrian Coxeter mutation on the initial pair}
\label{figure:intro_Legendrian Coxeter mutation}
\end{figure}

Removing the gray-shaded annulus region, $(\ngraph(\exdynD_n),\nbasis(\exdynD_n))$ and $\ncoxeter(\ngraph(\exdynD_n),\nbasis(\exdynD_n))$ are identical, and the only difference between
$(\ngraph(a,b,c),\nbasis(a,b,c))$ and $\ncoxeter(\ngraph(a,b,c),\nbasis(a,b,c))$ is the reverse of the color. Note that the intersection pattern between one-cycles and the
Legendrian mutability are preserved under the action of the Legendrian Coxeter mutation
$\ncoxeter$.
By the induction argument on the rank of root system, we conclude that there
in no (geometric) obstruction to realize each seed via the $N$-graph.

Note that the $N$-graphs $\ngraph(a,b,c)$ and $\ngraph(\exdynD_{n})$ include Lagrangian fillings of Legendrian links of type $\dynX\in\{\dynA,\dynD,\dynE,\exdynD,\exdynE\}$, see Table~\ref{table:short notations}.
In particular, $\ngraph(a,b,c)$ is of type $\dynADE$ or $\exdynD\exdynE$ if and only if $\frac 1a+\frac 1b+\frac 1c>1$ or $\frac 1a+\frac 1b+\frac 1c=1$, respectively.

This guarantees that there are at least as many Lagrangian fillings as seeds for $\legendrian(\dynX)$ for $\dynX\in\{\dynA,\dynD,\dynE,\exdynD,\exdynE\}$.

\begin{theorem}[Theorem~\ref{theorem:seed many fillings}]\label{thm_intro_1}
Let $\legendrian$ be a Legendrian knot or link of type~$\dynADE$ or type $\exdynD\exdynE$.
Then it admits at least as many distinct exact embedded Lagrangian fillings up to exact Lagrangian isotopy (rel boundary) as the number of seeds in the seed pattern of the same type.
See Table~\ref{table_seeds_and_cluster_variables} for the number of seeds of finite type. 
\end{theorem}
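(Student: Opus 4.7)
The plan is to realize each $Y$-seed of the cluster pattern of type $\dynX \in \{\dynA,\dynD,\dynE,\exdynD,\exdynE\}$ by an $N$-graph that yields an exact embedded Lagrangian filling of $\legendrian(\dynX)$, and then to distinguish the fillings using the Shen--Weng cluster structure on $\cM_1(\legendrian(\dynX))$. Starting from the initial pair $(\ngraph(a,b,c), \nbasis(a,b,c))$ or $(\ngraph(\exdynD_n), \nbasis(\exdynD_n))$ of Figure~\ref{fig:N-graphs of (a,b,c) and Dn}, which bounds $\legendrian(\dynX)$ and realizes the initial seed by Proposition~\ref{proposition:equivariance of mutations}, it suffices to construct an $N$-graph realization for every vertex of the exchange graph $\exchange(\Roots(\dynX))$: two fillings whose associated seeds differ must be non-isotopic, because the assigned cluster chart is an invariant of the filling.

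The realization engine is the Legendrian mutation $\mutation_\cycle$ and, for bipartite quivers, the Legendrian Coxeter mutation $\ncoxeter = \prod_{\cycle \in \nbasis_+} \mutation_\cycle \cdot \prod_{\cycle \in \nbasis_-} \mutation_\cycle$. Because the geometric and algebraic intersection numbers of the cycles in $\nbasis$ coincide, every individual Legendrian mutation is admissible and matches its algebraic counterpart via Proposition~\ref{proposition:equivariance of mutations}. A comparison of Figures~\ref{fig:N-graphs of (a,b,c) and Dn} and~\ref{figure:intro_Legendrian Coxeter mutation} shows that $\ncoxeter$ preserves both the intersection pattern and the Legendrian mutability of the chosen cycles: in the finite type case it only reverses the color roles of $\nbasis_+$ and $\nbasis_-$, and in the affine case the $N$-graph outside a gray annulus is unchanged. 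Consequently every iterate $\ncoxeter^k(\ngraph,\nbasis)$ still satisfies the two key properties (1)--(2), and further Legendrian mutations from it remain geometrically admissible.

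The proof then proceeds by induction on the rank of $\dynX$, using the structural fact that every vertex of $\exchange(\Roots(\dynX))$ lies in some Coxeter iterate of a facet $\exchange(\Roots(\dynX\setminus\{i\}))$. Each such sub-Dynkin diagram is again of $\dynADE$ or $\exdynD\exdynE$ type (possibly reducible), and by freezing the cycle attached to the omitted vertex the restriction of $(\ngraph,\nbasis)$ becomes a pair to which the induction hypothesis applies. Combining the facet fillings with the $\ncoxeter$-orbit thus exhausts the exchange graph, and by the distinguishability observation above the resulting fillings are pairwise non-isotopic. The main obstacle is verifying that the frozen restriction really produces an $N$-graph of the form covered by the induction, i.e.\ an instance of $(\ngraph(a',b',c'),\nbasis(a',b',c'))$ or $(\ngraph(\exdynD_m),\nbasis(\exdynD_m))$ of strictly smaller rank; this is a direct but delicate case analysis guided by Table~\ref{table:short notations} and by the symmetry of the two $N$-graph families, and it is the step that forces the hypothesis on the Dynkin type to be closed under deletion of a vertex.
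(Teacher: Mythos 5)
Your proposal follows essentially the same route as the paper: realize every $Y$-seed by an $N$-graph via Legendrian Coxeter mutations (implemented as Coxeter paddings) together with an induction on rank that decomposes the initial $N$-graph along the omitted cycle into smaller standard linear/tripod (or affine $\dynD$) pieces, and then distinguish the resulting fillings through the flag-moduli/cluster invariant, with Shen--Weng's result supplying the algebraic independence needed to pass from $Y$-seeds to seeds. You also correctly isolate the delicate step — checking that the facet restrictions are again instances of the standard $N$-graph families — which is precisely the case analysis carried out in the paper's proof of Proposition~\ref{proposition:realizability}.
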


There are several ways of constructing exact embedded Lagrangian fillings as mentioned above.
Especially in $\dynD_4$ case, there are 34 distinct Lagrangian fillings constructed by the method of the alternating Legendrians \cite{BFFH2018,STWZ2019},
while the above $N$-graphs give 50 distinct Lagrangian fillings which is the number of seeds of the corresponding cluster pattern.
Most recently, for Legendrian links of type $\dynD_n$, Hughes \cite{Hughes2021} makes use of $3$-graphs together with 1-cycles to show that every sequence of quiver mutations can be realized by Legendrian weave mutations. Compared with our strategy using structural results of the cluster pattern, he studies $3$-graph moves arise from quivers of type $\dynD_n$ in a more direct and concrete way. As a corollary, he also obtained at least as many Lagrangian fillings as seeds in the cluster algebra of type $\dynD_n$.

There are many results showing the existence of (infinitely many) distinct Lagrangian fillings for Legendrian links, see \cite{EHK2016,Pan2017,TZ2018,STWZ2019,CG2020,CZ2020,GSW2020b,CN2021}. To the best of authors' knowledge, Theorem~\ref{thm_intro_1} is the first results of (infinitely many) Lagrangian fillings of Legendrian links which exhaust all seeds in the corresponding cluster pattern beyond type $\dynA\dynD$.

The gray-shaded annular $N$-graphs in the above figure can be seen as exact Lagrangian cobordisms. 
In particular, the annular $N$-graph $\coxeterpadding({\exdynD}_{4})$ for $\ncoxeter(\ngraph(\exdynD_{4}),\nbasis(\exdynD_{4}))$ corresponds to the cobordism from the Legendrian $\legendrian(\exdynD_4)$ to itself which defines a \emph{Legendrian loop} $\vartheta(\exdynD_4)$. 
See Figure~\ref{figure:Legendrian Coxeter padding and legendrian loop} for the case of $\dynD_n$ for general $n\geq 4$.
Note that this coincides with the Legendrian loop described in \cite[Figure~2]{CN2021} up to Reidemeister moves. For type~$\exdynE$, the twice of 
Legendrian Coxeter mutation on the pair $(\ngraph(a,b,c),\nbasis(a,b,c))$ gives
the Legendrian loop~$\vartheta(a,b,c)$ of $\legendrian(a,b,c)$ as shown in Figure~\ref{fig:legendrian loop of E_intro}.
The Legendrian loop $\vartheta(a,b,c)$ can be interpreted as the move of the half twist $\Delta_3$ along the three-strand braid band, whereas the Legendrian loop $\vartheta(\exdynD_n)$ is essentially the move of the half twist $\Delta_2$ along the two-strand braid band as depicted in Figure~\ref{fig:legendrian loop of D_intro}.

\begin{figure}[ht]
\subfigure[$\coxeterpadding({\exdynD}_{n})$]{

}
\caption{Legendrian Coxeter padding $\coxeterpadding({\exdynD}_{n})$ and the corresponding Legendrian loop $\vartheta_0({\exdynD}_{n})$}
\label{figure:Legendrian Coxeter padding and legendrian loop}
\end{figure}

\begin{theorem}[Theorem~\ref{thm:legendrian loop}]\label{theorem:legendrian loop}
The Legendrian Coxeter mutation $\mutation_\ngraph$ on 
$(\ngraph(\exdynD),\nbasis(\exdynD))$ and twice of Legendrian mutation 
$\mutation_\ngraph^{2}$ on $(\ngraph(\exdynE),\nbasis(\exdynE))$ induce 
Legendrian loops $\vartheta(\exdynD)$ and $\vartheta(\exdynE)$ in Figures~\ref{fig:legendrian loop of E_intro} and \ref{fig:legendrian loop of D_intro}, respectively. 
In particular, the order of the Legendrian loops are infinite as elements of the fundamental group of the space of Legendrians isotopic to $\lambda(\exdynD)$ and $\lambda(\exdynE)$, respectively.
\end{theorem}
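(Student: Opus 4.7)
The plan is to establish the theorem in two stages: (1) construct the Legendrian loops $\vartheta(\exdynD)$ and $\vartheta(\exdynE)$ from the annular $N$-graphs produced by one or two applications of the Legendrian Coxeter mutation, and (2) detect their infinite order via the induced action on the moduli of microlocal rank one sheaves.

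For stage (1), I would compare the output $N$-graphs $\ncoxeter(\ngraph(\exdynD_n),\nbasis(\exdynD_n))$ and $\ncoxeter^{2}(\ngraph(\exdynE),\nbasis(\exdynE))$ in Figure~\ref{figure:intro_Legendrian Coxeter mutation} with the initial pairs $(\ngraph(\exdynD_n),\nbasis(\exdynD_n))$ and $(\ngraph(\exdynE),\nbasis(\exdynE))$. After a sequence of Moves~\Move{II} (and, for $\exdynE$, the color reversal generated by two successive Coxeter mutations), the post-mutation $N$-graph is isotopic to the original pair sitting inside an outer annular $N$-graph. This annular $N$-graph has matching Legendrian links on its two boundary circles and hence represents an exact Lagrangian cobordism from $\legendrian(\exdynX)$ to itself, which geometrically defines a Legendrian loop. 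To extract the explicit half-twist description, I would translate the annular picture into the front projection strand by strand, using the standard dictionary between $N$-graph trivalent vertices and cusp/crossing moves; the braid band structure in the front projection matches the Coxeter padding $\coxeterpadding(\exdynX)$ by direct inspection. This identifies $\vartheta(\exdynD_n)$ with the loop of Casals--Ng~\cite{CN2021}, up to Reidemeister moves, and produces the analogous $\Delta_3$-loop for $\exdynE$.

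For stage (2), I invoke the equivariance of Legendrian mutation and $Y$-seed mutation from Proposition~\ref{proposition:equivariance of mutations}. Under the identification of an $N$-graph with a $Y$-seed in the cluster structure on $\cM_1(\legendrian(\exdynX))$, the Legendrian Coxeter mutation $\ncoxeter$ induces the Coxeter mutation $\qcoxeter$ on the $Y$-pattern. In affine types $\exdynD\exdynE$, the Coxeter mutation acts with infinite order on the exchange graph $\exchange(\Roots(\exdynX))$: its orbit on the initial seed is the bipartite belt, which is an infinite $\Z$-orbit (Proposition~\ref{prop_Y-pattern_exchange_graph}). Consequently, the iterates $\ncoxeter^{k}$ (respectively $\ncoxeter^{2k}$) produce pairwise distinct $Y$-seeds for all $k \in \Z$.

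Finally, to promote this to the topological statement about the fundamental group of the space of Legendrians, I would fix any exact embedded Lagrangian filling $L$ of $\legendrian(\exdynX)$ and concatenate it with the $k$-fold iterate of the Legendrian loop. Because the moduli-theoretic class of a filling is preserved under exact Lagrangian isotopy and the resulting concatenations realize infinitely many distinct $Y$-seeds by stage (1) combined with the infinite-orbit statement above, the loops $\vartheta(\exdynD)^{k}$ and $\vartheta(\exdynE)^{k}$ are pairwise non-isotopic rel boundary, so the loops represent elements of infinite order in $\pi_{1}$ of the space of Legendrians isotopic to $\legendrian(\exdynX)$. The main obstacle will be stage (1): the explicit front-projection identification of the annular $N$-graph with a half-twist band move requires carefully sequencing the Moves~\Move{II} and tracking every strand through the cobordism, particularly for $\exdynE$ where two iterations are needed and the trivalent structure is richer.
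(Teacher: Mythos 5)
Your proposal is correct and follows essentially the same route as the paper: the paper shows the Coxeter paddings $\coxeterpadding(a,b,c)$ and $\coxeterpadding(\exdynD_n)$ are tame by decomposing them into elementary annular $N$-graphs (Reidemeister moves \Move{R0} and \Move{RIII}), identifies the resulting tame annular $N$-graph with a Legendrian loop via the braid-word computation exhibiting the half-twist moving along the braid band, and deduces infinite order from the aperiodicity of the Coxeter mutation in affine type (Lemma~\ref{lemma:order of coxeter mutation}) together with the equivariance of Proposition~\ref{proposition:equivariance of mutations}. The only quibble is a mis-citation: the infinitude of the bipartite belt in affine type is Lemma~\ref{lemma:order of coxeter mutation}, not Proposition~\ref{prop_Y-pattern_exchange_graph}.
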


Note that the above idea of Coxeter mutation also works for $(\ngraph(a,b,c),\nbasis(a,b,c))$ with $\frac{1}{a}+\frac{1}{b}+\frac{1}{c} < 1$. Indeed the operation $\qcoxeter$ is of infinite order and so is $\ncoxeter$, hence Legendrian weaves 
\[\Legendrian(\ncoxeter^r(\ngraph(a,b,c),\nbasis(a,b,c)))\]
produce infinitely many distinct Lagrangian fillings.
The quiver $\quiver(a,b,c)$ is also bipartite and one can perform 
the Legendrian Coxeter mutation $\mutation_{\ngraph}$ on the $N$-graph $\ngraph(a,b,c)$ by stacking the gray-shaded annulus like as before. Therefore, there is no obstruction to
realize seeds obtained by mutations $\ncoxeter^r$ via the $N$-graphs.
Since the order of the Legendrian Coxeter mutation is infinite (see Lemma~\ref{lemma:order of coxeter mutation}), we obtain infinitely many $N$-graphs
and hence infinitely many exact embedded Lagrangian fillings for the Legendrian
link $\legendrian(a,b,c)$ with $\frac{1}{a}+\frac{1}{b}+\frac{1}{c} < 1$.

\begin{theorem}[Theorem~\ref{theorem:infinite fillings}]\label{thm_intro_infinite_fillings}
For each $a,b,c\ge 1$, the Legendrian knot or link $\legendrian(a,b,c)$ has 
infinitely many distinct Lagrangian fillings if
\[
\frac1a+\frac1b+\frac1c < 1.
\]
\end{theorem}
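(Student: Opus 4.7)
The plan is to iterate the Legendrian Coxeter mutation $\ncoxeter$ on the initial pair $(\ngraph(a,b,c),\nbasis(a,b,c))$ and show that the resulting infinite sequence of $N$-graphs produces pairwise non-isotopic exact Lagrangian fillings of $\legendrian(a,b,c)$. Concretely, for each $r\ge 0$ set $L_r\colonequals \Legendrian(\ncoxeter^r(\ngraph(a,b,c),\nbasis(a,b,c)))$; the goal is to verify that every $L_r$ is well-defined and that the $L_r$ are pairwise distinct as exact Lagrangian fillings.

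First, one argues that the Legendrian Coxeter mutation can be applied indefinitely. The quiver $\quiver(a,b,c)$ is bipartite by property~(2) following Figure~\ref{fig:N-graphs of (a,b,c) and Dn}, and the two subsets $\nbasis_+,\nbasis_-$ of $\nbasis(a,b,c)$ have the property that geometric and algebraic intersection numbers coincide, which makes a single application of $\ncoxeter$ geometrically admissible. This application is realized by stacking the gray-shaded annular $N$-graph of Figure~\ref{ncoxeter_n(a,b,c)} onto $\ngraph(a,b,c)$. The crucial feature visible in that figure is self-similarity: the output pair $\ncoxeter(\ngraph(a,b,c),\nbasis(a,b,c))$ has the same $N$-graph structure with only the two colors exchanged, so the bipartite property of the associated quiver, the coincidence of geometric and algebraic intersection numbers, and the mutability of every one-cycle in $\nbasis_\pm$ all persist. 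Therefore an induction on $r$ shows that $\ncoxeter^r(\ngraph(a,b,c),\nbasis(a,b,c))$ exists as an $N$-graph for every $r\ge 0$, producing a well-defined exact Lagrangian filling $L_r$.

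Next, one distinguishes the fillings $L_r$ using their associated cluster data. By the equivariance of Legendrian mutation and quiver mutation (Proposition~\ref{proposition:equivariance of mutations}), the $Y$-seed attached to $L_r$ via the moduli $\cM_1(\legendrian(a,b,c))$ of microlocal rank-one sheaves is the $r$-th Coxeter-mutation iterate $\qcoxeter^r(\seed_0)$ of the initial $Y$-seed. The hyperbolic condition $\frac{1}{a}+\frac{1}{b}+\frac{1}{c}<1$ forces the root system underlying $\quiver(a,b,c)$ to be of neither finite nor affine Dynkin type, so Lemma~\ref{lemma:order of coxeter mutation} guarantees that $\qcoxeter$ has infinite order on the exchange graph. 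Consequently the $Y$-seeds $\qcoxeter^r(\seed_0)$ are pairwise distinct, and since non-isotopic Lagrangian fillings are detected by their $Y$-seeds, the fillings $L_r$ are pairwise non-isotopic.

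The main obstacle is the geometric realizability of the entire infinite sequence of Coxeter mutations. Unlike the purely combinatorial cluster mutation, a Legendrian mutation requires the concrete presence of an $N$-graph configuration around the chosen one-cycle; a priori, nothing guarantees that the cycles needed for the next Coxeter step continue to look locally like the ones present initially. This is resolved precisely by the self-similarity observed after a single application of $\ncoxeter$: the local $N$-graph structure and intersection pattern reappear verbatim (with the two colors swapped), so the mutability of each cycle in the split $\nbasis_+ \sqcup \nbasis_-$ is inherited at every step, and the iteration proceeds without geometric obstruction.
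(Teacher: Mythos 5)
Your proposal is correct and follows essentially the same route as the paper: the self-similarity you describe is exactly the content of Proposition~\ref{proposition:effect of Legendrian Coxeter mutation} (each $\ncoxeter^{\pm1}$ amounts to stacking a Coxeter padding onto the color-swapped pair, so the iteration is unobstructed), and the distinctness of the fillings is obtained, as in the paper, from the equivariance $\Psi\circ\ncoxeter=\qcoxeter\circ\Psi$ together with Lemma~\ref{lemma:order of coxeter mutation} and Corollary~\ref{corollary:distinct seeds imples distinct fillings}. The only cosmetic difference is that the paper also records (via Corollary~\ref{corollary:algebraic independence}) the algebraic independence of the initial coefficients needed to invoke the Fomin--Zelevinsky order statement, a point your write-up leaves implicit.
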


Gao--Shen--Weng \cite{GSW2020b} already proved the existence of infinitely many Lagrangian fillings for much general type of positive braid Legendrian links. Their main idea is to use the aperiodicity of \emph{Donaldson--Thomas transformation}(DT) on cluster varieties. An interesting observation is that the corresponding action of DT on the bipartite quivers in the $Y$-pattern coincides with the Coxeter mutation. See \cite[Theorem 2.6]{GSW2020b} and its proof. Accordingly, Theorem~\ref{thm_intro_infinite_fillings} can be interpreted as an $N$-graph analogue of the aperiodicity of DT.

\subsubsection{Lagrangian fillings for Legendrians of type $\dynBCFG$ or standard affine type with symmetry}
Now we move to cluster structure of type $\dynBCFG$ and standard affine types with certain symmetry.
They are obtained by the folding procedure from type $\dynADE$ or $\exdynD\exdynE$, see
Table~\ref{table:foldings}. 

In order to interpret those symmetries into Legendrians links and surfaces, we need to introduce corresponding actions on symplectic- and contact manifolds.
Consider two actions on $\sphere^3\times \R_u$, the rotation $R_{\theta_0}$ and conjugation  $\eta$ as follows:
\begin{align*}
R_{\theta_0}(z_1, z_2,u)&=(z_1\cos\theta_0 -z_2\sin\theta_0,z_1\sin\theta_0+z_2\cos\theta_0,u);\\
\eta(z_1,z_2,u)&= (\bar z_1,\bar z_2 ,u).
\end{align*}
Here $\sphere^3$ is the unit sphere in $\bbC^2$ with coordinates $z_1=r_1 e^{i\theta_1}, z_2=r_2 e^{i\theta_2}$ with $r_1^2 + r_2^2=1$.
Note that $\eta$ is an anti-symplectic involution which naturally gives $\Z/2\Z$-action on the symplectic manifold.
Under certain coordinate changes, the restrictions of $R_{\theta_0}$ and $\eta$ on $J^1\sphere^1$ become
\begin{align*}
R_{\theta_0}|_{J^1\sphere^1}(\theta,p_{\theta},z)&=(\theta+\theta_0,p_{\theta},z);\\
\eta|_{J^1\sphere^1}(\theta,p_{\theta},z)&=(\theta,-p_{\theta},-z).
\end{align*}
In turn, the rotation $R_{\theta_0}$ acts on the $N$-graph $\ngraph(\dynX)$ by rotating the disk $\disk^2$, and $\eta$ acts by flipping the $z$-coordinate.

Any $Y$-pattern of non-simply-laced finite or affine type can be obtained by folding 
a $Y$-pattern of type $\dynADE$ or $\exdynA \exdynD \exdynE$. In other words, 
those $Y$-pattern of non-simply-laced type can be seen as sub-patterns of $\dynADE$- or $\exdynA \exdynD \exdynE$-types 
consisting of $Y$-seeds with certain symmetries of finite group $G$ action. We call 
such $Y$-seeds or $N$-graphs \emph{$G$-admissible}, and the mutation in the folded 
cluster structure is a sequence of mutations respecting the $G$-orbits. 
We say that a $Y$-seed (or an $N$-graph) is \emph{globally foldable} if it is 
$G$-admissible and its arbitrary mutations along $G$-orbits are again 
$G$-admissible.

Figure~\ref{figure:N-graph with rotational symmetry} illustrates the $N$-graphs with rotational symmetry and the corresponding $Y$-patterns of folding. Indeed, they are $\ngraph(1,n,n)$, $\ngraph(2,2,2)$, $\ngraph(3,3,3)$, $\ngraph(\exdynD_{2n})$, $\ngraph(\exdynD_4)$ which admits $\Z/2\Z$-, $\Z/3\Z$-, $\Z/3\Z$-, $\Z/2\Z$-, $\Z/2\Z$-action, respectively.

\begin{figure}[ht]
\[
\begin{tikzcd}[column sep=0.5pc, row sep=small]

\end{tikzcd}
\]
\caption{Examples of $N$-graphs with rotational symmetry}
\label{figure:N-graph with rotational symmetry}
\end{figure}

In order to present conjugation invariant $N$-graphs, we need to adopt a degenerate version of $N$-graphs which allows overlapping edges and cycles as in Figure~\ref{figure:N-graph with conjugation symmetry}.
They are equivalent to $\ngraph(\exdynD_{n+1})$, $\ngraph(\exdynD_4)$, $\ngraph(2,3,3)$, $\ngraph(3,3,3)$, and $\ngraph(2,4,4)$ up to $\partial$-Legendrian isotopy see Definition~\ref{def:boundary Legendrian isotopic}, respectively.

\begin{figure}[ht]
\[
\begin{tikzcd}[column sep=0.5pc, row sep=small]

\end{tikzcd}
\]
\caption{Examples of $N$-graphs with conjugation symmetry}
\label{figure:N-graph with conjugation symmetry}
\end{figure}

\begin{theorem}[Theorem~\ref{thm:folding of N-graphs}]\label{Thm:folding of N-graphs}
The following holds:
\begin{enumerate}
\item The Legendrian $\lambda(\dynA_{2n-1})$ has at least $\binom{2n}{n}$ distinct Lagrangian fillings up to exact Lagrangian isotopy (rel boundary) which are invariant under the $\pi$-rotation and  admit the $Y$-pattern of type $\dynB_n$.
\item The Legendrian $\lambda(\dynD_{4})$ has at least $8$ distinct Lagrangian fillings up to exact Lagrangian isotopy (rel boundary) which are invariant under the $2\pi/3$-rotation and admit the $Y$-pattern of type $\dynG_2$.
\item The Legendrian $\lambda(\exdynE_{6})$ has  infinitely many distinct Lagrangian fillings up to exact Lagrangian isotopy (rel boundary) which are invariant under the $2\pi/3$-rotation and admit the $Y$-pattern of type $\exdynG_2$.
\item The Legendrian $\lambda(\exdynD_{2n})$ with $n\ge 3$ has infinitely many distinct Lagrangian fillings up to exact Lagrangian isotopy (rel boundary) which are invariant under the $\pi$-rotation and admit the $Y$-pattern of type $\exdynB_n$.
\item The Legendrian $\lambda(\exdynD_4)$ has infinitely many distinct Lagrangian fillings up to exact Lagrangian isotopy (rel boundary) which are invariant under the $\pi$-rotation and admit the $Y$-pattern of type $\exdynC_2$.
\item The Legendrian $\tilde\lambda(\dynE_{6})$ has at least $105$ distinct Lagrangian fillings up to exact Lagrangian isotopy (rel boundary) which are invariant under the antisymplectic involution and admit the $Y$-pattern of type $\dynF_4$.
\item The Legendrian $\tilde\lambda(\dynD_{n+1})$ has  at least $\binom{2n}{n}$ Lagrangian fillings up to exact Lagrangian isotopy (rel boundary) which are invariant under the antisymplectic involution and admit the $Y$-pattern of type $\dynC_n$.
\item The Legendrian $\tilde\lambda(\exdynE_{6})$ has infinitely many distinct Lagrangian fillings up to exact Lagrangian isotopy (rel boundary) which are invariant under the antisymplectic involution and admit the $Y$-pattern of type $\dynE_6^{(2)}$.
\item The Legendrian $\tilde\lambda(\exdynE_{7})$ has infinitely many distinct Lagrangian fillings up to exact Lagrangian isotopy (rel boundary) which are invariant under the antisymplectic involution and admit the $Y$-pattern of type $\exdynF_4$.
\item The Legendrian $\tilde\lambda(\exdynD_4)$ has infinitely many distinct Lagrangian fillings up to exact Lagrangian isotopy (rel boundary) which are invariant under the antisymplectic involution and admit the $Y$-pattern of type $\dynA_5^{(2)}$.
\end{enumerate}
\end{theorem}

The study of Lagrangian fillings with symmetry, again to the best of authors' knowledge, is started from \cite{Cas2020}. We clarify the actions on the symplectic and contact manifold, together with the induced actions on Lagrangian fillings and Legendrian links. The items (1),(2),(6),(7) in Theorem~\ref{Thm:folding of N-graphs} answer that the half of the conjecture \cite[Conjecture 5.4]{Cas2020}, i.e. the surjectivity from fillings to seeds, is true, and furthermore we extend our results to certain non-simply-laced affine types.

\subsection{Organization of the paper}
The rest of the paper is divided into six sections including appendices. 
We review, in Section~\ref{sec:cluster algebras}, some basics on finite and affine cluster algebra. Especially we focus on structural results about the combinatorics of exchange graphs using Coxeter mutations.

In Section~\ref{sec:N-graph}, we recall how $N$-graphs and their moves encode Legendrian surfaces and the Legendrian isotopies. We also introduce degenerate $N$-graphs which will be used to construct Lagrangian fillings having conjugation symmetry. 
After that we review the assignment of $Y$-seeds in the cluster structure from $N$-graphs together with certain flag moduli. We also discuss the Legendrian mutation on (degenerate) $N$-graphs.

In Section~\ref{sec:N-graph of finite or affine type}, we investigate Legendrian links and $N$-graphs of type $\dynADE$ or $\exdynD \exdynE$. 
We discuss $N$-graph realization of the Coxeter mutation and prove Theorem~\ref{theorem:legendrian loop} on the relationship between Coxeter mutations and Legendrian loops.
By combining the structural results in the seed pattern of cluster algebra and $N$-graph realization of the Coxeter mutation, we construct as many Lagrangian fillings as seeds for Legendrian links of type $\dynADE$ or $\exdynD \exdynE$, and hence prove Theorem~\ref{thm_intro_1}.

In Section~\ref{section:folding}, we discuss rotation and conjugation actions on $N$-graphs and invariant $N$-graphs. We also prove Theorem~\ref{Thm:folding of N-graphs}.

In Appendix~\ref{section:invariance and admissibility}, 
we argue that $G$-invariance of type $\dynADE$ implies $G$-admissibility. 
Finally, in Appendix~\ref{sec:supplementary pictorial proofs}, we collect several equivalences between different presentation of $N$-graphs.

If some readers are familiar with the notion of cluster algebra and $N$-graph, then one may skip Section~\ref{sec:cluster algebras} and Section~\ref{sec:N-graph}, respectively, and start from Section~\ref{sec:N-graph of finite or affine type}.

\subsection*{Acknowledgement}
B. An and Y. Bae were supported by the National Research Foundation of Korea (NRF) grants funded by the Korea government(MSIT) (RS-2023-00208405) and (No. 2020R1A2C1A0100320), respectively.
E. Lee was supported by the Institute for Basic Science (IBS-R003-D1).

\section{Cluster algebras}\label{sec:cluster algebras}

Cluster algebras, introduced by Fomin and Zelevinsky~\cite{FZ1_2002}, are
commutative algebras with specific generators, called \emph{cluster
	variables}, defined recursively.
In this section, we recall basic notions in the theory of cluster algebras.
For more details, we refer the reader to~\cite{FZ1_2002, FZ2_2003, FZ4_2007}.

Throughout this section, we fix $m, n \in \Z_{>0}$ such that $n \leq m$, and
we let $\field$ be the rational function field with $m$ independent
variables over $\bbC$.

\subsection{Basics on cluster algebras}

\subsubsection{Cluster algebras}
We first recall the definition of seeds and $Y$-seeds from \cite{FZ1_2002, FZ2_2003, FZ4_2007}.
\begin{definition}[{cf. \cite{FZ1_2002, FZ2_2003, FZ4_2007}}]\label{definition:seeds}
A seed and $Y$-seed are defined as follows.
\begin{enumerate}
\item	A \emph{seed} $(\bfx, \qbasis)$ is a pair of 
	\begin{itemize}
		\item a tuple $\bfx = (x_1,\dots,x_m)$ of algebraically
		independent generators of $\field$, that is, 
		$\field = \bbC(x_1,\dots,x_m)$;
		\item an $m \times n$ integer matrix $\qbasis = (b_{i,j})_{i,j}$ such
		that the \emph{principal part} $\qbasispr \colonequals
		(b_{i,j})_{1\leq i,j\leq n}$ is skew-symmetrizable, that is, there
		exist positive integers $d_1,\dots,d_n$ such that
\[		
\textrm{diag}(d_1,\dots,d_n) \cdot \qbasispr 
\]
		is a
		skew-symmetric matrix.
	\end{itemize}
	We refer to $\bfx$ as the \emph{cluster} of a seed $(\bfx, \qbasis)$, to elements $x_1,\dots,x_m$ as \emph{cluster variables}, and to
	$\qbasis$ as the \emph{exchange matrix}. Moreover, we call $x_1,\dots,x_n$
	\emph{unfrozen} (or, \emph{mutable}) variables and $x_{n+1},\dots,x_m$
	\emph{frozen} variables.
\item A \emph{$Y$-seed} $(\bfy,\qbasispr)$ is a pair of an $n$-tuple $\bfy=(y_1,\dots,y_n)$ of elements in $\field$ and an $n\times n$ skew-symmetrizable matrix $\qbasispr$. 
We call $\bfy$ the \emph{coefficient tuple} of a $Y$-seed $(\bfy, \qbasispr)$ and call $y_1,\dots,y_n$ \emph{coefficients}.
\end{enumerate}
\end{definition}
We say that two seeds $(\bfx, \qbasis)$ and $(\bfx', \qbasis')$ are \textit{equivalent}, denoted by $(\bfx, \qbasis)\sim(\bfx', \qbasis')$ if there exists a permutation $\sigma$ on $[m]$ fixing $n+1,\dots, m$ such that
\[
x_i' = x_{\sigma(i)}\quad \text{ and } \quad b_{i,j}' = b_{\sigma(i),\sigma(j)} \quad \text{ for }1\le i\le m, 1\le j\le n,
\]
where $\bfx = (x_{1},\dots,x_{m})$, $\bfx' = (x_1',\dots,x_m')$, $\qbasis = (b_{i,j})$, and $\qbasis' = (b_{i,j}')$. Similarly,
two $Y$-seeds $(\bfy,\qbasispr)$ and $(\bfy',\qbasispr')$ are \emph{equivalent} and denoted by $(\bfy,\qbasispr)\sim(\bfy',\qbasispr')$ if there exists a permutation $\sigma$ on $[n]$ such that 
\[
y_i' = y_{\sigma(i)}\quad\text{and}\quad
b_{i,j}' = b_{\sigma(i),\sigma(j)}\quad\text{for }1\le i,j \le n.
\]

To define cluster algebras, we introduce mutations on exchange matrices, and quivers, and seeds as follows. 
\begin{enumerate}
\item (Mutation on exchange matrices)
For an exchange matrix $\qbasis$ and $1 \le k \le n$, the mutation $\mu_k(\qbasis) = (b_{i,j}')$ is defined as follows.
\[
b_{i,j}' = \begin{cases}
-b_{i,j} & \text{ if } i = k \text{ or } j = k, \\
\displaystyle b_{i,j} + \frac{|b_{i,k}| b_{k,j} + b_{i,k} | b_{k,j}|} {2} & \text{ otherwise}.
\end{cases}
\]
We say that \emph{$\qbasis' =(b_{i,j}')$ is the mutation of $\qbasis$ at $k$}.
\item (Mutation on quivers)
We call a finite directed multigraph $\quiver$ a \emph{quiver} if it does not have
oriented cycles of length at most $2$. The adjacency matrix $\qbasis(\quiver)$ of a
quiver is always skew-symmetric. Moreover, $\mu_k(\qbasis(\quiver))$ is again 
the adjacency matrix of a quiver $\quiver'$. We define $\mu_k(\quiver)$ to be 
the quiver satisfying 
\[
 \qbasis(\mu_k(\quiver)) = \mu_k(\qbasis(\quiver)),
\]
and say that \emph{$\mu_k(\quiver)$ is the mutation of $\quiver$ at $k$}.
\item (Mutation on seeds)	For a seed $(\bfx, \qbasis)$ and an integer $1 \leq k \leq n$, the \emph{mutation} $\mutation_k(\bfx, \qbasis) = (\bfx', \mu_k(\qbasis))$ is defined as follows: 
\begin{equation*}
x_i' = \begin{cases}
x_i &\text{ if } i \neq k,\\
\displaystyle 
x_k^{-1}\left( \prod_{b_{j,k} > 0} x_j^{b_{j,k}} + \prod_{b_{j,k} < 0}x_j^{-b_{j,k}}
\right) & \text{ otherwise}.
\end{cases}
\end{equation*}
\item (Mutation on $Y$-seeds)
The \emph{$Y$-seed mutation} (or, \emph{cluster $\mathcal{X}$-mutation}, \emph{$\mathcal{X}$-cluster mutation}) on a $Y$-seed $(\bfy, \qbasispr)$ at $k\in[n]$ is a $Y$-seed $(\bfy'=(y_1',\dots, y_n'),\qbasispr'=\mu_k(\qbasispr))$, where for each $1 \le i\le n$,
\[
y_i' = \begin{cases}
    \displaystyle {y}_{i} {y}_{k}^{\max\{b_{i,k},0\}}(1+{y}_{k})^{-b_{i,k}} & \text{ if }i \neq k, \\
   {y}_{k}^{-1} &\text{ otherwise}.
\end{cases}
\]

\end{enumerate}

\begin{example}\label{example_mutation_skewsymmetrizable}
Let $n = m = 2$. Suppose that an initial seed is given by
\[
(\bfx_{t_0}, \qbasis_{t_0}) = \left(
(x_1,x_2), \begin{pmatrix}
0 & 1 \\ -3 & 0
\end{pmatrix}
\right).
\]
Considering mutations $\mu_1(\bfx_{t_0}, \qbasis_{t_0})$ and $\mu_2\mu_1(\bfx_{t_0}, \qbasis_{t_0})$, we obtain the following.
\begin{align*}
\mu_1(\bfx_{t_0}, \qbasis_{t_0}) &= \left(
\left(
\frac{1+x_2^3}{x_1},x_2
\right), \begin{pmatrix}
0 & -1 \\3 & 0
\end{pmatrix}
\right),\\
\mu_2\mu_1(\bfx_{t_0}, \qbasis_{t_0}) &= \left(
\left(
\frac{1+x_2^3}{x_1}, \frac{1+x_1+x_2^3}{x_1x_2}
\right),
\begin{pmatrix}
0 & 1  \\ -3 & 0
\end{pmatrix}
\right).
\end{align*}
\end{example}

\begin{remark}\label{rmk_mutation_on_quivers}
Let $k$ be a vertex in a quiver $\quiver$ on $[m]$.
The mutation $\mutation_k(\quiver)$ can also be described via a sequence of three steps:
\begin{enumerate}
\item For each directed two-arrow path $i \to k \to j$, add a new arrow $i \to j$.
\item Reverse the direction of all arrows incident to the vertex $k$.
\item Repeatedly remove directed $2$-cycles until unable to do so.
\end{enumerate}
\end{remark}
\begin{remark}\label{rmk_mutation_commutes}
Let $\qbasis = (b_{i,j})$ be an exchange matrix of size $m \times n$.
For $k,\ell \in [n]$, if $b_{k,\ell} = b_{\ell,k} = 0$, then the mutations at $k$ and $\ell$ commute with each other: $\mutation_{\ell}(\mutation_k(\qbasis)) = \mutation_k(\mutation_{\ell}(\qbasis))$. 
Similarly, for a quiver $\quiver$ on $[m]$, if there does not exist an arrow connecting mutable vertices $k$ and $\ell$, then we have $\mutation_{\ell}(\mutation_k(\quiver)) = \mutation_k(\mutation_{\ell}(\quiver))$.
\end{remark}

We say a quiver $\quiver'$ is \emph{mutation equivalent} to another quiver $\quiver$ if 
there exists a sequence  of mutations $\mutation_{j_1},\dots,\mutation_{j_{\ell}}$ 
which connects $\quiver'$ and $\quiver$, that is,
\[
\quiver' = (\mutation_{j_{\ell}} \cdots \mutation_{j_1})(\quiver).
\]
Similarly, we say an exchange matrix $\qbasis'$ is \emph{mutation equivalent} to another matrix $\qbasis$ if $\qbasis'$ is obtained by applying a sequence of mutations to $\qbasis$.

An immediate check shows that $\mutation_k(\bfx,\qbasis)$ is again a seed, $\mutation_k(\bfy,\qbasispr)$ is a $Y$-seed, and a mutation is an involution, that is, its square is the identity.
Also, note that the mutation on seeds does not change frozen variables $x_{n+1},\dots,x_m$.
Let $\mathbb{T}_n$ denote the $n$-regular tree whose edges are labeled by $1,\dots,n$. Except for $n = 1$, there are infinitely many vertices on the tree $\mathbb{T}_n$. For example, we present regular trees $\mathbb{T}_2$ and $\mathbb{T}_3$ in Figure~\ref{figure_regular_trees_2_and_3}.
\begin{figure}
	\begin{tabular}{cc}
		\begin{tikzpicture}
		\tikzset{every node/.style={scale=0.8}}
		\tikzset{cnode/.style = {circle, fill,inner sep=0pt, minimum size= 1.5mm}}
		\node[cnode] (1) {};
		\node[cnode, right of=1 ] (2) {};
		\node[cnode, right of=2 ] (3) {};
		\node[cnode, right of=3 ] (4) {};	
		\node[cnode, right of=4 ] (5) {};	
		\node[cnode, right of=5 ] (6) {};	
		
		\node[left of=1] {$\cdots$};
		\node[right of=6] {$\cdots$};
		
		\draw (1)--(2) node[above, midway] {$1$};
		\draw (2)--(3) node[above, midway] {$2$};
		\draw (3)--(4) node[above, midway] {$1$};
		\draw (4)--(5) node[above, midway] {$2$};
		\draw (5)--(6) node[above, midway] {$1$};
		\end{tikzpicture} &
		\begin{tikzpicture}
			\tikzset{every node/.style={scale=0.8}}
		\tikzset{cnode/.style = {circle, fill,inner sep=0pt, minimum size= 1.5mm}}
		\node[cnode] (1) {};
		\node[cnode, below right of =1] (2) {};
		\node[cnode, below of =2] (3) {};
		\node[cnode, above right of=2] (4){};
		\node[cnode, above of =4] (5) {};
		\node[cnode, below right of = 4] (6) {};
		\node[cnode, below of= 6] (7) {};
		\node[cnode, above right of = 6] (8) {};
		\node[cnode, above of = 8] (9) {};
		\node[cnode, below right of = 8] (10) {};
		\node[cnode, below of = 10] (11) {};
		\node[cnode, above right of = 10] (12) {};
		
		\node[left of = 1] {$\cdots$};
		\node[right of = 12] {$\cdots$};
		
		\draw (1)--(2) node[above, midway, sloped] {$1$};
		\draw (4)--(6) node[above, midway, sloped] {$1$};
		\draw (8)--(10) node[above, midway, sloped] {$1$};

		\foreach \x [evaluate ={ \x as \y using int(\x +2)} ] in {2, 6, 10}{
			\draw (\x)--(\y)  node[below, midway, sloped] {$2$}; 
		}
		\foreach \x [evaluate = {\x as \y using int(\x +1)}] in {2, 4, 6, 8, 10}{
			\draw (\x)--(\y) node[above, midway, sloped] {$3$};
		}
		\end{tikzpicture}\\[2ex]
		$\mathbb{T}_2$ & $\mathbb{T}_3$
	\end{tabular}
	\caption{The $n$-regular trees for $n=2$ and $n = 3$.}
	\label{figure_regular_trees_2_and_3}	
\end{figure}
A \emph{cluster pattern} (or \emph{seed pattern}) is an assignment
\[
\mathbb{T}_n \to \{\text{seeds in } \field\}, \quad t \mapsto (\bfx_t, \qbasis_t)
\]
such that if $\begin{tikzcd} t \arrow[r,dash, "k"] & t' \end{tikzcd}$ in $\mathbb{T}_n$, then $\mutation_k(\bfx_t, \qbasis_t) = (\bfx_{t'}, \qbasis_{t'})$.
Let $\{ (\bfx_t, \qbasis_t)\}_{t \in \mathbb{T}_n}$ be a cluster pattern with $\bfx_t = (x_{1;t},\dots,x_{m;t})$. Since the mutation does not change frozen variables, we may let $x_{n+1} = x_{n+1;t},\dots,x_m = x_{m;t}$.
Similarly, we define a \emph{cluster $Y$-pattern} (or a \emph{$Y$-pattern}) $\{(\bfy_t, \qbasispr_t)\}_{t \in \mathbb{T}_n}$ by an assignment from $\mathbb{T}_n$ to the set of $Y$-seeds regarding the mutation relations. 

\begin{definition}[{cf. \cite{FZ2_2003}}]
	Let $\{ (\bfx_t, \qbasis_t)\}_{t \in \mathbb{T}_n}$ be a cluster pattern with $\bfx_t = (x_{1;t},\dots,x_{m;t})$.
	The \emph{cluster algebra} $\cA(\{(\bfx_t, \qbasis_t)\}_{t \in \mathbb{T}_n})$ is defined to be the $\bbC[x_{n+1},\dots,x_m]$-subalgebra 
	of $\field$ generated by all the cluster variables 
	 $\bigcup_{t \in \mathbb{T}_n} \{x_{1;t},\dots,x_{n;t}\}$.
\end{definition}
If we fix a vertex $t_0 \in \mathbb{T}_n$, then a cluster pattern $\{ (\bfx_{t}, \qbasis_{t}) \}_{t \in \mathbb{T}_n}$ is 
constructed from the seed~$(\bfx_{t_0}, \qbasis_{t_0})$. In this case, we call $(\bfx_{t_0}, \qbasis_{t_0})$ an \emph{initial seed}. 
Because of this reason, we simply denote by $\cA(\bfx_{t_0}, \qbasis_{t_0})$ the cluster algebra given by the cluster 
pattern constructed from the initial seed~$(\bfx_{t_0}, \qbasis_{t_0})$.
\begin{example}\label{example_A2_example}
	Let $n = m = 2$. Suppose that an initial seed is given by 
	\[
		(\bfx_{t_0}, \qbasis_{t_0}) = \left(
			(x_1,x_2), \begin{pmatrix}
				0 & 1  \\ -1 & 0
			\end{pmatrix}
		\right).
	\]
	We present the cluster pattern obtained by the initial seed $(\bfx_{t_0}, \qbasis_{t_0})$.
	\begin{center}
	\begin{tikzcd}
		\left( (x_2,x_1), 
		\begin{pmatrix}
			0 & -1 \\ 1 & 0
		\end{pmatrix}
		\right)
		\arrow[r, color=white, "\textcolor{black}{\sim}" description]
		& (\bfx_{t_0}, \qbasis_{t_0})
			= \left(
			(x_1,x_2), 
			\begin{pmatrix}
				0 & 1 \\ -1 & 0
			\end{pmatrix}
		\right) \arrow[d,<->, "\mutation_1"]
		\\
		\left(
			(\frac{1+x_1}{x_2}, x_1), \begin{pmatrix}
				0 & 1 \\ -1 & 0
			\end{pmatrix}
		\right) \arrow[u,<->, "\mutation_1"]
		& 
		\left(
			\left(\frac{1+x_2}{x_1}, x_2\right), \begin{pmatrix}
				0 & -1 \\ 1 & 0
			\end{pmatrix}
		\right) \arrow[d, <->,"\mutation_2"]\\
		\left(
			\left(\frac{1+x_1}{x_2}, \frac{1+x_1+x_2}{x_1x_2}\right),
			\begin{pmatrix}
				0 & -1 \\ 1 & 0
			\end{pmatrix}
		\right) \arrow[u,<->, "\mutation_2"]
		&
		\left(
			\left(\frac{1+x_2}{x_1}, \frac{1+x_1+x_2}{x_1x_2}\right),
			\begin{pmatrix}
				0 & 1  \\ -1 & 0
			\end{pmatrix}
		\right) \arrow[l,<->, "\mutation_1"]
    \end{tikzcd}
\end{center}
Accordingly, we have
\[
\cA(\initialseed) = \cA(\{\seed_t\}_{t \in \mathbb{T}_n}) =  \bbC\left[x_1,x_2,\frac{1+x_2}{x_1}, \frac{1+x_1+x_2}{x_1x_2}, \frac{1+x_1}{x_2}\right].
\]
We notice that there are only five seeds in this case. Indeed, it is a cluster pattern of type~$\dynA_2$ (see Example~\ref{example_root_and_A2}).
\end{example}

\begin{remark}\label{rmk_ensembel_and_x_cluster_mutation}
One can associate a $Y$-pattern to a given cluster pattern in the following way.
Let $\{ (\bfx_t, \qbasis_t)\}_{t \in \mathbb{T}_n}$ be a cluster
pattern with $\bfx_t = (x_{1;t},\dots,x_{m;t})$. For $t \in \mathbb{T}_n$ and
$i \in [n]$, define $\hat{\mathbf y}_t= (\hat{y}_{1;t},\dots,\hat{y}_{n;t})$ to be 
\[
    \hat{y}_{i;t} = \prod_{j\in[m] } x_{j;t}^{b^{(t)}_{i,j}}.
\]
Here, $\qbasis_t = (b^{(t)}_{i,j})$ and $\qbasispr_t$ is the principal part of $\qbasis_t$.
Then the assignment $t \mapsto (\hat{\mathbf y}_t, \qbasispr)$ provides a 
$Y$-pattern and commutes with the mutation maps. Indeed, denoting by $p^{\ast}$ the assignment $(\hat{\mathbf y}_t, \qbasispr) \mapsto (\bfx_t, \qbasis_t)$, 
we obtain $\mutation_k(p^{\ast}(\hat{\mathbf y}_t, \qbasispr)) = p^{\ast}(\mutation_k(\hat{\mathbf y}_t, \qbasispr))$. 
\end{remark}

\subsubsection{Seed tori and cluster varieties}
For each  seed $\bfx_t = (x_{1;t},\dots,x_{m;t})$, define a torus $T_{\cA;t} \colonequals \Spec \mathbb{C}[x_{1;t}^{\pm 1}, \dots, x_{n;t}^{\pm 1}]$, called a \emph{seed tori}. Similarly, we define a \emph{$Y$-seed torus} $T_{\cX;t} \colonequals \Spec \mathbb{C} [{y}_{1;t}^{\pm 1},\dots,{y}_{n;t}^{\pm 1}]$.
For every edge  $\begin{tikzcd} t \arrow[r,dash, "k"] & t' \end{tikzcd}$ in $\mathbb{T}_n$, the associated seed tori are related by the mutation maps
\[
\begin{tikzcd}
	T_{\cA;t} \arrow[r, dashed, <->, "\mu_k"] & T_{\cA;t'} 
\end{tikzcd} \quad 
\begin{tikzcd}
	T_{\cX;t} \arrow[r, dashed, <->, "\mu_k"] & T_{\cX;t'}.
\end{tikzcd} 
\]

The \emph{$\cA$-cluster variety $\cA$} (also called a cluster $\cA$-variety or a cluster $K_2$ variety) is $\Spec(\cA(\{(\bfx_t, \qbasis_t)\}_{t \in \mathbb{T}_n}))$. The \emph{$\cX$-cluster variety $\cX$} (also called a cluster $\cX$-variety or a cluster Poisson variety) is given by gluing the seed tori $T_{\cX; t}$.
We call $T_{\cA;t}$ (respectively, $T_{\cX;t}$) a \emph{cluster chart} when we consider it as an \emph{embedded} torus in $\cA$ (respectively, in $\cX$). 

\begin{remark}
Let $p^{\ast}$ be the assignment given in Remark~\ref{rmk_ensembel_and_x_cluster_mutation}. 
Then $p^{\ast}$ provides a map between two seed tori given by seeds $\mathbf x_t$ and $\hat{\mathbf y}_t$:
\[
p \colon \Spec \mathbb{C} [x_{1;t}^{\pm 1},\dots,x_{n;t}^{\pm 1}] \to 
\Spec \mathbb{C} [\hat{y}_{1;t}^{\pm 1},\dots,\hat{y}_{n;t}^{\pm 1}].
\]
Indeed, we have a map $p \colon \cA \to \cX$, which is called the \emph{ensemble map}.
\end{remark}

Note that the mutation preserves the ranks of the exchange matrices. Indeed, it preserves the determinants of the principal parts of the exchange matrices up to sign 
as proved in~\cite[Lemma~3.2]{BFZ3_2005}. 
Accordingly, 
under the situation in Remark~\ref{rmk_ensembel_and_x_cluster_mutation}, 
if the exchange matrix $\qbasis_t$ has full rank, then the variables $\hat{y}_{1;t},\dots,\hat{y}_{n;t}$ are algebraically independent. 
Furthermore, if the exchange matrix $\qbasis_t$ is square having determinant $\pm 1$, then $p^{\ast}$ provides an isomorphism between the coordinate rings $ \mathbb{C} [x_{1,t}^{\pm},\dots,x_{n,t}^{\pm}]$ and $\mathbb{C} [\hat{y}_{1,t}^{\pm},\dots,\hat{y}_{n,t}^{\pm}]$ of seed tori. 

We summarize useful results for later use. 
\begin{proposition}[{\cite[Theorem~A.12]{GSW2020a}}]\label{prop_bijection_between_A_seed_and_tori}
	Let $\quiver$ be a quiver of full rank and let $\cA$ is the corresponding cluster $\cA$-variety defined over $\mathbb{C}$. The cluster charts of distinct cluster seeds of $\cA$ do not coincide. Indeed, there is a bijective correspondence between the set of cluster charts and that of cluster seeds. 
\end{proposition}

The full rank condition in the above proposition does not necessarily hold in general. However, if one could find an appropriate \emph{extension} of a given quiver by adding some vertices and edges, one could distinguish seed tori. 
We note that for $n \leq m$, one can naturally embed the $n$-regular tree $\mathbb{T}_n$ to the $m$-regular tree $\mathbb{T}_m$. 
\begin{corollary}\label{cor_Y-seeds_and_Y-charts}
Let $\quiver$ be a quiver on $[m]$ of full rank whose exchange matrix is square having determinant $\pm 1$ and let $\{(\bfy_t, \qbasispr_t)\}_{t \in \mathbb{T}_m}$ be the corresponding $Y$-pattern. 
Let $\{T_{\cX;t}\}_{t \in \mathbb{T}_m}$ be the set of $Y$-cluster charts corresponding to $\quiver$. For $n < m$, there is a bijective correspondence between the subset $\{T_{\cX;t}\}_{t \in \mathbb{T}_n}$ of $Y$-cluster charts and the subset $\{(\bfy_t, \qbasispr_t)\}_{t \in \mathbb{T}_n}$ of $Y$-cluster seeds. 
\end{corollary}
\begin{proof}
	Since $\quiver$ is a quiver of full rank, by Proposition~\ref{prop_bijection_between_A_seed_and_tori}, there is a bijective correspondence between the set of seed tori $\{T_{\cA;t}\}_{t \in \mathbb{T}_m}$ and that of cluster seeds $\{(\bfx_t, \qbasis_t)\}_{t \in \mathbb{T}_m}$. On the other hand, since the exchange matrix of $\quiver$ is square having determinant $\pm 1$, the ensemble map provide an isomorphism between a seed torus $T_{\cA;t}$ and a $Y$-seed torus $T_{\cX;t}$. Accordingly, there is a bijective correspondence between the set of $Y$-cluster charts $\{T_{\cX;t}\}_{t \in \mathbb{T}_m}$ and cluster charts $\{T_{\cA;t}\}_{t \in \mathbb{T}_m}$. 
	\[
	\begin{tikzcd}
		\{(\bfy_t, \qbasispr_t)\}_{t \in \mathbb{T}_n} \arrow[d] 
		\arrow[r, hook]
		& \{(\bfy_t, \qbasispr_t)\}_{t \in \mathbb{T}_m}  
		\arrow[r, <->, "\text{\tiny{bijective}}"]
		\arrow[d, <->, "\text{\tiny{bijective}}"]
		& \{(\bfx_t, \qbasis_t)\}_{t \in \mathbb{T}_m}  
		\arrow[<->, d, "\text{\tiny{bijective}}"]\\
		\{T_{\cX;t}\}_{t \in \mathbb{T}_n} \arrow[r, hook]
		& \{T_{\cX;t}\}_{t \in \mathbb{T}_m} 
		\arrow[<->, r, "\text{\tiny{bijective}}"]
		& \{T_{\cA;t}\}_{t \in \mathbb{T}_m}
	\end{tikzcd}
	\]
	Therefore, the correspondence between the set of $Y$-cluster charts $\{T_{\cX;t}\}_{t \in \mathbb{T}_m}$ and that of $Y$-cluster seeds $\{(\bfy_t, \qbasispr_t)\}_{t \in \mathbb{T}_m}$ is bijective. 
\end{proof}

\subsection{Cluster algebras of Dynkin type}
The number of cluster variables in Example~\ref{example_A2_example} is finite
even though the number of vertices in the graph $\mathbb{T}_2$ is infinite.
We call such cluster algebras \emph{of finite type}. More precisely, we
recall the following definition.
\begin{definition}[{\cite{FZ2_2003}}]
	A cluster algebra is said to be \emph{of finite type} if it has finitely
	many cluster variables.
\end{definition}
It has been realized that classifying finite type cluster algebras is related
to studying exchange matrices. The \emph{Cartan counterpart}
$C(\qbasispr) = (c_{i,j})$ of the principal part
$\qbasispr$ of an exchange matrix $\qbasis$ is defined by
\[
c_{i,j} = \begin{cases}
2 & \text{ if } i = j, \\
-|b_{i,j}| & \text{ otherwise}.
\end{cases}
\]
Since $\qbasispr$ is skew-symmetrizable, its Cartan
counterpart $C(\qbasispr)$ is symmetrizable. 

We say that a quiver $\quiver$ is \emph{acyclic} if it does not have directed cycles.
Similarly, for a skew-symmetrizable matrix $\qbasispr = (b_{i,j})$, we say that it is \emph{acyclic} if there are no sequences $j_1,j_2,\dots,j_{\ell}$ with $\ell \ge 3$ such that
\[
b_{j_1,j_2}, b_{j_2,j_3},\dots,b_{j_{\ell-1},j_{\ell}},b_{j_{\ell},j_1} > 0.
\]
We say a seed $\seed = (\mathbf x, \qbasis)$ is \emph{acyclic} if so is its principal part $\qbasispr$.

\begin{definition}\label{def_quiver_of_type_X}
For a finite or affine Dynkin type $\dynX$, we define a quiver $\quiver$, a matrix $\qbasis$, a cluster pattern $\{(\bfx_t, \qbasis_t)\}_{t \in \mathbb{T}_n}$, a $Y$-pattern $\{(\bfy_t, \qbasispr_t)\}_{t \in \mathbb{T}_n}$, or a cluster algebra $\cA(\bfx_{t_0}, \qbasis_{t_0})$ \emph{of type~$\dynX$} as follows.
\begin{enumerate}
\item A quiver is \textit{of type~$\dynX$} 
if it is mutation equivalent to an \emph{acyclic} quiver whose underlying graph is isomorphic to the Dynkin 
diagram of type $\dynX$.
\item A skew-symmetrizable matrix $\qbasispr$ is \textit{of 
type $\dynX$} if it is mutation equivalent to an acyclic skew-symmetrizable matrix whose 
Cartan counterpart $C(\qbasispr)$ is isomorphic to the Cartan matrix of type~$\dynX$. 
\item A cluster pattern $\{(\bfx_t, \qbasis_t)\}_{t \in \mathbb{T}_n}$ or a $Y$-pattern $\{(\bfy_t, \qbasispr_t)\}_{t \in \mathbb{T}_n}$ is \textit{of type $\dynX$} if for some $t \in \mathbb{T}_n$, the Cartan counterpart  $C(\qbasispr_t)$ is of type $\dynX$.
\item A cluster algebra $\cA(\bfx_{t_0}, \qbasis_{t_0})$ is \textit{of type $\dynX$} if its cluster pattern is of type $\dynX$.
\end{enumerate}
\end{definition}

Here, we say that two matrices $C_1$ and $C_2$ are \emph{isomorphic} if they are conjugate to each other via a permutation matrix, that is, $C_2 = P^{-1} C_1 P$ for some permutation matrix~$P$. 
One may wonder whether there exist exchange matrices in the same seed pattern having different Dynkin type. 
However, it is proved in~\cite[Corollary~4]{CalderoKeller06} that if two acyclic skew-symmetrizable matrices are mutation equivalent, then there exists a sequence of mutations from one to other such that intermediate skew-symmetrizable matrices are all acyclic. Indeed, if two acyclic skew-symmetrizable matrices are mutation equivalent, then their Cartan counterparts are isomorphic. 

\begin{proposition}[{cf. \cite[Corollary~4]{CalderoKeller06}}]\label{prop_quiver_of_same_type_are_mutation_equivalent}
Let $\qbasispr$ and $\qbasispr'$ be acyclic skew-symmetrizable matrices. Then the following are equivalent:
\begin{enumerate}
\item the Cartan matrices $C(\qbasispr)$ and $C(\qbasispr')$ are isomorphic;
\item $\qbasispr$ and $\qbasispr'$ are mutation equivalent.
\end{enumerate}
\end{proposition}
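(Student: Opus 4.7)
The plan is to prove both directions using the main theorem of Caldero--Keller, combined with elementary observations about how mutation interacts with sources and sinks of an acyclic matrix.

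For the direction $(2) \Rightarrow (1)$, the essential input is the result of Caldero--Keller stating that any two acyclic skew-symmetrizable matrices in the same mutation class can be connected by a sequence of mutations in which every intermediate matrix is again acyclic. Granting this, it suffices to show that whenever both $\qbasispr$ and $\mutation_k(\qbasispr)$ are acyclic, one has $C(\mutation_k(\qbasispr)) \cong C(\qbasispr)$. I would establish along the way the key sub-claim: under these hypotheses, $k$ must be a source or a sink of $\qbasispr$. Indeed, if $k$ were neither, there would exist indices $i, j$ with $b_{ik}, b_{kj} > 0$, and acyclicity of $\qbasispr$ forces $b_{ij} \ge 0$ (else $i \to k \to j \to i$ is already a directed $3$-cycle). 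The mutation formula then gives $b'_{ij} = b_{ij} + b_{ik}b_{kj} > 0$ together with $b'_{ki} > 0$ and $b'_{jk} > 0$, producing an oriented $3$-cycle $i \to j \to k \to i$ in $\mutation_k(\qbasispr)$, a contradiction. When $k$ is a source or sink, the mutation merely negates the $k$-th row and column, so $|b'_{i,j}| = |b_{i,j}|$ for all $i,j$, whence $C(\mutation_k(\qbasispr)) = C(\qbasispr)$ on the nose.

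For $(1) \Rightarrow (2)$, after a suitable relabeling of indices I may assume $C(\qbasispr) = C(\qbasispr')$, so that $\qbasispr$ and $\qbasispr'$ encode two acyclic orientations of a common underlying valued graph $\Gamma$. I would invoke the classical combinatorial fact (due to Mosesyan, and independently Pretzel) that any two acyclic orientations of a finite graph are related by a sequence of source-to-sink conversions. Each such conversion corresponds exactly to a mutation at a source (equivalently, a sink) of the associated skew-symmetrizable matrix, which by the observation above preserves acyclicity. Composing these mutations yields a path from $\qbasispr$ to $\qbasispr'$ entirely inside the common mutation class, as required.

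The principal obstacle is the input from Caldero--Keller used in the direction $(2) \Rightarrow (1)$: the statement that a mutation path between two acyclic matrices in their mutation class can be chosen to stay within the acyclic locus is delicate, and in their original proof is established via the categorification of cluster algebras through the cluster category. By contrast, the two elementary ingredients, namely (i) source/sink mutations preserve the Cartan counterpart and (ii) source-to-sink flips suffice to connect any two acyclic orientations of $\Gamma$, are purely combinatorial and readily verified.
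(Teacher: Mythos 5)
Your direction $(2)\Rightarrow(1)$ is sound. Granting the Caldero--Keller input (a mutation path between acyclic matrices that stays acyclic), your sub-claim that a mutation preserving acyclicity must occur at a source or sink is correct: with $b_{i,k},b_{k,j}>0$ and $b_{i,j}\ge 0$ forced by acyclicity, the mutation formula produces $b'_{i,j}>0$, $b'_{j,k}>0$, $b'_{k,i}>0$, an oriented $3$-cycle in $\mutation_k(\qbasispr)$; and a source/sink mutation only negates the $k$-th row and column, so $C(\qbasispr)$ is unchanged. Note that the paper offers no proof of this proposition at all — it is stated with a bare citation to \cite[Corollary~4]{CalderoKeller06} — so on this direction there is nothing to compare beyond the observation that you and the paper lean on the same nontrivial external input.

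The direction $(1)\Rightarrow(2)$ has a genuine gap, and in fact this implication of the proposition is false as literally stated. The ``classical fact'' you invoke — that any two acyclic orientations of a finite graph are connected by source-to-sink conversions — holds unconditionally only for forests. The actual theorems of Mosesyan and Pretzel carry a necessary and sufficient condition: for each cycle of the graph the difference between the numbers of forward and backward edges is invariant under a source-to-sink flip (the flipped vertex contributes one forward and one backward edge both before and after), so two acyclic orientations are flip-equivalent only when all these cycle invariants agree. Concretely, the orientations $\exdynA_{1,3}$ and $\exdynA_{2,2}$ of the $4$-cycle are both acyclic and have identical Cartan counterparts, yet they are not mutation equivalent — the paper itself records exactly this via \cite[Lemma~6.8]{FST08} two paragraphs after the proposition. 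Your argument (and the proposition) becomes correct once the common underlying valued graph is assumed to be a tree, which covers every finite and affine Dynkin type except $\exdynA_{n-1}$; that is the only setting in which the paper uses the statement, and for the well-definedness of ``type'' only the implication $(2)\Rightarrow(1)$ is actually needed. You should either add the tree hypothesis to your proof of $(1)\Rightarrow(2)$ or flag that this direction fails for $\exdynA$.
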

Accordingly, a quiver, a matrix, a cluster pattern, or a cluster algebra of type $\dynX$ is well-defined. 
The following
theorem presents a classification of cluster algebras of finite type.
\begin{theorem}[{\cite{FZ2_2003}}] \label{thm_FZ_finite_type}
	Let $\{ (\bfx_t, \qbasis_t)\}_{t \in \mathbb{T}_n}$ be a
	cluster pattern with an initial seed $(\bfx_{t_0},
	\qbasis_{t_0})$. Let $\mathcal{A}(\bfx_{t_0}, \qbasis_{t_0})$ be the corresponding
	cluster algebra. Then, the cluster algebra $\mathcal{A}(\bfx_{t_0}, \qbasis_{t_0})$ is of finite type if and only if $\mathcal{A}(\bfx_{t_0}, \qbasis_{t_0})$ is of finite Dynkin type.
\end{theorem}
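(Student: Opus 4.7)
The plan is to prove both implications, first reducing to the coefficient-free case. Since mutation does not alter the frozen variables $x_{n+1},\dots,x_m$ and acts on the mutable part of the exchange matrix independently of the last $m-n$ rows, the cluster pattern has finitely many cluster variables if and only if the associated principal-part cluster pattern does. So throughout I may replace $\qbasis_{t_0}$ with its principal part $\qbasispr_{t_0}$.

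For the forward direction, the key input is the rank-$2$ classification: the cluster pattern with principal part $\left(\begin{smallmatrix} 0 & b \\ -c & 0 \end{smallmatrix}\right)$ ($b,c\ge 0$) has finitely many seeds if and only if $bc\le 3$, giving precisely the four rank-$2$ finite Dynkin types $\dynA_1\times\dynA_1$, $\dynA_2$, $\dynB_2$, and $\dynG_2$. Assuming $\cA(\bfx_{t_0},\qbasis_{t_0})$ is of finite type, I would argue that every $2\times 2$ principal submatrix of every exchange matrix in the pattern must satisfy $|b_{ij}b_{ji}|\le 3$, since otherwise freezing the other vertices produces an infinite rank-$2$ sub-pattern, contradicting finite type. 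Combining this ``$2$-finiteness'' of the whole mutation class with the classification of connected symmetrizable Cartan matrices whose off-diagonal products lie in $\{0,1,2,3\}$, together with a separate check that some seed in the orbit can be taken acyclic (so that Proposition~\ref{prop_quiver_of_same_type_are_mutation_equivalent} applies), shows that $C(\qbasispr_t)$ is a finite Dynkin type Cartan matrix for some $t\in\mathbb{T}_n$.

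For the reverse direction, assume the cluster pattern is of type $\dynX$ with $\dynX$ a finite Dynkin diagram. The heart of the proof is to set up a bijection
\[
\{\text{cluster variables of } \cA(\bfx_{t_0},\qbasis_{t_0})\}\;\longleftrightarrow\;\alposRoots(\dynX)
\]
with the finite set of almost positive roots of $\Roots(\dynX)$. The bijection is constructed by reading off, for each non-initial cluster variable, its denominator as a Laurent monomial in $x_1,\dots,x_n$: writing this denominator as $x_1^{d_1}\cdots x_n^{d_n}$, the exponent vector $(d_1,\dots,d_n)$ is the coordinate vector of a unique positive root in the basis of simple roots, while the initial cluster variables $x_1,\dots,x_n$ themselves are matched with the negative simple roots $-\alpha_1,\dots,-\alpha_n$. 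One then verifies that mutation at $k$ corresponds, on the root side, to the piecewise-linear involution of the root lattice associated to the $k$-th generator of the Coxeter group, from which transitivity of the mutation action on both sides and hence bijectivity follows. Since $\alposRoots(\dynX)$ is finite, so is the set of cluster variables.

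The main obstacle is establishing the well-definedness and injectivity of this denominator parametrization: namely, that distinct cluster variables have distinct denominators and that every non-initial cluster variable's denominator is of the form $\prod x_i^{d_i}$ with $(d_1,\dots,d_n)$ the coordinate vector of an \emph{honest positive root}, rather than an arbitrary nonnegative integer vector. This is the content of the Laurent positivity and denominator theorems, and its proof requires careful tracking of the exchange relation
\[
x_k x_k' = \prod_{b_{j,k}>0} x_j^{b_{j,k}} + \prod_{b_{j,k}<0} x_j^{-b_{j,k}}
\]
under iterated mutations of acyclic finite-type seeds, using induction on the rank together with the compatibility degree statistics on $\alposRoots(\dynX)$, and ultimately reducing to explicit verifications in the rank-$2$ cases handled in the forward direction.
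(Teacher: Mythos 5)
The paper offers no proof of this statement: it is quoted directly from Fomin--Zelevinsky \cite{FZ2_2003}, so there is no in-paper argument to compare against. Your sketch is a faithful outline of the strategy of the original proof -- reduce to the principal part, characterize finite type via $2$-finiteness using the rank-$2$ classification, and in the converse direction parametrize cluster variables by almost positive roots through their denominator vectors. The reverse direction as you describe it is the right route, with the acknowledged heavy lifting concentrated in the denominator theorem.

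The forward direction, however, has a concrete gap as written. Knowing that every exchange matrix in the mutation class is $2$-finite (all $|b_{ij}b_{ji}|\le 3$) and then invoking ``the classification of connected symmetrizable Cartan matrices whose off-diagonal products lie in $\{0,1,2,3\}$'' does not yield a finite Dynkin type: that class of Cartan matrices also contains every affine diagram other than $\exdynA_1$ and $\dynA_2^{(2)}$ (for instance $\exdynD_4$ or $\exdynE_6$, all of whose off-diagonal entries equal $-1$), as well as indefinite types. An acyclic seed whose quiver is an orientation of the affine $\exdynD_4$ star is $2$-finite \emph{at that seed}, so your appeal to acyclicity and to Proposition~\ref{prop_quiver_of_same_type_are_mutation_equivalent} does not rule out an affine Cartan counterpart. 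The actual content of this direction in \cite{FZ2_2003} is the implication ``Cartan counterpart not of finite type $\Rightarrow$ some finite mutation sequence produces $|b_{ij}b_{ji}|\ge 4$,'' proved by reducing to the minimal infinite types (the affine diagrams) and exhibiting such mutation sequences explicitly, or equivalently by showing that a globally $2$-finite class contains a bipartite seed with positive definite Cartan counterpart. Without this ingredient the step from global $2$-finiteness to finite Dynkin type is unsupported, and the implication ``finite type $\Rightarrow$ finite Dynkin type'' is not established.
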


We provide a list of all of the irreducible finite type root systems and their Dynkin diagram in Table~\ref{table_finite} (cf.~\cite{Humphreys}).
In Tables~\ref{table_standard_affine} and~\ref{table_twisted_affine}, we present lists of standard affine root 
systems and twisted affine root systems, respectively. They are the same as 
presented in Tables Aff 1, Aff 2, and Aff 3 of~\cite[Chapter~4]{Kac83}, and we 
denote by $\exdynX = \dynX^{(1)}$. 
We notice that the number of vertices of the standard affine Dynkin diagram of type $\exdynX_{n-1}$ is $n$ while we do not specify the vertex numbering. 

We note that all Dynkin diagram of finite or affine type but $\exdynA_{n-1}$ do not have (undirected) cycles. Accordingly, we may omit the acyclicity condition in Definition~\ref{def_quiver_of_type_X} except $\exdynA_{n-1}$-type. 
On the other hand, if a quiver is a directed $n$-cycle, then the corresponding Cartan counterpart is of type $\exdynA_{n-1}$ while it is mutation equivalent to a quiver of type $\dynD_n$ (see~Type IV in \cite{Vatne10}). 

The mutation equivalence classes of acyclic quivers of type $\exdynA_{n-1}$ are described in~\cite[Lemma~6.8]{FST08}. Let 
$\quiver$ and $\quiver'$ are two $n$-cycles for $n \geq 3$. Suppose that in 
$\quiver$, there are $p$ edges of one direction and $q = n - p$ edges of the 
opposite direction. Also, in $\quiver'$, there are $p'$ edges of one direction 
and $q' = n - p'$ edges of the opposite direction. Then two quivers $\quiver$ 
and $\quiver'$ are mutation equivalent if and only if the unordered pairs 
$\{p,q\}$ and $\{p',q'\}$ coincide. We say that a quiver 
$\quiver$ is of type $\exdynA_{p,q}$ if it has $p$ edges of one direction and 
$q$ edges of the opposite direction. We depict some examples for quivers of 
type $\exdynA_{p,q}$ in Figure~\ref{fig_example_Apq}.

\begin{figure}[ht]
.
\end{equation}
Therefore, for each non-simply-laced Dynkin diagram $\dynX$, any exchange matrix $\qbasispr$ of type $\dynX$ is \emph{not} skew-symmetric but skew-symmetrizable. Hence it never comes from any quiver.

\begin{assumption}\label{assumption_finite}
Throughout this paper, we assume that for any cluster algebra, the principal part $\qbasispr_{t_0}$ of the initial exchange matrix is acyclic of \textit{finite or affine Dynkin} type unless mentioned otherwise. 
\end{assumption}

In Table~\ref{table_seeds_and_cluster_variables}, we provide enumeration on
the number of cluster variables and clusters in each cluster algebra of
finite (irreducible) type (cf.~\cite[Figure~5.17]{FWZ_chapter45}). 
\begin{table}[htb]
\setlength{\tabcolsep}{4pt}
	\begin{tabular}{c|ccccccccc}
		\toprule
		$\Roots$ & $\dynA_n$ & $\dynB_n$ & $\dynC_n$ & $\dynD_n$ & $\dynE_6$ & $\dynE_7$ & $\dynE_8$ & $\dynF_4$ & $\dynG_2$ \\
        \midrule
        $\#$seeds &  $\displaystyle \frac{1}{n+2}{\binom{2n+2}{n+1}}$ & $\displaystyle  \binom{2n}{n}$
        & $\displaystyle  \binom{2n}{n}$ & $\displaystyle  \frac{3n-2}{n} \binom{2n-2}{n-1}$ & $833$ & $4160$ & $25080$ & $105$ & $8$ \\[1.5em]
        $\#$clvar & $\displaystyle  \frac{n(n+3)}{2}$ & $n(n+1)$ & $n(n+1)$ & $n^2$ & $42$ & $70$ & $128$ & $28$ & $8$ \\
		\bottomrule 
	\end{tabular}
\caption{Enumeration of seeds and cluster variables}\label{table_seeds_and_cluster_variables}
\end{table}
\begin{example}\label{example_root_and_A2}
	Continuing Example~\ref{example_A2_example}, the Cartan
	counterpart of the principal part $\qbasispr_{t_0}$ is given by
	\[
		C(\qbasispr_{t_0}) = \begin{pmatrix}
			2 & -1  \\ -1 & 2
		\end{pmatrix},
	\]
	which is the Cartan matrix of type $\dynA_2$. 
	Accordingly, by Theorem~\ref{thm_FZ_finite_type}, the cluster algebra
	$\cA(\bfx_{t_0}, \qbasis_{t_0})$ is of finite type. Indeed, there are only five seeds in the seed pattern.
\end{example}

\subsection{Folding}\label{sec:folding}
Under certain conditions, one can \textit{fold} cluster patterns to produce new ones. This procedure is used to study cluster algebras of non-simply-laced type from those of simply-laced type (see Figure~\ref{fig_folding} and Table~\ref{figure:all possible foldings}). In this section, we recall \textit{folding} of cluster algebras from~\cite{FWZ_chapter45}. We also refer the reader to~\cite{Dupont08}.

Let $\quiver$ be a quiver on $[m]$.
Let $G$ be a finite group acting on the set $[m]$. 
The notation $i \sim i'$ will mean that $i$ and $i'$ lie in the same
$G$-orbit. To study folding of cluster algebras, we prepare some
terminologies.

We denote by $\qbasis = \qbasis(\quiver)$ the submatrix $(b_{i,j})_{1\le i\le m, 1 \le j \le n}$ of the adjacency matrix $(b_{i,j})_{1 \le i,j\le m}$ of the quiver~$\quiver$. 
Also, we denote by $\qbasispr = \qbasispr(\quiver)$ the principal part of $\qbasis(\quiver)$.
For each $g \in G$, let $\quiver' = g \cdot \quiver$ be the quiver such that $\qbasis(\quiver') = (b_{i,j}')$ is given by
\[
b_{i,j}' = b_{g(i),b(j)}.
\]
\begin{definition}[{cf.~\cite[\S4.4]{FWZ_chapter45}  and~\cite[\S 3]{Dupont08}}]\label{definition:admissible quiver}
    Let $\quiver$ be a quiver on $[m]$ and $G$ a finite
    group acting on the set~$[m]$.
\begin{enumerate} 
\item A quiver $\quiver$ is \emph{$G$-invariant} if $g \cdot \quiver = \quiver$ for any $g \in G$.
\item A $G$-invariant quiver $\quiver$ is \emph{$G$-admissible} if \label{admissible}
\begin{enumerate}
\item for any $i \sim i'$, index $i$ is mutable if and only if so is $i'$; \label{mutable}
\item for mutable indices $i \sim i'$, we have $b_{i,i'} = 0$; \label{bii'=0}
\item for any $i \sim i'$, and any mutable $j$, we have $b_{i,j} b_{i',j} \geq 0$.\label{nonnegativity_of_bijbi'j}
\end{enumerate}
\item For a $G$-admissible quiver $\quiver$, we call a $G$-orbit \emph{mutable} (respectively, \emph{frozen}) if it consists of mutable (respectively, frozen) vertices. 
\end{enumerate}         
\end{definition}
For a $G$-admissible quiver $\quiver$, we define the matrix $\qbasis^G =
\qbasis(\quiver)^G = (b_{I,J}^G)$ whose rows (respectively, columns) are
labeled by the $G$-orbits (respectively, mutable $G$-orbits) by
\[
    b_{I,J}^G = \sum_{i \in I} b_{i,j}
\]
where $j$ is an arbitrary index in $J$. We then say $\qbasis^G$ is obtained
from $\qbasis$ (or from the quiver $\quiver$) by \textit{folding} with
respect to the given $G$-action.
\begin{remark}
We note that the $G$-admissibility and the folding can also be defined for exchange matrices. 
\end{remark}
\begin{example}\label{example_D4_to_G2}
    Let $\quiver$ be a quiver of type $\dynD_4$ given as follows.
\[
    \begin{tikzpicture}[node distance=0.7cm]
        \tikzstyle{state}=[draw, circle, inner sep = 0.07cm]
        \tikzset{every node/.style={scale=0.7}}    
        \tikzstyle{double line} = [
            double distance = 1.5pt, 
            double=\pgfkeysvalueof{/tikz/commutative diagrams/background color}
        ]
        \tikzstyle{triple line} = [
            double distance = 2pt, 
            double=\pgfkeysvalueof{/tikz/commutative diagrams/background color}
        ]
        \node[state, label=left:{$1$}] (1) {};
        \node[state, label =right:{$2$}] (2) [above right = 0.4cm and 0.7cm of 1] {};
        \node[state, label=right:{$3$}] (3) [right = 0.7cm of 1] {};
        \node[state, label= right:{$4$}] (4) [below right = 0.4cm and 0.7cm of 1] {};
        
        \draw (3)--(1)--(4)
        (1)--(2);
    
        \node[label={below:\normalsize{$\rightsquigarrow$}}] [above right = 0.1cm and 1.5cm of 3] {};

        \node[ynode] at (1) {};
        \node[gnode] at (2) {};
        \node[gnode] at (3) {};
        \node[gnode] at (4) {};

    \draw[<-] (1)--(2);
    \draw[<-] (1)--(3);
    \draw[<-] (1)--(4);
    \end{tikzpicture}
    \qquad
    \text{\raisebox{1.5em}{$\qbasis(\quiver) = \begin{pmatrix}
        0 & -1 & -1 &-1 \\
        1 & 0 & 0 & 0  \\
        1 & 0 & 0 & 0 \\
        1 & 0 & 0 & 0
    \end{pmatrix}$}}
\]
The finite group $G = \Z / 3 \Z$ acts on $[4]$ by sending $2 \mapsto 3
\mapsto 4 \mapsto 2$ and $1 \mapsto 1$. 
Here, we decorate vertices of the quiver $\quiver$ with \colorbox{cyclecolor2!50!}{\cyclecolornamesecond} and \colorbox{cyclecolor1!50!}{\cyclecolornamefirst} colors
for presenting sources and sinks, respectively. 
One may check that the quiver
$\quiver$ is $G$-admissible. By setting $I_1 = \{1\}$ and $I_2 = \{ 2,3,4\}$,
we obtain
\[
    \begin{split}
    b_{I_1,I_2}^G &= \sum_{i \in I_1} b_{i,2} = b_{1,2} = -1, \\
    b_{I_2,I_1}^G &= \sum_{i \in I_2} b_{i,1} = b_{2,1} + b_{3,1} + b_{4,1} = 3.
    \end{split}
\]
Accordingly, we obtain the matrix $\qbasis^G = \begin{pmatrix} 0 & -1 \\ 3 &
0 \end{pmatrix}$ whose Cartan counterpart is the Cartan matrix of type
$\dynG_2$ (cf.~\eqref{eq_Cartan_G2}).
\end{example}

For a $G$-admissible quiver $\quiver$ and a mutable $G$-orbit $I$, we
consider a composition of mutations given by
\[
    \mutation_I = \prod_{i \in I} \mutation_i
\]
which is well-defined because of the definition of admissible quivers (cf. Remark~\ref{rmk_mutation_commutes}). Moreover, $\mu_I(\quiver)$ is $G$-invariant by~\cite[Lemma~5.12]{Dupont08}. 
If $\mutation_I(\quiver)$ is $G$-admissible, then we have 
\begin{equation*}
    (\mutation_I(\qbasis))^G = \mutation_I(\qbasis^G).
\end{equation*}
We notice that the quiver $\mutation_I(\quiver)$ is \textit{not}
$G$-admissible in general. Therefore, we present the following definition.
\begin{definition}
    Let $G$ be a group acting on the vertex set of a quiver $\quiver$.
    We say that $\quiver$ is \emph{globally foldable} with respect to $G$ if
    $\quiver$ is $G$-admissible and moreover for any sequence of mutable
    $G$-orbits $I_1,\dots,I_\ell$, the quiver $(\mutation_{I_\ell}  \dots
     \mutation_{I_1})(\quiver)$ is $G$-admissible.
\end{definition}
For a globally foldable quiver, we can fold all the seeds in the
corresponding seed pattern. Let
$\field^G$ be the field of rational functions in $\# ([m]/G)$ independent variables.
Let $\psi \colon \field \to \field^G$ be a surjective 
homomorphism.
A seed $(\mathbf{x}, \qbasis)$ or a $Y$-seed $(\bfy, \qbasispr)$ is called \emph{$(G, \psi)$-invariant} or \emph{admissible} if 
\begin{itemize}
    \item $\quiver$ is a $G$-invariant or admissible quiver, respectively;
    \item we have
\begin{equation}\label{condition_on_psi_map}
\psi(x_i) = \psi(x_{i'}) \text{ or }\psi(y_i) = \psi(y_{i'}) \quad \text{ for any } i \sim i'.
\end{equation}
\end{itemize} 
In this situation, we define new ``folded'' seed $(\bfx,\qbasis)^G = (\bfx^G,
\qbasis^G)$ and $Y$-seed $(\bfy,\qbasispr)^G=(\bfy^G, \qbasispr^G)$ in $\field^G$ whose exchange matrix is given as before
and cluster variables $\bfx^G = (x_I)$ and $\bfy^G=(y_I)$ are indexed by the $G$-orbits and
given by $x_I = \psi(x_i)$ and $y_I=\psi(y_i)$ for a $G$-orbit $I$ and $i \in I$.

We notice that for a $(G,\psi)$-admissible seed $(\bfx, \qbasis)$ or a $(G,\psi)$-admissible $Y$-seed $(\bfy, \qbasispr)$, the folding process is equivariant under the orbit-wise mutation, that is, for any mutable $G$-orbit~$I$, we have
\[
(\mutation_I(\bfx,\qbasis))^G = \mutation_{I}((\bfx,\qbasis)^G) \quad 
\text{ and } \quad
(\mutation_I(\bfy,\qbasispr))^G = \mutation_{I}((\bfy,\qbasispr)^G).
\]

\begin{proposition}[{cf.~\cite[Corollary~4.4.11]{FWZ_chapter45}}]\label{proposition:folded cluster pattern}
    Let $\quiver$ be a quiver which is globally foldable with respect to a
    group $G$ acting on the set of its vertices. Let $(\mathbf{x},
    \qbasis)$ and $(\bfy, \qbasispr)$ be a seed and a $Y$-seed in the field $\field$ of rational functions
    freely generated by $\mathbf{x} = (x_1,\dots,x_m)$. Then we have the following.
\begin{enumerate}
\item   Let $\psi \colon \field  \to \field^G$ be the homomorphism satisfying~\eqref{condition_on_psi_map}.
Then, for any mutable
    $G$-orbits $I_1,\dots,I_\ell$, the seed $(\mutation_{I_\ell} \cdots 
    \mutation_{I_1})(\bfx,\qbasis)$ is $(G, \psi)$-admissible, and moreover the
    folded seeds $((\mutation_{I_\ell}  \dots 
    \mutation_{I_1})(\bfx,\qbasis))^G$ form a seed pattern in $\field^G$ 
    with the initial seed $(\bfx,\qbasis)^G=(\bfx^G, \qbasis^G)$.
\item Let $\psi \colon \field  \to \field^G$ be the homomorphism satisfying~\eqref{condition_on_psi_map}. Then, for any mutable
    $G$-orbits $I_1,\dots,I_\ell$, the $Y$-seed $(\mutation_{I_\ell} \dots 
    \mutation_{I_1})(\bfy,\qbasispr)$ is $(G, \psi)$-admissible, and moreover the
    folded $Y$-seeds $((\mutation_{I_\ell}  \cdots 
    \mutation_{I_1})(\bfy,\qbasispr))^G$ form a $Y$-pattern in $\field^G$ 
    with the initial seed $(\bfy,\qbasispr)^G=(\bfy^G, \qbasispr^G)$.
\end{enumerate}
\end{proposition}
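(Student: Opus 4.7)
The plan is to prove both parts simultaneously by induction on the length $\ell$ of the mutation sequence $I_1, \ldots, I_\ell$, since the quiver side of the argument is identical and only the exchange rule differs. The base case $\ell=0$ is the hypothesis. For the inductive step, assume $(\bfx_\ell, \qbasis_\ell)$ is $(G,\psi)$-admissible and fix a mutable $G$-orbit $I = I_{\ell+1}$. Global foldability of $\quiver$ immediately gives that $\qbasis_{\ell+1} \colonequals \mutation_I(\qbasis_\ell)$ is again $G$-admissible; moreover, condition~(b) of Definition~\ref{definition:admissible quiver} forces $b_{i,i'}=0$ for distinct $i,i' \in I$, so the individual mutations $\mutation_i$ with $i \in I$ pairwise commute by Remark~\ref{rmk_mutation_commutes} and the composition $\mutation_I$ is unambiguous.

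The main content is to propagate $\psi$-invariance of the cluster tuple: we must show $\psi(x_{i;\ell+1}) = \psi(x_{i';\ell+1})$ whenever $i \sim i'$, and similarly for the coefficients in part~(2). If $i, i' \notin I$ the variables are unchanged by $\mutation_I$ and the conclusion is inherited from the inductive hypothesis. If $i, i' \in I$, pick $g \in G$ with $g(i)=i'$; $G$-invariance of $\qbasis_\ell$ yields $b_{g(j),i'} = b_{j,i}$ for all $j$, so $j \mapsto g(j)$ is a bijection between $\{j : b_{j,i}>0\}$ and $\{j' : b_{j',i'}>0\}$ preserving the corresponding exponents, and analogously for the negative part. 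Together with $\psi(x_{j;\ell})=\psi(x_{g(j);\ell})$ and $\psi(x_{i;\ell})=\psi(x_{i';\ell})$ from the inductive hypothesis, this yields $\psi(x_{i;\ell+1})=\psi(x_{i';\ell+1})$ directly from the exchange formula defining $\mutation_i$. The identical argument, applied to the $Y$-seed mutation rule $y_i \mapsto y_i \, y_k^{\max\{b_{i,k},0\}}(1+y_k)^{-b_{i,k}}$, handles part~(2).

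With $(G,\psi)$-admissibility propagated along every mutation sequence, the folded data $(\bfx_\ell^G, \qbasis_\ell^G)$ and $(\bfy_\ell^G, \qbasispr_\ell^G)$ are well-defined in $\field^G$. The compatibility $(\mutation_I(\bfx,\qbasis))^G = \mutation_I((\bfx,\qbasis)^G)$ recorded just before the statement then follows by direct comparison of exchange relations: condition~(c) of Definition~\ref{definition:admissible quiver} is precisely the combinatorial hypothesis ensuring that for each orbit $J_0$ the signs of $b_{j,i}$ with $j \in J_0$ are constant, so the product $\prod_{b_{j,i}>0} x_{j;\ell}^{b_{j,i}}$ groups cleanly into orbit-wise factors that assemble into $x_{J_0}^{b_{J_0,I}^G}$ after applying $\psi$. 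This compatibility lets the assignment $t \mapsto (\bfx_t^G, \qbasis_t^G)$ descend to a seed pattern, and analogously a $Y$-pattern, on the regular tree indexed by mutable $G$-orbits with the stated initial datum. The main obstacle is the bookkeeping at this final step; it is exactly where condition~(c) is indispensable, since without it the sign patterns across a $G$-orbit would mismatch and the folded exchange rule would fail to descend.
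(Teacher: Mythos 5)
The paper itself gives no proof of this proposition --- it is imported from \cite[Corollary~4.4.11]{FWZ_chapter45} --- so there is nothing in-text to compare against; I am judging your argument on its own. The core of your proof is correct and is the standard one: induction on $\ell$; commutativity of the $\mutation_i$, $i\in I$, from condition~(b) of Definition~\ref{definition:admissible quiver} together with Remark~\ref{rmk_mutation_commutes}; $G$-admissibility of the mutated quiver straight from global foldability; and propagation of $\psi$-invariance via the bijection $j\mapsto g(j)$ between the signed neighbourhoods of $i$ and $i'=g(i)$. That bijection argument, plus the observation that condition~(c) is exactly the sign-coherence needed to group $\prod_{b_{j,i}>0}x_j^{b_{j,i}}$ into orbit-wise powers $x_J^{\max\{b^G_{J,I},0\}}$, settles part~(1). (One point you leave implicit but should state: the dichotomy $\{i,i'\}\subset I$ versus $\{i,i'\}\cap I=\varnothing$ is exhaustive because $I$ is a full $G$-orbit.)

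The one place where ``the identical argument'' is not identical is the descent step in part~(2). For cluster variables the orbit being summed over in the exchange binomial is the orbit $J$ of the \emph{neighbours} $j$ of the mutation vertex, which matches the folding convention $b^G_{J,I}=\sum_{j\in J}b_{j,i}$ on the nose. In the $Y$-mutation formula as written in the paper, $y_i\mapsto y_i\,y_k^{\max\{b_{i,k},0\}}(1+y_k)^{-b_{i,k}}$, the orbit being summed over after applying all $\mutation_k$, $k\in I$, is the orbit $I$ of the \emph{mutation vertices}, so the grouped exponent is $\sum_{k\in I}b_{i,k}=-b^G_{I,J}$, not $b^G_{J,I}$. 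Since $\qbasispr^G$ is only skew-symmetrizable these genuinely differ: in the $\dynD_4\rightsquigarrow\dynG_2$ example one gets $-3$ versus $-1$, and indeed $\psi(y_1')=y_{I_1}(1+y_{I_2})^3$ while the folded rule with entry $b^G_{I_1,I_2}=-1$ would give $y_{I_1}(1+y_{I_2})$. So the folded $Y$-mutation you compare against must be taken with the transposed entry (equivalently, with the convention $y_k^{\max\{b_{k,i},0\}}(1+y_k)^{-b_{k,i}}$). This is a convention mismatch already latent in the paper's definitions rather than an error in your strategy, but your write-up should carry out the exponent bookkeeping for part~(2) explicitly instead of declaring it identical, since that is precisely where it ceases to be.
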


\begin{example}\label{example_folding_ADE}
The quiver in Example~\ref{example_D4_to_G2} is globally foldable, and
moreover the corresponding seed pattern is of type $\dynG_2$. In fact, 
seed patterns of type~$\dynBCFG$  are obtained by 
folding quivers of type~$\dynADE$; seed 
patterns of type~$\exdynB\exdynC\exdynF\exdynG$ are obtained by folding quivers 
of type~$\exdynD\exdynE$ 
(cf.~\cite{FeliksonShapiroTumarkin12_unfoldings}).
In Figures~\ref{fig_folding} and~\ref{figure:G-actions}, we present the corresponding quivers of
type~$\dynADE$ and type $\exdynE$. We decorate vertices of quivers with \colorbox{cyclecolor2!50!}{\cyclecolornamesecond} and \colorbox{cyclecolor1!50!}{\cyclecolornamefirst} colors for presenting source and sink, respectively. 
As one may see, we put arrows on the Dynkin diagram alternatingly.
The alternating colorings on quivers of type $\dynADE$ provide 
that on quivers of type $\dynBCFG$ as displayed in the right column of Figure~\ref{fig_folding}.
Foldings between simply-laced and non-simply-laced finete and affine Dynkin diagrams are given in 
Table~\ref{figure:all possible foldings}.
\end{example}
\begin{figure}
}
\caption{Foldings appearing in finite and affine Dynkin diagrams}
\label{figure:all possible foldings}
\end{table}

For any quiver of type $\dynADE$, one can prove that the $G$-invariance is equivalent to the $G$-admissible as follows:
\begin{theorem}\label{theorem:G-invariance and G-admissibility}
Let $\quiver$ be a quiver of type $\dynADE$, which is invariant under the $G$-action given by Figure~\ref{fig_folding}.
Then the quiver $\quiver$ is $G$-admissible. 
\end{theorem}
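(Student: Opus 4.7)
The plan is to verify the three conditions of Definition~\ref{definition:admissible quiver}\ref{admissible} in turn. In the $\dynADE$ setting every vertex is mutable, so condition~\ref{mutable} is automatic. The substance is in checking conditions~\ref{bii'=0} and~\ref{nonnegativity_of_bijbi'j}, and we treat the three $\Z/2\Z$-foldings ($\dynA_{2n-1}\rightsquigarrow\dynB_n$, $\dynD_{n+1}\rightsquigarrow\dynC_n$, $\dynE_6\rightsquigarrow\dynF_4$) separately from the triality folding $\dynD_4\rightsquigarrow\dynG_2$.

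For condition~\ref{bii'=0} in the three $\Z/2\Z$-cases, I would use a short linear-algebra argument. If $g$ denotes the nontrivial element of $G$ and $g(i)=i'$, then $G$-invariance yields $b_{i,i'}=b_{g(i),g(i')}=b_{i',i}$, while the exchange matrix of any quiver is skew-symmetric, so $b_{i',i}=-b_{i,i'}$. Together these force $b_{i,i'}=0$. This does not settle the triality case, where a $\Z/3\Z$-cycle on the three leaves is a priori compatible with both invariance and skew-symmetry.

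For the triality case I would first parametrise all $\Z/3\Z$-invariant quivers on four vertices: labelling the central vertex by $1$ and the triality-orbit of leaves by $\{2,3,4\}$, invariance and skew-symmetry reduce such a quiver to the two integers $a=b_{1,2}=b_{1,3}=b_{1,4}$ and $c=b_{2,3}=b_{3,4}=b_{4,2}$. The claim is that any such quiver in the mutation class of $\dynD_4$ has $c=0$. I would prove this by mutating the candidate quiver with $c\neq 0$ at any single vertex and observing (by direct computation with the formulas of Section~\ref{sec:cluster algebras}, as in Example~\ref{example_mutation_skewsymmetrizable}) that a doubled arrow of the form $b_{1,3}=\pm 2$ (or, after further mutations, higher-multiplicity arrows) appears. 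But quivers in the mutation class of $\dynD_4$ have only simple arrows; hence no such $\Z/3\Z$-invariant quiver with $c\neq 0$ belongs to that mutation class, and condition~\ref{bii'=0} is forced. This classification also yields condition~\ref{nonnegativity_of_bijbi'j} when $j$ is in the same orbit as $i,i'$, since then all the relevant entries vanish.

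Finally, for condition~\ref{nonnegativity_of_bijbi'j} in the remaining cases I would split on the orbit of $j$. If $j$ is $G$-fixed, then $G$-invariance gives $b_{i,j}=b_{g(i),g(j)}=b_{i',j}$, so $b_{i,j}b_{i',j}=b_{i,j}^{2}\geq 0$. If $j$ lies in a different non-trivial orbit, $G$-invariance relates the pair $(b_{i,j},b_{i',j})$ to the pair $(b_{i',g(j)},b_{i,g(j)})$; combined with skew-symmetry and the absence of doubled arrows in the mutation class of $\dynADE$-quivers, the only way both entries can be nonzero is if they have the same sign, giving non-negativity. The main obstacle throughout is the triality case: the $\Z/3\Z$-invariance is too weak by itself to exclude an honest three-cycle on the leaves, and the exclusion rests on the finer fact that the mutation class of $\dynD_4$ avoids multi-arrows, which is where the ``type $\dynADE$'' hypothesis is genuinely used.
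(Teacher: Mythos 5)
Your verification of conditions (a) and (b) of Definition~\ref{definition:admissible quiver} matches the paper's, and your treatment of the triality case $\dynD_4\rightsquigarrow\dynG_2$ is a legitimate alternative to the paper's (which excludes a directed $3$-cycle on the leaves by appealing to the explicit list of quivers in the mutation class of $\dynD_4$, whereas you produce a double arrow by a single mutation). The genuine gap is in condition (c) when $j$ lies in a nontrivial $G$-orbit different from that of $i$. Suppose $b_{i,j}<0<b_{i',j}$ with $i'=g(i)$ and $j'=g(j)$. Invariance gives $b_{i',j'}<0<b_{i,j'}$, so the full subquiver on $\{i,i',j,j'\}$ contains the cyclically oriented square $j\to i\to j'\to i'\to j$, and it is chordless because $b_{i,i'}=b_{j,j'}=0$ by condition (b). You claim this is excluded by ``the absence of doubled arrows in the mutation class of $\dynADE$-quivers,'' but that property does not exclude it: all four arrows in this configuration are simple, and a chordless oriented $4$-cycle is itself a quiver of finite type --- it is mutation equivalent to a quiver of type $\dynD_4$, as noted in Section~\ref{sec:cluster algebras}. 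So nothing you have invoked rules out the opposite-sign configuration, and this is precisely the hard case.

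The paper closes this gap with two separate structural arguments, and some such input is unavoidable. For $\dynA_{2n-1}$ (Lemma~\ref{lemma:no weird cycles in An}) one uses that quivers of type $\dynA$ arise from triangulations of a polygon, so every minimal cycle has length $3$; an oriented $4$-cycle would therefore force a chord $i$--$i'$ or $j$--$j'$, contradicting condition (b). For $\dynE_6$ (Lemma~\ref{lemma:no weird cycles in E6}) the relevant orbits are $\{3,5\}$ and $\{4,6\}$, and one must run a case analysis on how the invariant $4$-cycle attaches to the remaining vertices, showing that each resulting quiver mutates to one with a multiple arrow, contradicting finiteness. (For $\dynD_{n+1}$ there is only one nontrivial orbit, so this case does not arise and your ``$j$ fixed'' computation suffices.) Without one of these arguments, or an equivalent replacement, your proof of condition (c) for types $\dynA_{2n-1}$ and $\dynE_6$ is incomplete.
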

We notice that the quiver considering in Theorem~\ref{theorem:G-invariance and G-admissibility} can have any orientations so long as they are $G$-invariant.
The proof of Theorem~\ref{theorem:G-invariance and G-admissibility} is given in Appendix~\ref{section:invariance and admissibility}. As a direct corollary of Theorem~\ref{theorem:G-invariance and G-admissibility}, we have the following.
\begin{corollary}\label{corollary_G-invariance_and_globally_foldable}
Let $\quiver$ be a quiver of type $\dynADE$, which is invariant under the $G$-action given by Figure~\ref{fig_folding}. Then the quiver $\quiver$ is globally foldable.
\end{corollary}
\begin{proof}
Let $I$ be a mutable $G$-orbit. The quiver $\mutation_I(\quiver)$ is again $G$-invariant (see~\cite[Lemma~5.12]{Dupont08}) so it is $G$-admissible according to Theorem~\ref{theorem:G-invariance and G-admissibility}. Therefore, $\quiver$ is globally foldable. 
\end{proof}

As we saw in Definition~\ref{definition:admissible quiver}, if a seed $\seed = 
(\mathbf{x}, \quiver)$ is $(G,\psi)$-admissible, then $\seed$ is 
$(G,\psi)$-invariant. 
The converse holds when we consider the foldings presented in Table~\ref{figure:all possible foldings}, and moreover they form the folded cluster pattern.
\begin{theorem}[{\cite{AL2021}}]\label{thm_invariant_seeds_form_folded_pattern}
Let $(\dynX, G, \dynY)$ be a triple given by a column of Table~\ref{figure:all 
possible foldings}.
Let $\initialseed = (\mathbf{x}_{t_0},\quiver_{t_0})$ be a seed in the field $\field$. Suppose that $\quiver_{t_0}$ is of type $\dynX$ and $G$-admissible. Let $\psi 
\colon \field  \to \field^G$ be the homomorphism satisfying~\eqref{condition_on_psi_map}.
Then, for any seed $\seed = (\mathbf{x}, \quiver)$ in the cluster pattern, 
if the quiver~$\quiver$ is $G$-invariant, then it is $G$-admissible. Indeed, $\quiver$ is globally foldable.
Moreover, any $(G,\psi)$-invariant seed $\seed = (\mathbf{x}, \quiver)$ can be 
reached with a sequence of orbit mutations from the 
initial seed. Indeed,  the set of such seeds forms the cluster 
pattern of the `folded' cluster algebra $\cA(\initialseed^G)$ of type $\dynY$.
\end{theorem}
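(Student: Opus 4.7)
The plan is to split the theorem into its three assertions and attack them in order: (a) any $G$-invariant seed in the cluster pattern is automatically $G$-admissible; (b) from (a) the initial seed is globally foldable, so Proposition~\ref{proposition:folded cluster pattern} produces a cluster pattern in $\field^G$ of the correct Dynkin type; (c) every $(G,\psi)$-invariant seed arises in this way, i.e.\ can be reached by a sequence of orbit mutations from $\initialseed$.

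For (a) the finite-type cases $(\dynX,G) \in \{(\dynA_{2n-1},\Z/2),(\dynD_{n+1},\Z/2),(\dynE_6,\Z/2),(\dynD_4,\Z/3)\}$ are Theorem~\ref{theorem:G-invariance and G-admissibility}, which is proved in Appendix~\ref{section:invariance and admissibility}. I would extend that case analysis to the affine entries of Table~\ref{figure:all possible foldings}: given a $G$-invariant quiver $\quiver$ reached from $\quiver_{t_0}$, verify by inspection that no $G$-orbit contains an arrow between mutable vertices and that $b_{i,j}b_{i',j}\geq 0$ whenever $i \sim i'$ and $j$ is mutable. The argument from Appendix~\ref{section:invariance and admissibility} should carry over essentially verbatim, since those checks are local at a fixed $G$-orbit and rely only on the local shape of the Dynkin diagram plus the bipartite structure of the quiver, both of which persist in the affine setting.

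Granting (a), assertion (b) follows formally: starting from the admissible $\initialseed$ and performing any orbit mutation $\mutation_I$ gives a seed that is automatically $G$-invariant by equivariance of mutation under the $G$-action, and then $G$-admissible by (a). Hence $\quiver_{t_0}$ is globally foldable, and Proposition~\ref{proposition:folded cluster pattern} produces a cluster pattern $\{(\mutation_{I_\ell}\cdots\mutation_{I_1})(\initialseed)^G\}$ in $\field^G$ whose initial exchange matrix $\qbasis(\quiver_{t_0})^G$ has, by direct computation from the folding rule in Table~\ref{figure:all possible foldings}, Cartan counterpart equal to the Cartan matrix of type $\dynY$; so this is a cluster pattern of type $\dynY$ as required.

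The main obstacle is (c): any $(G,\psi)$-invariant seed $\seed$ can be connected to $\initialseed$ by orbit mutations. I would approach this through the exchange graph. In the finite-type cases, (b) already produces exactly $|\exchangesub{\Roots(\dynY)}{}|$ distinct folded seeds (recorded in Table~\ref{table_seeds_and_cluster_variables}), so a counting argument identifying this number with the total number of $G$-invariant seeds of type $\dynX$ closes the case; this identification is folklore for each $(\dynX,G,\dynY)$ and can be read off Table~\ref{table_seeds_and_cluster_variables}. In the affine cases, where counting is unavailable, I would use the structure of the exchange graph of affine type recalled in Section~\ref{sec:cluster algebras}: any seed can be reached from the initial one via a Coxeter-mutation power followed by a finite-type sub-mutation. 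The Coxeter element $\qcoxeter$ respects the $G$-action (because the bipartition $\nbasis=\nbasis_+\sqcup\nbasis_-$ is $G$-equivariant), hence $\qcoxeter^r$ is a product of orbit mutations, and each finite-type facet has already been handled. Combining these two reductions, every $(G,\psi)$-invariant seed is reached by an orbit-mutation sequence, completing (c) and hence the theorem.
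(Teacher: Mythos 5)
First, a point of comparison: the paper does not prove this statement at all --- it is imported wholesale from \cite{AL2021}. The only related argument in the paper is Appendix~\ref{section:invariance and admissibility}, which establishes the implication ``$G$-invariant $\Rightarrow$ $G$-admissible'' \emph{only} for the four finite triples $(\dynA_{2n-1},\Z/2\Z,\dynB_n)$, $(\dynD_{n+1},\Z/2\Z,\dynC_n)$, $(\dynE_6,\Z/2\Z,\dynF_4)$, $(\dynD_4,\Z/3\Z,\dynG_2)$. So your proposal cannot be measured against an in-paper proof; it has to stand on its own, and as written it does not. The main gap in your step (a) is the claim that the appendix arguments ``carry over essentially verbatim'' to the affine rows of Table~\ref{figure:all possible foldings} because the checks are ``local'' and use ``the bipartite structure of the quiver.'' Neither claim is true: a general quiver in the mutation class is not bipartite, and every argument in Appendix~\ref{section:invariance and admissibility} is global and type-specific --- Lemma~\ref{lemma:no weird cycles in An} uses the polygon-triangulation model of type $\dynA$ (minimal cycles have length~$3$), Proposition~\ref{proposition:admissibility for D4} uses the explicit finite list of quivers mutation-equivalent to $\dynD_4$, and Lemma~\ref{lemma:no weird cycles in E6} uses the bound $|b_{i,j}|\le 1$, which is a \emph{finite-type} phenomenon and fails in affine type (where entries $\pm 2$ occur, e.g.\ already in $\exdynA_1$). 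The affine cases, which include quivers of types $\exdynD$ and $\exdynE$ with infinite mutation classes and $\Z/3\Z$ as well as $\Z/2\Z$ actions, require a genuinely new argument; this is precisely the content outsourced to \cite{AL2021}.

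Step (c) also has a real gap in both branches. In finite type your ``counting argument'' is circular: Table~\ref{table_seeds_and_cluster_variables} gives the number of seeds of type $\dynY$, which by Proposition~\ref{proposition:folded cluster pattern} equals the number of $(G,\psi)$-invariant seeds \emph{reachable by orbit mutations}; to conclude that all $(G,\psi)$-invariant seeds are reached you would need an independent count of the $G$-invariant seeds of type $\dynX$, which is not in the paper and is essentially equivalent to the assertion being proved (note also that $(G,\psi)$-invariance constrains the cluster variables, not just the quiver). In affine type, Lemma~\ref{lemma:normal form} writes an arbitrary seed as $\mutation_{j_L}\cdots\mutation_{j_1}(\qcoxeter^r(\initialseed))$, but the $\mutation_{j_k}$ are \emph{single} mutations, not orbit mutations, and the intermediate seeds on that path need not be $G$-invariant. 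Turning this into a path of orbit mutations requires showing that the avoided vertex can be taken to be a full $G$-orbit, that the resulting sub-exchange-graph inherits a folding of the correct sub-type, and that the finite-type case of the statement applies to it --- none of which is supplied, and the last of which circles back to the unproven finite-type counting step. The skeleton (invariance $\Rightarrow$ admissibility, then global foldability, then connectivity of the invariant locus) is the right shape, but the two pillars carrying the weight are asserted rather than proved.
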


\subsection{Combinatorics of exchange graphs}
\label{sec_comb_of_exchange_graphs}
The \emph{exchange graph} of a cluster pattern or a $Y$-pattern is the $n$-regular (finite or
infinite) connected graph whose vertices are the seeds of the cluster pattern
and whose edges connect the seeds related by a single mutation. 
In this section, we recall the combinatorics of exchange
graphs which will be used later. For more details, we refer the reader
to~\cite{FZ2_2003, FZ_Ysystem03, FZ4_2007}.

\begin{definition}[Exchange graphs]
Exchange graphs for seed patterns or $Y$-patterns are defined as follows.
\begin{enumerate}
\item The \emph{exchange graph} $\exchange(\{(\bfx_t, \qbasis_t)\}_{t \in \mathbb T_n})$ of the cluster pattern $\{(\bfx_t, \qbasis_t)\}_{t \in \mathbb T_n}$ is a quotient of the tree $\mathbb{T}_n$ modulo the equivalence relation on vertices defined by setting $t \sim t'$ if and only if $(\bfx_t, \qbasis_t) \sim (\bfx_{t'},\qbasis_{t'})$. 
\item The \emph{exchange graph} $\exchange(\{(\bfy_t, \qbasispr_t)\}_{t \in \mathbb T_n})$ of the $Y$-pattern $\{(\bfy_t, \qbasispr_t)\}_{t \in \mathbb T_n}$ is a quotient of the tree $\mathbb{T}_n$ modulo the equivalence relation on vertices defined by setting $t \sim t'$ if and only if $(\bfy_t, \qbasispr_t) \sim (\bfy_{t'},\qbasispr_{t'})$. 
\end{enumerate}
\end{definition} 

For example, the exchange graph in Example~\ref{example_A2_example} is a cycle graph with~$5$~vertices. 
As we already have seen in Theorem~\ref{thm_FZ_finite_type}, cluster algebras
of finite type are classified by Cartan matrices of finite type. Moreover,
for a cluster algebra of finite or affine type, the exchange graph depends only on the
exchange matrix (see Theorem~\ref{thm_exchange_graph_Dynkin}). To explain this observation, we need some terminologies.

For $\initialseed = (\mathbf x_{t_0}, \qbasis_{t_0})$,
the cluster algebra $\cA(\initialseed)$ is said to have \emph{principal coefficients} if the exchange matrix $\qbasis_{t_0}$ is a $(2n \times n)$-matrix of the form $\begin{pmatrix}
\qbpr_{t_0} \\ \clusterfont{I}_n
\end{pmatrix}$, and have \emph{trivial coefficients} if $\qbasis_{t_0}=\qbpr_{t_0}$.
Here, $\clusterfont{I}_n$ is the identity matrix of size~$n \times n$. 
We recall the following result on the combinatorics of exchange graphs.
\begin{theorem}[{\cite[Theorem~4.6]{FZ4_2007}}]\label{thm_exchange_graph_covering}
The exchange graph of an arbitrary cluster pattern $\{(\bfx_t, \qbasis_t)\}_{t \in \mathbb T_n}$ is covered by\footnote{We say that a graph $\widetilde{G}$ is a \emph{covering graph} of another graph $G$, or say $G$ is \emph{covered} by $\widetilde{G}$, if there is a covering map $f$ from the vertex set $V(\widetilde{G})$ of $\widetilde{G}$ to the vertex set $V(G)$ of $G$. Here, a \emph{covering map} $f$ is a surjection such that the neighbourhood of a vertex $v$ in $\widetilde{G}$ is mapped bijectively onto the neighbourhood of the vertex $f(v)$ in $G$.} the exchange graph of the cluster pattern 
$\{(\bfx_t, \qbasis_t')\}_{t \in \mathbb T_n}$ having principal coefficients such that the principal parts of $\qbasis_t$ and $\qbasis_t'$ are the same. 
\end{theorem}
Moreover, the exchange graph of the cluster pattern 
$\{(\bfx_t, \qbasis_t)\}_{t \in \mathbb T_n}$ having trivial coefficients is covered by the exchange graph of the cluster pattern whose initial exchange matrix has the principal part $\qbasis_{t_0}$. For a fixed principal part of the exchange matrix, the cluster pattern having principal coefficients has the largest exchange graph while that having trivial coefficients has the smallest one (see~\cite[Section~4]{FZ4_2007}).

However, it is unknown whether the largest exchange graph is strictly larger than the smallest one or not. Indeed, it is conjectured in \cite[Conjecture~4.3]{FZ4_2007} that the exchange graph of a cluster pattern is determined by the initial principal part $\qbpr_{t_0}$ only.
The conjecture is confirmed for finite cases~\cite{FZ2_2003} or exchange matrices coming from quivers~\cite{IKLP13}.
We furthermore extend this result to cluster algebras whose initial exchange matrices are of affine type. 

\begin{theorem}[{cf. \cite[Theorem~1.13]{FZ2_2003} and \cite[Theorem~4.6]{IKLP13}}]\label{thm_exchange_graph_Dynkin}
Let $\initialseed = (\mathbf x_{t_0}, \qbasis_{t_0})$ be an initial seed.
If the principal part~$\qbasispr_{t_0}$ of $\qbasis_{t_0}$ is of \emph{finite or affine type},  then the exchange graph of the cluster pattern $\{(\bfx_t, \qbasis_t)\}_{t \in \mathbb T_n}$ only depends on $\qbasispr_{t_0}$.
\end{theorem}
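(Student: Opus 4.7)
The plan is to reduce the theorem to two known cases: the finite-type case \cite[Theorem~1.13]{FZ2_2003} and the skew-symmetric case \cite[Theorem~4.6]{IKLP13}. By Theorem~\ref{thm_exchange_graph_covering}, the exchange graph of any cluster pattern with initial principal part $\qbasispr_{t_0}$ is covered by the exchange graph of the corresponding cluster pattern with principal coefficients. Hence it suffices to show that the exchange graph with principal coefficients coincides with the exchange graph of the trivial-coefficient pattern, which by construction depends only on $\qbasispr_{t_0}$.

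If $\qbasispr_{t_0}$ is of finite type, the equality of these two exchange graphs is \cite[Theorem~1.13]{FZ2_2003}. If $\qbasispr_{t_0}$ is skew-symmetric, it is \cite[Theorem~4.6]{IKLP13}, which in particular covers every simply-laced affine type. The only remaining case is when $\qbasispr_{t_0}$ is skew-symmetrizable but not skew-symmetric of affine type, which by inspection of Tables~\ref{table_standard_affine} and~\ref{table_twisted_affine} means non-simply-laced affine Dynkin type. For this case I would invoke the folding machinery from Section~\ref{sec:folding}: Table~\ref{figure:all possible foldings} exhibits a simply-laced affine Dynkin type $\dynX$, a finite group $G$, and an acyclic skew-symmetric matrix $\tilde{\qbasispr}$ of type $\dynX$ whose $G$-folding is isomorphic to $\qbasispr_{t_0}$.

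Let $\tilde\cP$ be a cluster pattern with initial principal part $\tilde{\qbasispr}$ and principal coefficients, and let $\cP$ be the target cluster pattern with initial principal part $\qbasispr_{t_0}$ and principal coefficients. By Proposition~\ref{proposition:folded cluster pattern} and Theorem~\ref{thm_invariant_seeds_form_folded_pattern}, the $(G,\psi)$-invariant seeds of $\tilde\cP$ are precisely the lifts of seeds of $\cP$, and orbit mutations on the unfolded side correspond bijectively to single mutations on the folded side. Since $\tilde{\qbasispr}$ is skew-symmetric, the exchange graph of $\tilde\cP$ equals that of its trivial-coefficient companion by the quiver case. Transferring this equality along the folding bijection yields that $\exchange(\cP)$ coincides with the exchange graph of the trivial-coefficient cluster pattern whose initial principal part is $\qbasispr_{t_0}$, which is the desired conclusion.

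The main obstacle will be verifying that the folding bijection identifies the two exchange graphs on the nose, rather than merely giving a surjection. Concretely, two $(G,\psi)$-invariant seeds of $\tilde\cP$ must be equivalent as seeds if and only if their foldings are equivalent in $\cP$. The forward direction is immediate from the definition of folding. For the reverse direction, any permutation $\sigma$ witnessing equivalence of folded seeds must be lifted to a $G$-equivariant permutation $\tilde\sigma$ on the original index set that realizes the equivalence on the level of lifted seeds; this requires tracking the $G$-action on cluster variables along the mutation sequence leading from the initial seed to the given seed, together with the global $G$-foldability of $\tilde\quiver$ furnished by Theorem~\ref{theorem:G-invariance and G-admissibility}. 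Once this bookkeeping is carried out, combining the finite-type case, the skew-symmetric affine case, and the folding correspondence completes the proof.
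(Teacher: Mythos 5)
Your overall strategy matches the paper's: reduce to the finite case (Fomin--Zelevinsky) and the skew-symmetric case (IKLP), then handle non-simply-laced affine type by folding from a simply-laced affine type and comparing the principal-coefficient exchange graph with the trivial-coefficient one via Theorem~\ref{thm_exchange_graph_covering}. However, the mechanism you propose for the folding step has a genuine gap. You fold the principal-coefficient pattern $\tilde\cP$ of type $\dynX$ directly, with $G$ necessarily permuting the $n$ frozen vertices along with the mutable ones. To apply Proposition~\ref{proposition:folded cluster pattern} you need this \emph{extended} quiver (mutable part plus the $n$ principal-coefficient frozen vertices) to be globally foldable, and neither of the results you cite delivers that: Proposition~\ref{proposition:folded cluster pattern} takes global foldability as a hypothesis, and Theorem~\ref{thm_invariant_seeds_form_folded_pattern} concerns only the Dynkin quiver of type $\dynX$, with no frozen vertices. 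The problematic condition is Definition~\ref{definition:admissible quiver}\eqref{nonnegativity_of_bijbi'j} applied to two frozen vertices in the same $G$-orbit after an arbitrary sequence of orbit mutations; verifying it amounts to a sign-coherence statement for the $c$-vectors attached to $G$-conjugate frozen vertices, which is a nontrivial input you neither state nor prove. The paper sidesteps this entirely: it introduces an auxiliary quiver $\overline{\quiver}(\dynX)$ with one frozen vertex \emph{per $G$-orbit}, on which $G$ acts trivially, so that global foldability is exactly \cite[Lemma~5.5.3]{FWZ_chapter45}; the folding of $\overline{\quiver}(\dynX)$ is then the principal-coefficient pattern of type $\dynY$, and the comparison with the genuine principal-coefficient pattern $\tilde\seed$ of type $\dynX$ is made not by folding $\tilde\seed$ but by the IKLP equality of the three quiver exchange graphs $\{\tilde\seed_t\}/\!\sim\; = \{\overline\seed_t\}/\!\sim\; = \{\seed_t\}/\!\sim$.

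Your final paragraph about lifting the permutation $\sigma$ to a $G$-equivariant $\tilde\sigma$ addresses a real concern (that the folding map on exchange graphs could fail to be injective), but the paper resolves it more cheaply and you should adopt that argument: since all three type-$\dynX$ patterns come from quivers, their exchange graphs are literally the same quotient of $\mathbb{T}_n$ by \cite[Theorem~4.6]{IKLP13}, so the equivalence relations restricted to the vertices reachable by orbit mutations coincide, and Theorem~\ref{thm_invariant_seeds_form_folded_pattern} identifies those restricted quotients with the folded exchange graphs. This yields an injection from the principal-coefficient exchange graph of type $\dynY$ into the trivial-coefficient one, which together with the surjection from Theorem~\ref{thm_exchange_graph_covering} forces equality --- no lifting of seed equivalences is needed. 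If you insist on folding the principal-coefficient pattern itself, you must first prove global foldability of that extended quiver; as written, the proposal does not.
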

\begin{proof}
We first notice that the statement holds if the principal part $\qbasispr_{t_0}$ is of finite type~\cite[Theorem~1.13]{FZ2_2003} or exchange matrices are obtained from quivers~\cite[Theorem~4.6]{IKLP13}. 
It is enough to consider the case when the principal part is of \emph{non-simply-laced affine type}. 
Let $(\dynX, G, \dynY)$ be a column in Table~\ref{figure:all 
possible foldings}.
Let $\quiver(\dynX)$ be the quiver of type $\dynX$ and let $\qbasispr(\dynX)=\qbasispr(\quiver(\dynX))$ be the adjacency matrix of $\quiver(\dynX)$, which is a square matrix of size $n$.
Let $\qbasis(\dynX) = \begin{pmatrix}
\qbasispr(\dynX)\\ \clusterfont{I}_n
\end{pmatrix}$ be the $(2n\times n)$ matrix having principal coefficients whose principal part is $\qbasispr(\dynX)$. 
On the other hand, we consider a quiver~$\overline{\quiver}(\dynX)$ by adding $n^G \colonequals \#([n]/G)$ frozen vertices and $n$ arrows. Here, each frozen vertex is indexed by a $G$-orbit and we draw an arrow from the frozen vertex to each mutable vertex in the corresponding $G$-orbit. 
For algebraic independent elements $\bfx=(x_1,\dots, x_n)$, $\overline{\bfx} = (x_1,\dots,x_n,x_{n+1},\dots,x_{n+n^G})$, and $\tilde\bfx=(x_1,\dots, x_n, x_{n+1},\dots, x_{2n})$ in $\field$, we obtain seeds
\[
\tilde{\seed}_{t_0} = (\tilde\bfx, \qbasis(\dynX)),
\quad 
\overline{\seed}_{t_0} = (\overline\bfx, \qbasispr(\overline\quiver(\dynX))), 
\quad\text{ and }\quad
\seed_{t_0} = (\bfx,\qbasispr(\dynX)).
\]
Since the exchange matrices come from quivers, the exchange graphs given by seeds $\tilde{\seed}_{t_0}, \overline{\seed}_{t_0}, \seed_{t_0}$ are isomorphic: $\exchange(\{ \tilde{\seed}_{t}\}_{t \in \mathbb T_n})
\cong \exchange(\{ \overline{\seed}_{t}\}_{t \in \mathbb T_n})
\cong \exchange(\{ {\seed}_{t}\}_{t \in \mathbb T_n})$. Indeed, we have
\begin{equation}\label{equation_exchange_graphs_are_the_same}
\{ \tilde{\seed}_{t}\}_{t \in \mathbb T_n}/\sim \;\; = 
\{ \overline{\seed}_{t}\}_{t \in \mathbb T_n}/\sim \;\;  =   
\{ {\seed}_{t}\}_{t \in \mathbb T_n}/\sim.
\end{equation}

Extending the action of $G$ on $\quiver$ of type $\dynX$ to  $\overline\quiver(\dynX)$ such that $G$ acts trivially on frozen vertices, the quiver $\overline\quiver(\dynX)$ becomes a globally foldable quiver with respect to $G$ (see~\cite[Lemma~5.5.3]{FWZ_chapter45}).
Moreover, via $\psi \colon \field \to \field^G$, the folded seed $\overline{\seed}_{t_0}^G = (\overline\bfx, \overline\quiver(\dynX))^G$ produces the principal coefficient cluster algebra of type $\dynY$.
This produces the following diagram.
\[
\begin{tikzcd}
\{ \tilde{\seed}_{t}\}_{t \in \mathbb T_n}/\sim
	\arrow[r,dash,shift left=.1em] \arrow[r,dash,shift right=.1em]
& \{ \overline{\seed}_{t}\}_{t \in \mathbb T_n}/\sim
	\arrow[r,dash,shift left=.1em] \arrow[r,dash,shift right=.1em]
& \{ {\seed}_{t}\}_{t \in \mathbb T_n}/\sim \\
& \{ \text{$(G,\psi)$-admissible seeds $\overline{\seed}_{t}$}\}/\sim
	\arrow[u, hookrightarrow]  
	\arrow[r,rightarrowtail]
	\arrow[d,dash,shift left=.1em] \arrow[d,dash,shift right=.1em]
&\{ \text{$(G,\psi)$-admissible seeds ${\seed}_{t}$}\}/\sim
	\arrow[u, hookrightarrow]
	\arrow[d,dash,shift left=.1em] \arrow[d,dash,shift right=.1em]
\\
& \{ \overline{\seed}_t^G\}_{t \in \mathbb T_n}/\sim 
\arrow[r,twoheadrightarrow]
& \{{\seed}_t^G\}_{t \in \mathbb T_n}/\sim
\end{tikzcd}
\] 
The equalities on the top row are obtained by~\eqref{equation_exchange_graphs_are_the_same}.
The surjectivity in the bottom row is induced by the maximality of the exchange graph of a cluster algebra having principal coefficients in Theorem~\ref{thm_exchange_graph_covering}. 
Moreover, the equalities connecting the second and third rows are given by Theorem~\ref{thm_invariant_seeds_form_folded_pattern}. 
This proves that there is a bijective correspondence between the set of vertices of $\exchange(\{ \overline{\seed}_t^G\}_{t \in \mathbb T_n})$ and that of $\exchange(\{{\seed}_t^G\}_{t \in \mathbb T_n})$. On the other hand, the graph  $\exchange(\{{\seed}_t^G\}_{t \in \mathbb T_n})$ is covered by $\exchange(\{ \overline{\seed}_t^G\}_{t \in \mathbb T_n})$ by Theorem~\ref{thm_exchange_graph_covering}. Accordingly, two graphs are the same and this proves the theorem.
\end{proof}

We recall from~\cite{CaoHuangLi20} 
the relation between the cluster pattern and $Y$-pattern having the \emph{same} initial exchange matrix.
\begin{proposition}[{\cite[Theorem~2.5]{CaoHuangLi20}}]\label{prop_Y-pattern_exchange_graph}
Let $(\bfy_{t_0}, \qbasispr_{t_0})$ be a $Y$-seed and let $\{(\bfy_t, \qbasispr_t)\}_{t \in \mathbb{T}_n}$ be the $Y$-pattern.
Let $(\bfx_{t_0},\qbasis_{t_0})$ be a cluster seed such that the principal part of the exchange matrix $\qbasis_{t_0}$ is $\qbasispr_{t_0}$ and let $\{(\bfx_t, \qbasis_t)\}_{t \in \mathbb{T}_n}$ be the cluster pattern.
Suppose that the initial variables $y_{1;t_0},\dots,y_{n;t_0}$ are algebraically independent. 
Then, we have
\[
\exchange(\{(\bfx_t, \qbasis_t)\}_{t \in \mathbb{T}_n}) = 
\exchange(\{(\bfy_t, \qbasispr_t)\}_{t \in \mathbb{T}_n}).
\]
\end{proposition}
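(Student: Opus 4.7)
The plan is to compare the two exchange graphs via the mutation-equivariant assignment $\Theta$ from Remark~\ref{rmk_x_cluster_mutation}, which sends a cluster seed $(\bfx_t,\qbasis_t)$ to the auxiliary $Y$-seed $(\hat\bfy_t,\qbasispr_t)$ with $\hat y_{i;t}=\prod_{j\in[m]} x_{j;t}^{b^{(t)}_{i,j}}$. I will first identify the auxiliary $Y$-pattern $\{(\hat\bfy_t,\qbasispr_t)\}$ with the $Y$-pattern in the statement, and then show that $\Theta$ descends to an isomorphism
\[
\exchange(\{(\bfx_t,\qbasis_t)\}) \xrightarrow{\ \sim\ } \exchange(\{(\hat\bfy_t,\qbasispr_t)\}).
\]

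For the identification of the two $Y$-patterns, I will use that their initial coefficient tuples are both algebraically independent: the given $\bfy_{t_0}$ by hypothesis, and the $\hat\bfy_{t_0}$ by Remark~\ref{rmk_x_cluster_mutation} once Theorem~\ref{thm_exchange_graph_covering} lets me replace $\qbasis_{t_0}$ with its principal extension, which has full rank (a property preserved under mutation by \cite[Lemma~3.2]{BFZ3_2005}). The $Y$-mutation formula expresses each $y_{i;t}$ as a fixed rational function of the initial coefficients that depends only on $\qbasispr_{t_0}$ and the mutation path in $\mathbb{T}_n$, so the assignment $y_{i;t_0}\mapsto\hat y_{i;t_0}$ extends to a field isomorphism intertwining the two $Y$-patterns and matching their exchange graphs seed-by-seed.

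For the isomorphism induced by $\Theta$, the forward surjection comes from mutation-equivariance of $\Theta$: an equivalence $(\bfx_t,\qbasis_t)\sim(\bfx_{t'},\qbasis_{t'})$ via a permutation $\sigma$ on $[m]$—necessarily fixing each frozen index because $x_{n+1;t_0},\dots,x_{m;t_0}$ are algebraically independent—forces the same $\sigma$ to witness $(\hat\bfy_t,\qbasispr_t)\sim(\hat\bfy_{t'},\qbasispr_{t'})$. For injectivity I will assume $(\hat\bfy_t,\qbasispr_t)\sim(\hat\bfy_{t'},\qbasispr_{t'})$ via a permutation $\sigma$ of $[n]$ and invoke the separation-of-additions formula of \cite{FZ4_2007}: each cluster variable $x_{i;t}$ is a universal rational function in $\bfx_{t_0}$ and $\hat\bfy_{t_0}$ whose $g$-vector and $F$-polynomial are determined solely by $\qbasispr_{t_0}$ and the mutation path. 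Equality of the $\hat y$-coefficients under $\sigma$, combined with $\qbasispr_t=\sigma\cdot\qbasispr_{t'}$, forces the associated $g$-vectors and $F$-polynomials to coincide, hence $x_{i;t}=x_{\sigma(i);t'}$; an induction on path length matches the frozen rows of $\qbasis_t$ and $\qbasis_{t'}$ under $\sigma$ as well.

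The main obstacle will be this injectivity step—reconstructing a cluster-seed equivalence from a $Y$-seed equivalence. The pivotal role of the algebraic-independence hypothesis on $\bfy_{t_0}$ is precisely that it guarantees the $F$-polynomial evaluations $\hat y_{i;t}$ faithfully encode their underlying combinatorial data, so no coincidental algebraic relation among coefficients can produce a spurious $Y$-seed equivalence that fails to lift to the cluster level.
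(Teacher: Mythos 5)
The paper offers no proof of this proposition: it is imported verbatim as \cite[Theorem~2.5]{CaoHuangLi20} and used as a black box, so there is nothing in-paper to compare your argument against. Judged on its own merits, your proposal has a genuine gap.

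The gap is the reduction from the given exchange matrix $\qbasis_{t_0}$ to its principal extension. Your identification of the auxiliary pattern $\{(\hat{\mathbf y}_t,\qbasispr_t)\}$ with the abstract $Y$-pattern needs $\hat{\mathbf y}_{t_0}$ to be algebraically independent, which by Remark~\ref{rmk_x_cluster_mutation} requires $\qbasis_{t_0}$ to have full rank; this fails for a general choice of coefficients (e.g.\ trivial coefficients with $\qbasispr_{t_0}$ skew-symmetric of odd size, where the rows are linearly dependent and the $\hat y_{i;t_0}$ satisfy a multiplicative relation). You propose to "replace $\qbasis_{t_0}$ with its principal extension" via Theorem~\ref{thm_exchange_graph_covering}, but that theorem only supplies a covering $\exchange(\text{principal}) \twoheadrightarrow \exchange(\text{given})$, i.e.\ one inclusion of equivalence relations on $\mathbb{T}_n$. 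Your argument therefore yields at most that $\exchange(\{(\bfy_t,\qbasispr_t)\})$ — which you correctly match with the principal-coefficient cluster exchange graph — \emph{covers} $\exchange(\{(\bfx_t,\qbasis_t)\})$. The reverse implication, that two seeds identified in the given pattern are already identified in the universal $Y$-pattern, is precisely the hard content (Fomin--Zelevinsky's Conjecture~4.3 relative to principal coefficients) and is nowhere addressed; its known proofs run through sign-coherence of $c$-vectors or the proper Laurent monomial property, which is the actual engine behind the cited theorem.

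A secondary, related weakness: even in the full-rank case your injectivity step asserts that equality of the $\hat y$-coefficients under $\sigma$ "forces the associated $g$-vectors and $F$-polynomials to coincide." The separation formula expresses $\hat y_{i;t}$ through $c$-vectors and $F$-polynomials of the principal-coefficient pattern, and passing from these to the $g$-vectors of the cluster variables uses the $c$--$g$ duality, i.e.\ sign-coherence again. So the deep input is implicitly assumed twice rather than supplied; as written, the argument establishes one containment of exchange graphs but not the claimed equality.
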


Because of Assumption~\ref{assumption_finite}, Theorem~\ref{thm_exchange_graph_Dynkin}, and Proposition~\ref{prop_Y-pattern_exchange_graph}, 
when the initial variables $y_{1;t_0},\dots,y_{n;t_0}$ are algebraically independent, all the following exchange graphs are the same.
\[
\exchange(\{(\tilde\bfx_t, \qbasis_t)\}_{t \in \mathbb T_n})
 = \exchange(\{(\bfx_t, \qbasispr_t)\}_{t \in \mathbb T_n})
= \exchange(\{(\bfy_t, \qbasispr_t)\}_{t \in \mathbb T_n}).
\]
We simply denote the above exchange graphs with the associated root system $\Roots$ by
\begin{equation}\label{eq_exchange_graphs_are_the_same}
\exchange(\Roots) = \exchange(\{(\tilde\bfx_t, \qbasis_t)\}_{t \in \mathbb T_n})
 = \exchange(\{(\bfx_t, \qbasispr_t)\}_{t \in \mathbb T_n})
= \exchange(\{(\bfy_t, \qbasispr_t)\}_{t \in \mathbb T_n}).
\end{equation}

Since the exchange graph of a cluster pattern and that of a $Y$-pattern having the same type are the same, we will mainly treat exchange graphs of cluster patterns of finite or affine type from now on.

Let $\Roots$ be the root system defined by the Cartan counterpart of $\qbpr$.
It is proved in~\cite{FZ2_2003} and~\cite{ReadingStella20} that there is a bijective correspondence between a subset $\alposRoots \subset \Roots$, called \emph{almost positive roots}, and the set of cluster variables.
\begin{equation}\label{equation_bijective_vars_alpostRoots_facets}
\alposRoots \stackrel{1:1}{\longleftrightarrow}\{\text{cluster variables in $\cA$ of type $\dynX$}\} 
\end{equation}
More precisely, one may associate the set $-\SRoots$ of negative simple roots with the set of cluster variables $x_{1;t_0},\dots,x_{n;t_0}$ in the
initial seed $(\bfx_{t_0},\qbasis_{t_0})$;  
a positive root $\sum_{i=1}^n d_i \alpha_i$ is associated to a (non-initial) cluster variable of the
form
\[
	\frac{f(\bfx_{t_0})}{x_{1;t_0}^{d_1} \cdots x_{n;t_0}^{d_n}},\qquad
	f(\bfx_{t_0})\in\bbC[x_{1;t_0},\dots, x_{m;t_0}].
\]
Accordingly, each vertex of the exchange graph $\exchange(\Roots)$ corresponds to an $n$-subset of $\alposRoots$. 
We notice that when $\Roots$ is of finite type, the set $\alposRoots$ is given by  $\alposRoots\colonequals \Roots^+ \cup -\SRoots$. 
Here, $\Roots^+$ is the set of positive roots and $\SRoots=\{\alpha_1,\dots,\alpha_n\}$ is the set of simple roots. 

To study the combinatorics of exchange graphs, we prepare some terminologies.
Let $\Roots$ be a rank $n$ root system.
For every subset $J \subset [n]$, let $\Roots(J)$ denote the root subsystem
of $\Roots$ spanned by the set of simple roots $\{ \alpha_i \mid i \in J \}$.
Indeed, the Dynkin diagram of $\Roots(J)$ is the full subdiagram on the vertices in $J$. 
Note that $\Phi(J)$ may not be irreducible even if $\Phi$ is.

A \emph{Coxeter element} is a product of the simple
reflections. 
The order $h$ of a Coxeter element in $W$ is called the \emph{Coxeter number}
of $\Roots$. We present the known formula of Coxeter numbers $h$ in
Table~\ref{table_Coxeter_number} (see~\cite[Appendix]{Bourbaki02}).
\begin{table}[b]
	\begin{tabular}{c|ccccccccc}
		\toprule
		$\Roots$ & $\dynA_n$ & $\dynB_n$ & $\dynC_n$ & $\dynD_n$ & $\dynE_6$ & $\dynE_7$ & $\dynE_8$ & $\dynF_4$ & $\dynG_2$ \\
		\midrule
		$h$ & $n+1$ & $2n$ & $2n$ & $2n-2$ & $12$ & $18$ & $30$ & $12$ & $6$ 		\\
		\bottomrule 
	\end{tabular}
\caption{Coxeter numbers}\label{table_Coxeter_number}
\end{table}

The Dynkin diagrams of finite or affine root systems do not have cycles except 
of type $\exdynA_{n-1}$ for $n \geq 3$.
We consider \emph{bipartite coloring} on Dynkin diagrams except of type $\exdynA$, that is, we have a function $\varepsilon \colon [n] \to \{+,-\}$, called a \emph{coloring}, such that any two vertices $i$ and $j$ connected by an edge have different colors. 
Since we are considering tree-shaped diagrams, they admit bipartite colorings. 
We notice that a bipartite coloring on a Dynkin diagram decides a \emph{bipartite} skew-symmetrizable matrix $\qbasispr = (b_{i,j})$ of the same type by setting
\begin{equation}\label{eq_bipartite_matrix}
b_{i,j} > 0 \iff \varepsilon(i) = + \text{ and } \varepsilon(j) = -.
\end{equation}
Here, a skew-symmetrizable matrix is called \emph{bipartite} if there exists a coloring $\varepsilon$ satisfying~\eqref{eq_bipartite_matrix}.
Moreover, for a simply-laced Dynkin diagram, a bipartite coloring defines a \emph{bipartite quiver}, that is, each vertex of the quiver is either source or sink. More precisely, we let $i$ be a source if $\varepsilon(i) = +$; otherwise, a sink.

\begin{example}\label{example_F4_coloring}
Consider the coloring on the Dynkin diagram of $\dynF_{4}$. 
\[
		
\]
\end{example}

Let $I_+$ and $I_-$ be two parts of the set of vertices of the Dynkin diagram given by a bipartite coloring; they are determined
uniquely up to renaming.
Consider the composition
$\qcoxeter = \mutation_+ \mutation_-$ of a sequence of mutations where
\[
\mutation_{\varepsilon} = \prod_{i \in I_{\varepsilon}} \mutation_i \qquad \text{ for } \varepsilon \in \{ +, -\},
\]
which is well-defined (cf. Remark~\ref{rmk_mutation_commutes}).
We call $\qcoxeter$ a \emph{Coxeter mutation}.
Because of the definition, for a bipartite skew-symmetrizable matrix $\qbasispr$ or a bipartite quiver $\quiver$, we obtain
\[
\qcoxeter(\qbasispr) = \qbasispr,\qquad \qcoxeter(\quiver) = \quiver.
\]
The initial seed $\seed_{t_0} = \seed_0 = (\bfx_0, \qbasis_0)$ is included in the \textit{bipartite belt} consisting of the seeds $\seed_r = (\bfx_r, \qbasis_0)$ for $r \in \Z$ defined by 
\[
\seed_r = (\bfx_r, \qbasis_0) = \begin{cases}
\qcoxeter^r(\seed_0) & \text{ if } r > 0, \\
(\mutation_- \mutation_+)^{-r}(\seed_0) & \text{ if } r < 0.
\end{cases}
\]
We write 
\[
\bfx_r = (x_{1;r},\dots,x_{n;r}) \quad \text{ for }r \in \Z.
\]

It is known from~\cite{FZ_Ysystem03} and~\cite{ReadingStella20} that both $\mutation_+$ and
$\mutation_-$ act on the set $\alposRoots$ of almost positive roots and on the set $V(\exchange(\Roots))$ of vertices via the bijective correspondence~\eqref{equation_bijective_vars_alpostRoots_facets}. 
We summarize the properties of the action of Coxeter mutation as follows.

\begin{proposition}[{cf.~\cite[Propositions~2.5, 3.5, and~3.6]{FZ_Ysystem03} for finite type;~\cite[Propositions~5.4 and~5.14]{ReadingStella20} for affine type}]
    \label{prop_FZ_finite_type_Coxeter_element}
Let $\Roots$ be a finite or affine root system of type~$\dynX$. 
Let $\{(\mathbf x_t, \qbasis_t)\}_{t\in \mathbb T}$ be a cluster pattern of type $\dynX$ and $\exchange(\Roots)$ its exchange graph.
Then the following holds.
\begin{enumerate} 
\item For $\ell \in [n]$ and $r \in \Z$, we denote by $\exchangesub{\Roots}{x_{\ell;r}}$ the induced subgraph  of $\exchange(\Roots)$ consisting of seeds having the cluster variable $x_{\ell;r}$. Then, we have
\[
\exchangesub{\Roots}{x_{\ell;r}} \cong \exchange(\Roots([n] \setminus \{\ell\})).
\]
\item Both $\mutation_+$ and $\mutation_-$ act on the exchange graph $\exchange(\Roots)$.
\item For any seed $(\bfx, \qbasis) \in \exchange(\Roots)$, there exists $r\in \Z$ such that 
\[
|\{ x_{1;r},\dots,x_{n;r}\} \cap \{ x_{1},\dots,x_{n}  \}| \geq 2.
\]
Furthermore, if $\Roots$ is of finite type having even Coxeter number $h = 2e$, then $r \in \{0,1,\dots,e\}$.
\end{enumerate}
\end{proposition}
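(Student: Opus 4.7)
My plan is to prove the three parts in order, with parts (1) and (2) being formal consequences of the structural results in \S\ref{sec_comb_of_exchange_graphs}, and part (3) being the substantive combinatorial statement.

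For part (1), I would apply the freezing procedure to the bipartite belt seed $\seed_r = (\bfx_r, \qbasis_0)$: declaring the cluster variable $x_{\ell;r}$ frozen produces a seed $\seed_r'$ whose mutable indices are $[n]\setminus\{\ell\}$. Because a cluster variable in a seed is untouched by mutation at a different index, the seeds of the original cluster pattern containing $x_{\ell;r}$ are precisely the seeds obtained by mutating $\seed_r'$ at mutable indices, so $\exchangesub{\Roots}{x_{\ell;r}}$ coincides with the exchange graph generated from $\seed_r'$. The principal part of $\seed_r'$, read on the new mutable set, is the submatrix of $\qbasispr$ obtained by deleting the $\ell$-th row and column; its Cartan counterpart is the Cartan matrix of $\Roots([n]\setminus\{\ell\})$. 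Since this root system is still of finite or affine type, Theorem~\ref{thm_exchange_graph_Dynkin} yields the isomorphism $\exchangesub{\Roots}{x_{\ell;r}} \cong \exchange(\Roots([n]\setminus\{\ell\}))$.

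For part (2), I would verify that $\mutation_{\pm}$ is well-defined as a composition: the bipartite condition ensures that $b_{i,j} = 0$ whenever $i, j \in I_{\varepsilon}$ share a color, so the simple mutations constituting $\mutation_{\varepsilon}$ commute by Remark~\ref{rmk_mutation_commutes}. A direct computation in the mutation formula shows that a single $\mutation_{\varepsilon}$ has the effect of negating the rows and columns of $\qbasispr$ indexed by $I_{\varepsilon}$, so that $\qcoxeter = \mutation_+ \mutation_-$ preserves $\qbasispr$, as already noted in the paper. Consequently $\qcoxeter$ carries seeds with principal part $\qbasispr$ to seeds of the same form, and hence descends to an automorphism of the exchange graph $\exchange(\Roots)$.

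Part (3) is the heart of the proof and the main obstacle. The plan is to translate the statement through the bijection~\eqref{equation_bijective_vars_alpostRoots_facets}: clusters correspond to maximal sets of pairwise compatible elements of $\alposRoots$, and $\qcoxeter$ intertwines with an involutive piecewise-linear transformation $\tau = \tau_+ \tau_-$ on $\alposRoots$ (see \cite{FZ_Ysystem03} for finite type, \cite{ReadingStella20} for affine type). The bipartite belt $\{\bfx_r\}_{r\in\Z}$ corresponds to the $\tau$-orbits of $-\SRoots$, and the claim reduces to showing that every cluster meets some such orbit in at least two elements. For finite type with Coxeter number $h = 2e$, Zamolodchikov-type periodicity $\tau^{(h+2)/2}=\mathrm{id}$ bounds $r$ within $\{0,1,\ldots,e\}$, and a case analysis based on the compatibility degree of almost positive roots produces the required pair. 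The main difficulty lies in the affine case, where $\tau$ has infinite order and $\alposRoots$ contains infinitely many real and imaginary roots; here one must invoke Reading--Stella's description of $\tau$-orbits, including the classification of those containing imaginary roots, to find a suitable $r$. Handling the imaginary-root orbits uniformly across affine types is the subtlest ingredient of the argument.
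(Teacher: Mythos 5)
The first thing to note is that the paper does not prove this proposition at all: it is stated as a quoted result, with the finite-type case attributed to Fomin--Zelevinsky's $Y$-systems paper and the affine case to Reading--Stella, so there is no in-paper argument to compare yours against. Judged as a standalone proof, your proposal is a reasonable reconstruction of how those cited arguments are organized, but it has genuine gaps. In part (1), the sentence ``the seeds of the original cluster pattern containing $x_{\ell;r}$ are \emph{precisely} the seeds obtained by mutating $\seed_r'$ at mutable indices'' conceals the entire difficulty. One inclusion is trivial (mutation away from $\ell$ preserves $x_{\ell;r}$), but the converse --- that every seed of the pattern containing $x_{\ell;r}$ is reachable from $\seed_r$ without ever mutating the $\ell$-th position, equivalently that the star of a cluster variable in $\exchange(\Roots)$ is connected and coincides with the exchange graph of the frozen pattern --- is exactly the nontrivial content of the cited propositions. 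For finite type it ultimately rests on the cluster complex being the boundary complex of the generalized associahedron; in the affine case it is part of what Reading--Stella establish. Freezing alone does not deliver it.

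Two further points. In part (2), showing that $\mutation_{\pm}$ is a well-defined composition on a \emph{bipartite} seed and that $\qcoxeter$ fixes $\qbasispr$ does not by itself produce an action on the whole exchange graph: applying a fixed mutation sequence to an arbitrary seed need not send adjacent seeds to adjacent seeds, and the order-independence of $\prod_{i\in I_+}\mutation_i$ fails off the bipartite belt. The action intended here is the one induced by the piecewise-linear involutions $\tau_{\pm}$ on $\alposRoots$ through the bijection~\eqref{equation_bijective_vars_alpostRoots_facets}, which permute maximal compatible sets and visibly preserve the exchange relation; your argument needs to be routed through that model rather than through literal seed mutation. Finally, part (3), which you correctly identify as the heart of the statement, is not actually proved in your proposal: the finite-type case is reduced to ``a case analysis based on the compatibility degree'' that is not carried out, and the affine case is explicitly deferred to Reading--Stella's classification of $\tau$-orbits and imaginary roots. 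Since the paper itself only cites these results, deferring to the same sources is acceptable for the purposes of this paper, but then the proposal should be presented as a citation with a sketch rather than as a proof.
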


As a direct consequence of Proposition~\ref{prop_FZ_finite_type_Coxeter_element}, we 
have the following lemma which will be used later. 
\begin{lemma}\label{lemma:normal form}
Let $(\bfy_{t_0}, \qbasispr_{t_0})$ be a $Y$-seed such that the Cartan counterpart $C(\qbasispr_{t_0})$ is of finite or affine type.
For a $Y$-seed $(\bfy, \qbasispr)$ in the seed pattern, there exist $r \in\Z$, $\ell \in [n]$, and $j_1,\dots,j_{L} \in [n] \setminus \{\ell\}$ 
such that a sequence $\mutation_{j_1},\dots,\mutation_{j_L}$ of mutations 
connecting $\qcoxeter^r(\bfy_{t_0}, \qbasispr_{t_0})$ and $(\bfy, \qbasispr)$, that is, 
\[
(\bfy, \qbasispr) = (\mutation_{j_L} \cdots \mutation_{j_1})(\qcoxeter^r(\bfy_{t_0}, \qbasispr_{t_0})).
\]
Furthermore, if $\Roots$ is of finite type and has even Coxeter number $h = 2e$, then $r \in \{0,1,\dots,e\}$.
\end{lemma}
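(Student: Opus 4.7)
The plan is to reduce the statement, which is phrased for $Y$-seeds, to the corresponding statement for cluster seeds, and then extract it directly from Proposition~\ref{prop_FZ_finite_type_Coxeter_element}. By Proposition~\ref{prop_Y-pattern_exchange_graph} combined with the identifications recorded in~\eqref{eq_exchange_graphs_are_the_same}, the exchange graph $\exchange(\Roots)$ of the $Y$-pattern $\{(\bfy_t, \qbasispr_t)\}_{t \in \mathbb T_n}$ is canonically identified with the exchange graph of any cluster pattern $\{(\bfx_t, \qbasis_t)\}_{t \in \mathbb T_n}$ whose initial exchange matrix has $\qbasispr_{t_0}$ as its principal part. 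Under this identification, the Coxeter mutation $\qcoxeter$ acts as the same graph automorphism on both sides, so it suffices to produce the required sequence of mutations inside $\exchange(\Roots)$ and then transport the identity back.

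Working in the cluster-pattern model, denote by $\seed_r \colonequals \qcoxeter^r(\bfx_{t_0}, \qbasis_{t_0}) = (\bfx_r, \qbasis_0)$ the bipartite-belt seed and by $\seed \colonequals (\bfx, \qbasis)$ the seed corresponding to $(\bfy, \qbasispr)$ via the identification. Applying Proposition~\ref{prop_FZ_finite_type_Coxeter_element}(3) to $\seed$ yields an integer $r \in \Z$ (with $r \in \{0,1,\ldots,e\}$ in the finite even-Coxeter-number case) such that
\[
|\{x_{1;r},\dots,x_{n;r}\} \cap \{ x_{1},\dots,x_{n}\}| \geq 2.
\]
In particular the intersection is nonempty, so there exists $\ell \in [n]$ with $x_{\ell;r}$ appearing in both seeds. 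Hence $\seed_r$ and $\seed$ are both vertices of the induced subgraph $\exchangesub{\Roots}{x_{\ell;r}}$.

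By Proposition~\ref{prop_FZ_finite_type_Coxeter_element}(1), this subgraph is isomorphic to $\exchange(\Roots([n]\setminus\{\ell\}))$, the exchange graph of a cluster pattern of finite or affine type, which is connected. Consequently there is a path in $\exchangesub{\Roots}{x_{\ell;r}}$ from $\seed_r$ to $\seed$; the edges along this path correspond to mutations that do not alter the cluster variable $x_{\ell;r}$, hence are mutations at indices $j_1,\dots,j_L \in [n]\setminus\{\ell\}$. Transporting the resulting identity $\seed = (\mutation_{j_L}\cdots \mutation_{j_1})(\seed_r)$ back through the identification of exchange graphs yields the desired factorization for $(\bfy, \qbasispr)$.

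The only point that needs some care, rather than being a serious obstacle, is bookkeeping of the labelings: the vertices of $\exchange(\Roots)$ are equivalence classes of (Y-)seeds modulo permutation, so the fixed index $\ell$ has to be interpreted as the slot occupied by the common cluster variable $x_{\ell;r}$ along the whole path. This is automatic because within $\exchangesub{\Roots}{x_{\ell;r}}$ one may choose representatives keeping $x_{\ell;r}$ in a single slot throughout, after which ``mutation not at $\ell$'' is literally a mutation at an index in $[n]\setminus\{\ell\}$.
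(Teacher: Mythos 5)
Your proof is correct and follows essentially the same route as the paper's: reduce to the cluster-seed setting via Proposition~\ref{prop_Y-pattern_exchange_graph}, apply Proposition~\ref{prop_FZ_finite_type_Coxeter_element}(3) to locate a common cluster variable $x_{\ell;r}$, and use connectivity of the induced subgraph $\exchangesub{\Roots}{x_{\ell;r}}\cong\exchange(\Roots([n]\setminus\{\ell\}))$ from part~(1). The extra bookkeeping remark about slot labelings is a harmless (and reasonable) addition not spelled out in the paper.
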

\begin{proof}
Since the exchange graph $\exchange(\{(\bfy_t, \qbasispr_t)\}_{t\in \mathbb{T}_n})$ is the graph $\exchange(\Roots)$ by Proposition~\ref{prop_Y-pattern_exchange_graph}, it is enough to prove the claim in terms of seeds. Let $(\bfx, \qbasis) \in \exchange(\Roots)$ be a seed. 
By Proposition~\ref{prop_FZ_finite_type_Coxeter_element}(3), there exist $\ell \in [n]$ and $r \in \Z$ such that 
$x_{\ell;r} \in \{x_1,\dots,x_n\}$. Accordingly, both seeds~$\qcoxeter^r(\bfx_{t_0}, \qbasis_{t_0})$ and $(\bfx, \qbasis)$ are contained in the induced subgraph $\exchangesub{\Roots}{x_{\ell;r}}$.
Since the subgraph $\exchangesub{\Roots}{x_{\ell;r}}$ itself is the exchange  graph of the root subsystem $\Roots([n] \setminus \{\ell\})$ by Proposition~\ref{prop_Y-pattern_exchange_graph}(1), it is connected. 
Therefore, two seeds~$\qcoxeter^r(\bfx_{t_0}, \qbasis_{t_0})$ and  $(\bfx, \qbasis)$ are connected without applying mutations at the vertex $\ell$, that is, there exists a sequence $j_1,\dots,j_L \in [n] \setminus \{\ell\}$ such that $(\bfx, \qbasis) = (\mutation_{j_L} \cdots \mutation_{j_1})(\qcoxeter^r(\bfx_{t_0}, \qbasis_{t_0}))$ as desired. Furthermore, if $\Roots$ is of finite type and has even Coxeter number $h = 2e$, then $r \in \{0,1,\dots,e\}$ because of Proposition~\ref{prop_FZ_finite_type_Coxeter_element}(3).
\end{proof}

For a finite root system $\Roots$, the exchange graph $\exchange(\Roots)$ becomes the one-skeleton of an $n$-dimensional polytope $P(\Roots)$, called the \emph{generalized associahedron}.  
Moreover, there is a bijective correspondence between the set $\facet(P(\Roots))$ of codimension-one faces, called \emph{facets}, of $P(\Roots)$ and the set of almost positive roots $\alposRoots$. 
We denote by $F_{\beta}$ the facet of the polytope $P(\Roots)$ corresponding to a root $\beta \in \alposRoots$. 
We demonstrate Proposition~\ref{prop_FZ_finite_type_Coxeter_element} for root systems of type $\dynA_3$ and $\dynD_4$. 
\begin{example}
	Consider the root system $\Roots$ of type $\dynA_3$. In this case, the
	Coxeter number is $4$, which is even (cf.
	Table~\ref{table_Coxeter_number}). In Table~\ref{table_A3_tau_action}, we
	present how $\qcoxeter$ acts on the set of almost positive roots. Here, we use the convention that
	$I_+ = \{1,3\}$ and $I_- = \{2\}$.
	\begin{table}[b]
		\begin{tabular}{c|ccc}
			\toprule
			$r$ & $\qcoxeter^r({-\alpha_1})$ 
			& $ \qcoxeter^r({-\alpha_2})$ 
			& $ \qcoxeter^r({-\alpha_3})$\\ 
			\midrule
			$0$ & ${-\alpha_1}$ & ${-\alpha_2}$ & ${-\alpha_3}$ \\
			$1$ & ${\alpha_1 + \alpha_2}$ & ${\alpha_2}$ &${\alpha_2 + \alpha_3}$ \\
			$2$ & ${\alpha_3}$ & ${\alpha_1 + \alpha_2 + \alpha_3}$ & ${\alpha_1}$\\
			\bottomrule
		\end{tabular}
		\caption{Computation $\qcoxeter^r({-\alpha_i})$ for type $\dynA_3$}\label{table_A3_tau_action}
	\end{table}
	The generalized associahedron of type $\dynA_3$ is presented in
	Figure~\ref{fig_asso_A3}. We label each codimension-one face the corresponding almost
	positive root. The back-side facets are associated with the set of
	negative simple roots. As one may see that the face posets of $\qcoxeter^r(F_{-\alpha_i})$ 
	are the same as that of the
	generalized associahedron $P(\Roots ([n]\setminus \{i\}))$. Indeed, the facets
	$\qcoxeter^r(F_{-\alpha_1})$ and
	$\qcoxeter^r(F_{-\alpha_3})$ are pentagons, and the facets 
	$\qcoxeter^r(F_{-\alpha_2})$ are squares.
For $(\bfx, \qbasis) = F_{-\alpha_1} \cap F_{-\alpha_2} \cap F_{-\alpha_3}$, 
we decorate the vertices $\{ \qcoxeter^r(\bfx,\qbasis) \mid r = 0,1,2 \}$ with green. As one can see, the orbits of $F_{-\alpha_1}, F_{-\alpha_2}, F_{-\alpha_3}$ exhaust all vertices as claimed in Proposition~\ref{prop_FZ_finite_type_Coxeter_element}(3). 
\end{example}

\begin{figure}
\tdplotsetmaincoords{110}{-30}

\caption{The type $\dynA_3$ generalized associahedron}\label{fig_asso_A3}
\end{figure}

\begin{example}
	We consider the generalized associahedron of type $\dynD_4$ and present
	four facets corresponding to the negative simple roots in
	Figure~\ref{fig_asso_D4}. The facet corresponding to $-\alpha_2$ is
    combinatorially equivalent to $P(\Roots(\{1\})) \times P(\Roots(\{3\})) 
    \times P(\Roots(\{4\}))$, which is a $3$-cube presented in the boundary. The
	intersection of these four facets is a vertex sits in the bottom colored
	in green. The Coxeter mutation $\qcoxeter$ acts on the face poset
	of the permutohedron, and four green vertices are in the same
	orbit.
\end{example}

\begin{remark}\label{remark:folding and Coxeter mutation}
As saw in Example~\ref{example_folding_ADE}, bipartite coloring 
on quivers of type~$\dynADE$ induce that on quivers of type~$\dynBCFG$.
Accordingly, if a seed pattern of simply-laced type $\dynX$ 
gives a seed pattern of type $\dynY$ via the folding procedure, then
the Coxeter mutation of type~$\dynY$ is the same as
that of type~$\dynX$. 
More precisely, for a globally foldable $Y$-seed $(\bfy,\qbasispr)$ with respect to $G$ 
of type $\dynX$ and its Coxeter mutation $\qcoxeter^{\dynX}$, we have
\[
 \qcoxeter^{\dynY}((\bfy,\qbasispr)^G) = (\qcoxeter^{\dynX}(\bfy,\qbasispr))^G.
\]
Here,  $\qcoxeter^{\dynY}$ is the Coxeter mutation on 
the seed pattern determined by $(\bfy,\qbasispr)^G$.

Moreover, Coxeter numbers of $\dynX$
and $\dynY$ are the same. Indeed, 
\[
\begin{split}
& h(\dynA_{2n-1}) = h (\dynB_n) = 2n, \\
& h(\dynD_{n+1}) = h(\dynC_n) = 2n, \\
& h(\dynE_6) = h(\dynF_4) = 12, \\
& h(\dynD_4) = h(\dynG_2) = 6.
\end{split}
\]
\end{remark}
\begin{figure}
	\subfigure[The generalized associahedron of type $\dynD_4$.]{
        \centering
\tdplotsetmaincoords{110}{260}
  
}
\caption{The generalized associahedron of type $\dynD_4$ and facets corresponding to some negative simple roots $-\alpha_1$, $-\alpha_3$, and $-\alpha_4$.}\label{fig_asso_D4}
\end{figure}

In the remaining part of this section, we recall~\cite{FZ4_2007} which
considers the combinatorics on mutations. Let
$\quiver$ be a bipartite quiver and $I_+$ and $I_-$ be the bipartite
decomposition of the vertex set of $\quiver$. Consider the composition
$\qcoxeter = \mutation_+ \mutation_-$ of a sequence of mutations where
\[
    \mutation_{\varepsilon} = \prod_{i \in I_{\varepsilon}} \mutation_i \qquad \text{ for } \varepsilon \in \{ +, -\}.
\]
We call $\qcoxeter$ a \emph{Coxeter mutation} as before.
We enclose this section by recalling the result~\cite[Theorem~8.8]{FZ4_2007}  on the order of Coxeter mutation on the cluster pattern. 
Recall from Proposition~\ref{prop_Y-pattern_exchange_graph} that for an exchange matrix $\qbasis_{t_0}$, if $\qbasispr_{t_0}$ is skew-symmetric, then the exchange graph of a seed pattern $\{(\bfx_t,\qbasis_t)\}_{t\in \mathbb{T}_n}$ and that of a $Y$-pattern $\{(\bfy_t, \qbasispr_{t})\}_{t\in\mathbb{T}_n}$ having algebraically independent variables $y_{1;t_0},\dots,y_{n;t_0}$ are the same. Accordingly, we obtain the following from~\cite[Theorem~8.8]{FZ4_2007}.
\begin{lemma}[{cf. \cite[Theorem~8.8]{FZ4_2007}}]\label{lemma:order of coxeter mutation}
Let $(\bfy_{t_0}, \qbasispr_{t_0})$ be an initial $Y$-seed. Suppose
that $\qbasispr_{t_0} = \qbasispr(\quiver)$ for a
bipartite quiver $\quiver$
and $y_{1;t_0},\dots,y_{n;t_0}$ are algebraically independent.
Then the set $\{ \qcoxeter^r (\bfy_{t_0}, \qbasispr_{t_0}) \}_{r \in \Z_{\geq 0}}$ 
of $Y$-seeds is finite if and only if $\qbasispr_{t_0}$ is of
finite type.

Moreover, for such a quiver $\quiver$, the order the $\qcoxeter$-action is given by $(h+2)/2$ if $h$ is even, or $h+2$ otherwise, where $h$ is the corresponding Coxeter number.
\end{lemma}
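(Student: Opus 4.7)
The plan is to reduce the statement to the corresponding periodicity result for cluster patterns due to Fomin--Zelevinsky, using the algebraic independence hypothesis to identify the two exchange graphs.

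First, I would pick any cluster seed $(\bfx_{t_0},\qbasis_{t_0})$ whose initial exchange matrix has principal part equal to $\qbasispr_{t_0}$ (for instance, the one with principal coefficients). Since $y_{1;t_0},\dots,y_{n;t_0}$ are algebraically independent, the hypothesis of Proposition~\ref{prop_Y-pattern_exchange_graph} is satisfied, giving an identification
\[
\exchange(\{(\bfy_t,\qbasispr_t)\}_{t\in\mathbb{T}_n}) \;=\; \exchange(\{(\bfx_t,\qbasis_t)\}_{t\in\mathbb{T}_n})
\]
under which $\qcoxeter^r(\bfy_{t_0},\qbasispr_{t_0})$ corresponds to $\qcoxeter^r(\bfx_{t_0},\qbasis_{t_0})$ for every $r\ge 0$. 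Consequently, the orbit $\{\qcoxeter^r(\bfy_{t_0},\qbasispr_{t_0})\}_{r\ge 0}/{\sim}$ is finite if and only if the cluster orbit $\{\qcoxeter^r(\bfx_{t_0},\qbasis_{t_0})\}_{r\ge 0}/{\sim}$ is, and the two actions have the same order on their orbits.

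Next, I would invoke [FZ4_2007, Theorem~8.8] on the cluster side. For a bipartite seed, that theorem asserts exactly that the $\qcoxeter$-orbit of the initial cluster is finite if and only if the Cartan counterpart $C(\qbasispr_{t_0})$ is of finite type, and in that case the order is $(h+2)/2$ for $h$ even and $h+2$ for $h$ odd, where $h$ is the Coxeter number of the corresponding root system. The ``if'' direction also follows more directly from the combinatorial description provided by Proposition~\ref{prop_FZ_finite_type_Coxeter_element}: under the bijection~\eqref{equation_bijective_vars_alpostRoots_facets} between cluster variables and almost positive roots, the action of $\qcoxeter$ translates into the rotation $\tau$ on $\alposRoots$, whose order on each $\tau$-orbit can be computed case-by-case from the root-system data and gives the stated values $(h+2)/2$ or $h+2$.

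The main obstacle is the ``only if'' direction, i.e.\ showing that whenever $\qbasispr_{t_0}$ is \emph{not} of finite Dynkin type the orbit $\{\qcoxeter^r(\bfy_{t_0},\qbasispr_{t_0})\}_{r\ge 0}$ is genuinely infinite. This is the content of Zamolodchikov-type periodicity failing outside finite type, and it is established in [FZ4_2007] by combining the classification in Theorem~\ref{thm_FZ_finite_type} with a detailed analysis of the tropical/piecewise-linear dynamics of $\qcoxeter$; we invoke it as a black box rather than reprove it here. Since our hypotheses — $\qbasispr_{t_0}$ bipartite and the $y_{i;t_0}$ algebraically independent — are exactly what is needed to apply [FZ4_2007, Theorem~8.8] after the passage to cluster seeds carried out in the first step, the lemma follows.
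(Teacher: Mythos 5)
Your proposal is correct and follows essentially the same route as the paper: the paper derives this lemma by first using Proposition~\ref{prop_Y-pattern_exchange_graph} (under the algebraic-independence hypothesis) to identify the exchange graph of the $Y$-pattern with that of a cluster pattern having the same principal part, and then citing \cite[Theorem~8.8]{FZ4_2007} for both the finiteness criterion and the order $(h+2)/2$ resp.\ $h+2$. The paper gives no further detail beyond this reduction, so your spelled-out version matches its argument.
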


\section{Legendrians and \texorpdfstring{$N$-graphs}{N-graphs}}\label{sec:N-graph}

We recall from \cite{CZ2020} the notion of $N$-graphs and their combinatorial moves which encode the Legendrian isotopy data of corresponding Legendrian surfaces. As an application, we review how $N$-graphs can be use to find and to distinguish Lagrangian fillings for Legendrian links.

\subsection{\texorpdfstring{$N$}{N}-graphs and Legendrian weaves}

\begin{definition}\cite[Definition~2.2]{CZ2020}\label{definition:N-graph}
An  $N$-graph $\ngraph$ on a smooth surface $S$ is an $(N-1)$-tuple of graphs $(\ngraph_1,\dots, \ngraph_{N-1})$ satisfying the following conditions:
\begin{enumerate}
\item Each graph $\ngraph_i$ is embedded, trivalent, possibly empty and non necessarily connected.
\item Any consecutive pair of graphs $(\ngraph_i,\ngraph_{i+1})$, $1\leq i \leq N-2$, intersects only at hexagonal points depicted as in Figure~\ref{fig:hexagonal_point}.
\item Any pair of graphs $(\ngraph_i, \ngraph_j)$ with $1\leq i,j\leq N-1$ and $|i-j|>1$  intersects transversely at edges.
\end{enumerate}
\end{definition}

\begin{figure}[ht]
\begin{tikzpicture}
\begin{scope}
\draw[dashed] (0,0) circle (1cm);
\draw[red, thick] (60:1)--(0,0) (180:1)--(0,0) (-60:1)--(0,0);
\draw[blue, thick] (0:1)--(0,0) (120:1)--(0,0) (240:1)--(0,0);
\draw[thick,black,fill=white] (0,0) circle (0.05);
\end{scope}
\end{tikzpicture}
\caption{A hexagonal point}
\label{fig:hexagonal_point}
\end{figure}

Let $\pi_F:J^1S \cong T^*S\times\R\to S\times \R$ be the front projection, and we call the image $\pi_F(\Legendrian)$ of a Legendrian~$\Legendrian\subset J^1S$ a \emph{wavefront}.
Since $J^1S$ is equipped with the contact form $dz-p_x dx-p_y dy$, the coordinates~$(p_x,p_y)$ of the Legendrian $\Legendrian$ are recovered from $(x,y)$-slope of the tangent plane $T_{(x,y,z)}\pi_F(\Legendrian)$:
\begin{align*}
p_x&=\partial_x z(x,y),&  p_y&=\partial_y z(x,y).
\end{align*}
For any $N$-graph $\ngraph$ on a surface $S$, we associate a Legendrian surface $\Legendrian(\ngraph)\subset J^1S$. 
Basically, we construct the Legendrian surface by weaving the wavefronts in $S \times \R$ constructed from a local chart of $S$. 

Let $\ngraph\subset S$ be an $N$-graph.
A finite cover $\{U_i\}_{i\in I}$ of $S$ is called {\em $\ngraph$-compatible} if
\begin{enumerate}
\item each $U_i$ is diffeomorphic to the open disk $\mathring{\disk}^2$,
\item $U_i \cap \ngraph$ is connected, and
\item $U_i \cap \ngraph$ contains at most one vertex.
\end{enumerate}

For each $U_i$, we associate a wavefront $\wavefront(U_i)\subset U_i\times \R \subset S\times \R$.
Note that there are only five types of nondegenerate local charts for any $N$-graph $\ngraph$ as follows:
\begin{enumerate}[Type 1]
\item A chart without any graph component whose corresponding wavefront becomes
\[
\bigcup_{i=1,\dots,N}\mathring{\disk}^2\times\{i\}\subset \mathring{\disk}^2\times \R.
\]

\item A chart with single edge. The corresponding wavefront is the union of the $\dynA_1^2$-germ along the two sheets $\mathring\disk^2\times \{i\}$ and $\mathring\disk^2\times\{i+1\}$, and trivial disks $\disk^2\times\{j\}$, $j\in \{1,\dots,N\}\setminus\{i,i+1\}$.
The local model of $\dynA_1^2$ comes from the origin of the singular surface
\[
\wavefront(\dynA_1^2)=\{(x,y,z)\in \R^3 \mid x^2-z^2=0\}
\]
See Figure~\ref{fig:A_1^2 germ}.

\item A chart with two transversely intersecting edges. The wavefront consists of two $\dynA_1^2$-germs of $\mathring\disk^2\times\{i,i+1\}$ and $\mathring\disk^2\times\{j,j+1\}$ with $|i-j|>1$, and trivial disks $\disk^2\times\{k\}$, $k\in \{1,\dots,N\}\setminus\{i,i+1,j,j+1\}$.

\item A chart with a monochromatic trivalent vertex whose wavefront is the union of the $\dynD_4^-$-germ, see \cite[\S2.4]{Arn1990}, and trivial disks $\disk^2\times\{j\}$, $j\in \{1,\dots,N\}\setminus\{i,i+1\}$.
The local model for Legendrian singularity of type $\dynD_4^-$ is given by the image at the origin of 
\begin{align*}
\delta_4^-:\R^2\to \R^3:(x,y)\mapsto \left( x^2-y^2, 2xy, \frac{2}{3}(x^3-3xy^2) \right).
\end{align*}
See Figure~\ref{fig:D_4^- germ}.

\item A chart with a bichromatic hexagonal point. The induced wavefront is the union of the $\dynA_1^3$-germ along the three sheets $\mathring\disk^2\times \{*\}$, $*=i,i+1,i+2$, and the trivial disks $\disk^2\times\{j\}$, $j\in \{1,\dots,N\}\setminus\{i,i+1,i+2\}$. The local model of $\dynA_1^3$ is given by the origin of the singular surface
\[
\{(x,y,z)\in \R^3 \mid (x^2-z^2)(y-z)=0\}.
\]
See Figure~\ref{fig:A_1^3 germ}.
\end{enumerate}

\begin{figure}[ht]
\subfigure[The germ of $A_1^2$\label{fig:A_1^2 germ}]{\makebox[0.3\textwidth]{$

\caption{Local charts for $N$-graphs of Type 2,3,4, and 5.}
\label{fig:local_chart_3-graphs}
\end{figure}

\begin{definition}\cite[Definition~2.7]{CZ2020}
Let $\ngraph$ be an $N$-graph on a surface $S$. 
The {\em Legendrian weave}~$\Legendrian(\ngraph)\subset J^1 S$ is an embedded Legendrian surface whose wavefront $\wavefront(\ngraph)\subset S\times \R$ is constructed by weaving the wavefronts $\{\wavefront(U_i)\}_{i\in I}$ from a $\ngraph$-compatible cover $\{U_i\}_{i\in I}$ with respect to the gluing data given by $\ngraph$.
\end{definition}

\begin{remark}
When an $N$-graph $\ngraph$ is fixed, the space of possible Legendrian weaves $\Legendrian(\ngraph)$ is contractible via Legendrian isotopy.
So $\Legendrian(\ngraph)$ is well-defined up to Legendrian isotopy.
\end{remark}

We also list certain degenerate local models of $N$-graph as follows:
\begin{enumerate}[Type~D1]
\item A chart with double edges whose wavefront consists of two $\dynA_1^2$-germs of $\mathring\disk^2\times\{i,i+1\}$ and $\mathring\disk^2\times\{j,j+1\}$ for $|i-j|>1$, and trivial disks $\disk^2\times\{k\}$, $k\in \{1,\dots,N\}\setminus\{i,i+1,j,j+1\}$.
See the left-hand side of Figure~\ref{fig:degenerate type1}.

\item A chart with double trivalent vertices whose wavefront consists of two $\dynD_4^-$-germs at the level of $i, i+1$, and $j,j+1$ with $|i-j|>1$.
The other levels are trivial disks.
See the right-hand side of Figure~\ref{fig:degenerate type1}.

\item A chart with trichromatic graph of $(\ngraph_{i-1},\ngraph_i,\ngraph_{i+1})$ satisfying \label{degenerate_type2}
\begin{itemize}
\item each has a unique vertex of four valent,
\item $\ngraph_{i-1}$ and $\ngraph_{i+1}$ are identical, and
\item $\ngraph_i$ and $\ngraph_{i+1}$ are intersecting at the vertex of eight valent in an alternating way, see the middle one in Figure~\ref{fig:degenerate type3}.
\end{itemize}
\end{enumerate}

For $i=2$, the wavefront corresponding to a chart of \ref{degenerate_type2} inside $\disk^2\times \R$ consists of four disks $(\disk_1,\dots,\disk_4)$, which is the cone $C(\legendrian)=\legendrian\times[0,1]/\legendrian\times\{0\}$ of the following Legendrian front $\legendrian$ in $\sphere^1\times\R$
\[
(\sigma_{1,3}\sigma_2)^4=\vcenter{\hbox{

\caption{A wavefront for the degenerate $N$-graph.}
\label{fig:degenerated N-graph}
\end{figure}

We obtain (regular) $N$-graphs from degenerate $N$-graphs via (generic) perturbation of the wavefront as depicted in Figure~\ref{figure:perturbation of degenerated Ngraphs}.

The idea of $N$-graph is useful in the study of Legendrian surface, because the Legendrian isotopy of the Legendrian weave $\Legendrian(\ngraph)$ can be encoded in combinatorial moves of $N$-graphs.

\begin{theorem}\cite[Theorem~1.1]{CZ2020}\label{thm:N-graph moves and legendrian isotopy}
Let $\ngraph$ be a non-degenerate local $N$-graph. The combinatorial moves $\Move{I}\sim \Move{VI'}$ in Figure~\ref{fig:move1-6} are Legendrian isotopies for $\Legendrian(\ngraph)$.
\end{theorem}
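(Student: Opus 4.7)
The plan is to verify each combinatorial move $\Move{I}$ through $\Move{IV'}$ separately, by exhibiting an explicit Legendrian isotopy supported in a small neighborhood of the disk where the move takes place. Since the global Legendrian weave $\Legendrian(\ngraph)$ is built by weaving local wavefronts over an $\ngraph$-compatible cover of $S$, a Legendrian isotopy that is compactly supported in a single disk extends, via the identity, to a compactly supported Legendrian isotopy of $\Legendrian(\ngraph)$ in $J^1S$ relative to the boundary. Therefore the whole theorem reduces to the local statement that, inside $J^1 \mathring{\disk}^2$, the Legendrian surfaces associated to the two sides of each move are Legendrian isotopic.

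First, I would handle the moves that involve only bichromatic or disjoint pieces of the graph and do not touch any monochromatic trivalent vertex. These correspond to the local models where the wavefront is a union of $\dynA_1^2$-germs and $\dynA_1^3$-germs. The isotopy is essentially a planar isotopy of the codimension-one strata together with a normal-form rearrangement of the Reeb direction, so the front modification is given explicitly by sliding the hexagonal point (the $\dynA_1^3$ cone) across a transverse edge, or by pushing two parallel edges of disjoint color past each other. I would write down the one-parameter family of wavefronts in coordinates $(x,y,z)$ of $\mathring{\disk}^2 \times \R$ and check that at each time the locus $\wavefront_t$ is the front of an embedded Legendrian, using the usual criterion that the $(p_x,p_y)$ coordinates remain smooth off the singular strata.

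Next, for the moves involving a monochromatic trivalent vertex, the relevant local model is the $\dynD_4^-$-germ given by the map $\delta_4^-$. Here the candidate isotopy is obtained by deforming the generating family of the $\dynD_4^-$ singularity while keeping its Legendrian lift embedded; concretely, one deforms the parametrization $(x,y) \mapsto (x^2-y^2,2xy,\tfrac{2}{3}(x^3-3xy^2))$ by adding small lower-order terms in a one-parameter family and shows that the resulting Legendrian surfaces are isotopic. For the moves in which a $\dynD_4^-$-vertex collides with a hexagonal $\dynA_1^3$-point, I would use the standard fact that these are stable Legendrian singularities and so their intersection pattern is governed by a generic one-parameter family; the two sides of the move are the two generic smoothings of the codimension-one degeneration, and a direct check in local coordinates produces the isotopy.

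I expect the hard part to be the verification of Move $\Move{IV}$ (and its primed variant), which combines a monochromatic trivalent vertex with a bichromatic hexagonal point. The difficulty is that the two $\dynD_4^-$-germs on either side of the move sit in different pairs of adjacent sheets, so the isotopy must interchange the role of the sheets while preserving the embeddedness of the weave. My strategy for this move is to factor the isotopy into three stages: first push the $\dynD_4^-$-vertex toward the hexagonal point until they coalesce into a higher codimension Legendrian singularity; then perturb this singularity in the transverse direction parametrized by the bichromatic hexagonal structure; and finally separate the singularities on the other side of the move. Each stage is a Legendrian isotopy by a genericity argument, and their concatenation realizes the combinatorial move. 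The remaining moves then follow by symmetry (colour reversal, sheet relabelling, and reflection), completing the proof.
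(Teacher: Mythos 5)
This statement is quoted from Casals--Zaslow \cite{CZ2020} and the present paper gives no proof of it, so the only meaningful comparison is with the argument in that reference. Your overall architecture -- localize to a single chart of an $\ngraph$-compatible cover, then realize each move as a one-parameter family of wavefronts whose Legendrian lifts stay embedded -- is exactly the strategy used there, and your observation that the bichromatic moves reduce to sliding $\dynA_1^3$ and $\dynA_1^2$ strata past each other while the monochromatic ones are controlled by deformations of the $\dynD_4^-$ generating family is on target. (It is also consistent with the reduction, noted in the paper, that \Move{III} factors through \Move{I} and \Move{II}.)

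There is, however, a genuine gap in your treatment of \Move{IV}: the assertion that ``each stage is a Legendrian isotopy by a genericity argument'' is not a valid principle. A generic one-parameter family of fronts passes through codimension-one strata that include \emph{dangerous self-tangencies} (tangencies of two branches with matching coorientation), and these change the Legendrian isotopy class of the lift even though the fronts are homotopic; this is precisely why front homotopy does not imply Legendrian isotopy. Likewise, degenerating the $\dynD_4^-$ vertex and the hexagonal point into a higher-codimension singularity and then re-resolving on the other side does not by itself produce a Legendrian isotopy between the two resolutions -- the two generic perturbations are a priori only connected through the discriminant, and one must check that the connecting path avoids the dangerous strata (equivalently, exhibit a continuous family of generating functions whose Legendrian lifts remain embedded throughout, which is how Casals--Zaslow handle the $\dynD_4^-$/$\dynA_1^3$ interaction). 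Without this verification for each move -- and in particular for \Move{IV} and \Move{IV'}, where the sheets interchanged by the two $\dynD_4^-$ germs are different -- the proof is incomplete; the ``follows by symmetry'' step at the end inherits the same gap.
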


We denote the equivalence class of an $N$-graph $\ngraph$ up to the moves $\Move{I}\sim \Move{VI'}$ in Figure~\ref{fig:move1-6} by~$[\ngraph]$.
Let us also list the combinatorial moves \Move{DI} and \Move{DII} for Legendrian isotopies involving degenerate $N$-graphs as depicted in Figure~\ref{fig:move1-6}.

\begin{corollary}\label{cor:degenerate N-graph moves and legendrian isotopy}

Let $\ngraph$ be a local degenerate $N$-graph. The combinatorial moves \Move{DI} and \Move{DII} in Figure~\ref{fig:move1-6} are Legendrian isotopies for $\Legendrian(\ngraph)$.
\end{corollary}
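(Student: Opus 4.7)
The plan is to reduce the degenerate case to the non-degenerate case of Theorem~\ref{thm:N-graph moves and legendrian isotopy} via a controlled perturbation argument. Concretely, a degenerate $N$-graph of Type~D1 or Type~D2 sits on the boundary of the space of $N$-graphs, and Figure~\ref{figure:perturbation of degenerated Ngraphs} exhibits, for each of them, an explicit one-parameter family of (non-degenerate) $N$-graphs $\ngraph_{\epsilon}$ whose wavefronts $\wavefront(\ngraph_{\epsilon})$ converge to $\wavefront(\ngraph)$ as $\epsilon\to 0$. Since the wavefront only changes by a compactly supported smooth perturbation that stays inside the prescribed disk, the induced family of Legendrian weaves $\Legendrian(\ngraph_{\epsilon})$ gives a Legendrian isotopy between $\Legendrian(\ngraph)$ and $\Legendrian(\ngraph_{\epsilon})$ for every sufficiently small $\epsilon>0$.

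Next, I would interpret each side of Move~\Move{DI} and Move~\Move{DII} after such a perturbation. For Move~\Move{DI}, the double edge of colors $(i,i+1)$ and $(j,j+1)$ with $|i-j|>1$ perturbs to a transverse pair of edges, i.e.\ a non-degenerate chart of Type~3; the corresponding move between the two perturbed configurations is then a composition of Moves~\Move{I}--\Move{IV'}, typically involving a planar isotopy pushing the two edges past each other together with a Move~\Move{II}/\Move{III} adjustment. For Move~\Move{DII}, the trichromatic degenerate chart perturbs to the right-hand configuration of Figure~\ref{figure:perturbation of degenerated Ngraphs}(b), whose Legendrian weave is locally the cone on the front $(\sigma_{1,3}\sigma_2)^4$; here the required isotopy is realized by a sequence of hexagonal moves~\Move{III} and bigon moves~\Move{II} around the central eight-valent point, applied simultaneously in the symmetric fashion forced by the identification $\ngraph_{i-1}=\ngraph_{i+1}$.

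Putting these ingredients together yields the desired Legendrian isotopy: first perturb $\Legendrian(\ngraph)$ on the left-hand side to a non-degenerate model $\Legendrian(\ngraph_\epsilon)$, apply the chain of non-degenerate moves from Theorem~\ref{thm:N-graph moves and legendrian isotopy} to reach the perturbed right-hand side $\Legendrian(\ngraph'_\epsilon)$, and finally degenerate back to $\Legendrian(\ngraph')$ by letting $\epsilon\to 0$. Each of the three stages is a Legendrian isotopy with compact support inside the local chart, and their concatenation realizes Move~\Move{DI} or~\Move{DII} as a Legendrian isotopy of $\Legendrian(\ngraph)$.

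The main obstacle I anticipate is Move~\Move{DII}: one must check that the sequence of non-degenerate moves applied to the perturbed model is independent (up to Legendrian isotopy) of the chosen perturbation, since Figure~\ref{figure:perturbation of degenerated Ngraphs}(b) shows two a priori distinct resolutions of the eight-valent point. Verifying that both resolutions yield Legendrian isotopic weaves amounts to checking a commutativity relation among the hexagonal and bigon moves around the degenerate vertex; this is where the condition that $\ngraph_{i-1}$ and $\ngraph_{i+1}$ coincide is essential, as it forces the two perturbations to differ by a Move~\Move{II} and an overall planar isotopy. The remaining cases are routine applications of Theorem~\ref{thm:N-graph moves and legendrian isotopy}.
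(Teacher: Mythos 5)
Your proposal is correct and follows essentially the same route as the paper: both sides of \Move{DI} and \Move{DII} are perturbed to non-degenerate $N$-graphs as in Figure~\ref{figure:perturbation of degenerated Ngraphs}, and the perturbed configurations are then connected by a composition of the moves \Move{I}--\Move{IV'} of Theorem~\ref{thm:N-graph moves and legendrian isotopy}. The paper simply makes this explicit by writing out the pictorial move sequences in Appendix~\ref{appendix:DI and DII} (which use \Move{II}, \Move{VI}, \Move{I} for \Move{DI} and \Move{II}, \Move{IV} for \Move{DII}), so the independence-of-perturbation issue you flag is handled by exhibiting the concrete chains of moves rather than by a separate commutativity argument.
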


\begin{proof}
It is direct to check that the moves (DI) and (DII) for degenerate $N$-graphs can be obtained by composing the perturbations in Figure~\ref{figure:perturbation of degenerated Ngraphs} and moves in Figure~\ref{fig:move1-6}. See Appendix~\ref{appendix:DI and DII}.
\end{proof}

\begin{figure}[ht]

\caption{Combinatorial moves for Legendrian isotopies of surface $\Legendrian(\ngraph)$.
Here the pairs ({\color{blue} blue}, {\color{red} red}) and ({\color{red} red}, {\color{green} green}) are consecutive. Other pairs are not.}
\label{fig:move1-6}
\end{figure}

\begin{definition}\label{definiton:freeness}
An $N$-graph $\ngraph$ on $S$ is called {\em free} if the induced Legendrian weave $\Legendrian(\ngraph)\subset J^1S$ can be woven without interior Reeb chord. 
\end{definition}

\begin{example}\cite[Example 7.3]{CZ2020}\label{ex:free N-graph}
Let $\ngraph\subset \disk^2$ be a $2$-graph such that $\disk^2\setminus \ngraph$ is simply connected relative to the boundary $\boundary\disk^2\cap(\disk^2\setminus \ngraph)$. Then $\ngraph$ is free if and only if $\ngraph$ has no faces contained in~$\mathring\disk^2$.
Note that each of such faces admits at least one Reeb chord, see Figure~\ref{fig:N-graphs with Reeb chords}.
\end{example}

\begin{figure}[ht]
\begin{tikzcd}
\begin{tikzpicture}
\draw [dashed] (0,0) circle [radius=1.5];
\draw [thick,blue] (-1.5,0)-- (-1,0) to[out=90,in=90] (1,0)--(1.5,0) (-1,0) to[out=-90,in=-90] (1,0);
\draw [thick, blue, fill] (-1,0) circle (1.5pt)  (1,0) circle (1.5pt);
\end{tikzpicture}
&
\begin{tikzpicture}
\draw [dashed] (0,0) circle [radius=1.5];
\draw [thick,blue] (90:1.5) -- (90:1) (210:1.5) -- (210:1) (-30:1.5) -- (-30:1)
(90:1) -- (210:1) -- (-30:1)-- (90:1);
\draw [thick, blue, fill] (90:1) circle (1.5pt) (210:1) circle (1.5pt) (-30:1) circle (1.5pt);
\end{tikzpicture}
\end{tikzcd}
\caption{$N$-graphs with Reeb chords}
\label{fig:N-graphs with Reeb chords}
\end{figure}

In particular, we have the following lemma whose proof is omitted.
\begin{lemma}\label{lemma:tree Ngraphs are free}
Let $\ngraph=(\ngraph_1,\dots,\ngraph_{N-1})$ be an $N$-graph on $\disk^2$. Suppose that each $\ngraph_i$ is a tree or empty. Then $\ngraph$ is free.
\end{lemma}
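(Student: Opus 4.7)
The plan is to reduce the general case to the $N=2$ case already handled in Example~\ref{ex:free N-graph}, by exploiting the fact that Reeb chords of the Legendrian weave $\Legendrian(\ngraph)$ can be classified according to which pair of sheets they connect, and that the weaving of each pair of consecutive sheets $i$ and $i+1$ is controlled exclusively by the single graph $\ngraph_i$.

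First, I would note that since each $\ngraph_i$ is a tree, every connected component of $\disk^2 \setminus \ngraph_i$ has nonempty intersection with $\boundary \disk^2$; in particular $\ngraph_i$ has no face contained in $\mathring{\disk}^2$. Applying the $N=2$ analysis of Example~\ref{ex:free N-graph} to the pair of sheets $(i, i+1)$, the subweave controlled by $\ngraph_i$ alone admits a model without interior Reeb chord between sheets $i$ and $i+1$. For pairs of sheets $(k,\ell)$ with $|k-\ell|>1$, no graph $\ngraph_j$ swaps them, so they can be placed at disjoint height ranges throughout $\disk^2$, ruling out Reeb chords between such sheets.

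Next, I would handle the interactions between different $\ngraph_i$'s at vertices of the combined graph. For $|i-j|>1$, the graphs $\ngraph_i$ and $\ngraph_j$ act on disjoint pairs of sheets, and their transverse crossings yield two independent $\dynA_1^2$ germs; by keeping the height gap between sheets $i+1$ and $j$ uniformly positive, no Reeb chord appears between nonconsecutive pairs. For consecutive pairs $(\ngraph_i, \ngraph_{i+1})$, the hexagonal meeting points produce the $\dynA_1^3$ germ on sheets $i, i+1, i+2$, which is locally Reeb-chord-free by direct inspection of the local model in Figure~\ref{fig:legendrian_singularities}.

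Finally, I would verify global consistency, i.e.\ that the pair-by-pair Reeb-chord-free local models glue into a single wavefront $\wavefront(\ngraph)$ realizing the weave without interior Reeb chords. The tree hypothesis on each $\ngraph_i$ is precisely what guarantees this, because it allows one to extend the sheet-height ordering between sheets $i$ and $i+1$ from $\boundary \disk^2$ to all of $\disk^2$ with no interior monodromy (there are no cycles in $\ngraph_i$). The main obstacle is handling the monochromatic trivalent vertices (the $\dynD_4^-$ germ) that appear as interior vertices of the trees $\ngraph_i$: one must check that the standard $\dynD_4^-$ local wavefront contributes no interior Reeb chord beyond the prescribed cusp edges, which follows directly from the explicit parametrization $\delta_4^-(x,y)=(x^2-y^2,\, 2xy,\, \tfrac{2}{3}(x^3-3xy^2))$ recalled in Section~\ref{sec:N-graph}, together with the fact that the three edges of $\ngraph_i$ emanating from such a vertex lie in three distinct complementary regions of the tree.
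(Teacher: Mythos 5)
Your overall strategy --- classifying potential Reeb chords by the pair of sheets they connect, handling each consecutive pair via the $2$-graph criterion of Example~\ref{ex:free N-graph}, and checking the local models at trivalent and hexagonal vertices --- is the right one, and your observations that a tree has no face contained in $\mathring{\disk}^2$ and that the $\dynA_1^3$ and $\dynD_4^-$ germs contribute no interior Reeb chords are correct.

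The gap is in your treatment of non-consecutive sheets. A Reeb chord between sheets $k$ and $\ell$ over a point $q\in\mathring{\disk}^2$ is not a point where the two heights coincide; it is a point where the two tangent planes are parallel, i.e.\ a critical point of the difference function $h_{k\ell}=f_\ell-f_k$ of the sheet functions. Keeping the height ranges of sheets $k$ and $\ell$ disjoint therefore does nothing to exclude such chords --- in the extreme case of two disjoint flat sheets \emph{every} point would carry a Reeb chord. Since $h_{k\ell}=\sum_{j=k}^{\ell-1}h_{j,j+1}$ and each summand is only controlled individually by $\ngraph_j$, the gradients $\nabla h_{j,j+1}$ can cancel at an interior point even though each is nonvanishing there, and this cancellation is precisely what must be ruled out. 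The standard fix is quantitative: after choosing each $h_{j,j+1}$ with nonvanishing gradient on $\mathring{\disk}^2\setminus\ngraph_j$ (this is where the tree hypothesis enters, since the complement of a tree has no interior face in which such a gradient could be forced to vanish), one rescales the sheets so that the norms $\|\nabla h_{j,j+1}\|$ grow fast enough in $j$, away from a small neighborhood of $\ngraph$, that every partial sum $\nabla h_{k\ell}$ is dominated by its top term and hence cannot vanish. Without this (or an equivalent device) the pairwise reduction does not close, so as written the proposal has a genuine gap at exactly the step where Reeb chords between sheets $k$ and $\ell$ with $|k-\ell|>1$ are dismissed.
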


Let us consider the Lagrangian projection $\pi_L:J^1S\cong T^*S\times \R \to T^*S$.
Then the image 
\[
L(\ngraph)\colonequals\pi_L(\Legendrian(\ngraph))
\] 
of the Legendrian weave gives us an exact, possibly immersed Lagrangian surface in $T^*S$.
The following lemma is a direct consequence of Theorem~\ref{thm:N-graph moves and legendrian isotopy} and Definition~\ref{definiton:freeness}.
\begin{lemma}
Let $\ngraph$ and $\ngraph'$ be two $N$-graphs on $S$. Then the following statements hold:
\begin{enumerate}
\item If $\ngraph$ is free, then the Lagrangian surface $L(\ngraph)=\pi_L(\Legendrian(\ngraph))$ is exact and embedded.
\item If $[\ngraph]=[\ngraph']$, then two Lagrangian surfaces
\[
L(\ngraph)=\pi_L(\Legendrian(\ngraph))\quad\text{ and }\quad
L(\ngraph')=\pi_L(\Legendrian(\ngraph'))
\]
in $T^*S$ are exact Lagrangian isotopic relative to boundary.
\end{enumerate}
\end{lemma}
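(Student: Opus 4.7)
The plan is to derive both assertions from standard contact-topological principles, using freeness to control self-intersections of the Lagrangian projection and Theorem~\ref{thm:N-graph moves and legendrian isotopy} to upgrade combinatorial equivalence to a geometric isotopy.

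For part (1), I would first recall that the Reeb vector field on $J^{1}S$ equipped with the contact form $\alpha = dz - p_{x}dx - p_{y}dy$ is $\partial_{z}$, and that a Reeb chord of an embedded Legendrian~$\Legendrian$ is precisely a pair of distinct points of $\Legendrian$ that share the same $(x,y,p_{x},p_{y})$-coordinates, i.e.\ a double point of $\pi_{L}|_{\Legendrian}$. Freeness of $\ngraph$, by Definition~\ref{definiton:freeness}, means that $\Legendrian(\ngraph)$ has no interior Reeb chords, so $\pi_{L}|_{\Legendrian(\ngraph)}$ is injective on the interior; combined with the transversality of the construction, this shows $L(\ngraph) = \pi_{L}(\Legendrian(\ngraph))$ is embedded. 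Exactness is then automatic: restricting the contact form along $\Legendrian(\ngraph)$ gives $dz|_{\Legendrian(\ngraph)} = (p_{x}dx + p_{y}dy)|_{\Legendrian(\ngraph)}$, so the Liouville one-form $\lambda_{\mathrm{std}} = p_{x}dx + p_{y}dy$ pulls back to the exact form $dz|_{\Legendrian(\ngraph)}$; transporting this under the diffeomorphism $\pi_{L}|_{\Legendrian(\ngraph)} \colon \Legendrian(\ngraph) \xrightarrow{\sim} L(\ngraph)$ provides a primitive on $L(\ngraph)$.

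For part (2), the strategy is to turn the combinatorial moves into a smooth Legendrian isotopy and then take its Lagrangian projection. Theorem~\ref{thm:N-graph moves and legendrian isotopy} says that each of the moves \Move{I}--\Move{IV'} (and one may add \Move{V}, \Move{VI}, \Move{VI'}, \Move{DI}, \Move{DII}) in Figure~\ref{fig:move1-6} is realised by a compactly supported Legendrian isotopy $\{\Legendrian_{t}\}_{t\in[0,1]}$ of $J^{1}S$ relative to the boundary, with $\Legendrian_{0} = \Legendrian(\ngraph)$ and $\Legendrian_{1} = \Legendrian(\ngraph')$. Since $[\ngraph] = [\ngraph']$ means $\ngraph'$ is obtained from $\ngraph$ by a finite sequence of such moves (together with planar ambient isotopy, which obviously produces a Legendrian isotopy), concatenating these gives a Legendrian isotopy between $\Legendrian(\ngraph)$ and $\Legendrian(\ngraph')$ fixed near $\partial S$.

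It remains to project: for any Legendrian isotopy $\{\Legendrian_{t}\}$ in $J^{1}S$, the family $L_{t} \colonequals \pi_{L}(\Legendrian_{t})$ is a smooth family of exact (possibly immersed) Lagrangians in $T^{*}S$, with primitives varying smoothly by the argument in part (1). Under the freeness hypothesis on both endpoints, $L_{0}$ and $L_{1}$ are embedded; a standard genericity argument, combined with the local nature of the moves, shows that the intermediate $L_{t}$ can be taken embedded as well (each individual move is supported in a small disk where the corresponding Legendrian isotopy can be chosen $C^{0}$-close to the identity off a neighborhood of the modified region, and within that neighborhood the moves are explicit enough to verify embeddedness directly). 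The resulting family is the desired exact Lagrangian isotopy relative to the boundary. The only subtle point, and the part I would be most careful about, is ensuring that each local move admits a representative Legendrian isotopy whose Lagrangian projection stays embedded throughout; for the non-degenerate moves this is part of the content of~\cite{CZ2020}, and for the degenerate moves \Move{DI}, \Move{DII} one reduces to the non-degenerate case via the perturbation procedure of Figure~\ref{figure:perturbation of degenerated Ngraphs}, as argued in Appendix~\ref{appendix:DI and DII}.
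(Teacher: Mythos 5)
Your proof is correct and is essentially the paper's intended argument: the paper states this lemma without proof as a direct consequence of Theorem~\ref{thm:N-graph moves and legendrian isotopy} and Definition~\ref{definiton:freeness}, and your write-up is the natural unpacking (Reeb chords $=$ double points of $\pi_L$, and $\lambda_{\mathrm{std}}$ pulling back to $dz$ for exactness; realizing the moves by Legendrian isotopies and projecting for part (2)). The one subtlety you flag — that the intermediate Lagrangian projections must remain embedded, i.e.\ no Reeb chords are created during the isotopy — is exactly the point the paper itself acknowledges only later, in the corollary phrased ``without making Reeb chords during the isotopy,'' so your care there is well placed.
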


\subsection{\texorpdfstring{$N$-graphs on $\disk^2$ and $\annulus$}{N-graphs on disk and annulus}}
In this section, we consider Legendrian links in $\R^3$ or $\sphere^3$, Lagrangian fillings in $\R^4$ and how to describe them in terms of $N$-graphs.

\subsubsection{Geometric setup}\label{sec:geometric setup}
Let us fix basic notions from $3$- and $5$-dimensional contact geometry.
Let $(\theta, p_\theta, z)$ be the coordinates of $J^1\sphere^1$ with the contact form $\alpha_{J^1\sphere^1}=dz-p_\theta d\theta$.
The Legendrian unknot $\legendrian_{\rm unknot}$ in $J^1\sphere^1$ is given by
\[
\legendrian_{\rm unknot}=\{ (\theta,0,0) \mid \theta \in \sphere^1\}\subset J^1\sphere^1.
\]
The symplectization of $J^1\sphere^1$ is 
\[(J^1\sphere^1\times \R_s,d(e^s(dz-p_\theta d\theta))),\] 
and its contactization becomes 
\[
(J^1\sphere^1\times \R_s \times \R_t,dt+e^s(dz-p_\theta d\theta))\] 
which is contactomorphic to $(J^1(\sphere^1\times \R_{r>0}), dw-p_\vartheta d\vartheta-p_r dr)$ under the strict contactomorphism~$\phi$ given by
\[
(\theta,p_\theta,z,s,t)\mapsto(\vartheta, r, p_\vartheta, p_r, w)=(\theta, e^s, e^s p_\theta, z, t+ e^s z).
\]
For each symplectization level $s=s_0$, the map $\phi$ induces a contact embedding $J^1\sphere^1 \hookrightarrow J^1(\sphere\times \R_{r>0})$ especially into $J^1(\sphere^1 \times \R_{r>0})\cap \{r=e^{s_0}\}$.

Furthermore, there is a strict contactomorphism
\[
\psi \colon (J^1(\sphere^1\times \R_{r>0}), dw-p_\vartheta d\vartheta-p_r dr) \to (J^1(\R^2\setminus\{{\bf 0}\}),dw-y_1dx_1-y_2dx_2)
\]
defined by
\[
(x_1,x_2,y_1,y_2,w)=\left(r \cos\vartheta,r\sin\vartheta, p_r \cos\vartheta -\frac{\sin\vartheta}{r}p_\vartheta, p_r \sin\vartheta + \frac{\cos\vartheta}{r}p_\vartheta,w\right).
\]
By compactifying the origin ${\bf 0}\in \R^2$, we have the following diagram:
\[
\begin{tikzcd}
J^1\sphere^1\times\mathbb{R}_s\times\mathbb{R}_t \arrow[r, "\cong", "\phi"'] \arrow[d,"\pi_t"] & 
J^1(\sphere^1\times\mathbb{R}_{r>0})\arrow[r,"\cong","\psi"'] &
J^1(\mathbb{R}^2\setminus\{\mathbf{0}\})\arrow[r, hookrightarrow] &
J^1\mathbb{R}^2=T^*\R^2\times \R_w \arrow[d,"\pi_L"]\\
J^1\sphere^1\times \R_s \arrow[rrr,hookrightarrow,"\Phi"] & & & T^*\R^2
\end{tikzcd}
\]
Here, the symplectic embedding $\Phi\colon J^1\sphere^1\times \R_s \hookrightarrow T^*\R^2$ is defined by
\begin{align*}
(\theta, p_\theta, z, s)\mapsto (x_1, x_2, y_1, y_2)=(e^s\cos\theta, e^s\sin\theta, z\cos\theta-p_\theta\sin\theta, z\sin\theta+p_\theta\cos\theta).
\end{align*}

On the other hand, we have another symplectomorphism 
\begin{align*}
\varphi \colon (\sphere^3\times \R_u,d(e^u\alpha_{\sphere^3}))&\to (T^*\R^2\setminus\{({\bf 0},{\bf 0})\},dx_1\wedge dy_1+dx_2\wedge dy_2);\\
(z_1,z_2,u)&\mapsto e^{u/2}(r_1\cos\theta_1,r_2\cos\theta_2, r_1\sin\theta_1,r_2\sin\theta_2)
\end{align*}
where $\sphere^3$ is the unit sphere in $\bbC^2_{z_1,z_2}$, $z_1=r_1 e^{i\theta_1}, z_2=r_2 e^{i\theta_2}$, and with the contact form
\begin{align*}
\alpha_{\sphere^3}=\frac{1}{2}r_1^2 d\theta_1 +\frac{1}{2} r_2^2d\theta_2,\quad r_1^2+r_2^2=1.
\end{align*}
So far, we have the following diagram of symplectic embeddings
\[
\begin{tikzcd}
J^1\sphere^1\times \R_s \arrow[rr, hookrightarrow, "\Phi"] \arrow[rd, hookrightarrow, "\Psi"'] & & T^*\R^2\setminus\{(\mathbf{0},\mathbf{0})\}\\
& \sphere^3\times\R_u \arrow[ru, "\cong"']
\end{tikzcd}
\]
where the map $\Psi(\theta, p_\theta, z, s) = (z_1, z_2, u)$ is defined by
\begin{align*}
z_1 &= \frac{e^s\cos\theta+ i (z\cos\theta-p_\theta\sin\theta)}{\sqrt{e^{2s}+z^2+p_\theta^2}},\\
z_2 &= \frac{e^s\sin\theta+ i (z\sin\theta+p_\theta\cos\theta)}{\sqrt{e^{2s}+z^2+p_\theta^2}},\\
e^u &= e^{2s}+z^2+p_\theta^2.
\end{align*}

Let us define $\iota:J^1\sphere^1\to \sphere^3$ as the composition of the inclusions
$J^1\sphere^1\cong J^1\sphere^1\times\{s=0\}\to J^1\sphere^1\times\R_s$, $\Psi:J^1\sphere^1\times\R_s\to \sphere^3\times\R^u$ and the projection $\sphere^3\times\R_u\to \sphere^3$ so that
\[
\iota(\theta, p_\theta, z)\colonequals
\left(
\frac{\cos\theta+i(z\cos\theta-p_\theta\sin\theta)}{\sqrt{1+z^2+p_\theta^2}},
\frac{\sin\theta+i(z\sin\theta+p_\theta\cos\theta)}{\sqrt{1+z^2+p_\theta^2}}
\right).
\]
Then the image of the Legendrian unknot $\legendrian_{\rm unknot}\subset J^1\sphere^1$ becomes
\[
\{(z_1,z_2) \mid z_1=\cos\theta, z_2=\sin\theta, \theta\in\sphere^1\}\subset \sphere^3\subset \bbC^2.
\]

Recall the stereographic projection of $\sphere^3$ with respect to $(0,-i)\in\bbC^2$, and see the corresponding image of $\legendrian_{\rm unknot}$:
\begin{align*}
(\sphere^3\setminus \{(0,-i)\},\alpha_{\sphere^3}) &\to (\R^3,dz'+x'dy'-y'dx')\cong \bbC\times \R;\\
(z_1,z_2)& \mapsto \left(\frac{i z_1}{i+z_2},\frac{-{\rm Re}(z_2)}{|i+z_2|^2}\right);\\
(\cos\theta,\sin\theta)&\mapsto \left(\frac{\cos\theta}{1+\sin^2\theta},\frac{\cos\theta\sin\theta}{1+\sin^2\theta},\frac{-\sin\theta}{1+\sin^2\theta}\right).
\end{align*}
Under the strict contactomorphism
\begin{align*}
(\R^3,dz'+x'dy'-y'dx') &\to (J^1\R,dz-ydx);\\
(x',y',z') &\mapsto (x,y,z)=(x',2y',z'+x'y'),
\end{align*}
the image of $\legendrian_{\rm unknot}$ becomes
\[
\left( \frac{\cos\theta}{1+\sin^2\theta},\frac{2\cos\theta\sin\theta}{1+\sin^2\theta},\frac{-2\sin^3\theta}{(1+\sin^2\theta)^2} \right)
\]
whose front projection looks like as follows:
\[
\begin{tikzpicture}[baseline=-.5ex, scale=2]
\draw[->] (-1.25,0) -- (1.25,0) node[above] {$x$};
\draw[->] (0,-0.75) -- (0,0.75) node[right] {$z$};
\draw[thick, red] plot[domain=0:2*pi, samples=200] ({cos(\x r)/ (1+ sin(\x r)^2)}, {-2*sin(\x r)^3/(1+sin(\x r)^2)^2});
\end{tikzpicture}
\]

Let $\legendrian\subset J^1\sphere^1$ be a Legendrian link.
Then the image $\iota(\legendrian)$ can be isotoped into a neighborhood of the Legendrian unknot in $\R^3$.
We consider a Legendrian surface $\widehat\Legendrian \subset J^1(\sphere^1\times \R_{r>0})$ having cylindrical ends so that for some $S_1>S_2$,
\begin{align*}
\widehat\Legendrian \cap J^1(\sphere^1\times\R_{r\ge e^{S_1}}) &\cong \legendrian_1\times\R_{s\ge S_1},&
\widehat\Legendrian \cap J^1(\sphere^1\times\R_{r\le e^{S_2}}) &\cong \legendrian_2\times\R_{s\le S_2}.
\end{align*}
Then the projection $L_{\widehat\Legendrian}=\pi_L(\psi(\widehat\Legendrian))$ of the surface $\widehat\Legendrian$ inside $\sphere^3\times\R_u$ becomes an exact Lagrangian cobordism from $\iota(\legendrian_1)$ to $\iota(\legendrian_2)$.

Similarly, let $\widehat\Legendrian\subset J^1\R^2$ be a Legendrian surface having a cylindrical end. That is, for some $S\in \R$, 
\[
\widehat\Legendrian\cap J^1\R^2_{r\ge e^S}\cong \legendrian \times \R_{s\ge S}.
\]
Then the projection $L_{\widehat\Legendrian}=\pi_L(\widehat\Legendrian)$ in 
$T^*\R^2\cong(\bbC^2, \omega_{\mathsf{st}})$ becomes an exact Lagrangian filling of $\iota(\legendrian)$.
Note that the Lagrangian $\pi_L(\widehat\Legendrian)$ is embedded if and only if the Legendrian surface $\widehat\Legendrian$ has no Reeb chords.

\begin{lemma}\label{lem:legendrian and lagrangian}
Let $\widehat\Legendrian$ and $\widehat\Legendrian'$ be two Legendrian surfaces in $J^1\R^2$ without Reeb chords having the identical cylindrical ends
\[
\widehat\Legendrian\cap J^1\R^2_{r\ge e^S} \cong \legendrian \times \R_{s\ge S}
\cong
\widehat\Legendrian'\cap J^1\R^2_{r\ge e^S}
\]
for some $S\in\R$.
If the exact embedded Lagrangian fillings $L_{\widehat\Legendrian}=\pi_L(\widehat\Legendrian)$ and $L_{\widehat\Legendrian'}=\pi_L(\widehat\Legendrian')$ of $\iota(\legendrian)$ are exact Lagrangian isotopic, then $\widehat\Legendrian, \widehat\Legendrian'$ are Legendrian isotopic.
\end{lemma}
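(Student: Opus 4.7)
The plan is to exploit the standard fact that a Legendrian surface in $J^1\R^2 \cong T^*\R^2 \times \R_w$ without Reeb chords is completely determined by its Lagrangian projection together with the choice of a primitive of the Liouville form, and that this correspondence turns exact Lagrangian isotopies into Legendrian isotopies.

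First I would set up the lifting dictionary. For any exact Lagrangian $L \subset T^*\R^2$ and any function $f \colon L \to \R$ satisfying $df = \lambda|_L$ (where $\lambda = y_1 dx_1 + y_2 dx_2$), the graph
\[
\widehat{L}(f) \colonequals \{(q, f(q)) \in J^1\R^2 \mid q \in L\}
\]
is an embedded Legendrian surface whose Lagrangian projection is $L$. Conversely, for a Legendrian $\widehat\Legendrian$ without Reeb chords, $\pi_L$ restricts to an embedding onto $L_{\widehat\Legendrian}$, so the $w$-coordinate pulls back to a primitive $f_{\widehat\Legendrian}$ of $\lambda|_{L_{\widehat\Legendrian}}$ and $\widehat\Legendrian = \widehat{L_{\widehat\Legendrian}}(f_{\widehat\Legendrian})$. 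Since $L$ is connected (by the assumed cylindrical end, the filling is path-connected to its boundary), the primitive is unique up to an overall additive constant, which is pinned down by the matching cylindrical ends of $\widehat\Legendrian$ and $\widehat\Legendrian'$.

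Next I would lift the given exact Lagrangian isotopy to a Legendrian one. Let $\{L_t\}_{t \in [0,1]}$ be an exact Lagrangian isotopy in $T^*\R^2$ with $L_0 = L_{\widehat\Legendrian}$ and $L_1 = L_{\widehat\Legendrian'}$, realized by an ambient Hamiltonian isotopy that is the identity on the cylindrical end $J^1\R^2_{r \ge e^S}$. For each $t$, choose the unique primitive $f_t$ of $\lambda|_{L_t}$ that agrees with the $w$-coordinate of $\legendrian \times \R_{s \ge S}$ on the cylindrical end. Continuity of $f_t$ in $t$ follows from the standard identity $\tfrac{d}{dt} f_t = H_t + (\iota_{X_{H_t}}\lambda)$ along $L_t$ for the generating Hamiltonian $H_t$, normalized by the boundary condition. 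Setting $\widehat\Legendrian_t \colonequals \widehat{L_t}(f_t)$ produces a smooth family of Legendrian embeddings in $J^1\R^2$ with $\widehat\Legendrian_0 = \widehat\Legendrian$ and cylindrical ends equal to $\legendrian \times \R_{s \ge S}$ throughout.

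Finally I would verify $\widehat\Legendrian_1 = \widehat\Legendrian'$. Both are Legendrian lifts (in the sense above) of the \emph{same} Lagrangian $L_1 = L_{\widehat\Legendrian'}$ and both restrict to $\legendrian \times \R_{s \ge S}$ on the cylindrical end. By the uniqueness of the primitive up to constant and the pinning from the cylindrical end, $f_1$ equals the primitive $f_{\widehat\Legendrian'}$ induced by $\widehat\Legendrian'$, hence $\widehat\Legendrian_1 = \widehat\Legendrian'$, and $\{\widehat\Legendrian_t\}$ is the desired Legendrian isotopy. The main technical point I expect to be delicate is the continuous (in fact smooth) dependence of the primitive $f_t$ on $t$ in the noncompact filling setting; this is handled by using a compactly supported Hamiltonian away from the cylindrical end where $f_t$ is prescribed, and then extending by the prescribed boundary values, so that integrating the standard ODE for $f_t$ produces a globally well-defined smooth family.
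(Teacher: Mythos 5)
The paper states this lemma without proof, treating it as a standard fact, so there is no argument of the authors' to compare yours against; what you wrote is the standard lifting argument one would expect, and it is essentially correct. Your dictionary between Reeb-chord-free Legendrians in $J^1\R^2$ and exact Lagrangians equipped with a primitive of $\lambda|_L$ is the right mechanism, and transporting the primitive along the isotopy via the usual ODE and then matching the endpoint lift to $\widehat\Legendrian'$ using the prescribed cylindrical end is exactly how one proves this.

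Two small points deserve more care. First, your appeal to connectedness of $L$ is not quite what you need: a filling of a link can be disconnected, so the primitive is unique up to a constant \emph{on each component}; the argument still goes through because every component of $\widehat\Legendrian$ meets the cylindrical end (a closed component would be a closed embedded exact Lagrangian in $T^*\R^2$, which does not exist), so each constant is pinned down separately. Second, you silently upgrade "exact Lagrangian isotopic" to "generated by an ambient Hamiltonian isotopy that is the identity on the cylindrical end." That upgrade is what the paper itself intends (see the remark following the corollary, which declares exact Lagrangian isotopy rel boundary equivalent to Hamiltonian isotopy fixing the boundary), but as written your proof should either cite that convention or include the standard argument that an exact Lagrangian isotopy fixing the ends extends to a compactly supported Hamiltonian isotopy. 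With those two clarifications your proof is complete.
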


On the other hand, any compact Legendrian surface $\Legendrian\subset J^1\disk^2$ can be extended to $\widehat\Legendrian\subset J^1\R^2$ by attaching the cylindrical end $\boundary\Legendrian\times[1,\infty)$ in a smooth way.
For two compact Legendrian surfaces $\Legendrian, \Legendrian'\subset J^1\disk^2$, if $\widehat\Legendrian$ and $\widehat\Legendrian'$ are Legendrian isotopic if and only if $\Legendrian$ and $\Legendrian'$ are Legendrian isotopic relative to boundary.

\begin{corollary}
Let $\legendrian\subset J^1\sphere^1$ be a Legendrian link and $\Legendrian, \Legendrian'\subset J^1\disk^2$ be two Legendrian surfaces without Reeb chords whose boundaries are $\legendrian$.
Then two exact embedded Lagrangian fillings $\pi_{L}(\Legendrian)$ and $\pi_{L}(\Legendrian')$ are exact Lagrangian isotopic relative to boundary if and only if $\Legendrian$ and $\Legendrian'$ are Legendrian isotopic relative to boundary without making Reeb chords during the isotopy.
\end{corollary}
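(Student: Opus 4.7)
The plan is to reduce the statement to Lemma~\ref{lem:legendrian and lagrangian} by extending the compact Legendrian surfaces $\Legendrian,\Legendrian'\subset J^1\disk^2$ to Legendrian surfaces $\widehat\Legendrian,\widehat\Legendrian'\subset J^1\R^2$ that share a common cylindrical end $\legendrian\times[1,\infty)$, as described in the paragraph just preceding the statement. Under the symplectic embedding $\Phi\colon J^1\sphere^1\times\R_s\hookrightarrow T^*\R^2$, each $\pi_L(\widehat\Legendrian)$ is identified with an exact Lagrangian filling of $\iota(\legendrian)\subset\sphere^3$ that agrees with $\pi_L(\Legendrian)$ on the compact piece; since neither $\Legendrian$ nor its cylindrical extension admits Reeb chords, the projection $\pi_L(\widehat\Legendrian)$ is embedded.

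For the ``if'' direction, a Legendrian isotopy $\{\Legendrian_t\}_{t\in[0,1]}$ relative to boundary with no Reeb chords at any time extends to a Legendrian isotopy $\{\widehat\Legendrian_t\}$ in $J^1\R^2$ with fixed cylindrical end and no Reeb chords. The projected family $\pi_L(\widehat\Legendrian_t)$ therefore provides an isotopy through embedded exact Lagrangian fillings of $\iota(\legendrian)$, giving the desired exact Lagrangian isotopy relative to boundary. For the ``only if'' direction, assume that $\pi_L(\Legendrian)$ and $\pi_L(\Legendrian')$ are exact Lagrangian isotopic relative to boundary; after the cylindrical extension, the same holds for $\pi_L(\widehat\Legendrian)$ and $\pi_L(\widehat\Legendrian')$, and Lemma~\ref{lem:legendrian and lagrangian} furnishes a Legendrian isotopy between $\widehat\Legendrian$ and $\widehat\Legendrian'$ in $J^1\R^2$. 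Truncating to the compact part gives a Legendrian isotopy between $\Legendrian$ and $\Legendrian'$ relative to boundary, and since the isotopy lifts an embedded exact Lagrangian isotopy, no Reeb chords can develop along the way.

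The main subtlety will be arranging the Legendrian isotopy produced by Lemma~\ref{lem:legendrian and lagrangian} to preserve the cylindrical end structure throughout, so that truncation to $J^1\disk^2$ is an honest boundary-fixing isotopy and the primitive $w$-functions match at infinity. Because the Legendrian lift of an exact Lagrangian is unique up to a global constant (which is pinned down by the required matching at the cylindrical end), this compatibility should be automatic, but it is the step deserving explicit care; once it is in place, the equivalence between embeddedness of $\pi_L$ and absence of Reeb chords does the rest.
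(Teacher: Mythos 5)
Your proposal is correct and follows exactly the route the paper intends: the corollary is stated as an immediate consequence of Lemma~\ref{lem:legendrian and lagrangian} together with the preceding remark that attaching cylindrical ends turns boundary-relative isotopies of compact Legendrian surfaces in $J^1\disk^2$ into isotopies of their extensions in $J^1\R^2$, with the equivalence between embeddedness of $\pi_L$ and absence of Reeb chords handling both directions. The paper gives no further detail, so your explicit treatment of the cylindrical-end compatibility is, if anything, more careful than the source.
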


\begin{remark}
We are interested in exact Lagrangian fillings of Legendrian links up to \emph{exact Lagrangian isotopy} relative to boundary, 
an isotopy through exact Lagrangian fillings which fixes the Legendrian boundary.
This is equivalent to exact Lagrangian fillings up to \emph{Hamiltonian isotopy}, which is an isotopy through Hamiltonian diffeomorphism fixing the boundary. The similar holds for Lagrangian cobordisms.
\end{remark}

We end this section by investigating certain actions on the symplectic manifold $\sphere^3\times \R_u$ and induced actions on $J^1\sphere^1$.
Especially, we are interested in actions on $\sphere^3\times \R_u$ preserving the $\R_u$-coordinate, the symplectization coordinate. So actions on $\sphere^3$ determine the actions on the symplectic manifold $\sphere^3\times \R_u$.

Recall that $\sphere^3$ is the unit sphere in $\bbC^2$, i.e., coordinates $z_1=r_1 e^{i\theta_1}, z_2=r_2 e^{i\theta_2}$ with $r_1^2 + r_2^2=1$.

\paragraph{Rotation} A symplectomorphism $R_{\theta_0} \colon \sphere^3\times \R_u \to \sphere^3\times \R_u$, called {\em rotation}, is defined by
\[
R_{\theta_0}(z_1, z_2,u)=(z_1\cos\theta_0 -z_2\sin\theta_0, z_1\sin\theta_0+z_2\cos\theta_0,u).
\]
Note that the restriction $R_{\theta_0}|_{\sphere^3}$ fixes the contact form $\alpha_{\sphere^3}$.
Under the symplectic embedding $\Psi:J^1\sphere^1\times \R_s \hookrightarrow \sphere^3\times \R_u$, we have the following induced symplectomorphism
\[
J^1\sphere^1\times \R_s \to J^1\sphere^1\times \R_s,\quad (\theta,p_{\theta},z,s)\mapsto (\theta+\theta_0,p_{\theta},z,s).
\]
By restricting $R_{\theta_0}$ on $J^1\sphere^1$, we obtain
\[
J^1\sphere^1 \to J^1\sphere^1, \quad (\theta,p_{\theta},z)\mapsto (\theta+\theta_0,p_{\theta},z).
\]
We are especially interested in $\theta_0=\pi,2\pi/3$. They produce $\Z/2\Z$- and $\Z/3\Z$-action on the symplectic manifold $\sphere^3\times \R_u$ and Lagrangian fillings of satellite links of the Legendrian unknot, respectively.

\paragraph{Conjugation} An anti-symplectic involution $\tau \colon \sphere^3\times\R_u \to \sphere^3\times\R_u$, which we call {\em conjugation},  is defined by
\[
(z_1,z_2,u)\mapsto (\bar z_1,\bar z_2 ,u).
\]
It is direct to check that $\tau$ reverses the sign of symplectic form $\frac{i}{2}(dz_1\wedge d\bar z_1+dz_2\wedge d\bar z_2)$, and its restriction on $\sphere^3$ also reverse the sign of $\alpha_{\sphere^3}$. 
Again by the symplectic embedding~$\Psi$, the conjugation induces an action on $J^1\sphere^1\times \R_s$
\[
(\theta,p_\theta,z,s)\mapsto (\theta,-p_\theta,-z,s)
\]
whose restriction on $J^1\sphere^1$ becomes
\[
(\theta,p_\theta,z)\mapsto (\theta,-p_\theta,-z).
\]
This anti-symplectic involution naturally produce $\Z/2\Z$-action on the symplectic manifold and Lagrangian fillings as in the actions from the rotations.

\begin{figure}[ht]
\subfigure[Rotation\label{figure:rotation unknot}]{
\begin{tikzpicture}[baseline=-.5ex, scale=2]
\draw[->] (-1.25,0) -- (1.25,0) node[above] {$x$};
\draw[->] (0,-0.75) -- (0,0.75) node[right] {$z$};
\draw[thick, red] plot[domain=0:2*pi, samples=200] ({cos(\x r)/ (1+ sin(\x r)^2)}, {-2*sin(\x r)^3/(1+sin(\x r)^2)^2});
\draw[->,blue] (1,-0.1) to[out=210,in=30] (0.75,-0.25);
\draw[->,blue] (0.2,-0.6) to[out=190,in=-10] (-0.2,-0.6);
\draw[->,blue] (-0.75,-0.25) to[out=150,in=-30] (-1,-0.1);
\begin{scope}[rotate=180]
\draw[->,blue] (1,-0.1) to[out=210,in=30] (0.75,-0.25);
\draw[->,blue] (0.2,-0.6) to[out=190,in=-10] (-0.2,-0.6);
\draw[->,blue] (-0.75,-0.25) to[out=150,in=-30] (-1,-0.1);
\end{scope}
\end{tikzpicture}
}
\subfigure[Conjugation\label{figure:conjugation unknot}]{
\begin{tikzpicture}[baseline=-.5ex, scale=2]
\draw[->] (-1.25,0) -- (1.25,0) node[above] {$x$};
\draw[->] (0,-0.75) -- (0,0.75) node[right] {$z$};
\draw[thick, red] plot[domain=0:2*pi, samples=200] ({cos(\x r)/ (1+ sin(\x r)^2)}, {-2*sin(\x r)^3/(1+sin(\x r)^2)^2});
\draw[<->,blue] (0.65,-0.1) -- (0.75,-0.25);
\draw[<->,blue] (0,-0.4) -- (0,-0.6);
\draw[<->,blue] (-0.65,-0.1) -- (-0.75,-0.25);
\begin{scope}[rotate=180]
\draw[<->,blue] (0.65,-0.1) -- (0.75,-0.25);
\draw[<->,blue] (0,-0.4) -- (0,-0.6);
\draw[<->,blue] (-0.65,-0.1) -- (-0.75,-0.25);
\end{scope}
\end{tikzpicture}
}
\caption{Rotations and conjugation near the Legendrian unknot.}
\label{fig:actions on the unknot}
\end{figure}

\begin{lemma}\label{lem:rotation and conjugation}
Let $R_{\theta_0}$ and $\eta$ be rotation and conjugation defined on $\sphere^3\times \R$ as above, respectively.  Then the induced maps on the front projection $\pi_{F}:J^1\sphere^1\to\sphere^1\times\R$ becomes as follows:
\begin{align*}
R_{\theta_0}|_{\sphere^1\times \R}&:(\theta,z)\mapsto (\theta+\theta_0,z);\\
\eta|_{\sphere^1\times \R}&:(\theta,z)\mapsto (\theta,-z).
\end{align*}
\end{lemma}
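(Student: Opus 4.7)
The plan is to unwind the explicit coordinate change $\Psi\colon J^1\sphere^1\times\R_s\hookrightarrow\sphere^3\times\R_u$ displayed just above the lemma, verify in detail the formulas for the restrictions $R_{\theta_0}|_{J^1\sphere^1}$ and $\eta|_{J^1\sphere^1}$ announced there, and then apply the front projection $\pi_F\colon(\theta,p_\theta,z)\mapsto(\theta,z)$ at the end. Since $\pi_F$ simply forgets the $p_\theta$-coordinate, whatever the induced action on $J^1\sphere^1$ does to $\theta$ and $z$ is exactly what shows up on $\sphere^1\times\R$.

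First I would handle the rotation $R_{\theta_0}$. Substituting the formulas
\[
z_1=\frac{e^s\cos\theta+i(z\cos\theta-p_\theta\sin\theta)}{\sqrt{e^{2s}+z^2+p_\theta^2}},\qquad z_2=\frac{e^s\sin\theta+i(z\sin\theta+p_\theta\cos\theta)}{\sqrt{e^{2s}+z^2+p_\theta^2}}
\]
into $(z_1\cos\theta_0-z_2\sin\theta_0,\;z_1\sin\theta_0+z_2\cos\theta_0,u)$ and using the angle-sum identities $\cos(\theta+\theta_0)=\cos\theta\cos\theta_0-\sin\theta\sin\theta_0$ and $\sin(\theta+\theta_0)=\sin\theta\cos\theta_0+\cos\theta\sin\theta_0$, the numerators collapse to $e^s\cos(\theta+\theta_0)+i(z\cos(\theta+\theta_0)-p_\theta\sin(\theta+\theta_0))$ and $e^s\sin(\theta+\theta_0)+i(z\sin(\theta+\theta_0)+p_\theta\cos(\theta+\theta_0))$. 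These are exactly the components of $\Psi(\theta+\theta_0,p_\theta,z,s)$, so $R_{\theta_0}\circ\Psi=\Psi\circ T_{\theta_0}$ with $T_{\theta_0}(\theta,p_\theta,z,s)=(\theta+\theta_0,p_\theta,z,s)$. Restricting to $s=0$ and composing with $\pi_F$ yields $(\theta,z)\mapsto(\theta+\theta_0,z)$.

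Next I would handle the conjugation $\eta$ in the same spirit. Since $\bar z_j$ keeps the real parts of the numerators of $z_j$ intact and flips their imaginary parts, I look for the unique $(\theta',p_\theta',z',s')$ with $\Psi(\theta',p_\theta',z',s')=(\bar z_1,\bar z_2,u)$. Matching real parts (and therefore $e^{s'}\cos\theta'=e^s\cos\theta$, $e^{s'}\sin\theta'=e^s\sin\theta$) forces $s'=s$ and $\theta'=\theta$; matching imaginary parts gives the $2\times 2$ linear system
\begin{align*}
z'\cos\theta-p_\theta'\sin\theta&=-(z\cos\theta-p_\theta\sin\theta),\\
z'\sin\theta+p_\theta'\cos\theta&=-(z\sin\theta+p_\theta\cos\theta),
\end{align*}
whose unique solution is $z'=-z,\; p_\theta'=-p_\theta$. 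Hence $\eta|_{J^1\sphere^1\times\R_s}(\theta,p_\theta,z,s)=(\theta,-p_\theta,-z,s)$; restricting to $s=0$ and projecting with $\pi_F$ gives $(\theta,z)\mapsto(\theta,-z)$.

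There is no real obstacle, since the whole proof reduces to elementary trigonometric identities and the solution of a $2\times 2$ linear system; the only small point worth remarking on is that both $R_{\theta_0}$ and $\eta$ preserve the slice $\{s=0\}$ (equivalently $\iota(J^1\sphere^1)\subset\sphere^3$), so their restrictions to $J^1\sphere^1$ are well defined and descend through $\pi_F$ as claimed.
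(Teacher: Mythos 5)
Your proof is correct and follows exactly the route the paper intends: the paper itself gives no separate proof, having already asserted the induced actions $(\theta,p_\theta,z)\mapsto(\theta+\theta_0,p_\theta,z)$ and $(\theta,p_\theta,z)\mapsto(\theta,-p_\theta,-z)$ on $J^1\sphere^1$ in the paragraphs preceding the lemma, from which the statement follows by forgetting $p_\theta$ under $\pi_F$. Your explicit verification of those two formulas via the embedding $\Psi$ (angle-sum identities for $R_{\theta_0}$, and the invertible $2\times 2$ system for $\eta$) supplies precisely the computation the paper leaves implicit.
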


\subsubsection{Positive $N$-braids}\label{section_stabilizations}
A \emph{positive $N$-braid} is a braid of $N$-strands represented by a finite word of positive generators $\sigma_1,\dots, \sigma_{N-1}$.
One may regard a positive $N$-braid $\beta$ as a Legendrian in $J^1\R^1$ (with cylindrical ends) whose front projection is the same as the usual braid diagram of $\beta$.

Let us start with two ways of obtaining a Legendrian link from a positive braid. The \emph{rainbow closure} is to close up a positive braid via nested copies of the Legendrian unknot.
The other way is called the \emph{$(-1)$-closure}, and it closes up a positive braid by considering parallel copies of the Legendrian unknot with respect to the Reeb direction.
Notice that the rainbow closure of $\beta_0$ is the same as the $(-1)$-closure of $\beta=\Delta_N\beta_0\Delta_N$ as seen in Figure~\ref{figure:closures}.
We will use the \emph{closure} to indicate the $(-1)$-closure unless mentioned otherwise.

\begin{figure}[ht]
\[
\setlength\arraycolsep{2pt}
\begin{array}{rcccc}
\legendrian_\beta&=&
\begin{tikzpicture}[baseline=-.5ex,scale=0.8]
\draw (-1,-1.125) rectangle node[yshift=-.5ex] {$\beta_0$} (0.5,-0.375);
\draw (-3,0) to[out=0,in=180] (-1,1) -- (1,1) to[out=0,in=180] (3,0);
\draw (-3,0) to[out=0,in=180] (-1,-1) (0.5,-1) -- (1,-1) to[out=0,in=180] (3,0);
\draw (-2.5,0) to[out=0,in=180] (-1,0.75) -- (1,0.75) to[out=0,in=180] (2.5,0);
\draw (-2.5,0) to[out=0,in=180] (-1,-0.75) (0.5,-0.75) -- (1,-0.75) to[out=0,in=180] (2.5,0);
\draw (-2,0) to[out=0,in=180] (-1,0.5) -- (1,0.5) to[out=0,in=180] (2,0);
\draw (-2,0) to[out=0,in=180] (-1,-0.5) (0.5,-0.5) -- (1,-0.5) to[out=0,in=180] (2,0);
\end{tikzpicture}
&=&
\begin{tikzpicture}[baseline=-.5ex,scale=0.8]
\draw[fill=white] (-1,-1.125) rectangle node[yshift=-.5ex] {$\beta_0$} (0.5,-0.375);
\draw (-3.5,0.5) to[out=0,in=180] (-2.5, 1) -- (2.5,1) to[out=0,in=180] (3.5,0.5);
\draw (-3.5,0.5) to[out=0,in=180] (-2.5,-0.5) -- (-1.75,-0.5) to[out=0,in=180] (-1, -1)
(0.5,-1) to[out=0,in=180] (1.25, -0.5) to[out=0,in=180] (2,-0.5) -- (2.5,-0.5) to[out=0,in=180] (3.5,0.5);
\draw (-3.5,0.25) to[out=0,in=180] (-2.5, 0.75) -- (2.5,0.75) to[out=0,in=180] (3.5,0.25);
\draw (-3.5,0.25) to[out=0,in=180] (-2.5,-0.75) -- (-1.75,-0.75) to[out=0,in=180] (-1.375,-1) to[out=0,in=180] (-1, -0.75)
(0.5,-0.75) to[out=0,in=180] (0.875, -1) to[out=0,in=180] (1.25,-0.75) -- (2.5,-0.75) to[out=0,in=180] (3.5,0.25);
\draw (-3.5,0) to[out=0,in=180] (-2.5, 0.5) -- (2.5,0.5) to[out=0,in=180] (3.5,0);
\draw (-3.5,0) to[out=0,in=180] (-2.5,-1) -- (-1.75,-1) to[out=0,in=180] (-1,-0.5)
(0.5,-0.5) to[out=0,in=180] (1.25, -1) -- (2.5,-1) to[out=0,in=180] (3.5,0);
\draw[dashed] (-1.75, -1.25) rectangle node[below=2.5ex] {$\beta$} (1.25, -0.25);
\end{tikzpicture}
\end{array}
\]
\caption{Rainbow and $(-1)$-closures of positive braids $\beta_0$ and $\beta$}
\label{figure:closures}
\end{figure}

For the closure $\legendrian_\beta$ of an $N$-braid $\beta$, 
The front projection $\pi_F(\legendrian_\beta)\subset \sphere^1\times \R$ of $\legendrian_\beta$ consists of $N$-strands with double points corresponding to the braid word $\beta$.
Hence, the Legendrian $\legendrian_\beta$ gives us an $(N-1)$-tuple $(\legendrian_1, \legendrian_2,\dots, \legendrian_{N-1})$ of subsets of points in $\sphere^1$, each of which corresponds to the generator $\sigma_i$ in the braid word $\beta$.

Conversely, let $(\legendrian_1,\dots, \legendrian_{N-1})$ be an $(N-1)$-tuple of disjoint\footnote{This condition can be weakened as follows: $\legendrian_i\cap \legendrian_{i+1}=\varnothing$ for each $1\le i<N$.} finite subsets of $\sphere^1$.
Then, from this data $(\legendrian_1,\dots,\legendrian_{N-1})$, one can build the Legendrian link $\legendrian$, which is the branched $N$-fold covering space of $\sphere^1$ such that the $i$-th and $(i+1)$-st covers are branched along the set $\legendrian_i$.

For a positive $N$-braid $\beta_0$, a \emph{stabilization} $\tilde\beta$ is a positive $(N+1)$-braid which satisfies the following:
\begin{enumerate}
\item The rainbow closures of $\beta_0$ and $\tilde\beta_0$ are Legendrian isotopic in $\sphere^3$, and
\item the braid $\beta_0$ can be recovered by forgetting a strand from $\tilde\beta_0$.
\end{enumerate}

The most typical example of a stabilization is as follows:
for a positive $N$-braid $\beta_0$, we introduce  a notation $S_0(\beta_0)$ for a specific type of \emph{stabilization} which is a positive $(N+1)$-braid defined by
$S_0(\beta_0) = \beta_0\sigma_N$, where $\beta_0$ in $S_0(\beta_0)$ is regarded as an $(N+1)$-braid by adding a trivial $(N+1)$-st strand.
For $\beta=\Delta_N\beta_0\Delta_N$, we introduce another notation $S(\beta) = (\sigma_1\dots\sigma_N) \beta (\sigma_N\dots\sigma_1) \sigma_1$. Then we have the following:
\begin{align*}
S(\beta) &=\Delta_{N+1} S_0(\beta_0) \Delta_{N+1}\mathrel{\dot{=}} \beta (\sigma_N\dots\sigma_2\sigma_1^3\sigma_2\dots\sigma_N)
\end{align*}
where $\mathrel{\dot{=}}$ means the same up to cyclic permutation of braid words.
See Figure~\ref{figure:stabilization of Legendrian}.

\begin{figure}[ht]
\[
\setlength\arraycolsep{2pt}

\]
\caption{A stabilization $\legendrian_{S(\beta)}$ of a Legendrian link $\legendrian_\beta$}
\label{figure:stabilization of Legendrian}
\end{figure}

The Legendrian $\legendrian_{S(\beta)}$ does depend on the braid word $\beta_0$.
For example, for each pair of positive $N$-braids $\beta_0^{(1)}$ and $\beta_0^{(2)}$ with $\beta_0=\beta_0^{(1)}\beta_0^{(2)}$, let $\beta_0'=\beta_0^{(2)}\beta_0^{(1)}$ and $\beta' = \Delta_N\beta_0'\Delta_N$.
Then two Legendrian links $\legendrian_\beta$ and $\legendrian_{\beta'}$ are Legendrian isotopic but $\legendrian_{S(\beta)}$ and $\legendrian_{S(\beta')}$ are \emph{not} Legendrian isotopic in general.
Therefore a stabilization of a Legendrian link $\legendrian$ which is a closure of a positive braid may not be uniquely determined.

\begin{example}\label{example:stabilization of An}
Let $\beta_0(a,b,c)=\2 \1^a \2^{b-1} \1^c$ and $\beta_0(\dynA_n)=\1^{n+1}$, then we deduce 
\[
\beta(a,b,c)\mathrel{\dot=}\sigma_2\sigma_1^{a+1}\sigma_2\sigma_1^{b+1}\sigma_2\sigma_1^{c+1}\quad\text{ and }\quad
\beta(\dynA_n)=\sigma_1^{n+3},
\] 
see Section~\ref{sec:linear} for details.
For each $b,c\ge 1$ with $b+c-1=n$, since $\beta(\dynA_n)=\sigma_1^{n+3} = \sigma_1^{c}\sigma_1^{b+2}$, we have
\begin{align*}
S(\beta(\dynA_n))&=(\sigma_1\sigma_2)\sigma_1\sigma_1^{c-1}\sigma_1^{b+1}\sigma_1(\sigma_2\sigma_1)\sigma_1=\beta(1,b,c).
\end{align*}
Therefore $\beta(1,b,c)$ is a stabilization of $\beta(n)$ for each $b+c-1=n$.
\end{example}

\begin{example}\label{ex:stabilization of (a,b,c)}
Let $\beta_{\degen,0}(p,q,r)=\sigma_{1,3}\sigma_2^{p}\sigma_{1,3}^{q-1}\sigma_2^r$,
where $\sigma_{1,3}$ is a $4$-braid isotopic to $\sigma_1\sigma_3$ (or equivalently, $\sigma_3\sigma_1$) such that two crossings $\sigma_1$ and $\sigma_3$ occur simultaneously.
Now consider
\[
\beta_\degen(p,q,r)=\Delta_4\tilde\beta_0(p,q,r)\Delta_4\mathrel{\dot{=}}\sigma_2^{p+1}\sigma_{1,3}\sigma_2\sigma_{1,3}^q \sigma_2^{r+1}\sigma_{1,3}\sigma_2\sigma_{1,3}^2.
\]
See Lemma~\ref{lemma:tripod to degenerated Ngraph}.

Let $\beta_0'(a,b,b)=\sigma_2^{b-1}\sigma_1\sigma_2^a\sigma_1^{b-1}\sigma_2\mathrel{\dot=}\beta_0(a,b,b)$ and $\beta'(a,b,b)=\Delta_3\beta_0'(a,b,b)\Delta_3$.
Then $\beta(a,b,b)\mathrel{\dot=}\beta'(a,b,b)$ and so $\legendrian_{\beta(a,b,b)}=\legendrian_{\beta'(a,b,b)}$. Moreover,
\begin{align*}
S(\beta'(a,b,b))&
\mathrel{\dot=}\Delta_4\sigma_1\sigma_2^a\sigma_1^{b-1}\sigma_2\sigma_3\sigma_2^{b-1}\Delta_4
\mathrel{\dot=}\Delta_4\sigma_{1,3}\sigma_2^{a}\sigma_{1,3}^{b-1}\sigma_2\Delta_4
=\beta_\degen(a,b,1),
\end{align*}
and so we conclude that the Legendrian $\legendrian_{\beta_\degen(a,b,1)}$ is a stabilization of $\legendrian_{\beta(a,b,b)}$. 
\end{example}
Recall the conjugate action on $\mathbb{C}^2$, which turns links upside down so that in terms of braid words, it interchanges $\sigma_i$ and $\sigma_{N-i}$ for each $N$-braid.
Hence, for $4$-braids, it preserves $\sigma_{1,3}$. Therefore $\beta_\degen(p,q,r)$ is invariant under conjugation and so is $\legendrian_{\beta_\degen(p,q,r)}$.

\begin{corollary}\label{corollary:invariance under conjugation}
The Legendrian $\legendrian_{\beta_\degen(p,q,r)}$ is invariant under conjugation. 
\end{corollary}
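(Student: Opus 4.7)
The plan is to reduce both cases to the preceding Lemma, which establishes invariance under conjugation for Legendrians of the form $\legendrian_{\tilde\beta(a,b,b)}$. For $\tilde\beta(\dynE_6)=\tilde\beta(2,3,3)$ the exponents already satisfy $b=c=3$, so the Lemma applies verbatim and yields the claim.

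All the substantive work lies in the $\dynD_{n+1}$ case. The key observation, implicit in the Lemma's proof, is that conjugation acts on braid words by $\tilde\beta(a,b,c)\mapsto\tilde\beta(a,c,b)$, using the commutation of $\sigma_1$ and $\sigma_3$ in the $4$-strand braid group; hence conjugation sends $\tilde\beta(\dynD_{n+1})=\tilde\beta(2,2,n-1)$ to $\tilde\beta(2,n-1,2)$. It therefore suffices to show that $\legendrian_{\tilde\beta(2,2,n-1)}$ is Legendrian isotopic to $\legendrian_{\tilde\beta(n-1,2,2)}$, since the latter has $b=c=2$ and the Lemma applies directly to it. The plan to produce this isotopy is to exploit the threefold cyclic symmetry of the underlying $3$-braid $\beta(a,b,c)=\sigma_2\sigma_1^{a+1}\sigma_2\sigma_1^{b+1}\sigma_2\sigma_1^{c+1}$, whose word admits the cyclic rotation $\beta(a,b,c)\sim \beta(c,a,b)$. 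Because the rainbow closure is invariant under cyclic rotation of the braid word, this gives a Legendrian isotopy between the rainbow closures of $\beta(2,2,n-1)$ and $\beta(n-1,2,2)$. I then propagate this rotation through the specific stabilization that defines $\tilde\beta$ from $\beta$ (cf.\ the discussion surrounding Example~\ref{example:stabilization of An}), tracking the extra $\sigma_3$-strand and the enveloping half-twist $\Delta_4$ under the cyclic shift, using the relation $\sigma_1\sigma_3=\sigma_3\sigma_1$ together with the standard braid relations.

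The main point requiring justification is this final compatibility: that the cyclic rotation of $\beta(a,b,c)$ lifts through the chosen stabilization to a genuine \emph{Legendrian} isotopy of the $4$-braid closures $\legendrian_{\tilde\beta(\cdot)}$, rather than merely a smooth link-level isotopy. Once this compatibility is verified, the Lemma applied to $\tilde\beta(n-1,2,2)$ produces the desired invariance of $\legendrian_{\tilde\beta(\dynD_{n+1})}$ under conjugation.
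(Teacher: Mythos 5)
Your $\dynE_6$ case is exactly the paper's (non-)argument: $\tilde\beta(\dynE_6)=\tilde\beta(2,3,3)$ is of the form $\tilde\beta(a,b,b)$, and the Lemma applies verbatim. The same is true of the $\dynD_{n+1}$ case, and this is where your proposal goes astray. The line ``denote $\tilde\beta(2,2,n-1)$ \dots by $\tilde\beta(\dynD_{n+1})$'' occurs in a sentence that has just restricted to braids of the form $\tilde\beta(a,b,b)$, and $(2,2,n-1)$ is not of that form unless $n=3$; it is a slip for $(n-1,2,2)$, as the paper later states explicitly ($\tilde\beta(\dynD_{n+1})\colonequals\tilde\beta(n-1,2,2)$ in the section on degenerate $N$-graphs, consistent with $\beta(\dynD_m)=\beta(m-2,2,2)$ in Table~\ref{table:short notations}). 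With the intended definition the corollary is, for both types, an immediate specialization of the Lemma with no further work: the word $(\sigma_2\sigma_{1,3}\sigma_2)^3\sigma_2^{a-1}\sigma_{1,3}^{b-1}$ is fixed letter-by-letter by $\sigma_1\leftrightarrow\sigma_3$.

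Beyond being unnecessary, your detour for $\dynD_{n+1}$ has a genuine gap at precisely the step you flag as ``requiring justification.'' You need the cyclic rotation $\beta(2,2,n-1)\mathrel{\dot{=}}\beta(n-1,2,2)$ to lift through the stabilization to a Legendrian isotopy of the $4$-braid closures, but the paper's discussion of stabilizations warns that this is exactly what can fail: for $\beta_0=\beta_0^{(1)}\beta_0^{(2)}$ and $\beta_0'=\beta_0^{(2)}\beta_0^{(1)}$, the Legendrians $\legendrian_{S(\beta)}$ and $\legendrian_{S(\beta')}$ are \emph{not} Legendrian isotopic in general, so the claim cannot be waved through and would need an explicit word computation. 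Moreover, even if you supplied that isotopy, you would only conclude that $\eta(\legendrian_{\tilde\beta(\dynD_{n+1})})$ is Legendrian \emph{isotopic} to $\legendrian_{\tilde\beta(\dynD_{n+1})}$, whereas the invariance used downstream (for the $\Z/2\Z$-action on $N$-graphs and the folding of Theorem~\ref{thm:folding of N-graphs}) is the on-the-nose symmetry of the braid word that the Lemma provides.
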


On the other hand, a stabilization $\legendrian_{S(\beta)}$ of $\legendrian_\beta$ will be represented by $N$-colored dots in $S^1$ while $\legendrian_\beta$ uses only $(N-1)$ colors. That is,
\[
\legendrian_{S(\beta)} \longleftrightarrow 
\begin{tikzpicture}[baseline=-.5ex, scale=0.6]
\draw (0,0) circle (2);
\curlybrace[]{90}{270}{2.2};
\draw (180:2.2) node[left] {$\beta$};
\draw[fill, violet] (75:2) circle (2pt) (-75:2) circle (2pt);
\draw[fill, yellow] (45:2) circle (2pt) (-45:2) circle (2pt);
\draw[fill, red] (30:2) circle (2pt) (-30:2) circle (2pt);
\draw[fill, blue] (15:2) circle (2pt) (-15:2) circle (2pt) (0:2) circle (2pt);
\draw (60:2.2) node[rotate=-30] {$\dots$} (-60:2.2) node[rotate=30] {$\dots$};
\end{tikzpicture}\subset J^1\sphere^1
\]
Then one can transfer an $N$-graph $\ngraph$ for $\legendrian_\beta$ into an $(N+1)$-graph $S(\ngraph)$ for $\legendrian_{S(\beta)}$ as follows:
\begin{center}
\begin{tikzcd}
\ngraph=\begin{tikzpicture}[baseline=-.5ex]
\draw [thick] (0,0) circle [radius=1];
\node at (0,0) {$\ngraph_{(1,\dots,N)}$};
\end{tikzpicture}
\arrow[leftrightarrow,"\mathrm{(S)}",r]&
\begin{tikzpicture}[baseline=-.5ex]
\draw (0,0) circle (1.5);
\draw[double] (0,1.5) -- (0,-1.5) arc (-90:-270:1.5);
\draw (-0.75,0) node {$\ngraph_{(1,\dots,N)}$};
\draw[violet] (-75:1.5) to[out=105,in=-90] (0.25,0) to[out=90,in=-105] (75:1.5);
\draw[yellow] (-45:1.5) to[out=135,in=-90] (0.75,0) to[out=90,in=-135] (45:1.5);
\draw[red] (-30:1.5) to[out=150,in=-90] (1,0) to[out=90,in=-150] (30:1.5);
\draw (0.5,0) node {$\scriptstyle\cdots$};
\draw[blue] (-15:1.5) to[out=135,in=-90] (1.25,0) to[out=90,in=-150] (15:1.5) (1.25,0) -- (1.5,0);
\draw[fill,blue] (1.25,0) circle (1pt);
\end{tikzpicture}=S(\ngraph)
\end{tikzcd}
\end{center}

\subsubsection{$N$-graphs on $\disk^2$ and $\annulus$}\label{section:annular Ngraphs}

Let $\ngraph=(\ngraph_1,\dots,\ngraph_{N-1})$ be an $N$-graph on $\disk^2$.
The \emph{boundary} $\boundary\ngraph$ of $\ngraph$ is a Legendrian link defined by an $N$-graph on $\sphere^1=\boundary\disk^2$ as
\[
\boundary\ngraph=(\boundary\ngraph_1,\dots,\boundary\ngraph_{N-1}),\quad
\boundary\ngraph_i\colonequals \ngraph_i\cap \sphere^1\subset \sphere^1.
\]
We say that $\ngraph$ is \emph{of type} $\legendrian$ or $\legendrian$ \emph{admits} an $N$-graph $\ngraph$ if $\boundary\ngraph=\legendrian$.

Let $\annulus$ be the oriented annulus with two boundary components $\boundary_+\annulus$ and $\boundary_-\annulus$.
For an $N$-graph~$\ngraph$ on $\annulus$, let $\boundary_\pm\ngraph\colonequals\ngraph\cap\boundary_\pm\annulus$ be Legendrian links at two boundaries $\boundary_\pm\annulus$, respectively.
We say that $\ngraph$ is \emph{of type} $(\legendrian_+, \legendrian_-)$ if $\boundary_\pm\ngraph=\legendrian_\pm$, respectively.

A typical example of annular $N$-graphs comes from Lagrangian cobordism between Legendrian links, which are closures of positive braids.
In particular, for two closures $\legendrian_1$ and $\legendrian_2$ of positive braids $\beta_1$ and $\beta_2$, any sequence of Legendrian braid moves from $\legendrian_2$ to $\legendrian_1$ will give us a special annular $N$-graph $\ngraph_{\legendrian_2\legendrian_1}$.\footnote{One may call the $N$-graph $\ngraph_{\legendrian_2\legendrian_1}$ a \emph{strict concordance} since it is a union of cylinders.}
Hence, for an $N$-graphs $\ngraph$ with $\boundary\ngraph=\legendrian_1$, we have the $N$-graph $\ngraph_{\legendrian_2\legendrian_1}\ngraph$ with boundary
\[
\boundary(\ngraph_{\legendrian_2\legendrian_1}\ngraph)=\legendrian_2.
\]

\begin{remark}
We are dealing with both Legendrian links $\legendrian$ and surfaces $\Legendrian$.
In order to avoid the confusion, we use the terminologies ``$\boundary$-Legendrian isotopy'' and ``Legendrian isotopy'' for isotopies between Legendrian links and surfaces, respectively.
\end{remark}

Since a closure of a Legendrian positive braid in $J^1\sphere^1$ should not have any cusp, possible $\boundary$-Legendrian isotopies are either plane isotopies \Move{R0} or the third Reidemeister move \Move{RIII} as follows:
\begin{center}

\]

It is known that the rotational annular $N$-graph acts on the set of $N$-graphs for the Legendrian torus link $\legendrian(n,m)$ of maximal Thurston--Bennequin number.
This type of annular $N$-graphs play a crucial role in producing a sequence of distinct exact Lagrangian fillings of positive braid Legendrian links, see \cite{Kal2006, CG2020, GSW2020b}.
\end{example}

\begin{definition}\label{def:boundary Legendrian isotopic}
We say that two $N$-graphs $\ngraph$ and $\ngraph'$ with $\boundary\ngraph=\legendrian_1$ and $\boundary\ngraph=\legendrian_2$ are \emph{$\boundary$-Legendrian isotopic} if there exists a tame annular $N$-graph $\ngraph_{\legendrian_2\legendrian_1}$ such that $[\ngraph']= [\ngraph_{\legendrian_2\legendrian_1}\ngraph]$.
\end{definition}

\subsubsection{Annular $N$-graphs and Legendrian loops}
Let $\beta, \beta_+, \beta_-\subset J^1\R^1$ be Legendrian positive $N$-braids.
We denote by $\Ngraphs(\beta)$ and $\Ngraphs(\beta_+, \beta_-)$ the sets of equivalence classes of (degenerate) $N$-graphs on $\disk^2$ and $\annulus$ satisfying boundary conditions given by the closure $\legendrian_\beta$ or a pair of closures $(\legendrian_{\beta_+}, \legendrian_{\beta_-})$
up to local (degenerate) moves in Figure~\ref{fig:move1-6} relative to the boundary.
\begin{align*}
\Ngraphs(\beta)&\colonequals
\{[\ngraph]\mid \ngraph\text{ is an $N$-graph on $\disk^2$ of type $\legendrian_\beta$}\}\\
\Ngraphs(\beta_+, \beta_-)&\colonequals
\{[\ngraph]\mid \ngraph\text{ is an $N$-graph on $\annulus$ of type $(\legendrian_{\beta_+}, \legendrian_{\beta_-})$}\}.
\end{align*}
Here, we are assuming that we are aware of where each braid word starts.

By a direct consequence of Theorem~\ref{thm:N-graph moves and legendrian isotopy} and Corollary~\ref{cor:degenerate N-graph moves and legendrian isotopy}, if $[\ngraph]=[\ngraph']$ as in the elements of $\Ngraphs(\beta_+, \beta_-)$, then $\Legendrian(\ngraph)$ and $\Legendrian(\ngraph')$ are Legendrian isotopic relative to the boundary $(\legendrian_{\beta_+},\legendrian_{\beta_-})$.

Then it is direct to check that these sets are invariant under the cyclic rotation of the braid words up to bijection. More precisely, for $N$-braids $\beta^{(1)}, \beta^{(2)}$ and $\beta^{(1)}_\pm, \beta^{(2)}_\pm$, closures of $\beta^{(1)}\beta^{(2)}$ and $\beta^{(2)}\beta^{(1)}$ are identical in $J^1\sphere^1$ and there are one-to-one correspondences between sets of $N$-graphs
\begin{align}
\begin{split}
\Ngraphs\left(\beta^{(1)}\beta^{(2)}\right) &\cong \Ngraphs\left(\beta^{(2)}\beta^{(1)}\right),
\end{split}\\
\begin{split}
\Ngraphs\left(\beta_+, \beta^{(1)}_-\beta^{(2)}_-\right) &\cong \Ngraphs\left(\beta_+, \beta^{(2)}_-\beta^{(1)}_-\right),\\
\Ngraphs\left(\beta^{(1)}_+\beta^{(2)}_+, \beta_-\right) &\cong \Ngraphs\left(\beta^{(2)}_+\beta^{(1)}_+, \beta_-\right).
\end{split}
\label{equation:cyclic rotation of word}
\end{align}
Indeed there are infinitely many bijections in each case which are induced by rotating a boundary (counter)clockwise by appropriate angle and so indexed canonically by the set $\mathbb{Z}$. We omit the detail.

Suppose that $\ngraph_1\in\Ngraphs(\beta_2,\beta_1)$ and $\ngraph_2\in\Ngraphs(\beta_3,\beta_2)$.
Then two $N$-graphs can be merged or piled in a natural way to obtain the annular $N$-graph, denoted by~$\ngraph_2\ngraph_1\in\Ngraphs(\beta_3, \beta_1)$.
On the other hand, for $\ngraph\in\Ngraphs(\beta)$ and $\ngraph_1\in\Ngraphs(\beta', \beta)$, the concatenation $\ngraph_1\ngraph\in\Ngraphs(\beta')$ is well-defined by gluing along the boundary $\legendrian_\beta$.
Hence, we have two natural maps
\begin{align*}
\Ngraphs(\beta_3,\beta_2)\times\Ngraphs(\beta_2,\beta_1)&\to \Ngraphs(\beta_3,\beta_1),\\
\Ngraphs(\beta', \beta)\times\Ngraphs(\beta) &\to \Ngraphs(\beta').
\end{align*}

In particular, for each $\boundary$-Legendrian isotopy from $\legendrian'=\legendrian_{\beta'}$ and $\legendrian=\legendrian_\beta$, we have a tame annular $N$-graph $\ngraph_{\legendrian'\legendrian}\in\Ngraphs(\beta',\beta)$, where $\legendrian$ and $\legendrian'$ are closures of $\beta$ and $\beta'$, respectively.
Moreover, we also have a tame annular $N$-graph $\ngraph^{-1}_{\legendrian'\legendrian}$ obtained by flipping the annulus inside out corresponding to the inverse isotopy from $\legendrian$ to $\legendrian'$.
Hence, we have two maps inverses to each other
\[
\Ngraphs(\beta) \to \Ngraphs(\beta'),\quad\text{ and }\quad
\Ngraphs(\beta')\to \Ngraphs(\beta),
\]
defined by
\[
\ngraph\mapsto \ngraph_{\legendrian'\legendrian}\cdot\ngraph,\quad\text{ and }\quad
\ngraph'\mapsto \ngraph^{-1}_{\legendrian'\legendrian}\cdot\ngraph',
\]
respectively.

Let $\Ngraphs_0(\beta,\beta)$ be the subset of tame annular $N$-graphs of type $(\beta,\beta)$.
\begin{lemma}
Let $\beta$ be a Legendrian positive $N$-graph.
The set $\Ngraphs_0(\beta,\beta)$ becomes a group under the concatenation which acts on the set $\Ngraphs(\beta)$.
\end{lemma}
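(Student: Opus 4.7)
The plan is to verify the four group axioms for $\Ngraphs_0(\beta,\beta)$ under concatenation, and then check that the concatenation map $\Ngraphs_0(\beta,\beta)\times\Ngraphs(\beta)\to\Ngraphs(\beta)$ descends to a well-defined group action on equivalence classes of $N$-graphs. Closure is immediate from the definition: if $\ngraph,\ngraph'\in\Ngraphs_0(\beta,\beta)$ are concatenations of the elementary pieces $\ngraph_{\Move{R0}}$ and $\ngraph_{\Move{RIII}}$, then the vertical stack $\ngraph\cdot\ngraph'$ is again such a concatenation, hence tame. Associativity of stacking annular $N$-graphs is evident. For the identity, I will take the trivial annular $N$-graph $\ngraph_{\rm id}$ whose each layer $\ngraph_i$ consists only of radial arcs joining $\boundary_+\annulus$ to $\boundary_-\annulus$ matching the dot pattern of $\legendrian_\beta$ on both circles; this is tame (as an empty concatenation) and $\ngraph_{\rm id}\cdot\ngraph$, $\ngraph\cdot\ngraph_{\rm id}$ are equivalent to $\ngraph$ by an application of Move~\Move{I}.

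For inverses, I will construct an inverse on elementary pieces and extend by order reversal. The move \Move{R0}, being a transposition of two adjacent Reidemeister-$0$ crossings, is reversed by its mirror image across the annulus (swap $\boundary_+$ and $\boundary_-$ roles); similarly for \Move{RIII}, whose inverse is the mirror-flipped hexagonal reconfiguration. Given a tame annulus $\ngraph=\ngraph_k\cdots\ngraph_1$ with each $\ngraph_i$ elementary, set $\ngraph^{-1}\colonequals\ngraph_1^{-1}\cdots\ngraph_k^{-1}$. The verification $[\ngraph\cdot\ngraph^{-1}]=[\ngraph_{\rm id}]=[\ngraph^{-1}\cdot\ngraph]$ reduces inductively to the two elementary identities $[\ngraph_{\Move{R0}}\cdot\ngraph_{\Move{R0}}^{-1}]=[\ngraph_{\rm id}]$ and $[\ngraph_{\Move{RIII}}\cdot\ngraph_{\Move{RIII}}^{-1}]=[\ngraph_{\rm id}]$; each is a local picture that can be simplified by applications of Move~\Move{II} (to cancel the two trivalent vertices produced by an \Move{R0}-pair) and Move~\Move{III} together with \Move{I} (to cancel the six trivalent vertices of an \Move{RIII}-pair).

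The action on $\Ngraphs(\beta)$ is defined by $\ngraph_1\cdot[\ngraph]\colonequals[\ngraph_1\ngraph]$ for $\ngraph_1\in\Ngraphs_0(\beta,\beta)$ and $[\ngraph]\in\Ngraphs(\beta)$. Well-definedness on equivalence classes is a consequence of the locality of the combinatorial moves \Move{I}--\Move{IV'}: a move applied inside $\ngraph$ (or inside $\ngraph_1$) is supported away from the common boundary circle along which concatenation takes place, and therefore remains a valid move after gluing. The identity axiom $[\ngraph_{\rm id}]\cdot[\ngraph]=[\ngraph]$ again follows from Move~\Move{I}, while associativity $(\ngraph_1\ngraph_2)\cdot[\ngraph]=\ngraph_1\cdot(\ngraph_2\cdot[\ngraph])$ is immediate from associativity of annular stacking.

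The main obstacle is the elementary inverse identities in the previous paragraph: that is, showing pictorially that an elementary \Move{R0}- or \Move{RIII}-annulus stacked on its inverse can be dissolved to the trivial cylinder $\ngraph_{\rm id}$ by the moves of Figure~\ref{fig:move1-6}. I expect the \Move{R0} case to follow from a single application of Move~\Move{II}, while the \Move{RIII} case will require an \Move{III} followed by Move~\Move{II} cancellations and finally Move~\Move{I}; these are precisely the pictorial verifications already implicit in the analysis of strict concordances in \S\ref{section:annular Ngraphs}, so no new technique is needed beyond the combinatorial moves already catalogued.
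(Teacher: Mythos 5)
Your overall structure — closure and associativity of stacking, identity from the trivial isotopy, inverses by flipping the annulus and reducing to elementary cancellations — is exactly the paper's argument, which disposes of the inverse step by citing Moves \Move{I} and \Move{V}. However, your account of the elementary cancellations misdescribes the local pictures, and the move sequences you propose would not apply as stated. The elementary annulus $\ngraph_{\Move{R0}}$ contains \emph{no trivalent vertices}: it consists of two transversely crossing arcs belonging to non-consecutive levels $\ngraph_i,\ngraph_j$ with $|i-j|>1$. Move~\Move{II} is a push-through of a trivalent vertex across an edge of a \emph{consecutive} color, so it has nothing to act on here; the pair $\ngraph_{\Move{R0}}\cdot\ngraph_{\Move{R0}}^{-1}$ is dissolved by Move~\Move{V}, which uncrosses a bigon formed by two non-consecutive colors. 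Similarly, $\ngraph_{\Move{RIII}}$ contains a single hexagonal (bichromatic) point, not six trivalent vertices, so the stacked pair $\ngraph_{\Move{RIII}}\cdot\ngraph_{\Move{RIII}}^{-1}$ exhibits two hexagonal points joined by an edge, which is precisely the left-hand side of Move~\Move{I} and cancels in one step — no Move~\Move{III} or \Move{II} is involved. With the moves corrected in this way (Move~\Move{V} for the \Move{R0}-pairs, Move~\Move{I} for the \Move{RIII}-pairs) the rest of your argument goes through and coincides with the proof in the paper.
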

\begin{proof}
It is easy to see that the set $\Ngraphs_0(\beta,\beta)$ is closed under the concatenation, which is associative.
The trivial $\boundary$-Legendrian isotopy gives us the identity annular $N$-graph.

Finally, for each $\ngraph\in\Ngraphs_0(\beta,\beta)$, the $N$-graph $\ngraph^{-1}$ plays the role of the inverse of $\ngraph$ due to the Move \Move{I} and \Move{V} of $N$-graphs in Figure~\ref{fig:move1-6}.
Hence $\Ngraphs_0(\beta,\beta)$ becomes a group acting on the set $\Ngraphs(\beta)$ by concatenation, and so we are done.
\end{proof}

\begin{definition}[Legendrian loop]\label{definition:Legendrian loops}
Let $\legendrian \subset (\R ^3, \xi_{\rm st})$ be a Legendrian link and $\cL(\legendrian)$ be the space of Legendrian links isotopic to $\legendrian$. 
A {\em Legendrian loop} $\vartheta$ is a continuous map $\vartheta\colon(\sphere^1,{\rm pt})\to (\cL(\legendrian), \legendrian)$ and said to be \emph{tame} if the Legendrian $\vartheta(\theta)$ is a closure of a positive braid for each $\theta\in\sphere^1$.
\end{definition}

\begin{remark}
One can regard each Legendrian loop $\vartheta$ for $\legendrian$ as an element of the fundamental group $\pi_1(\cL(\legendrian), \legendrian)$.
\end{remark}

Let $\legendrian$ be the closure of a positive braid $\beta$.
Then each tame Legendrian loop for $\legendrian$ corresponds to a $\boundary$-Legendrian isotopy from $\legendrian$ to $\legendrian$ and can be regarded as an element $\ngraph_\vartheta$ in $\Ngraphs_0(\beta,\beta)$.
Conversely, any element $\ngraph$ in $\Ngraphs_0(\beta,\beta)$ defines a tame Legendrian loop $\vartheta_\ngraph$ obviously.

In summary, we have the following lemma.
\begin{lemma}\label{lemma:Legendrian loops and tame annular Ngraphs}
Let $\beta$ be a Legendrian positive $N$-braid. Then there is one-to-one correspondence between $\Ngraphs_0(\beta,\beta)$ and the subset of homotopy classes of tame Legendrian loops for $\legendrian=\legendrian_\beta$.
In particular, each tame Legendrian loop acts on $\Ngraphs(\beta)$.
\end{lemma}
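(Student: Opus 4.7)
The lemma asserts a bijection between equivalence classes of tame annular $N$-graphs and homotopy classes of tame Legendrian loops. The plan is to write down an explicit map in each direction, check they are mutually inverse, and then verify they descend to the equivalence classes.

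First I would construct the map $\Phi$ from homotopy classes of tame loops to $\Ngraphs_0(\beta,\beta)$. Given a tame loop $\vartheta\colon(\sphere^1,{\rm pt})\to(\cL(\legendrian_\beta),\legendrian_\beta)$, a generic perturbation makes the family of front projections combinatorially constant except at finitely many times. Since $\vartheta(t)$ is the closure of a positive braid for every $t$, no cusps are permitted, so the only local events that can appear are the planar isotopy \Move{R0} and the triple-point move \Move{RIII}. Concatenating the elementary annular $N$-graphs $\ngraph_{\Move{R0}}$ and $\ngraph_{\Move{RIII}}$ corresponding to these events, in the order prescribed by $\vartheta$, produces a tame annular $N$-graph $\ngraph_\vartheta$. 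In the other direction, $\Psi$ is built by the converse reading: by definition of tameness, every $\ngraph\in\Ngraphs_0(\beta,\beta)$ decomposes as an ordered concatenation of elementary annular pieces, each of which encodes a single Reidemeister move of positive braid closures; reading this decomposition on concentric circles inside $\annulus$ yields a continuous $1$-parameter family $\vartheta_\ngraph$ in $\cL(\legendrian_\beta)$ based at $\legendrian_\beta$.

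The key content is showing that $\Phi$ and $\Psi$ descend to the respective equivalence relations. For $\Psi$: each of the moves \Move{I}--\Move{VI'} in Figure~\ref{fig:move1-6} is a local Legendrian isotopy of the Legendrian surface $\Legendrian(\ngraph)$ by Theorem~\ref{thm:N-graph moves and legendrian isotopy}, and this isotopy restricts on the circle foliation of $\annulus$ to a homotopy of the associated loop in $\cL(\legendrian_\beta)$. For $\Phi$: a homotopy of tame Legendrian loops is a generic $2$-parameter family of positive braid closures whose singular set is $1$-dimensional and crosses codimension-two strata at isolated instants; the local models for these strata are precisely the $N$-graph moves \Move{I}--\Move{VI'}, so two generic representatives of the same homotopy class produce $N$-graphs related by a finite sequence of such moves. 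Since both $\Phi$ and $\Psi$ are defined by reading off the same sequence of local moves, $\Phi\circ\Psi$ and $\Psi\circ\Phi$ are the identity on equivalence classes, giving the bijection. The action of $\Ngraphs_0(\beta,\beta)$ on $\Ngraphs(\beta)$ claimed at the end is then immediate: transport an $N$-graph $\ngraph$ on $\disk^2$ along the $\boundary$-Legendrian isotopy encoded by the loop, i.e.\ concatenate the annular $N$-graph with $\ngraph$.

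The main obstacle is the descent for $\Phi$, namely the claim that the list \Move{I}--\Move{VI'} exhausts the codimension-two singular models appearing in generic $2$-parameter families of positive braid closures. This is a transversality and classification statement in the same spirit as Casals--Zaslow's proof of Theorem~\ref{thm:N-graph moves and legendrian isotopy}, but restricted to the subspace of families that remain inside positive braid closures throughout. Once this is in hand, well-definedness of $\Phi$, and hence the lemma, follow directly.
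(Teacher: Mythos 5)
Your proposal is correct and takes essentially the same route as the paper: the paper's entire proof is the paragraph preceding the lemma, which identifies a tame Legendrian loop with a sequence of \Move{R0}/\Move{RIII} events (no cusps being possible for positive braid closures) and hence with a concatenation of elementary annular $N$-graphs, and reads the converse off the definition of tameness. The one issue you flag that the paper does not address — that descent of your map $\Phi$ to homotopy classes needs the moves \Move{I}--\Move{VI'} to exhaust the relevant codimension-two models — is left implicit in the source as well, so your more careful treatment of well-definedness supplements rather than contradicts the paper's argument.
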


\subsection{One-cycles in Legendrian weaves}\label{sec:1-cycles in Legendrian weaves}
Let us recall from \cite{CZ2020} how to construct a seed from an $N$-graph~$\ngraph$.
Each one-cycle in $\Legendrian(\ngraph)$ corresponds to a vertex of the quiver,
and a monodromy along that cycle gives a coordinate function at that vertex.
The quiver is obtained from the intersection data among one-cycles.
Moreover, there is an operation in $N$-graph, called \emph{Legendrian mutation}, which is a counterpart of the mutation in the cluster structure.
The Legendrian mutation is crucial in constructing and distinguishing $N$-graphs.
In turn, these will give as many Lagrangian fillings of a given Legendrian links as seeds in the associated cluster pattern to the link which will be discussed in Section~\ref{sec:N-graph of finite or affine type}.

Let $\ngraph\subset \disk^2$ be a free $N$-graph and let  $\Legendrian(\ngraph)$ be the induced Legendrian weave.
We express one-cycles of $\Legendrian(\ngraph)$ in terms of subgraphs of $\ngraph$.

\begin{definition}
A subgraph $\sfT$ of a nondegenerate $N$-graph $\ngraph$ is said to be \emph{admissible} if at each vertex, it looks locally one of pictures depicted in Figure~\ref{fig:T cycle}.
For a degenerate $N$-graph $\ngraph$, a subgraph $\sfT$ is \emph{admissible} if so is its perturbation as a subgraph of the perturbation of $\ngraph$. See Figure~\ref{figure:perturbation of admissible subgraphs}.

For an admissible subgraph $\sfT\subset\ngraph$, let $\ell(\sfT)\subset\disk^2$ be an oriented, immersed, labelled loop given by gluing paths whose local pictures look as depicted in Figure~\ref{fig:T cycle}.
\end{definition}

\begin{figure}[ht]
\subfigure[A trivalent vertex: case 1
\label{figure:loop near vertex1}]{\makebox[.3\textwidth]{

\end{tikzcd}
\]
\caption{Local configurations on degenerate cycles and its perturbation.}
\label{figure:perturbation of admissible subgraphs}
\end{figure}

The loop $\ell(\sfT)$ defines a unique lift $\tilde\ell(\sfT)\subset\wavefront(\ngraph)$ via $\pi_{\disk^2}:\wavefront(\ngraph)\to\disk^2$ so that each $s_j$-labelled arc in $\ell(\sfT)$ is contained in the $s_j$-th sheet of $\wavefront(\ngraph)$.
Moreover, the immersed loop $\tilde\ell(\sfT)$ lifts uniquely to an embedded loop $\cycle(\sfT)$ in $\Legendrian(\ngraph)$ via the front projection $\pi_F:\Legendrian(\ngraph)\to\wavefront(\ngraph)$.

\begin{definition}\label{def:one-cycles}[$\sfT$-cycle]
For an admissible subgraph $\sfT\subset\ngraph$, if $\sfT \cap \boundary \disk^2=\emptyset$ we call the cycle $[\cycle(\sfT)]\in H_1(\Legendrian(\ngraph);\Z)$ a \emph{$\sfT$-cycle}. When $\sfT \cap \boundary \disk^2 \neq \emptyset$, we then call the relative cycle $[\cycle(\sfT)]\in H_1(\Legendrian(\ngraph),\legendrian(\boundary \ngraph);\Z)$ a \emph{relative $\sfT$-cycle}.
\end{definition}

\begin{example}[(Long) $\sfI$-cycles]
For an edge $e$ of $\ngraph$ connecting two trivalent vertices, let $\sfI(e)$ be the subgraph of $\ngraph$ consisting of a single edge $e$.
Then the cycle~$[\cycle(\sfI(e))]$ depicted in Figure~\ref{figure:I-cycle} is called an \emph{$\sfI$-cycle}.
Similarly, for an edge $e$ of $\ngraph$ connecting a point on the boundary $\boundary\disk^2$ and a trivalent vertex, we call the induced relative cycle $[\cycle(\sfI(e))]$ in $H_1(\Legendrian(\ngraph),\legendrian(\boundary \ngraph))$ a \emph{relative $\sfI$-cycle}.

In general, a linear chain of edges $(e_1,e_2,\dots, e_n)$ satisfying
\begin{itemize}
\item $e_i$ connects a trivalent vertex and a hexagonal point for $i=1,n$;
\item $e_i$ and $e_{i+1}$ meet at a hexagonal point in the opposite way, see Figure~\ref{figure:long I-cycle}, for $i=2,\dots, n-1$
\end{itemize}
forms an adissible subgraph $\sfI(e_1,\dots, e_n)$, and the cycle $[\cycle(\sfI(e_1,\dots, e_n))]$ is called a \emph{long $\sfI$-cycle}. See Figure~\ref{figure:long I-cycle}.
\end{example}

\begin{example}[$\sfY$-cycles]
Let $e_1,e_2,e_3$ be monochromatic edges joining a hexagonal point $h$ and trivalent vertices $v_i$ for $i=1,2,3$.
Then the subgraph $\sfY(e_1,e_2,e_3)$ consisting of three edges $e_1, e_2$ and $e_3$ is an admissible subgraph of $\ngraph$ and it defines a cycle $[\cycle(\sfY(e_1,e_2,e_3))]$ called an \emph{upper} or \emph{lower}~\emph{$\sfY$-cycle} according to the relative position of sheets that edges represent.
See Figures~\ref{figure:Y-cycle_1} and~\ref{figure:Y-cycle_2}.
\end{example}

\begin{figure}[ht]
\subfigure[An $\sfI$-cycle $\cycle(\sfI(e))$\label{figure:I-cycle}]{\makebox[.4\textwidth]{

\end{tikzcd}
\]
Therefore, there is a bit subtle issue for picking up nice cycles in a consistent way.
\end{remark}

\begin{definition}\label{def:good cycle}
Let $\ngraph\subset \disk^2$ be an $N$-graph, and let $\Legendrian(\ngraph)$ be an induced Legendrian surface in~$J^1\disk^2$.
A (relative) cycle $[\cycle]$ in $H_1(\Legendrian(\ngraph))$ or $H_1(\Legendrian(\ngraph),\legendrian(\boundary \ngraph))$ is \emph{good} if $[\cycle]$ can be transformed to an (relative) $\sfI$-cycle in $H_1(\Legendrian(\ngraph'))$ or $H_1(\Legendrian(\ngraph'),\legendrian(\boundary \ngraph'))$ for some~$[\ngraph']=[\ngraph]$, respectively.
\end{definition}

\begin{example}\label{ex:I,Y,T-cycles}
The following cycles are good.
\begin{enumerate}
\item All (long) $\sfI$- and $\sfY$-cycles

\[
\begin{tikzcd}[row sep=0pc]

\end{tikzcd}
\]

\item The cycle $\cycle(\sfT)$ for an admissible tree $\sfT$ without local configurations depicted in Figures~\ref{figure:loop near vertex2} and \ref{figure:loop near vertex3}
\end{enumerate}
\end{example}

\begin{definition}\label{def:equiv on N-graph and N-basis}
Let $(\ngraph, \nbasistilde)$ and $(\ngraph', \nbasistilde')$ be pairs of an $N$-graph and a set of good (relative) cycles.
We say that $(\ngraph, \nbasistilde)$ and $(\ngraph', \nbasistilde')$ are \emph{equivalent}
if $[\ngraph]=[\ngraph']$ and the induced isomorphism
\[
H_1(\Legendrian(\ngraph),\legendrian(\boundary\ngraph))\cong H_1(\Legendrian(\ngraph'),\legendrian(\boundary\ngraph'))
\]
identifies $\nbasistilde$ with $\nbasistilde'$.
We denote the equivalent class of $(\ngraph, \nbasistilde)$ by $[\ngraph, \nbasistilde]$.
\end{definition}

\subsection{Flag moduli spaces}\label{sec:flag moduli spaces}

We recall from \cite{STZ2017,CZ2020} central algebraic invariants $\cM(\legendrian)$ and $\mathcal{M}(\ngraph)$ of the Legendrians $\legendrian$ and $\Legendrian(\ngraph)$, respectively.
The main idea is to consider moduli spaces of constructible sheaves adapted to the Legendrians.

\subsubsection{Flag moduli spaces for Legendrian links}
Let $\legendrian=\legendrian_\beta$ be a Legendrian in $J^1\sphere^1$, then $\beta$ gives us an $(N-1)$-tuple of points $X=(X_1,\dots, X_{N-1})$ in $\sphere^1$.
Notice that we are not assuming that all $X_i$'s are disjoint.
\begin{remark}
By abuse of notation, we regard $\beta$ as a braid word on $\sphere^1$, where two generators can be occur simultaneously, such as $\sigma_{1,3}$.
In this case $X_1$ and $X_3$ is not disjoint.
\end{remark}
The position of the points in $X_i$ correspond to the position of $\sigma_i$ in $\beta$ for $i=1,\dots, N-1$. Let $\{f_k\}_{k\in K}$ be the set of closures of connected components of $\sphere^1\setminus X$.

In order to introduce a legible model for the constructible sheaves adapted to $\legendrian_\beta$, let us consider a full flag, i.e. a nested sequence of subspaces in $\bbC^N$:
\[
\cF^\bullet \in \{(\cF^i)_{i=0}^N \mid \dim \cF^i=i,\  \cF^j\subset \cF^{j+1}, 1\leq j\leq N-1,\  \cF^N=\bbC^N \}.
\]
Let $f_1,f_2$ be consecutive faces sharing a point $x$ in $X$. Then the corresponding flags $\cF^\bullet(f_1),\cF^\bullet(f_2)$ satisfy
\begin{align}\label{equation:flag conditions}
\cF^i(f_1)=\cF^i(f_2) \Longleftrightarrow x\not\in X_i.
\end{align}

The flags $\flags=\{\cF_\legendrian^\bullet(f_k)\}_{k\in K}$ in~$\bbC^N$ satisfying the conditions will be called simply by \emph{flags on~$\legendrian$}.
Let us denote the moduli space of such flags by $\widetilde\cM(\legendrian)$, then the general linear group $\operatorname{GL}_N(\bbC)$ acts on all flags at once.
The \emph{flag moduli space} for Legendrian link $\legendrian$ is defined by the quotient space
\[
\cM(\legendrian)\colonequals\widetilde{\cM}(\legendrian)/\operatorname{GL}_N(\bbC).
\]
It is well known that $\cM(\legendrian)$ is isomorphic to~$\Sh_\legendrian^1(\R^2)_0$ which is a Legendrian isotopy invariant, see \cite[Theorem 1.1]{STZ2017}.

\begin{remark}\label{rmk:non-trivial monodromy}
In general, we may consider moduli of flags $\{\cF_\legendrian^\bullet(f_k)\}_{k\in K}$ which possibly have non-trivial monodromy along the base $\sphere^1$.
In the current manuscript, however, we are only interested in flags which can be extended to $\disk^2$, which is adapted to the $N$-graphs. So it is enough to consider the current setup $\{\cF_\legendrian^N(f_j)=\bbC^N\}_{k\in K}$ of trivial monodromy along $\sphere^1$.
\end{remark}

\subsubsection{Flag moduli spaces for $N$-graphs}
Let $\ngraph=(\ngraph_1,\dots,\ngraph_{N-1})$ be an $N$-graph on $\disk^2$. Let $\{F_\ell \}_{\ell \in L}$ be a set of closures of connected components of $\disk^2\setminus \ngraph$, call each closure a \emph{face}. 
The \emph{framed flag moduli space} $\widetilde \cM(\ngraph)$ is a collection of \emph{flags} $\cF_{\Legendrian(\ngraph)}=\{\cF^\bullet(F_\ell )\}_{\ell \in L}$ in $\bbC^N$ satisfying the following: 
Let $F_1,F_2$ be a pair of faces sharing an edge $e$ in $\ngraph$. Then the corresponding flags $\cF^\bullet(F_1),\cF^\bullet(F_2)$ satisfy the condition
\[
\cF^i(F_1) = \cF^i(F_2)\Longleftrightarrow e\not\in \ngraph_i
\]
which is equivalent to the condition \eqref{equation:flag conditions}.

Then \emph{flag moduli space} for the $N$-graph $\ngraph$ is defined by
\[
\cM(\ngraph)\colonequals\widetilde{\cM}(\ngraph)/\operatorname{GL}_N(\bbC),
\] 
which is a stack in general.

Let $\Sh(\disk^2 \times \R)$ be the category of \emph{constructible sheaves} on $\disk^2\times \R$. Under the identification $J^1\disk^2\cong T^{\infty,-}(\disk^2\times \R)$, an $N$-graph $\ngraph\subset \disk^2$ gives a Legendrian 
\[
\Legendrian(\ngraph)\subset J^1 \disk^2 
\cong T^{\infty,-}(\disk^2\times \R) 
\subset T^\infty(\disk^2\times \R).
\]
This can be used to define a Legendrian isotopy invariant $\Sh_{\Legendrian(\ngraph)}^1(\disk^2 \times \R)_{0}$ of $\Sh(\disk^2 \times \R)$ consisting of constructible sheaves 
\begin{itemize}
\item whose singular support at infinity lies in $\Legendrian(\ngraph)
\subset T^\infty(\disk^2\times \R)$,
\item whose microlocal rank is one, and
\item which are zero near $\disk^2\times \{-\infty\}$.
\end{itemize} 
See \cite{CZ2020,GKS2012,STZ2017} for more details.

\begin{theorem}[{\cite[Theorem~5.3]{CZ2020}}]
The flag moduli space $\cM(\ngraph)$ is isomorphic to $\Sh_{\Legendrian(\ngraph)}^1(\disk^2\times\R)_0$. Hence $\cM(\ngraph)$ is a Legendrian isotopy invariant of $\Legendrian(\ngraph)$.\footnote{
Indeed, the actual theorem is about a connected surface, not only for $\disk^2$.}
\end{theorem}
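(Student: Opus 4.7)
The plan is to construct an explicit equivalence between the two moduli problems and then invoke the Legendrian invariance of microlocal sheaves due to Guillermou--Kashiwara--Schapira. Given a flag assignment $\cF_{\Legendrian(\ngraph)} = \{\cF^\bullet(F_i)\}_{i \in I}$ satisfying the compatibility \eqref{equation:flag conditions}, I would build a constructible sheaf $\mathcal{S}(\cF_{\Legendrian(\ngraph)}) \in \Sh(\disk^2 \times \R)$ as follows. Above each face $F_i$ of $\disk^2 \setminus \ngraph$, the wavefront $\wavefront(\ngraph)$ consists of $N$ disjoint graphs $\{z = z_k(x,y)\}_{k=1}^N$ which stratify $F_i \times \R$ into $N+1$ horizontal chambers ordered by height. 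Declare the stalk of $\mathcal{S}$ in the lowest chamber to be $0$ and in the $j$-th chamber from the bottom to be $\cF^j(F_i) \subset \bbC^N$, with all generization maps the natural inclusions. This is exactly the unique (up to isomorphism) constructible sheaf on $F_i \times \R$ that is zero near $-\infty$ and has microlocal rank one along each of the $N$ sheets.

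The key step is then to check that the local flag condition at an edge of $\ngraph_i$ corresponds precisely to the gluing data of such a sheaf across the $\dynA_1^2$-germ of the wavefront, and that the more subtle vertex conditions are automatic. Across an edge of $\ngraph_i$ separating faces $F_1$ and $F_2$, the $i$-th and $(i+1)$-st sheets of $\wavefront(\ngraph)$ swap vertical order; on the sheaf side, this forces a change of the subspace at level $i$ while preserving all other levels, which is exactly $\cF^j(F_1) = \cF^j(F_2)$ for $j \neq i$ and $\cF^i(F_1) \neq \cF^i(F_2)$. At a trivalent vertex in $\ngraph_i$, three faces meet across three edges of the same color, and the $\dynD_4^-$-wavefront geometry imposes no further constraint beyond transverse pairwise inequality of the three flags at level $i$; at a hexagonal point of $(\ngraph_i, \ngraph_{i+1})$, the $\dynA_1^3$-germ forces a genericity condition among the six surrounding flags which, after unpacking, is tautologically satisfied whenever the edgewise conditions hold. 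The $\operatorname{GL}_N$-action on framed flags coincides with the automorphism group of the fixed ambient vector space $\bbC^N$ into which all stalks embed, so the quotient $\cM(\ngraph) = \widetilde\cM(\ngraph)/\operatorname{GL}_N$ parametrizes isomorphism classes, yielding a bijection with objects of $\Sh^1_{\Legendrian(\ngraph)}(\disk^2 \times \R)_0$.

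For the second assertion, once the isomorphism $\cM(\ngraph) \cong \Sh^1_{\Legendrian(\ngraph)}(\disk^2\times \R)_0$ is in hand, Legendrian invariance is free: the category $\Sh^1_{\Legendrian}(\disk^2\times\R)_0$ depends only on $\Legendrian$ up to Legendrian isotopy, because a contact isotopy of $J^1\disk^2$ lifts to a Hamiltonian isotopy in $T^*(\disk^2\times\R)$ whose GKS quantization provides an equivalence of the associated microlocal sheaf categories.

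The main obstacle I anticipate is the local verification at the hexagonal point, since there the six surrounding stalks must assemble into a consistent sheaf on a neighborhood of the cone point of the $\dynA_1^3$-singularity; one must carefully check that the flag compatibility at each of the six edges, together with the definition of $\mathcal{S}$ chamber-by-chamber, produces a well-defined sheaf rather than merely a collection of compatible local sheaves. This amounts to verifying a cocycle condition for the transition maps, which is a finite but nontrivial linear-algebraic check; once it is settled, the rest of the argument is formal.
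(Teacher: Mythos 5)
This statement is quoted from \cite[Theorem~5.3]{CZ2020} and the paper gives no proof of its own, so the only meaningful comparison is with the argument in that reference. Your strategy is the same one used there (and, for the one-dimensional case, in \cite{STZ2017}): build the sheaf chamber-by-chamber over each face from the flag $\cF^\bullet(F_i)$, match the edge condition~\eqref{equation:flag conditions} with the $\dynA_1^2$-crossing condition, reduce the global statement to local moduli computations at the singular points of the wavefront, and obtain Legendrian isotopy invariance from the GKS quantization of a contact isotopy. So the architecture is right, and your identification of the $\operatorname{GL}_N$-quotient with passing to isomorphism classes of sheaves is also how the framed/unframed distinction is handled.

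The gap is that the two local verifications you defer are not peripheral checks but the entire mathematical content of the theorem, and one of them is misstated. At a trivalent vertex the claim that the $\dynD_4^-$-germ ``imposes no further constraint beyond transverse pairwise inequality'' is a computation of the category of microlocal rank-one sheaves on a neighborhood of the cone point of the $\dynD_4^-$ front; it is true, but it requires exhibiting the sheaf at the cone point itself (where the three chambers merge) and showing its stalk and restriction maps are forced by the three pairwise-distinct lines in $\cF^{i+1}/\cF^{i-1}$. At a hexagonal point the condition is \emph{not} ``tautologically satisfied whenever the edgewise conditions hold'': going around the six sectors one obtains a cyclic chain of pairs $(\ell_k\subset P_k)$ in the three-dimensional space $\cF^{i+2}(F)/\cF^{i-1}(F)$ in which lines and planes alternately jump, and the existence of an extension over the $\dynA_1^3$ cone point is equivalent to this configuration being in general position (equivalently, arising from a pair of transverse complete flags in $\bbC^3$); this is a genuine transversality statement that must be derived from the six edge conditions, not a cocycle formality. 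Without these two local computations the map from flags to sheaves is not yet shown to be well defined, let alone bijective, so as written the proposal is an accurate outline of the proof in \cite{CZ2020} rather than a proof.
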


We end this section by introducing a concept 
relating two moduli spaces $\cM(\ngraph)$ and $\cM(\boundary\ngraph)$ as follows.
For each $\ngraph$ on $\disk^2$, we have a canonical map $\cM(\ngraph)\to\cM(\boundary\ngraph)$ induced by the restriction map, which does not have to be injective or surjective.
However, since each internal edge in $\ngraph$ gives us an additional open condition, the image of $\cM(\ngraph)$ is open in $\cM(\boundary\ngraph)$ and therefore it is a birational equivalence if it is injective.

\begin{definition}[deterministic $N$-graphs]\label{def:good N-graph}
An $N$-graph $\ngraph$ on $\disk^2$ is said to be \emph{deterministic} if the induced map $\cM(\ngraph)\to\cM(\boundary\ngraph)$ between flag moduli spaces is a birational equivalence, or equivalently, the function field of $\cM(\ngraph)$ is canonically isomorphic to $\cM(\boundary\ngraph)$, i.e.,
\[
\bbC(\cM(\ngraph))\cong  \bbC(\cM(\boundary\ngraph)).
\]
\end{definition}

\begin{example}
Let us compare $\cM(\ngraph(a,b,c))$ and $\cM(\boundary\ngraph(a,b,c))$. Interior edges of $\ngraph(a,b,c)$ produce additional open conditions on flags adapted to $\boundary\ngraph(a,b,c)$. 
Moreover, each flag on $(\sphere^1,\boundary\ngraph(a,b,c))$ can be extended uniquely on $(\disk^2,\ngraph(a,b,c))$ if possible.
This implies that $\cM(\ngraph(a,b,c))$ can be seen as an open subset (indeed, an algebraic torus) of $\cM(\boundary\ngraph(a,b,c))$. So $\cM(\ngraph(a,b,c))$ and $\cM(\boundary\ngraph(a,b,c))$ are birationally equivalent.
A similar argument works for $\ngraph(\exdynD_n)$.
We have $\ngraph(a,b,c)$ and $\ngraph(\exdynD_n)$ are deterministic.
\end{example}

The following observations are obvious: let $\ngraph$ be an $N$-graph on $\disk^2$.
(i) if $\ngraph$ consists of trees, then it is deterministic; (ii) if an $N$-graph $\ngraph\subset \disk^2$ is deterministic and $[\ngraph]=[\ngraph']$, then so is $\ngraph'$.

\subsection{$Y$-seeds and Legendrian mutations}\label{sec:N-graphs and seeds}
Let $\ngraph\subset \disk^2$ be an $N$-graph, and let $\nbasis=\{[\cycle_1],\dots, [\cycle_n]\}$ be a set of good (absolute) cycles in $H_1(\Legendrian(\ngraph))$.
For two cycles $[\cycle_i]$ and $[\cycle_j]$, let $i([\cycle_i], [\cycle_j])$ be the algebraic intersection number.
In particular, if $\cycle_i$ is an $\sfI$-cycle $\cycle(\sfI(e))$ and $\cycle_j$ is a $\sfT$-cycle for some admissible subgraph $\sfT$, then 
\[
i([\cycle_i], [\cycle_j]) = \sum_{e'\in \sfT} i(e, e'),
\]
where $i(e,e')\in\{0,1,-1\}$ is defined as follows: 
\[
i(e,e') \colonequals
\begin{cases}
0 & \text{ if }e=e'\text{ or }e\cap e'=\varnothing;\\
1 & \text{ if }e' \text{ is lying on the left side of }e;\\
-1 & \text{ if }e' \text{ is lying on the right side of }e.
\end{cases}
\]
Geometrically, two representatives of $\cycle_i$ and $\cycle_j$ look locally as depicted in Figure~\ref{fig:I-cycle with orientation and intersections}. Their intersection $i([\cycle_i], [\cycle_j])$ is defined to be $+1$ by using the counterclockwise rotation convention of two tangent directions of cycles $\cycle_i$ and $\cycle_j$ at the intersection point as depicted in the third picture in Figure~\ref{fig:I-cycle with orientation and intersections}.
Note that our convention is opposite to the one in~\cite{CZ2020}.

\begin{figure}[ht]
\[
\def\arraycolsep{1pc}

\]
\caption{$\sfI$-cycles with intersections.}
\label{fig:I-cycle with orientation and intersections}
\end{figure}

\begin{definition} 
For each a pair $(\ngraph, \nbasis)$ of an $N$-graph and a set of good cycles, we define a quiver $\quiver=\quiver(\ngraph,\nbasis)$ as follows: let $n=\#(\nbasis)$.
\begin{enumerate}
\item the set of vertices is $[n]$, 
\item the $(i,j)$-entry $b_{i,j}$ for $\qbasispr(\quiver)=(b_{i,j})$ is the algebraic intersection number between $[\cycle_i]$ and~$[\cycle_j]$
\[
b_{i,j} = i([\cycle_i], [\cycle_j])\quad\text{for}\quad 1\le i,j\leq n.
\]
\end{enumerate}
\end{definition}

In order to assign a coefficient to each cycle, let us review  the microlocal monodromy functor from \cite{STZ2017}
\[
\mmon_\Legendrian:\Sh_\Legendrian^\bullet \to\Loc^\bullet(\Legendrian).
\]
In our case, this functor sends microlocal rank-one sheaves $\cF \in \cM(\ngraph)\cong \Sh_{\Legendrian(\ngraph)}^1(\disk^2\times \R)_0$, or equivalently, flags $\{\cF^\bullet(F_i)\}_{i \in I}\in\cM(\ngraph)$ to rank-one local systems $\mmon_{\Legendrian(\ngraph)}(\cF)$ on the Legendrian surface $\Legendrian(\ngraph)$. 
From now on, we regard flags $\cF$ as a formal parameter for the flag moduli space $\cM(\ngraph)$.
Then the coefficients in the coefficient tuple $\bfy$ for the pair $(\ngraph, \nbasis)$ are defined by
\begin{align}\label{eqn:y-varibles}
\bfy(\ngraph,\nbasis)=\left(
\mmon_{\Legendrian(\ngraph)}(-)([\cycle_1]),
\dots,
\mmon_{\Legendrian(\ngraph)}(-)([\cycle_n])\right),
\end{align}
where $\mmon_{\Legendrian(\ngraph)}(-)([\cycle_j]):\cM(\ngraph)\to \bbC$.  
Let us denote the above assignment by 
\[
\Psi(\ngraph, \nbasis)=(\bfy(\ngraph, \nbasis),\quiver(\ngraph,\nbasis)).
\]
By the Legendrian isotopy invariance of $\Sh_{\Legendrian(\ngraph)}^1(\disk^2\times \R)_0$ in \cite{GKS2012}, and the functorial property of the microlocal monodromy functor $\mmon$ \cite{STZ2017}, the assignment $\Psi$ is well-defined up to isotopy of~$\Legendrian(\ngraph)$. That is, if two pairs $(\Legendrian(\ngraph),\nbasis)$ and $(\Legendrian(\ngraph'),\nbasis')$ are Legendrian isotopic, or $[\ngraph, \nbasis]=[\ngraph',\nbasis']$ in particular, then they give us the same seed via $\Psi$.

\begin{theorem}\cite[\S7.2.1]{CZ2020}\label{thm:N-graph to seed}
Let $\ngraph\subset \disk^2$ be a $N$-graph with a tuple of cycles $\nbasis$ in $H_1(\Legendrian(\ngraph))$. Then the assignment $\Psi$ to a $Y$-seed in a cluster structure
\[
\Psi([\ngraph,\nbasis])= (\bfy(\ngraph,\nbasis),\quiver(\ngraph,\nbasis))
\]
is well-defined.
\end{theorem}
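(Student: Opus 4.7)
The plan is to prove well-definedness separately for the two components of $\Psi$, namely the quiver $\quiver(\ngraph,\nbasis)$ and the coefficient tuple $\bfy(\ngraph,\nbasis)$, by invoking the invariance results cited in the preceding subsections. Concretely, I would unwind the definition of equivalence on pairs from Definition~\ref{def:equiv on N-graph and N-basis}: whenever $[\ngraph,\nbasis]=[\ngraph',\nbasis']$, there is a sequence of moves $\Move{I}$--$\Move{IV'}$ (and, for degenerate graphs, $\Move{DI}$--$\Move{DII}$) relating $\ngraph$ to $\ngraph'$, which by Theorem~\ref{thm:N-graph moves and legendrian isotopy} lifts to a Legendrian isotopy $\Legendrian(\ngraph)\simeq\Legendrian(\ngraph')$; and the induced isomorphism on $H_1$ carries the tuple $\nbasis$ to $\nbasis'$ up to a permutation~$\sigma$. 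The whole theorem reduces to showing that both the quiver and the coefficient tuple are invariant under such an isotopy, modulo $\sigma$.

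For the quiver part, the entries $b_{i,j}=i([\cycle_i],[\cycle_j])$ are algebraic intersection numbers in $H_1(\Legendrian(\ngraph))$, which is a topological (hence Legendrian isotopy) invariant together with its intersection form. Thus if $\Phi\colon H_1(\Legendrian(\ngraph))\to H_1(\Legendrian(\ngraph'))$ is the isomorphism induced by the isotopy and $\Phi([\cycle_i])=[\cycle_{\sigma(i)}']$, then
\[
i([\cycle_i],[\cycle_j])=i(\Phi[\cycle_i],\Phi[\cycle_j])=i([\cycle_{\sigma(i)}'],[\cycle_{\sigma(j)}']),
\]
so $\quiver(\ngraph,\nbasis)$ and $\quiver(\ngraph',\nbasis')$ coincide up to the relabelling $\sigma$. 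Figures~\ref{fig:cycles under moves} and \ref{fig:T cycle} make this intersection-preserving property visible at the level of admissible subgraphs, providing a concrete sanity check on each move.

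For the coefficient tuple, I would apply the cited theorem identifying $\cM(\ngraph)\cong\Sh^1_{\Legendrian(\ngraph)}(\disk^2\times\R)_0$. The Guillermou--Kashiwara--Schapira invariance result makes this category, hence the moduli stack $\cM(\ngraph)$, a Legendrian isotopy invariant; therefore any isotopy $\Legendrian(\ngraph)\simeq\Legendrian(\ngraph')$ induces an equivalence $\cM(\ngraph)\simeq\cM(\ngraph')$. Functoriality of the microlocal monodromy functor $\mmon_{\Legendrian(-)}\colon\Sh^1_{\Legendrian(-)}\to\Loc^1(\Legendrian(-))$ guarantees that this equivalence intertwines the monodromy local systems on $\Legendrian(\ngraph)$ and $\Legendrian(\ngraph')$; evaluating at corresponding one-cycles $[\cycle_i]$ and $[\cycle_{\sigma(i)}']$ therefore identifies the functions $\mmon_{\Legendrian(\ngraph)}(-)([\cycle_i])$ and $\mmon_{\Legendrian(\ngraph')}(-)([\cycle_{\sigma(i)}'])$. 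Hence $\bfy(\ngraph,\nbasis)$ and $\bfy(\ngraph',\nbasis')$ agree up to the same permutation $\sigma$, which is exactly the equivalence of $Y$-seeds recalled in \S\ref{sec:cluster algebras}.

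The main obstacle, and the reason this is more than a pure citation, is bookkeeping the identifications: one must check that the specific equivalence of sheaf categories produced by each local move in Figure~\ref{fig:move1-6} sends the rank-one monodromy around a good cycle to the monodromy around its image under $\Phi$. For the non-degenerate moves this is already handled by~\cite{CZ2020}, so the effort reduces to verifying the same for $\Move{DI}$ and $\Move{DII}$; since these moves can be realised by perturbing to a non-degenerate $N$-graph (Figure~\ref{figure:perturbation of degenerated Ngraphs}) followed by a sequence of standard moves, the degenerate case is reduced to the non-degenerate one and the argument closes.
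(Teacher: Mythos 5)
Your proposal is correct and follows essentially the same route the paper takes: the paper justifies well-definedness of $\Psi$ exactly by the Legendrian-isotopy invariance of $\Sh^1_{\Legendrian(\ngraph)}(\disk^2\times\R)_0$ from~\cite{GKS2012} together with functoriality of $\mmon$ from~\cite{STZ2017}, deferring the details to~\cite{CZ2020}. Your write-up simply makes explicit the two checks (invariance of the intersection form on $H_1$ for the quiver, and intertwining of monodromy local systems for the coefficients) and correctly observes that the degenerate moves \Move{DI}, \Move{DII} reduce to the non-degenerate case by perturbation.
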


When an $N$-graph $\ngraph$ is deterministic, 
the coefficient tuple $\bfy$ originally defined on $\bbC[\cM(\ngraph)]$ can be restricted to the coordinate ring $\bbC[\cM(\legendrian)]$ of the moduli spaces of flags on~$\legendrian=\boundary\ngraph$, which is actually a $\cX$-cluster variety due to the result of Shen--Weng \cite[Theorem~1.1]{SW2019}.

The monodromy $\mmon_{\Legendrian(\ngraph)}(\cF)$ along a loop $[\gamma]\in H_1(\Legendrian(\ngraph))$ can be obtained by restricting the constructible sheaf $\cF$ to a tubular neighborhood of $\gamma$. 
Let us investigate how the monodromy can be computed explicitly in terms of flags $\{\cF^\bullet(F_i)\}_{i \in I}$.

Let us consider an $\sfI$-cycle $[\cycle]$ represented by a loop $\cycle(e)$ for some monochromatic edge $e$ as in Figure~\ref{figure:I-cycle with flags}.
Let us denote four flags corresponding to each region by $F_1,F_2,F_3,F_4$, respectively.
Suppose that $e \subset \ngraph_i$, then by the construction of flag moduli space $\cM(\ngraph)$, a two-dimensional vector space $V\colonequals\cF^{i+1}(F_*)/\cF^{i-1}(F_*)$ is independent of $*=1,2,3,4$. Moreover, $\cF^{i}(F_*)/\cF^{i-1}(F_*)$ defines a one-dimensional subspace $v_*\subset V$ for $*=1,2,3,4$, satisfying
\[
v_1\neq v_2 \neq v_3 \neq v_4 \neq v_1.
\]
Then $\mmon_{\Legendrian(\ngraph)}(\cF)$ along the one-cycle $[\gamma(e)]$ is defined by the cross ratio \[
\mmon_{\Legendrian(\ngraph)}(\cF)([\cycle])\colonequals\langle v_1,v_2,v_3,v_4 \rangle=\frac{v_1 \wedge v_2}{v_2 \wedge v_3}\cdot\frac{v_3 \wedge v_4}{v_4\wedge v_1}.
\]

Suppose that local flags $\{F_j\}_{j\in J}$ near the upper $\sfY$-cycle $[\cycle_U]$ look like in Figure~\ref{figure:Y-cycle with flags I}. 
Let $\ngraph_i$ and $\ngraph_{i+1}$ be the $N$-subgraphs in red and blue, respectively.
Then the $3$-dimensional vector space $V=\cF^{i+2}(F_*)/\cF^{i-1}(F_*)$ is independent of $*\in J$. Now regard $a,b,c$ and $A,B,C$ are subspaces of $V$ of dimension one and two, respectively. Then the microlocal monodromy along the $\sfY$-cycle~$[\cycle_U]$  becomes
\[
\mmon_{\Legendrian(\ngraph)}(\cF)([\cycle_U])\colonequals\frac{A(c)B(a)C(b)}{A(b)B(c)C(a)}.
\]
Here, $B(a)$ can be seen as a paring between a vector $v_a$ with $\langle  v_a \rangle=a$, and a covector $w_B$ with $\langle w_B \rangle=B^\perp$.

Now consider the lower $\sfY$-cycle $[\cycle_L]$ whose local flags given as in Figure~\ref{figure:Y-cycle with flags II}. We already have seen that the orientation convention of the loop in Figure~\ref{fig:I and Y cycle} for the upper and lower $\sfY$-cycle is different. Then microlocal monodromy along $[\cycle_L]$ follows the opposite orientation and becomes 
\[
\mmon_{\Legendrian(\ngraph)}(\cF)([\cycle_L])\colonequals\frac{A(b)B(c)C(a)}{A(c)B(a)C(b)}.
\]

\begin{figure}[ht]
\begin{tikzcd}
\subfigure[$\sfI$-cycle with flags.\label{figure:I-cycle with flags}]{
}
\end{tikzcd}
\caption{$\sfI$- and $\sfY$-cycles with flags. Here $ab$ means the span of $a$ and $b$, and $AB$ means the intersection of $A$ and $B$.}
\label{fig:I and Y cycle with flags}
\end{figure}

Let us define an operation called (\emph{Legendrian}) \emph{mutation} on $N$-graphs $\ngraph$ which corresponds to a geometric operation on the induced Legendrian surface $\Legendrian(\ngraph)$ that producing a smoothly isotopic but not necessarily Legendrian isotopic to $\Legendrian(\ngraph)$, see \cite[Definition~4.19]{CZ2020}.
Note that operation has an intimate relation with the wall-crossing phenomenon~\cite{Aur2007}, Lagrangian surgery \cite{Pol1991}, and quiver (or $Y$-seed) mutations \cite{FZ1_2002}.

\begin{definition}\cite{CZ2020}\label{def:legendrian mutation}
Let $\ngraph$ be a (local) $N$-graph and $e\in \ngraph_i\subset \ngraph$ be an edge between two trivalent vertices corresponding to an $\sfI$-cycle $[\cycle]=[\cycle(e)]$. The mutation $\mutation_\cycle(\ngraph)$ of $\ngraph$ along $\cycle$ is obtained by applying the local change depicted in Figure~\ref{figure:I-mutation}.
\end{definition}

\begin{figure}[ht]
\subfigure[Legendrian mutation along $\sfI$-cycle.\label{figure:I-mutation}]{
\makebox[0.45\textwidth]{

}}
\caption{Legendrian (local) mutations at (degenerate, long) $\sfI$- and $\sfY$-cycles.}
\label{fig:Legendrian mutation on N-graphs}
\end{figure}

For the $\sfY$-cycle, the Legendrian mutation becomes as in the right of Figure~\ref{figure:Y-mutation}.
Note that the mutation at $\sfY$-cycle can be decomposed into a sequence of Move~\Move{I} and Move~\Move{II} together with a mutation at $\sfI$-cycle, see Example~\ref{ex:I,Y,T-cycles}.

One can easily verify Legendrian (local) mutations on degenerate $N$-graph shown in Figures~\ref{figure:degen I-cycle} and \ref{figure:local degen I-cycle} via perturbation.
For Figures~\ref{figure:local long I-cycle} and \ref{figure:local degen long I-cycle}, see Appendix~\ref{appendix:local mutations}.

\begin{remark}\label{rem:local mutation}
Note that Figures~\ref{figure:local degen I-cycle},\ref{figure:local long I-cycle}, and \ref{figure:local degen long I-cycle} depict the effect of the Legendrian mutation on a certain \emph{part} of $\sfT$-cycles. Since the boundaries of each side does not match, it seems not well-defined local operations at first glance. Indeed, they produce a well-defined operation when we apply each of the local operations to the (whole part of) $\sfT$-cycle. By combining Figure~\ref{figure:local I-cycle} and \ref{figure:local long I-cycle}, for example, we obtain Figure~\ref{figure:mutation example long I-cycle}.
\end{remark}

Let us remind our main purpose of finding exact embedded Lagrangian fillings for a Legendrian links.
The following lemma guarantees that Legendrian mutation preserves the embedding property of Lagrangian fillings.
\begin{proposition}\cite[Lemma~7.4]{CZ2020}
Let $\ngraph\subset \disk^2$ be a free $N$-graph. Then mutation $\mutation(\ngraph)$ at any $\sfI$- or $\sfY$-cycle is again free $N$-graph. 
\end{proposition}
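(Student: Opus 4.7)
The plan is to reduce the statement to the $\sfI$-cycle case. As the paper notes, the Legendrian mutation at a $\sfY$-cycle decomposes into Moves $\Move{I}, \Move{II}$ together with an $\sfI$-cycle mutation. Since Moves $\Move{I}$ and $\Move{II}$ induce Legendrian isotopies of $\Legendrian(\ngraph)$ by Theorem~\ref{thm:N-graph moves and legendrian isotopy}, and Legendrian isotopies cannot create interior Reeb chords once none are present (one can weave throughout the isotopy with the cylindrical-at-infinity height data unchanged), the $\sfY$-case follows from the $\sfI$-case. So the substantive content is to show that for a free $\ngraph$ and an $\sfI$-cycle $\cycle = \cycle(e)$, the mutated graph $\mutation_\cycle(\ngraph)$ is still free.

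Let $e \subset \ngraph_i$ be the edge underlying $\cycle$. The mutation is supported in a small disk $U \subset \disk^2$ that contains $e$ together with its two trivalent endpoints (Figure~\ref{figure:I-mutation}), and outside $U$ the $N$-graph and the wavefront $\wavefront(\ngraph)$ are unchanged. Recall that an interior Reeb chord of $\Legendrian(\ngraph)$ corresponds to an interior critical point of some height difference $h_{jk} = f_k - f_j$ between sheets $\wavefront_j$ and $\wavefront_k$ on the complement of the projection of the relevant branch locus. Since the wavefront outside $U$ is unchanged, any Reeb chord created by mutation must project into $U$. The plan is therefore to build an explicit local model for $\wavefront(\mutation_\cycle(\ngraph))$ over $U$ which (i) agrees with the unchanged wavefront on $\partial U$, and (ii) has no interior critical points of any $h_{jk}$.

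For (ii), the local model before and after mutation only differ by a $\pi/2$-rotation in the planar direction: both are built from a single $\dynA_1^2$-germ of sheets $i, i+1$ terminating at two $\dynD_4^-$-germs (Figure~\ref{fig:legendrian_singularities}). The original wavefront over $U$, being part of a free $N$-graph, admits a choice of heights making $\nabla h_{i,i+1}$ nowhere zero on $U \setminus \ngraph_i$. Rotating this choice by $\pi/2$ yields heights on the mutated model with the same property. The remaining work is to interpolate these rotated heights on a thin collar inside $U$ to match the unchanged heights on $\partial U$, while keeping $\nabla h_{i,i+1} \neq 0$; this is possible because the slope data of sheets $i, i+1$ on $\partial U$ is intrinsic (recoverable from the $N$-graph outside $U$) and hence agrees for both the original and the rotated model up to the ambient rotation, so the interpolation can be made monotone in the radial direction.

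The main obstacle is (ii) for the \emph{other} pairs of sheets: sheets $\ngraph_j$ with $|j-i| > 1$ may pass transversely through $U$, and one must ensure that no new critical point of $h_{jk}$ for $k \in \{i, i+1\}$ appears after the heights of sheets $i, i+1$ are rearranged. The key observation is that the branches of $\ngraph_j$ crossing $U$ carry only transverse intersections (Type~3 charts), so the heights of sheets $j$ can be taken identical before and after mutation, and the pre-mutation non-vanishing of $\nabla h_{j,i}$ and $\nabla h_{j,i+1}$ is an open condition. By choosing the perturbation of the heights of sheets $i, i+1$ inside $U$ to be $C^1$-small relative to the unperturbed configuration, all pairwise gradients remain non-vanishing, which yields freeness of $\mutation_\cycle(\ngraph)$ and completes the proof.
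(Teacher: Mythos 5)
First, a point of reference: this proposition is not proved in the paper at all — it is imported verbatim from Casals--Zaslow as \cite[Lemma~7.4]{CZ2020}, so there is no in-paper argument to compare yours against (the paper's other freeness statements, e.g.\ Lemma~\ref{lemma:tree Ngraphs are free}, are likewise stated without proof). The standard criterion behind Definition~\ref{definiton:freeness} is the one you use: choose the weaving so that every pairwise height difference $h_{jk}=f_k-f_j$ has nowhere-vanishing gradient away from the branch locus. Your overall strategy — localize the mutation to a disk $U$, observe that any new Reeb chord must project into $U$, verify non-vanishing of $\nabla h_{i,i+1}$ on the rotated local model, and decouple the remaining sheets — is the right route, and the reduction of the $\sfY$-case to the $\sfI$-case via Moves \Move{I} and \Move{II} matches how the paper itself treats $\sfY$-mutations (granting, as you do, that those moves preserve freeness).

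There is, however, one step that fails as written. You control the non-adjacent pairs $(j,i)$ and $(j,i+1)$ by declaring the post-mutation heights of sheets $i,i+1$ to be a ``$C^1$-small perturbation'' of the pre-mutation ones and invoking openness of the condition $\nabla h_{j,i}\neq 0$. The mutation is \emph{not} a $C^1$-small perturbation of the front: the $\dynA_1^2$ crease rotates by $\pi/2$ inside $U$ (Figure~\ref{figure:I-mutation}), so $\nabla f_i$ and $\nabla f_{i+1}$ change by a definite $O(1)$ amount near the old and new creases no matter how small $U$ is taken. The statement survives, but the correct mechanism is the slope-hierarchy (``tilting'') trick: first rescale so that the crease function $h_{i,i+1}=f_{i+1}-f_i\ge 0$ is uniformly shallow, i.e.\ $\|\nabla h_{i,i+1}\|$ is small while still nonvanishing off $\ngraph_i$; then $f_i$ and $f_{i+1}$ are genuinely $C^1$-close to their common average, which \emph{is} unchanged by the mutation, and each sheet $j$ with $|j-i|>1$ passing transversely through $U$ can be given a slope dominating this deviation, forcing $\nabla h_{j,i},\nabla h_{j,i+1}\neq 0$. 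Relatedly, the collar interpolation of the two nonvanishing gradient fields for the pair $(i,i+1)$ needs an actual argument — a convex combination of nonvanishing gradients can vanish — although the radially monotone interpolation you propose does work once the radial derivative is arranged to dominate the angular one. With these two repairs the proof is sound.
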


An important observation is the Legendrian mutation on $(\ngraph,\nbasis)$ induces a $Y$-seed mutation on the induced seed $\Psi(\ngraph,\nbasis)$.

\begin{proposition}[{\cite[\S7.2]{CZ2020}}]\label{proposition:equivariance of mutations}
Let $\ngraph\subset \disk^2$ be a $N$-graph and $\nbasis$ be a set of good cycles in $H_1(\Legendrian(\ngraph))$.
Let $\mutation_{\cycle_i}(\ngraph,\nbasis)$ be a Legendrian mutation of $(\ngraph,\nbasis)$ along a one-cycle $\cycle_i$. Then
\[
\Psi(\mutation_{\cycle_i}(\ngraph,\nbasis))=\mutation_{i}(\Psi(\ngraph,\nbasis)).
\]
Here, $\mutation_{i}$ is the $Y$-seed mutation 
at the vertex $i$.
\end{proposition}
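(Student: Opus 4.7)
The plan is to verify the two ingredients of a $Y$-seed transformation separately: the exchange matrix (quiver) transformation and the coefficient tuple (monodromy) transformation. Since the Legendrian mutation along an $\sfI$-cycle is supported in a small disk neighborhood of the edge $e$ representing $\cycle_i$ (Figure~\ref{figure:I-mutation}), and the $\sfY$-cycle mutation decomposes as a sequence of moves \Move{I}, \Move{II} followed by an $\sfI$-cycle mutation, one may reduce everything to the local model of the $\sfI$-cycle case. In particular, for any other $\cycle_j\in\nbasis$ we may, using that $\cycle_j$ is good and appealing to the equivalence in Definition~\ref{def:equiv on N-graph and N-basis}, arrange that $\cycle_j$ enters the mutation disk in one of the standard local configurations of Figure~\ref{fig:T cycle}.

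For the quiver part, I would carry out a direct case analysis comparing the pre- and post-mutation pictures. The cycle $\cycle_i$ itself is replaced by a cycle $\cycle'_i$ whose local orientation relative to every transversal is reversed, which immediately yields $b'_{i,j}=-b_{i,j}$. For $j,k\neq i$, each intersection point of $\cycle_j$ with the old edge $e$ contributes, after mutation, to a collection of new intersection points of $\cycle_j'$ with other cycles passing through the (now vertical) edge $e'$; counting these with signs determined by the side on which $\cycle_k$ enters the disk reproduces the asymmetric term $\tfrac12\bigl(|b_{j,i}|b_{i,k}+b_{j,i}|b_{i,k}|\bigr)$ of the mutation rule.

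For the coefficient part, I would use the flag moduli description of Section~\ref{sec:flag moduli spaces}. Assign flags $\cF^\bullet(F_1),\dots,\cF^\bullet(F_4)$ to the four faces meeting along $e$ as in Figure~\ref{figure:I-cycle with flags}, and write $y_i=\langle v_1,v_2,v_3,v_4\rangle$ for the corresponding cross ratio. After mutation the roles of the ``horizontal'' and ``vertical'' faces swap, and the compatibility condition~\eqref{equation:flag conditions} forces the new cross ratio to be $\langle v_1,v_2,v_3,v_4\rangle^{-1}$, giving $y'_i=y_i^{-1}$. For a neighboring cycle $\cycle_j$, the portion of its monodromy computation that lies outside the mutation disk is unchanged, while the portion inside involves the new trivalent vertices of $\mutation_{\cycle_i}\ngraph$; expanding the relevant one-dimensional subspaces at these vertices in terms of the old flags produces, via the standard cross-ratio identity $\langle v_1,v_2,v_3,v_4\rangle+\langle v_1,v_2,v_4,v_3\rangle$-type relations, exactly a factor of $(1+y_i)$ or $(1+y_i^{-1})$ according to the sign of $b_{j,i}$, together with a monomial correction $y_i^{\max(b_{j,i},0)}$. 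Matching this with Remark~\ref{rmk_x_cluster_mutation} yields the $Y$-seed mutation formula.

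The main obstacle is the bookkeeping: one must carefully enumerate which local configurations of Figure~\ref{fig:T cycle} arise for each transversal $\cycle_j$ interacting with $\cycle_i$, track the orientation signs consistently through the before/after pictures, and verify that the flag arithmetic in the cross-ratio computation produces exactly $y_j\, y_i^{\max(b_{j,i},0)}(1+y_i)^{-b_{j,i}}$ in each case. Once this local computation is carried out for the $\sfI$-cycle model, the $\sfY$-cycle case follows from the decomposition into Moves \Move{I}, \Move{II} and $\sfI$-cycle mutation, since Moves \Move{I} and \Move{II} preserve $\Psi$ by Theorem~\ref{thm:N-graph to seed}.
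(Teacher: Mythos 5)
The paper does not prove this proposition; it is quoted directly from \cite[\S 7.2]{CZ2020}, so there is no in-paper argument to compare against. Your outline follows the strategy of that reference: reduce the $\sfY$-cycle case to the $\sfI$-cycle case using Moves \Move{I} and \Move{II} (which preserve $\Psi$ because $\Psi$ only depends on the equivalence class $[\ngraph,\nbasis]$), verify the exchange-matrix rule by a local intersection count, and verify the coefficient rule by a cross-ratio computation in the flag moduli space. That is the correct and standard route, and the statement's hypotheses already grant you that the mutation is realizable, so you do not need to worry about the goodness of $\mutation_{\cycle_i}(\nbasis)$ here.

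Two spots in your sketch are thinner than they should be. For the quiver part, rather than informally ``counting new intersection points,'' state the induced map on homology explicitly, $[\cycle_i]\mapsto -[\cycle_i]$ and $[\cycle_j]\mapsto [\cycle_j]+\max(b_{j,i},0)\,[\cycle_i]$, and check that this transvection reproduces $b_{j,k}+\tfrac12\bigl(|b_{j,i}|b_{i,k}+b_{j,i}|b_{i,k}|\bigr)$; the sign conventions are exactly what that identity tests, and the loose phrasing hides them. For the coefficient part, note that the homological transformation alone would only give $y_j\,y_i^{\max(b_{j,i},0)}$: the extra factor $(1+y_i)^{-b_{j,i}}$ is \emph{not} a consequence of the class transformation but of re-expressing the microlocal monodromy of the mutated surface in terms of the unchanged flags via the Pl\"ucker relation $(v_1\wedge v_3)(v_2\wedge v_4)=(v_1\wedge v_2)(v_3\wedge v_4)+(v_1\wedge v_4)(v_2\wedge v_3)$. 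Your plan does point at the right identity, but as written it reads as though the factor falls out of bookkeeping; it is the one genuinely nontrivial computation and should be carried out case by case in the sign of $b_{j,i}$. With those two points made precise, the argument is complete.
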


\begin{remark}\label{remark:boundary-Legendrian isotopy}
Let $\legendrian$ and $\legendrian'$ be two isotopic closures of positive $N$-braids.
By fixing an isotopy between them, we have an annular $N$-graph $\ngraph_{\legendrian \legendrian'}$ which induces a bijection between sets of $N$-graphs for $\legendrian$ and $\legendrian'$ by attaching $\ngraph_{\legendrian\legendrian'}$.
Then, indeed, this bijection is equivariant under the Legendrian mutation if it is defined,  that is, for $[\cycle] \in H_1(\Legendrian(\ngraph))$, 
\[
\mutation_{\cycle}(\ngraph_{\legendrian \legendrian'} \cdot \ngraph)
= \ngraph_{\legendrian \legendrian'} \cdot \mutation_{\cycle}(\ngraph).
\]
In other words, two $\boundary$-Legendrian isotopic $N$-graphs will generate equivariantly bijective sets of $N$-graphs under Legendrian mutations.
\end{remark}

\begin{remark}\label{remark:Stabilization}
Similarly, a stabilization $S(\ngraph)$ of $\ngraph$ will generate equivariantly bijective sets of $N$-graphs under Legendrian mutations as well since the stabilization part in $S(\ngraph)$ is away from chosen cycles and does not affect the Legendrian mutability.
\end{remark}

\begin{proposition}\label{prop_mutation_preserves_deterministic}
Let $\ngraph\subset \disk^2$ be a deterministic $N$-graph.
Then, for any $\sfI$- or $\sfY$-cycle~$\gamma$, the mutation~$\mutation_\gamma(\ngraph)$ is again a deterministic $N$-graph.
\end{proposition}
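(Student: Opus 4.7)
The plan is to exploit the locality of mutations. Both the $\sfI$- and $\sfY$-cycle mutations act by a local move inside a small disk $D\subset\mathring{\disk}^2$ containing the support of $\gamma$, so $\mutation_\gamma(\ngraph)$ agrees with $\ngraph$ on $\disk^2\setminus D$. For any boundary flag datum $\flags$ on $\legendrian=\boundary\ngraph=\boundary\mutation_\gamma(\ngraph)$, I would first invoke the deterministic hypothesis on $\ngraph$ to obtain the unique extension $\cF\in\widetilde{\cM}(\ngraph)$ lying over $\flags$. Its restriction to $\disk^2\setminus\mathring{D}$ then assigns a flag uniquely to each region of $\disk^2\setminus(\ngraph\cup\mathring{D})$, and in particular to each flag-carrying region adjacent to $\partial D$, which is shared with $\mutation_\gamma(\ngraph)$.

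For uniqueness inside $D$, the key observation---which can be read off Figures~\ref{figure:I-mutation} and~\ref{figure:Y-mutation}---is that every connected region of $D\setminus\mutation_\gamma(\ngraph)$ meets $\partial D$. Consequently any candidate flag in $\widetilde{\cM}(\mutation_\gamma(\ngraph))$ lying over $\flags$ must assign to each such region the flag already fixed by $\cF$ on the adjacent outer region. This forces at most one extension, settling the uniqueness half of determinism for $\mutation_\gamma(\ngraph)$.

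Existence is then a compatibility check: one must verify that the flags prescribed by this recipe satisfy the adjacency conditions~\eqref{equation:flag conditions} across every internal edge and the nondegeneracy at every internal vertex of $\mutation_\gamma(\ngraph)\cap D$. In the $\sfI$-case this reduces to pairwise distinctness of four lines in the two-dimensional quotient $\cF^{i+1}/\cF^{i-1}$ induced by $\cF$; the new cross-edge condition produced by the mutated $\sfI$-cycle follows from the pairwise distinctness of all four relevant lines, already enforced by $\cF$ at the two old trivalent vertices of $\gamma$. The $\sfY$-case is handled analogously inside a three-dimensional quotient $\cF^{i+2}/\cF^{i-1}$, but now one must track new trivalent vertices of the opposite color and the additional edges and hexagonal points introduced by the mutation.

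The hard part will be the $\sfY$-case, where the local combinatorics of $\mutation_\gamma(\ngraph)\cap D$ is a genuine rearrangement rather than a simple rotation. Verifying that the outer-induced flags automatically satisfy all the new distinctness and subspace-containment conditions around the freshly created trivalent and hexagonal vertices requires careful case analysis of the regions inside $D$, using the cross-ratio and line-containment formulas recalled in \S\ref{sec:flag moduli spaces}. I expect this to reduce to the same projective-geometric identities that underlie the $Y$-seed equivariance of $\Psi$ established in Proposition~\ref{proposition:equivariance of mutations}.
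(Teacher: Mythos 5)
Your treatment of the $\sfI$-case matches the paper's in spirit: the mutation is local, every region of the mutated local disk meets its boundary, so uniqueness propagates from the deterministic extension of $\ngraph$. Be careful, though, with your existence step. After the $\sfI$-mutation the new central edge imposes $\cF^i(W)\neq\cF^i(E)$ on the two regions to the left and right of the old edge, and this is \emph{not} ``already enforced by $\cF$ at the two old trivalent vertices'': before the mutation those vertices only force the triples $\{N,W,S\}$ and $\{N,E,S\}$ to consist of pairwise distinct lines in the two-dimensional quotient, which leaves $W=E$ entirely possible. This is a genuinely new open condition (it is exactly why the $Y$-seed mutation is only birational), so your claim that it follows from ``the pairwise distinctness of all four relevant lines'' has a false premise; the paper's own proof is equally silent here, but you should not present this as a consequence of the pre-mutation vertex conditions.

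The substantive gap is the $\sfY$-case. You defer it to ``a careful case analysis'' of the freshly created trivalent and hexagonal vertices, which you do not carry out and which is considerably harder than the $\sfI$-case. The paper avoids this entirely: it uses the factorization, recorded in \S\ref{sec:N-graphs and seeds} just after Figure~\ref{fig:Legendrian mutation on N-graphs}, of the mutation at a $\sfY$-cycle into a sequence of Moves~\Move{I} and \Move{II} followed by a mutation at an $\sfI$-cycle, together with the observation stated immediately after Definition~\ref{def:good N-graph} that determinism depends only on the equivalence class $[\ngraph]$, so Moves~\Move{I} and \Move{II} preserve it. With that factorization the $\sfY$-case reduces to the $\sfI$-case in one line. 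If you insist on the direct route, you must actually exhibit the flags on every interior region of the mutated local model in Figure~\ref{fig:Legendrian mutation on N-graphs} and verify conditions~\eqref{equation:flag conditions} across each new edge and at each new hexagonal point; as written, the hardest part of your proof is only a plan.
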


\begin{proof}
The proof is straightforward from the notion of the deterministic $N$-graph in Definition~\ref{def:good N-graph} and of the Legendrian mutation depicted in Figure~\ref{figure:I-mutation}.
Note that the Legendrian mutation~$\mutation_\gamma(\ngraph)$ at $\sfY$-cycle $\gamma$ is also deterministic, since $\mutation_\gamma(\ngraph)$ is a composition of Moves \Move{I} and \Move{II}, and a mutation at $\sfI$-cycle.
\end{proof}

\subsection{Relative cycles and a seed from a $Y$-seed}\label{section:relative cycles}

For a pair $(\ngraph,\nbasis)$, we have constructed a $Y$-seed $\Psi(\ngraph,\nbasis)$ in a $Y$-pattern. On the other hand, the pair corresponds to an exact Lagrangian filling $L=L(\ngraph)$ of $\legendrian=\boundary\ngraph$ which gives a toric chart $Loc(L)\cong\cM(\ngraph)$ in the corresponding $\cX$-cluster variety $\cM(\legendrian)$ by considering local systems on $L$. Unfortunately, distinct seeds in a $Y$-pattern may share the same cluster chart in the $\cX$-cluster variety, e.g. two $Y$-seeds in $\dynA_1$ cluster pattern. In the $\cA$-cluster structure, however, cluster charts in the $\cA$-cluster variety can be distinguished by seeds in the cluster pattern. This is suitable for our purpose of distinguishing Lagrangian fillings via cluster charts for distinct seeds.
Our strategy is to construct a seed in the cluster pattern (or $\cA$-cluster structure) from a $Y$-seed by considering additional relative cycles. 
See also \cite{CW2024} for the role of relative cycles and microlocal merodromy in the study of Lagrangian fillings of Legendrian links and cluster structure.

Let $\ngraph$ be an $N$-graph on $\disk^2$ and $\nbasistilde=\{[\gamma_1],\dots,[\gamma_m]\}$ be a set of good (relative) cycles in $H_1(\Legendrian(\ngraph),\legendrian(\boundary(\ngraph)))$.
We define the \emph{extended exchange matrix} $\qbasis=\qbasis(\ngraph,\nbasistilde)$ of size $m\times m$ by the algebraic intersection number among cycles in $\nbasistilde$ as before.

\begin{definition}\label{def:admissible relative cycle}
We call $\nbasistilde$ \emph{admissible} if 
\begin{enumerate}
\item $\nbasistilde$ is the union of $\nbasis=\{[\gamma_1],\dots,[\gamma_n]\}$ and $\nbasis_{rel}=\{[\gamma_{n+1}],\dots,[\gamma_m]\}$ consisting of absolute and relative cycles, respectively,
\item $\nbasis$ forms a basis of $H_1(\Legendrian(\ngraph))$,
\item $\nbasistilde$ forms a basis of $H_1(\Legendrian(\ngraph),\legendrian(\boundary(\ngraph)))$, and
\item $\qbasis$ has determinant $\pm1$.
\end{enumerate}
\end{definition}

Now let us introduce a new pair $(\tilde\ngraph,\tilde\nbasis)$.
Here $\tilde\ngraph$ is an $N$-graph on $\disk^2$ which is obtained from $\ngraph$ by padding an annular $N$-graph containing (trivalent) vertices for each point in $\gamma\cap\boundary\disk^2$ for all $[\gamma]\in \nbasis_{rel}$, see Figure~\ref{fig:N-graph with rel cycle and extended one}.
Note that the above annular $N$-graph gives a (fixed) Lagrangian cobordism from $\legendrian=\legendrian(\boundary \ngraph)$ to $\tilde\legendrian \colonequals \legendrian(\boundary\tilde\ngraph)$, let us denote it by $L_{\legendrian \tilde\legendrian}$.

\begin{figure}[ht]
\subfigure[An $N$-graph $\ngraph$ with relative cycle $\gamma_2$.\label{N-graph with relative cycle}]{\makebox[.4\textwidth]{
\begin{tikzpicture}
\draw [black, thick] (0,0) circle [radius=1.5];

\draw [color=cyclecolor1, line cap=round, line width=5, opacity=0.5] (-1/2,0) to (1/2,0);
\draw [color=black, line cap=round, line width=5, opacity=0.5] (1/2,0) to (30:1.5);
\draw [blue, thick] (150:1.5)--(-1/2,0);
\draw [blue, thick] (210:1.5)--(-1/2,0);
\draw [blue, thick] (30:1.5)--(1/2,0)  node[above left, midway] {$\gamma_2$};
\draw [blue, thick] (-30:1.5)--(1/2,0);
\draw [blue, thick] (-1/2,0)--(1/2,0) node[above, midway] {$\gamma_1$};

\clip (0,0) circle (1.5);

\draw[thick,blue,fill=blue] (-1/2,0) circle (0.05);
\draw[thick,blue,fill=blue] (1/2,0) circle (0.05);

\end{tikzpicture}
}}
\subfigure[The extended $N$-graph $\tilde \ngraph$ with corresponding cycles.\label{extended N-graph}]{\makebox[.4\textwidth]{
\begin{tikzpicture}
\draw [black, thick] (0,0) circle [radius=1.5];
\begin{scope}[yscale=0.7]
\draw [red, dashed, thick] (0,0) circle [radius=0.9];
\end{scope}
\draw [color=cyclecolor1, line cap=round, line width=5, opacity=0.5] (-1/2,0) to (1/2,0);
\draw [color=black, line cap=round, line width=5, opacity=0.5] (1/2,0) to (30:1);
\draw [blue, thick] (150:1.5)--(-1/2,0);
\draw [blue, thick] (210:1.5)--(-1/2,0);
\draw [blue, thick] (30:1)--(1/2,0)  node[above left, midway] {$\tilde{\gamma_2}$};
\draw [blue, thick] (30:1)--(20:1.5);
\draw [blue, thick] (30:1)--(40:1.5);
\draw [blue, thick] (-30:1.5)--(1/2,0);
\draw [blue, thick] (-1/2,0)--(1/2,0) node[above, midway] {$\tilde{\gamma_1}$};

\clip (0,0) circle (1.5);

\draw[thick,blue,fill=blue] (-1/2,0) circle (0.05);
\draw[thick,blue,fill=blue] (1/2,0) circle (0.05);
\draw[thick,blue,fill=blue] (30:1) circle (0.05);

\end{tikzpicture}
}}
\caption{An example $N$-graph with relative cycle and the induced extended $N$-graph with cycle. The inner side of dotted red line of $\tilde\ngraph$ can be identified with the $N$-graph $\ngraph$.}
\label{fig:N-graph with rel cycle and extended one}
\end{figure}

Note that there is a natural inclusion $i:\ngraph \to \tilde\ngraph$. For a (good) cycle $[\gamma]$ in $H_1(\Legendrian(\ngraph))$, we have a corresponding (good) cycle $[\tilde\gamma]=[i(\gamma)]$ in $H_1(\Legendrian(\tilde\ngraph))$.
When $[\gamma]$ is a good \emph{relative} cycle in $H_1(\Legendrian(\ngraph),\legendrian(\boundary\ngraph))$, we associate a good (absolute) cycle $[\tilde \gamma]$ in $H_1(\Legendrian(\tilde\ngraph))$ by using the trivalent vertex corresponding to $\gamma$ in the annular $N$-graph, see Figure~\ref{fig:N-graph with rel cycle and extended one}. 
By the same construction as in Section~\ref{sec:N-graphs and seeds}, we have the coefficient tuple $\tilde\bfy=(\tilde y_1,\dots,\tilde y_m)$ for the pair $(\tilde \ngraph, \tilde \nbasis)$ which are defined by measuring microlocal monodromies along the cycles
\[
\tilde y_i=\mmon_{\Legendrian(\tilde\ngraph)}(-)([\tilde\cycle_i]):\cM(\Legendrian(\tilde\ngraph))\to \bbC, \qquad i=1,\dots, m.
\]
In summary, we have constructed the pair $(\tilde\bfy,\qbasis)$ out of $(\tilde \ngraph,\tilde\nbasis)$. Denote this assignment by $\tilde\Psi$.

Let us produce new pairs from $(\tilde\ngraph,\tilde\nbasis)$ by applying Legendrian mutations \emph{only} along the cycles in $\nbasis$. We then obtain pairs $\{(\tilde\bfy_t,\qbasis_t)\}_{t\in \mathbb{T}_n}$ which are obviously corresponds to the $Y$-seeds $\{(\bfy_t,\qbasispr_t)\}_{t\in \mathbb{T}_n}$ of the cluster pattern.
Here, we have additional tuple $(\tilde y_{n+1},\dots, \tilde y_m)$, and regard it as an analogy the frozen variables (or formal variables) in the $Y$-seed.

Now we investigate the relation between flag moduli spaces $\cM(\ngraph)$, $\cM(\tilde\ngraph)$, $\cM(\legendrian)$, and $\cM(\tilde\legendrian)$. Note that the Legendrian link $\tilde\legendrian$ can be obtained from $\legendrian$ by adding $\#(\nbasis_{rel})$ positive crossings in suitable positions. Again by the work of Shen--Weng \cite[Theorem~1.1]{SW2019}, the moduli space $\cM(\tilde\legendrian)$ becomes an $\cX$-cluster variety.

Using the language of local system, see \cite{JT2017}, we have the following embeddings
\begin{align*}
i_{\ngraph}:\cM(\ngraph) \to \cM(\legendrian),&&
i_{\tilde\ngraph}:\cM(\tilde\ngraph) \to \cM(\tilde \legendrian).
\end{align*}
On the other hand, we have a (natural) restriction
\[
r_{\ngraph}:\cM(\tilde\ngraph) \to \cM(\ngraph).
\]
The variables $y_i\in \bbC[\cM(\ngraph)]$ and $\tilde y_i\in \bbC[\cM(\tilde\ngraph)]$ for $1\leq i \leq n$ can be identified via the restriction~$r_\ngraph$.

Now we collect $N$-graphs $\{\tilde\ngraph_t\}_{t\in \mathbb{T}_n}$ which are obtained from $\tilde\ngraph$ by a sequence of Legendrian mutations \emph{only} along the cycles in $\nbasis$.
Then consider an $\cX$-cluster \emph{sub}variety inside $\cM(\tilde\legendrian)$ which consists of cluster charts $\{\cM(\tilde\ngraph_t)\}_{t\in \mathbb{T}_n}$.
Let us denote the subvariety by $\cM(\tilde\legendrian)'$.
Note that the tuple $\tilde\bfy$ is originally in $\bbC[\cM(\tilde\ngraph)]$, and can be seen as in $\bbC[\cM(\tilde\legendrian)']$ the coordinate ring of the newly considered $\cX$-cluster variety.

For each relative cycle $[\gamma]$ in $\nbasis_{rel}$, we have the following local flag description in $\cM(\tilde\ngraph)$:
\begin{figure}[ht]
\begin{tikzpicture}
\draw[thick, black] (-1.5,1)--(1.5,1);
\draw [color=black, line cap=round, line width=5, opacity=0.5] (0,0.2)--(0,-1);
\draw[thick, blue] (-0.7,1)--(0,0.2) (0.7,1)--(0,0.2)--(0,-1);
\draw[thick, red, dashed] (-1.5,-0.2)--(1.5,-0.2);
\draw[dashed] (-1.5,1)--(-1.5,-1)--(1.5,-1)--(1.5,1);
\node at (-0.7,0.2) {$\cF_1$};
\node at (0.7,0.2) {$\cF_2$};
\node at (-0.7,-0.7) {$\cF_1$};
\node at (0.7,-0.7) {$\cF_2$};
\node at (0,0.7) {$\cF_3$};
\draw[thick,blue,fill=blue] (0,0.2) circle (0.05);
\end{tikzpicture}
\caption{Local flag configuration near a relative cycle}
\label{figure:doubling}
\end{figure}

\noindent Note that the regions below the dotted red line are the ones in the original $N$-graph $\ngraph$.
Even though $\cF_i$, $i=1,2,3$ are flags in $\bbC^N$ in general, we may assume that they are flags in $\bbC^2$ by modding out identical vector spaces in $\cF_i$, $i=1,2,3$.
Every cluster charts $\{\cM(\tilde\ngraph_t)\}_{t\in \mathbb{T}_n}$ should satisfy $\cF_1\neq \cF_2$ for each relative cycle.
Since $\cM(\tilde\legendrian)'$ is the gluing of such charts, the same hold for $\cM(\tilde\legendrian)'$. This condition allows us to define a restriction 
\begin{align*}
r_{\legendrian}:\cM(\tilde\legendrian)' \to \cM(\legendrian).
\end{align*}
Note that there is no natural restriction from $\cM(\tilde\legendrian)$ to $\cM(\legendrian)$.
One can easily check that the following diagram commutes:
\[
\begin{tikzcd}
\cM(\ngraph) \arrow[r, "i_{\ngraph}"] & \cM(\legendrian) \\
\cM(\tilde\ngraph)  \arrow[u, "r_{\ngraph}"'] \arrow[r,"i_{\tilde\ngraph}"]& \cM(\tilde\legendrian)' \arrow[u, "r_{\legendrian}"]
\end{tikzcd}
\]

Now we are ready to prove that there are at least as many Lagrangian fillings for Legendrian link as seeds in the cluster structure.
\begin{proposition}\label{prop:distinct seeds imples distinct fillings}
	Let $(\ngraph, \nbasis)$ and $(\ngraph',\nbasis')$ be pairs of free and deterministic $N$-graphs on $\disk^2$ and sets of good 1-cycles such that 
	\begin{enumerate}
		\item $\boundary\ngraph = \boundary \ngraph'$, 
		\item $\nbasis$ and $\nbasis'$ can be extended to admissible sets of good cycles, and
		\item there exists a sequence of Legendrian mutations from $(\ngraph,\nbasis)$ to $(\ngraph', \nbasis')$. 
	\end{enumerate}
If $(\ngraph, \nbasis)$ and $(\ngraph',\nbasis')$ define different $Y$-seeds via $\Psi$, then there is no exact Lagrangian isotopy between two Lagrangian fillings $L(\ngraph)$ and $L(\ngraph')$ of a Legendrian link $\legendrian(\boundary\ngraph)=\legendrian(\boundary\ngraph')$.
\end{proposition}

\begin{proof}
We first notice that $\Psi(\ngraph, \nbasis)\neq\Psi(\ngraph',\nbasis')$ implies that $\tilde\Psi(\tilde\ngraph,\nbasistilde)\neq \tilde\Psi(\tilde\ngraph',\nbasistilde')$. Since $\nbasistilde$ is of full rank, by Proposition~\ref{prop_bijection_between_A_seed_and_tori}, the cluster charts $\cM(\tilde\ngraph)$ and $\cM(\tilde\ngraph')$ are different in $\cM(\tilde\legendrian)$ and so is in $\cM(\tilde\legendrian)'$ by Corollary~\ref{cor_Y-seeds_and_Y-charts}.

Assume on the contrary that there is an exact Lagrangian isotopy between $L(\ngraph)$ and $L(\ngraph')$ then so is between $L(\tilde\ngraph)$ and $L(\tilde\ngraph')$.
Then, by \cite{JT2017}, the toric charts $\Loc^1(L(\tilde\ngraph))\cong\cM(\tilde\ngraph)$ and $\Loc^1(L(\tilde\ngraph'))\cong\cM(\tilde\ngraph')$ have the same images in $\cM(\tilde\legendrian)$ under $i_{\tilde\ngraph}$ and $i_{\tilde\ngraph'}$,
which yields a contradiction.
\end{proof}

\section{Lagrangian fillings for Legendrian links of finite or affine type}\label{sec:N-graph of finite or affine type}

Let $\legendrian\subset J^1\sphere^1$ be a Legendrian knot or link which is a closure of a positive braid and bounds a Legendrian surface~$\Legendrian(\ngraph)$ in $J^1\disk^2$ for some free $N$-graph $\ngraph$.
We fix a set $\nbasis$ of good cycles in the sense of Definition~\ref{def:good cycle}.
Then, by Theorem~\ref{thm:N-graph to seed}, we obtain a $Y$-seed $\Psi(\ngraph,\nbasis)$ which is a pair of a coefficient tuple~$\bfy(\Legendrian(\ngraph),\nbasis)$ 
and a quiver $\quiver(\Legendrian(\ngraph),\nbasis)$. 

We say that the pair $(\ngraph,\nbasis)$ is \emph{of finite type} or \emph{of
infinite type} if so is the cluster algebra defined by
$\quiver(\Legendrian(\ngraph),\nbasis)$.
Similarly, it is said to be \emph{of type $\dynX$} for some Dynkin diagram $\dynX$ if so is the associated cluster algebra.
In particular, it is said to be \emph{of type~$\dynADE$} or \emph{of affine type} if the quiver is of type $\dynADE$ or of affine type. See Definition~\ref{def_quiver_of_type_X}.

\subsection{\texorpdfstring{$N$-graphs}{N-graphs} of finite or affine types}

In \cite{GSW2020b}, Gao, Shen, and Weng describe a procedure as follows. Starting from a (positive) braid word, they associate a so-called brick diagram, which includes the data of a quiver. The reader is encouraged to see their paper for more details. We will not define these notions here, but will sketch their result for a number of examples which yield quivers of finite and affine type.  We will prefer to use mutation-equivalent models for our purposes, as they are more amenable to performing the desired Legendrian mutations, though we include the Gao--Shen--Weng procedure to indicate that finding models for a given Dynkin type $\dynX$ is essentially algorithmic using their work.

\begin{remark}
In this section, we will define braids $\beta$ and $\tilde\beta$ of each type, where $\tilde\beta$ is obtained from $\beta$ by doubling chosen generators so that the closure of $\tilde\beta$ has only one component. The chosen generator of $\beta$ will be decorated by the box. For example, if $\beta=\sigma_1^n \fbox{$\sigma_1$}$, then $\beta=\sigma_1^{n+1}$ and $\tilde\beta$ is either $\sigma_1^{n+1}$ or $\sigma_1^{n+2}$.
\end{remark}

\subsubsection{Linear and tripod \texorpdfstring{$N$-graphs}{N-graphs}}\label{sec:linear}
For $n\ge 1$ and a triple $(a,b,c)$ with $a,b,c\ge 1$, let us define positive braids $\beta_0(\dynA_n), \beta(\dynA_n), \beta_0(a,b,c)$ and $\beta(a,b,c)$ as follows:
\begin{align*}
\beta_0(\dynA_n)\colonequals& \sigma_1^{n}\fbox{$\sigma_1$},&
\beta_0(a,b,c)\colonequals& \sigma_2\fbox{$\sigma_1$}\sigma_1^{a-1}\sigma_2^{b-1}\sigma_1^{c-1}\fbox{$\sigma_1$}\\
\beta(\dynA_n)\colonequals& \Delta_2\beta_0(\dynA_n)\Delta_2 \mathrel{\dot=} \sigma_1^{n+1}\fbox{$\sigma_1$} \sigma_1&
\beta(a,b,c)\colonequals& \Delta_3\beta_0(\dynA_n)\Delta_3.
\end{align*}
Then the braids $\tilde\beta_0(\dynA_n)$ and $\tilde\beta_0(a,b,c)$ are exactly the same as 
$\tilde\beta_0(\dynA_n)\colonequals\beta_0(\dynA_{n+\epsilon})$ and $\tilde\beta_0(a,b,c)\colonequals\beta_0(a+\epsilon_1,b,c+\epsilon_2)$,
where $\epsilon$ and $\epsilon_i$ are either $0$ or $1$ such that $n+\epsilon$ is odd, and exactly two of $a+\epsilon_1,b$, and $c+\epsilon_2$ are odd.

We define $\legendrian(\dynA_n)$, $\tilde\legendrian(\dynA_n)$, $\legendrian(a,b,c)$, and $\tilde\legendrian(a,b,c)$ as the rainbow closures of $\beta_0(\dynA_n)$, $\tilde\beta_0(\dynA_n)$, $\beta_0(a,b,c)$, and $\tilde\beta_0(a,b,c)$, or equivalently, the $(-1)$-closures of $\beta(\dynA_n)$ and $\beta(a,b,c)$, respectively.
\begin{align*}
\legendrian(\dynA_n)&=

\end{align*}

Then there are canonical $N$-graphs $(\ngraph^\brick(\dynA_n),\nbasistilde^\brick(\dynA_n))$ and 
$(\ngraph^\brick(a,b,c),\nbasistilde^\brick(a,b,c))$ on $\disk^2$ with (relative) cycles as shown in Figure~\ref{figure:brick linear and tripod N-graphs} such that 
\begin{align*}
\quiver^\brick(\dynA_n)&=
\quiver(\ngraph^\brick(\dynA_n), \nbasistilde^\brick(\dynA_n)),&
\quiver^\brick(a,b,c)&=
\quiver(\ngraph^\brick(a,b,c), \nbasistilde^\brick(a,b,c)).
\end{align*}

\begin{figure}[ht]
\begin{align*}
(\ngraph^\brick(\dynA_n),\nbasistilde^\brick(\dynA_n))&=

\end{align*}
\caption{Brick linear and tripod $N$-graphs with (relative) cycles}
\label{figure:brick linear and tripod N-graphs}
\end{figure}

The colors on cycles in Figure~\ref{figure:brick linear and tripod N-graphs} are nothing to do with the bipartite coloring, but we define $N$-graphs with bipartite coloring, which is equivalent to the original brick $N$-graphs and will play the roles of the initial seeds.
Throughout this section, relative cycles are indicated in gray.

\begin{definition}[Linear and tripod $N$-graphs]
For $n\ge 1$, the \emph{linear $N$-graph} $(\ngraph(\dynA_n), \nbasistilde(\dynA_n))$ is the $2$-graph on $\disk^2$ depicted in Figure~\ref{figure:linear N-graph}.

For $a,b,c\ge 1$, the \emph{tripod $N$-graph} $(\ngraph(a,b,c), \nbasistilde(a,b,c))$ is a free $3$-graph on $\disk^2$ depicted in Figure~\ref{figure:tripod N-graph}.
\end{definition}

\begin{figure}[ht]
\subfigure[$2$-graph $(\ngraph(\dynA_n),\nbasistilde(\dynA_n))$\label{figure:linear N-graph}]{\makebox[0.45\textwidth]{

}}
\caption{Bipartite linear and tripod $N$-graphs with chosen cycles and their quivers}
\end{figure}

\begin{lemma}\label{lemma:full rank}
For each $n\ge 1$ and a triple $(a,b,c)$, both $N$-graphs $(\ngraph^\brick(\dynA_n),\nbasistilde^\brick(\dynA_n))$ and $(\ngraph^\brick(a,b,c),\nbasistilde^\brick(a,b,c))$ are free, deterministic and equivalent to $(\ngraph(\dynA_n),\nbasistilde(\dynA_n))$ and $(\ngraph(a,b,c),\nbasistilde(a,b,c))$ up to $\boundary$-Legendrian isotopy and mutations, respectively, and their quivers are the same as shown in Figures~\ref{figure:linear quiver} and~\ref{figure:tripod quiver}.
\begin{align*}
\quiver(\dynA_n)&=\quiver(\ngraph(\dynA_n), \nbasistilde(\dynA_n)),&
\quiver(a,b,c)&=\quiver(\ngraph(a,b,c), \nbasistilde(a,b,c)).
\end{align*}
\end{lemma}
\begin{proof}
For $\dynA_n$, this is trivial.

For a triple $(a,b,c)$, the freeness and deterministicity of $\ngraph(a,b,c)$ follows from Lemma~\ref{lemma:tree Ngraphs are free}.
Since $\legendrian(a,b,c)$ is the $(-1)$-closure of $\beta(a,b,c)$, we need to check that $\legendrian(a,b,c)$ and $\boundary\ngraph(a,b,c)$ are equivalent in $J^1\sphere^1$.
Indeed, 
\begin{align*}
\beta(a,b,c)&
\mathrel{\dot=}\sigma_2\fbox{$\sigma_1$}\sigma_1^{a-1}\Delta_3\sigma_1^{b-1}\Delta_3\sigma_1^{c-1}\fbox{$\sigma_1$},
\end{align*}
whose the $(-1)$-closure is the same as $\boundary\ngraph(a,b,c)$.
Here $\Delta_N$ is the half-twist braid of $N$-strands.

The $N$-graph equivalence for $\ngraph(a,b,c)$ will be given in Appendix~\ref{appendix:Ngraph of type abc} and it is easy to check that the quiver $\quiver^\brick(a,b,c)$ is mutation equivalent to $\quiver(a,b,c)$.
\end{proof}

Note that the linear and tripod $N$-graphs have certain symmetries as follows:
\begin{lemma}[Rotational symmetries]
By ignoring relative cycles,
\begin{enumerate}
\item the $N$-graph $(\ngraph(\dynA_n),\nbasis(\dynA_n))$ with cycles is invariant under $\pi$-rotation for odd $n\ge 1$, and 
\item the $N$-graph $(\ngraph(a,a,a),\nbasis(a,a,a))$ with cycles is invariant under $2\pi/3$-rotation for each $a\ge 1$.
\end{enumerate}
\end{lemma}

\begin{lemma}[Conjugation symmetries]
The $N$-graph $(\ngraph(\dynA_n),\nbasistilde(\dynA_n))$ with cycles is invariant under the conjugation.
\end{lemma}

For any triple $(a,b,c)$, the $N$-graph $\ngraph(a,b,c)$ is never invariant under conjugation, which acts on the Legendrian $\legendrian(a,b,c)$ as interchanging $\sigma_1$ and $\sigma_2$ so that $\overline{\legendrian(a,b,c)}$ is the rainbow closure of $\overline{\beta_0(a,b,c)}=\sigma_1\sigma_2^a\sigma_1^{b-1}\sigma_2^c$.
The $N$-graph $\overline{(\ngraph(a,b,c),\nbasistilde(a,b,c))}$ corresponding to $\overline{\legendrian(a,b,c)}$ is depicted below.
\[
\overline{(\ngraph(a,b,c),\nbasistilde(a,b,c))}\colonequals
\begin{tikzpicture}[baseline=-.5ex,xscale=0.6,yscale=0.6]
\useasboundingbox(-4,-3.5)rectangle(4,3.5);
\draw[thick] (0,0) circle (3cm);
\begin{scope}
\clip (0,0) circle (3);
\draw[color=gray, line cap=round, line width=5, opacity=5] (50:2) -- (60:3);
\draw[color=gray, line cap=round, line width=5, opacity=5] (290:2) -- (300:3);
\end{scope}
\draw[color=cyclecolor2, line cap=round, line width=5, opacity=0.5] (60:1) -- (50:1.5) (70:1.75) -- (50:2) (180:1) -- (170:1.5) (190:1.75) -- (170:2) (300:1) -- (290:1.5) (310:1.75) -- (290:2);
\draw[color=cyclecolor1, line cap=round, line width=5, opacity=0.5] (0,0) -- (60:1) (0,0) -- (180:1) (0,0) -- (300:1) (50:1.5) -- (70:1.75) (170:1.5) -- (190:1.75) (290:1.5) -- (310:1.75);
\draw[blue, thick] (0,0) -- (0:3) (0,0) -- (120:3) (0,0) -- (240:3);
\draw[red, thick, fill] (0,0) -- (60:1) circle (2pt) -- (100:3) (60:1) -- (50:1.5) circle (2pt) -- (20:3) (50:1.5) -- (70:1.75) circle (2pt) -- (80:3) (70:1.75) -- (50:2) circle (2pt) -- (40:3);
\draw[red, thick, dashed] (50:2) -- (60:3);
\draw[red, thick, fill] (0,0) -- (180:1) circle (2pt) -- (220:3) (180:1) -- (170:1.5) circle (2pt) -- (140:3) (170:1.5) -- (190:1.75) circle (2pt) -- (200:3) (190:1.75) -- (170:2) circle (2pt) -- (160:3);
\draw[red, thick, dashed] (170:2) -- (180:3);
\draw[red, thick, fill] (0,0) -- (300:1) circle (2pt) -- (340:3) (300:1) -- (290:1.5) circle (2pt) -- (260:3) (290:1.5) -- (310:1.75) circle (2pt) -- (320:3) (310:1.75) -- (290:2) circle (2pt) -- (280:3);
\draw[red, thick, dashed] (290:2) -- (300:3);
\draw[thick, fill=white] (0,0) circle (2pt);
\curlybrace[]{10}{110}{3.2};
\draw (-60:3.5) node[rotate=30] {$c+1$};
\curlybrace[]{130}{230}{3.2};
\draw (180:3.5) node[rotate=90] {$b+1$};
\curlybrace[]{250}{350}{3.2};
\draw (-300:3.5) node[rotate=-30] {$a+1$};
\end{tikzpicture}
\]

On the other hand, if one of $a,b,c$ is $1$, then the quiver $\quiver(a,b,c)$ is of type $\dynA_n$.  
As seen in Example~\ref{example:stabilization of An}, the Legendrian link $\legendrian(1,b,c)$ is a stabilization of $\legendrian(\dynA_n)$ for $n=b+c-1$. Indeed, the $N$-graph $\ngraph(1,b,c)$ is a stabilization of $\ngraph(\dynA_n)$.
See Appendix~\ref{appendix:tripod with a=1 is of type An} for the proof.
\begin{lemma}\label{lemma:stabilized An}
The $N$-graph $\ngraph(1,b,c)$ is a stabilization of $\ngraph(\dynA_n)$ for $n=b+c-1$.
\end{lemma}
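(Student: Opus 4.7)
The plan is to unpack the stabilization recipe $\ngraph \mapsto S(\ngraph)$ illustrated at the end of \S\ref{section:annular Ngraphs} in the special case $\ngraph = \ngraph(\dynA_n)$, and then to massage the resulting $3$-graph into the tripod $\ngraph(1,b,c)$ using the interior moves \Move{I}--\Move{IV'} together with one $\boundary$-Legendrian isotopy.

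First I would write down $S(\ngraph(\dynA_n))$ explicitly. By Figure~\ref{figure:linear N-graph}, the $2$-graph $\ngraph(\dynA_n)$ consists of a horizontal blue backbone with $n+1$ trivalent vertices hanging to the lower boundary, and its $n$ chosen cycles are the $\sfI$-cycles on the intermediate edges of this backbone. Applying the stabilization template, one keeps this backbone intact in the left half of the disk, and adjoins a column of hexagonal points and red edges in the right half of the disk that produces the new $3$rd strand. By construction the boundary of $S(\ngraph(\dynA_n))$ is the closure of $S(\beta(\dynA_n))$, which in Example~\ref{example:stabilization of An} is shown to be cyclically equivalent to $\beta(1,b,c)$ for every $b,c \ge 1$ with $b+c-1 = n$.

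Next I would fix the cyclic rotation of $S(\beta(\dynA_n))$ that produces $\beta(1,b,c)$ and invoke the resulting tame annular $N$-graph $\ngraph_{\legendrian(1,b,c)\,\legendrian_{S(\beta(\dynA_n))}}$ to match boundary data; by Remark~\ref{remark:boundary-Legendrian isotopy} this preserves good cycles and Legendrian-mutability. The remaining task is purely interior: produce a sequence of Moves in Figure~\ref{fig:move1-6} transforming the stabilized $N$-graph into the tripod in Figure~\ref{figure:tripod N-graph}. The key local observation is that the stabilizing red column of $S(\ngraph(\dynA_n))$ furnishes exactly the central hexagonal point of the tripod together with the short $a=1$ arm (consisting of two trivalent vertices immediately adjacent to the new strand), while the horizontal blue backbone of $\ngraph(\dynA_n)$ splits naturally into two segments of lengths $b-1$ and $c-1$ which become the two longer arms.

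The main obstacle will be the explicit pictorial verification of this last reshaping: one must slide the newly-created red hexagonal points past blue trivalent vertices using Moves \Move{II} and \Move{IV}/\Move{IV'}, and absorb the ``bent'' configuration of blue edges at the junction via Move \Move{III}. Because every one of these local moves preserves the set of $\sfI$-cycles up to the identification of $H_1(\Legendrian)$ described in \S\ref{sec:1-cycles in Legendrian weaves}, once the underlying $N$-graphs are matched the cycle data of $\nbasis(\dynA_n)$ (augmented with no new cycles, by the stabilization procedure of Remark~\ref{remark:Stabilization}) automatically matches $\nbasis(1,b,c)$. I would carry out this sequence of pictures step by step in the appendix, yielding the equivalence $[S(\ngraph(\dynA_n)), S(\nbasis(\dynA_n))] = [\ngraph(1,b,c), \nbasis(1,b,c)]$ as pairs of $N$-graphs with chosen cycles, which is the desired conclusion.
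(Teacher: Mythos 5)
Your proposal is correct and follows essentially the same route as the paper's Appendix~\ref{appendix:tripod with a=1 is of type An}: form $S(\ngraph(\dynA_n))$ via the stabilization template, attach the tame annular $N$-graph realizing the braid isotopy from $S(\beta(\dynA_n))$ to $\beta(1,b,c)$ (using the cyclic equivalence from Example~\ref{example:stabilization of An}), and then reshape the interior by local moves — the paper uses the generalized push-through move $\Move{II^*}$ followed by a few applications of $\Move{II}$, which matches your plan up to the bookkeeping of which moves are invoked.
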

One consequence of this lemma is that two $N$-graphs $\ngraph(\dynA_n)$ and $\ngraph(1,b,c)$ with $n=b+c-1$ will generate bijective sets of $N$-graphs under mutations as seen in Remarks~\ref{remark:boundary-Legendrian isotopy} and \ref{remark:Stabilization}, where the bijection preserves the mutation.

Notice that the quivers $\quiver(a,b,c)$ together with $\quiver(\dynA_n)$ cover all quivers of finite type and some quivers of affine type. Indeed, for $1\le a\le b\le c$ and $n=a+b+c-2$, the quivers $\quiver(1,b,c)$ and $\quiver(\dynA_n)$ are of type $\dynA_n$, and the quivers $\quiver(2,2,n-2)$ and $\quiver(2,3,m-3)$ are of type $\dynD_n$ and $\dynE_m$.
Moreover, $\quiver(3,3,3)$, $\quiver(2,4,4)$ and $\quiver(2,3,6)$ are of type $\exdynE_6, \exdynE_7$ and $\exdynE_8$, respectively.
Hence we denote Legendrians, quivers, $N$-graphs, and so on by using $\dynX$ for $\dynX=\dynD_n, \dynE_n$ or $\exdynE_n$ instead of the triple $(a,b,c)$ corresponding to $\dynX$ as seen in Table~\ref{table:short notations}.

\begin{table}[ht]
\[
\renewcommand\arraystretch{1.5}
\begin{array}{c||c|c|c|c|c|c|c|c}
\toprule
\dynX & \dynA_n & \dynD_n & \dynE_6 & \dynE_7 & \dynE_8 & \exdynE_6 & \exdynE_7 & \exdynE_8\\
\midrule
(a,b,c) & (1,b,c) & (2,2,n-2) & (2,3,3) & (2,3,4) & (2,3,5) & (3,3,3) & (2,4,4) & (2,3,6)\\
\bottomrule
\end{array}
\]
\caption{Triples $(a,b,c)$ of type $\dynA\dynD\dynE$ and $\exdynE$}
\label{table:short notations}
\end{table}

\subsubsection{Degenerate $N$-graphs}
For $p, q, r\ge 1$, we define the positive $4$-braids $\beta_{\degen,0}(p,q,r)$ and $\beta_\degen(p,q,r)$ as
\begin{align*}
\beta_{\degen,0}(p,q,r)&\colonequals \sigma_{1,3}\sigma_2^p\sigma_{1,3}^{q-2}\fbox{$\sigma_{1,3}$}\sigma_2^{r-1}\fbox{$\sigma_2$}&
\beta_\degen(p,q,r)&\colonequals\Delta_4\beta_{\degen,0}(p,q,r)\Delta_4.
\end{align*}
Then $\tilde\beta_{\degen,0}(p,q,r)$ is the positive braid $\beta_{\degen,0}(p,q',r')$ for some $q\le q'\le q+1$ and $r\le r'\le r+1$ such that both $q'$ and $p+r'$ are odd.

Let $\legendrian_\degen(p,q,r)$ be the rainbow closures of $\beta_{\degen,0}(p,q,r)$, or equivalently, the $(-1)$-closure of $\beta_{\degen}(p,q,r)$. Then we denote its brick quiver and canonical $N$-graph with cycles by $\quiver^\brick(\beta_{\degen,0}(p,q,r))$ and $(\ngraph^\brick_\degen(p,q,r), \tilde\nbasis^\brick_\degen(p,q,r))$ as before. See Figure~\ref{figure:degenerate brick}.

\begin{figure}[ht]
\subfigure[Legendrian link $\legendrian_\degen(p,q,r)$]{$
\begin{aligned}
\legendrian_\degen(p,q,r)&=

\]
\caption{Degenerate $4$-graphs $(\ngraph_\degen(p,q,r),\nbasistilde_\degen(p,q,r))$ and cycles in the perturbation}
\label{figure:degenerated 4-graph}
\end{figure}

\begin{lemma}\label{lemma:tripod to degenerated Ngraph}
The pairs $(\ngraph_\degen(p,q,r),\nbasistilde_\degen(p,q,r))$ and $(\ngraph^\brick_\degen(p,q,r),\nbasistilde^\brick_\degen(p,q,r))$ are equivalent up to $\boundary$-Legendrian isotopy and Legendrian mutations.
\end{lemma}
\begin{proof}
We first show that $\legendrian_{\degen}(p,q,r)$ is the same as $\boundary \ngraph_\degen(p,q,r)$ as follows:
\begin{align*}
\beta_\degen(p,q,r)
&\mathrel{\dot{=}}\sigma_2^p(\sigma_2\sigma_{1,3}\sigma_2\sigma_{1,3})\sigma_{1,3}^{q-1}\sigma_2^r(\sigma_2\sigma_{1,3}\sigma_2\sigma_{1,3})\sigma_{1,3}
=\sigma_2^{p+1}\sigma_{1,3}\sigma_2\sigma_{1,3}^q\sigma_2^{r+1}\sigma_{1,3}\sigma_2\sigma_{1,3}^2,
\end{align*}
whose the $(-1)$-closure is the same as $\boundary\ngraph_\degen(p,q,r)$.

It is straightforward to check that we obtain the following degenerate $N$-graph from $\ngraph^\brick_\degen(p,q,r)$ by applying a sequence of Move \Move{DI} to the left part of the figure and Move \Move{DII} to the right part.
\[

\]
Let us ignore the shaded regions whose union is tame under perturbation, see \S~\ref{section:annular Ngraphs}, then it is obvious that the resulting $N$-graph together with a set of one cycles become $(\ngraph_\degen(p,q,r),\nbasistilde_\degen(p,q,r))$ in Figure~\ref{figure:degenerated 4-graph} after a sequence of Legendrian mutations.
\end{proof}

The following observation is obvious since all of $\beta_{\degen}(p,q,r)$ and $\legendrian_\degen(p,q,r)$ are invariant under conjugation, so is the pair $(\ngraph_\degen(p,q,r),\nbasistilde_\degen(p,q,r))$.
\begin{lemma}
The degenerate $N$-graph $(\ngraph_\degen(p,q,r),\nbasistilde_\degen(p,q,r))$ with cycles is invariant under conjugation.
\end{lemma}

Note that the $4$-graph $\ngraph_\degen(p,q,1)$ is indeed a stablization of the tripod $3$-graph $\ngraph(p,q,q)$ up to $\boundary$-Legendrian isotopy and Legendrian mutations.
In particular, when $(p,q,r) = (n-2,2,1), (2,3,1), (3,3,1)$ and $(2,4,1)$, we denote the braid, their closures and $N$-graphs by $\beta_{\degen}(\dynX)$, $\legendrian_\degen(\dynX)$ and $\ngraph_\degen(\dynX)$ for $\dynX=\dynD_n, \dynE_6, \exdynE_6$ and $\exdynE_7$, respectively. The degenerate $N$-graphs and the perturbed $N$-graphs with cycles listed above are depicted in Table~\ref{table:degenerated 4-graphs}.

\begin{table}[ht]
\renewcommand{\arraystretch}{1.5}
 
\caption{Degenerate $4$-graphs $\ngraph_\degen(\dynX)$ and cycles in the perturbations for $\dynX=\dynD_n, \dynE_6, \exdynE_6$ and $\exdynE_7$}
\label{table:degenerated 4-graphs}
\end{table}

\begin{remark}\label{remark:degenerated Ngraph of type A}
As observed in Lemma~\ref{lemma:stabilized An}, one can think $\quiver(1,n,n)$ and $\ngraph(1,n,n)$ for $\dynA_{2n-1}$ instead of $\quiver(\dynA_{2n-1})$ and $\ngraph(\dynA_{2n-1})$. Therefore we may obtain a degenerate $N$-graph~$\ngraph_\degen(\dynA_{2n-1})=\ngraph_\degen(1,n,1)$, which is obviously invariant under the conjugation.
\end{remark}

We also consider the degenerate $4$-graph $(\ngraph_\degen(\exdynD_4), \nbasistilde_\degen(\exdynD_4))=(\ngraph_\degen(2,2,2), \nbasistilde_\degen(2,2,2))$ with cycles of type $\exdynD_4$ as follows:
\[
(\ngraph_\degen(\exdynD_4), \nbasistilde_\degen(\exdynD_4))=

\end{align*}
Then $\legendrian(\exdynD_n)$ is the rainbow closure of the positive braid $\beta_0(\exdynD_n)$ 
\begin{align*}
\beta_0({\exdynD}_n)&=\sigma_3\fbox{$\sigma_2$}\sigma_2\sigma_3 \fbox{$\sigma_2$}\sigma_2^{n-5} \sigma_1 \fbox{$\sigma_2$} \sigma_2 \sigma_1,
\end{align*}
or the $(-1)$-closure of $\beta(\exdynD_n)=\Delta_4\beta_0(\exdynD_n)\Delta_4$.
Since $\legendrian(\exdynD_n)$ has three or four components, we have
\[
\tilde\beta_0(\exdynD_n) =\begin{cases}
\sigma_3\sigma_2^3\sigma_3 \sigma_2^{n-4} \sigma_1 \sigma_2^3 \sigma_1 & n\text{ is odd};\\
\sigma_3\sigma_2^3\sigma_3 \sigma_2^{n-3} \sigma_1 \sigma_2^3 \sigma_1 & n\text{ is even}.
\end{cases}
\]

The Legendrian link $\legendrian({\exdynD}_n)$ admits the brick quiver diagram $\quiver^\brick({\exdynD}_n)$.

\begin{align*}
\quiver^\brick({\exdynD}_n)&=

\end{align*}
\caption{$N$-graphs of type $\exdynD_{2k+3}$ and $\exdynD_{2k+4}$ for $k\ge 0$}
\label{figure:4-graph of type affine Dn}
\end{figure}

\begin{lemma}\label{lemma:Ngraphs of affine Dn}
The pairs $(\ngraph(\exdynD_n),\nbasis(\exdynD_n))$ and $(\ngraph^\brick(\exdynD_n),\nbasis^\brick(\exdynD_n))$ are equivalent up to $\boundary$-Legendrian isotopy and Legendrian mutations.
\end{lemma}
\begin{proof}
We first introduce an auxiliary $N$-graph $(\ngraph(\exdynD_n)', \nbasistilde(\exdynD_n)')$
\[
(\ngraph(\exdynD_n)', \nbasistilde(\exdynD_n)') \coloneqq\begin{tikzpicture}[baseline=-.5ex,scale=0.4]
\draw[rounded corners=5, thick] (-6.5, -2.5) rectangle (6.5, 2.5);
\draw (0.5, -2.5) node[below] {$\underbrace{\hphantom{\hspace{2cm}}}_{n-4}$};
\clip[rounded corners=5] (-6.5, -2.5) rectangle (6.5, 2.5);
\draw[color=gray, line cap=round, line width=5, opacity=0.5] (-5.5,-1) -- (-5.5,-3);
\draw[color=gray, line cap=round, line width=5, opacity=0.5] (-1.5,0) -- (-1.5,-3);
\draw[color=gray, line cap=round, line width=5, opacity=0.5] (4.5,-1.75) -- (6.5,-1.75);
\draw[thick, blue, fill]
(-0.5, -2.5) -- (-0.5,0) circle (2pt)
(1.5, -2.5) -- (1.5,0) circle (2pt)
;
\begin{scope}[xscale=-1]
\draw[thick, blue, fill]
(-2.5, -2.5) -- (-2.5,0) circle (2pt)
(-0.5, -2.5) -- (-0.5,0) circle (2pt)
(1.5, -2.5) -- (1.5,0) circle (2pt)
;
\end{scope}
\draw[thick, green, rounded corners] (-2.5, 2.5) -- (-2.5, -2.5);
\draw[thick, red] 
(-3.5, -2.5) -- (-3.5, 2.5)
(-6.5, 0) -- (-3.5, 0)
;
\draw[thick, red] 
(3.5, 2.5) -- (3.5, -2.5)
(6.5, 0) -- (3.5, 0)
;
\draw[thick, blue, fill] 
(-3.5, 0) -- (3.5, 0)
(-3.5, 0) -- (-4.5, 1) circle (2pt) -- (-4.5, 2.5)
(-4.5, 1) -- (-6.5, 1)
(-5.5, 1) circle (2pt) -- (-5.5, 2.5)
(-3.5, 0) -- (-4.5, -1) circle (2pt) -- (-4.5, -2.5)
(-4.5, -1) -- (-6.5, -1)
(-5.5, -1) circle (2pt) -- (-5.5, -3)
;
\begin{scope}[xscale=-1]
\draw[thick, blue, fill] 
(-3.5, 0) -- (-4.5, 1) circle (2pt) -- (-4.5, 2.5)
(-4.5, 1) -- (-6.5, 1)
(-5.5, 1) circle (2pt) -- (-5.5, 2.5)
(-3.5, 0) -- (-4.5, -1) circle (2pt) -- (-4.5, -2.5)
(-4.5, -1) -- (-6.5, -1)
(-4.5, -1.75) circle (2pt) -- (-6.5, -1.75)
;
\end{scope}
\draw[thick, fill=white] (-3.5, 0) circle (2pt) (3.5, 0) circle (2pt);
\end{tikzpicture}
\]
Then $\boundary \ngraph(\exdynD_n)'=\sigma_2\sigma_1^3\sigma_2\sigma_1\fbox{$\sigma_1$}\sigma_1\sigma_2\sigma_3\fbox{$\sigma_1$}\sigma_1^{n-5}\sigma_2\sigma_1^3\sigma_2\sigma_1\fbox{$\sigma_1$}\sigma_1\sigma_2\sigma_3$ is equivalent to $\legendrian(\exdynD_n)$ as follows:
\begin{align*}
\beta(\exdynD_n)&=\Delta_4\sigma_3\fbox{$\sigma_2$}\sigma_2\sigma_3 \fbox{$\sigma_2$}\sigma_2^{n-5} \sigma_1 \fbox{$\sigma_2$} \sigma_2 \sigma_1\Delta_4\\
&\mathrel{\dot=}\sigma_2\sigma_1^2\sigma_2\sigma_1{\sigma_2\sigma_3\sigma_2}\sigma_1\fbox{$\sigma_2$}\sigma_2\sigma_3\fbox{$\sigma_2$}\sigma_2^{n-4}
\sigma_1\sigma_2\sigma_1\sigma_2\fbox{$\sigma_1$}\sigma_1\sigma_2\sigma_3\\
&=\boundary \ngraph(\exdynD_n)'.
\end{align*}

Moreover, as seen in Appendix~\ref{appendix:Ngraph of type affine Dn}, $(\ngraph^\brick(\exdynD_n),\nbasis^\brick(\exdynD_n))$ is equivalent to $(\ngraph(\exdynD_n)',\nbasistilde(\exdynD_n)')$, which is equivalent to $(\ngraph(\exdynD_n)',\nbasistilde(\exdynD_n)')$ up to $\boundary$-Legendrian isotopy and Legendrian mutations.
\end{proof}
As before, the freeness is obvious since $\ngraph(\exdynD_n)$ consists of trees.
Moreover, $\ngraph(\exdynD_{2k+4})$ has a $\pi$-rotation symmetry. That is, we obtain the following lemma.
\begin{lemma}
The pair $\ngraph(\exdynD_{2k+4})$ is invariant under $\pi$-rotation.
\end{lemma}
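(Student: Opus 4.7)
The plan is to reduce the lemma to a direct visual inspection of Figure~\ref{figure:4-graph of type affine Dn}(a), after first checking that the combinatorial parameters appearing in the braid word force the two halves of the picture to coincide. Recall from the construction that $\ngraph(\exdynD_{m})$ is built from a braid whose two factors are controlled by the integers $k=\lfloor(m-3)/2\rfloor$ and $\ell=\lfloor(m-4)/2\rfloor$. For $m=2n$ these specialize to $k=n-2=\ell$, so the two factors
\[
\sigma_2\sigma_1^3\sigma_2\sigma_1^3\sigma_2\sigma_1^{n-2}\sigma_3
\qquad\text{and}\qquad
\sigma_2\sigma_1^3\sigma_2\sigma_1^3\sigma_2\sigma_1^{n-2}\sigma_3
\]
agree on the nose. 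Consequently, the boundary data of $\ngraph(\exdynD_{2n})$ on $\partial\disk^2=\sphere^1$ is invariant under the $\pi$-rotation $R_\pi$ (see Lemma~\ref{lem:rotation and conjugation}), which exchanges the two rainbow closure intervals realizing the two factors.

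Next, I would exhibit explicitly the $\pi$-rotation symmetry of the interior of $\ngraph(\exdynD_{2n})$. The picture in Figure~\ref{figure:4-graph of type affine Dn}(a) is drawn with the origin placed at the center of the rectangle and with the ``tree'' pieces of colors red, blue, green arranged symmetrically: the two hexagonal points at $(\pm 3.5,0)$ are swapped by $R_\pi$, the central horizontal red edge joining them is preserved setwise, and the outer ``claw'' (the cluster of red/blue edges around $(-4.5,\pm 1)$, $(-5.5,1)$, $(-4.5,-1.75)$) on the left is mapped to its counterpart (the claw around $(4.5,\pm 1)$, $(4.5,1.75)$, $(5.5,-1)$) on the right. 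The $n-2$ blue spokes emanating downward from the central red edge on the left are exchanged with the $n-2$ blue spokes going upward on the right, using $k=\ell=n-2$. This gives $R_\pi(\ngraph(\exdynD_{2n}))=\ngraph(\exdynD_{2n})$ as unoriented embedded graphs in~$\disk^2$.

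Finally, I would check that $R_\pi$ sends the distinguished set of cycles $\nbasis(\exdynD_{2n})$ bijectively onto itself. By inspection of the shaded \colorbox{cyclecolor1!50!}{\cyclecolornamefirst}- and \colorbox{cyclecolor2!50!}{\cyclecolornamesecond}-edges, the three cycles around each hexagonal point on the left (the central one and the two outer ones at $(-4.5,\pm 1)$) are carried to the three cycles around the corresponding hexagonal point on the right, and each of the intermediate $\sfI$-cycles on the horizontal red edges between the two hexagonal points is mapped to an $\sfI$-cycle on the symmetric edge (since $k=\ell$, the pairing is bijective). Under the induced isomorphism $H_1(\Legendrian(\ngraph(\exdynD_{2n})))\to H_1(\Legendrian(R_\pi(\ngraph(\exdynD_{2n}))))$ this yields $R_\pi(\nbasis(\exdynD_{2n}))=\nbasis(\exdynD_{2n})$, so $[\ngraph(\exdynD_{2n}),\nbasis(\exdynD_{2n})]$ is $R_\pi$-invariant in the sense of Definition~\ref{def:equiv on N-graph and N-basis}.

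The only real subtlety is the bookkeeping of the central row of blue spokes: one must verify that $R_\pi$ pairs the $n-2$ spokes lying below the central red edge on the left with the $n-2$ spokes lying above the central red edge on the right, as drawn, and that the unique central green edge (which crosses the red ``axis'' transversely) is preserved setwise. This is where $k=\ell$ is used in an essential way; were $m$ odd, the corresponding picture in Figure~\ref{figure:4-graph of type affine Dn}(b) would have $k\neq\ell$ and $\pi$-rotation symmetry would fail, which is consistent with the statement being restricted to $\exdynD_{2n}$.
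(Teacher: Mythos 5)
Your proof is correct and follows essentially the same route as the paper, which in fact states this lemma without printing a proof because Figure~\ref{figure:4-graph of type affine Dn}(a) is constructed with a built-in $\pi$-rotational symmetry (the two halves are literally the same picture rotated by $180^\circ$). Your verification that $k=\ell=n-2$ forces the two factors of $\beta(\exdynD_{2n})$ to coincide, and that the rotation swaps the two hexagonal points, the outer claws, and the central spokes while permuting the chosen $\sfI$- and $\sfY$-cycles in $\nbasis(\exdynD_{2n})$, is exactly the check the paper leaves implicit.
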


Finally, two canonical $N$-graphs $\ngraph(\exdynD_4)$ and $\ngraph_\degen(\exdynD_4)$ for $\exdynD_4$, which are indeed equivalent.
\begin{lemma}\label{lem:exdynD_4}
The pair $(\ngraph(\exdynD_4),\nbasistilde(\exdynD_4))$ is equivalent to the pair $(\ngraph_\degen(\exdynD_4),\nbasistilde_\degen(\exdynD_4))$ up to $\boundary$-Legendrian isotopy and Legendrian mutations. 
\end{lemma}
See Appendix~\ref{appendix:affine D4} for the proof.

\subsubsection{Exchange matrices and graphs}

Notice that the $N$-graphs $\ngraph(\dynX)$ and $\ngraph^\brick(\dynX)$ are deterministic for $\dynX = \dynA,\dynD,\dynE,\exdynD, \exdynE$.
Therefore, the coefficients in $\bfy(\ngraph(\dynX),\nbasis(\dynX))$ are defined on $\bbC[\cM(\legendrian(\dynX))]$. Here, $\cM(\legendrian)$ is the moduli spaces of flags on $\legendrian$ and is turned out to be a cluster Poisson variety as mentioned earlier.

On the other hand, one can show that the variables $\{X_a\}_{a\in I_\legendrian}$, Shen--Weng constructed in~\cite[\S 3.2]{SW2019}, coincide with the coefficients in the coefficient tuple $\bfy(\ngraph^\brick(\dynA_n),\nbasis^\brick(\dynA_n))$, $\bfy(\ngraph^\brick(a,b,c),\nbasis^\brick(a,b,c))$, $\bfy(\ngraph^\brick_\degen(p,q,r),\nbasis^\brick_\degen(p,q,r))$, or $\bfy(\ngraph^\brick(\exdynD_n),\nbasis^\brick(\exdynD_n))$.
Moreover, coefficients are algebraically independent.
In summary, we have the following corollary, which is a direct consequence of the above discussion, Proposition~\ref{prop_Y-pattern_exchange_graph}, and~\eqref{eq_exchange_graphs_are_the_same}.
\begin{corollary}\label{corollary:algebraic independence}
Let $(\ngraph_{t_0},\nbasis_{t_0})$ be either $(\ngraph(a,b,c), \nbasis(a,b,c))$ or $(\ngraph(n), \nbasis(n))$ of type $\dynX$, and let $(\bfy_{t_0},\qbasispr_{t_0})=\Psi(\ngraph_{t_0}, \nbasis_{t_0})$ and $\qbasispr_{t_0}=\qbasispr(\quiver(\ngraph_{t_0},\nbasis_{t_0}))$.
Then the exchange graph of the $Y$-pattern given by the initial $Y$-seed $(\bfy_{t_0},\qbasispr_{t_0})$ is the same as the exchange graph $\exchange(\Roots)$ of the root system~$\Roots$ of type $\dynX$.
\end{corollary}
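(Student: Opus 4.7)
The plan is to reduce the claim to a direct application of Proposition~\ref{prop_Y-pattern_exchange_graph} combined with the equality chain~\eqref{eq_exchange_graphs_are_the_same}. The only hypothesis of Proposition~\ref{prop_Y-pattern_exchange_graph} that is not automatic in our setting is the algebraic independence of the initial coefficients $y_{1;t_0},\dots,y_{n;t_0}$, and the principal part being of finite or affine Dynkin type (so Theorem~\ref{thm_exchange_graph_Dynkin} applies). The Dynkin type is guaranteed by construction of $\ngraph(a,b,c)$ and $\ngraph(\dynA_n)$ in \S\ref{sec:tripods} and \S\ref{sec:linear}, respectively; thus the real work is establishing algebraic independence.

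First, I would verify that each $\ngraph(\dynX)$ and its brick model $\ngraph^{\mathsf{brick}}(\dynX)$ is deterministic (in the sense of Definition~\ref{def:good N-graph}). For the tripod $\ngraph(a,b,c)$ and the linear $\ngraph(\dynA_n)$ this is immediate from their tree-like form, and determinacy is preserved under $\boundary$-Legendrian isotopy and Legendrian mutations by Proposition~\ref{prop_mutation_preserves_deterministic}. Because of determinacy, each microlocal monodromy $\mmon_{\Legendrian(\ngraph)}(-)([\cycle_j])$ descends from a function on $\cM(\ngraph)$ to a well-defined regular function on the flag moduli $\cM(\legendrian(\dynX))$, as pointed out in Remark~\ref{remark:Poisson variety}.

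Next, I would identify these monodromy coordinates with the Shen--Weng coordinates $\{X_a\}_{a\in I_\legendrian}$ on the cluster Poisson variety $\cM(\legendrian(\dynX))$ constructed in~\cite[\S 3.2]{SW2019}. The explicit description of~$\ngraph^{\mathsf{brick}}(\dynX)$ in Figures~\ref{figure:brick linear N-graph} and~\ref{figure:brick tripod N-graphs} is tailor-made to match the brick diagram of the positive braid $\beta_0(\dynX)$ that Shen--Weng use to define $X_a$; tracing the $\sfI$-cycles through the brick picture and computing the cross-ratio monodromy formula from \S\ref{sec:flag moduli spaces} one checks vertex-by-vertex that the coefficient $\mmon(-)([\cycle_a])$ equals $X_a$. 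Since Shen--Weng prove these variables are algebraically independent (they form an algebraic torus chart on $\cM(\legendrian(\dynX))$), the coefficients in $\bfy(\ngraph^{\mathsf{brick}}(\dynX),\nbasis^{\mathsf{brick}}(\dynX))$ are algebraically independent. Finally, because $(\ngraph(\dynX),\nbasis(\dynX))$ and $(\ngraph^{\mathsf{brick}}(\dynX),\nbasis^{\mathsf{brick}}(\dynX))$ are related by $\boundary$-Legendrian isotopies and Legendrian mutations (the pictorial proofs are deferred to the appendices cited in \S\ref{sec:tripods}), Proposition~\ref{proposition:equivariance of mutations} transports this algebraic independence along a sequence of $Y$-seed mutations, which preserves algebraic independence since each mutation is a birational equivalence of function fields.

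Given algebraic independence, Proposition~\ref{prop_Y-pattern_exchange_graph} yields $\exchange(\{(\bfy_t, \qbasispr_t)\}_{t\in\mathbb{T}_n}) = \exchange(\{(\bfx_t, \qbasis_t)\}_{t\in\mathbb{T}_n})$ for any compatible cluster pattern, and then~\eqref{eq_exchange_graphs_are_the_same} combined with Theorem~\ref{thm_exchange_graph_Dynkin} (applicable since $\qbasispr_{t_0}$ is acyclic of Dynkin type $\dynX$) identifies this common exchange graph with $\exchange(\Roots)$. I expect the main obstacle to be the second step: carefully matching the microlocal monodromies assigned to the cycles of $\ngraph^{\mathsf{brick}}(\dynX)$ with the Shen--Weng variables, since this requires a concrete flag-theoretic computation on the brick diagram and a careful sign/orientation bookkeeping given the opposite intersection convention noted after Figure~\ref{fig:I-cycle with orientation and intersections}.
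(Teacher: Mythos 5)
Your proposal follows essentially the same route as the paper: establish that the $N$-graphs are deterministic so the coefficients live on $\bbC[\cM(\legendrian(\dynX))]$, identify the brick-model coefficients with the Shen--Weng variables $\{X_a\}$ to get algebraic independence, and then invoke Proposition~\ref{prop_Y-pattern_exchange_graph} together with~\eqref{eq_exchange_graphs_are_the_same}. The paper states this more tersely (leaving the Shen--Weng matching and the transfer from the brick model to the standard model implicit), but the argument is the same.
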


For each $n\ge 1$ and triples $(a,b,c)$ and $(p,q,r)$, let $\beta_0$ be either $\beta_0(\dynA_n)$, $\beta_0(a,b,c)$, $\beta_0(p,q,r)$, or $\beta_0(\exdynD_n)$, and $\tilde\beta_0$ be the braid obtained by doubling chosen generators of $\beta_0$ if necessary so that the closure of $\tilde\beta_0$ has only one component as before.
Notice that the $N$-graph $\ngraph^\brick(\tilde\beta_0)$ is equivalent to the $N$-graph $\widetilde{\ngraph^\brick(\beta_0)}$ under the mutation on the cycle corresponding to each relative cycle in $\ngraph^\brick(\beta_0)$.

\begin{lemma}\label{lemma:admissible extension}
Let $\legendrian$ and $\tilde\legendrian$ be the rainbow closures of $\beta_0$ and $\tilde\beta_0$, respectively.
\begin{enumerate}
\item Both $\cM(\legendrian)$ and $\cM(\tilde\legendrian)$ admit the $\clusterfont{X}$-cluster structure.
\item There is a cluster subvariety $\cM(\tilde\legendrian)'$ in $\cM(\tilde\legendrian)$, whose cluster structure coincides with that of $\cM(\legendrian)$.
\item For each $N$-graph $(\ngraph,\nbasistilde)$ for $\legendrian$, the canonical extension $(\tilde\ngraph,\nbasistilde)$ yields the restriction map $\cM(\tilde\ngraph)\to\cM(\ngraph)$ between toric charts in $\cM(\tilde\legendrian)'$ and $\cM(\legendrian)$.
\item For $N$-graphs $(\ngraph(\dynA_n),\nbasistilde(\dynA_n))$, $(\ngraph(a,b,c),\nbasistilde(a,b,c))$, $(\ngraph(\exdynD_n),\nbasistilde(\exdynD_n))$, and  $(\ngraph_\degen(p,q,r),\nbasistilde_\degen(p,q,r))$, their exchange matrices are admissible in the sense of Definition~\ref{def:admissible relative cycle}.
\end{enumerate}
\end{lemma}
\begin{proof}
(1) This follows from \cite{SW2019}.

\noindent (2) and (3) These are observed in \S~\ref{section:relative cycles}.

\noindent (4) This follows easily from the direct computation.
\end{proof}

\subsection{Legendrian Coxeter mutations}

For a bipartite quiver $\quiver$, we have two sets of vertices $I_+$ and
$I_-$ so that all edges are oriented from $I_+$ to $I_-$.
Let $\mutation_+$ and $\mutation_-$ be sequences of mutations defined by 
compositions of mutations corresponding to each and every vertex in $I_+$ and 
$I_-$, respectively.
A Coxeter mutation~$\qcoxeter$ and its inverse $\qcoxeter^{-1}$ are the compositions
\begin{align*}
\mutation_\quiver&=\prod_{i\in I_+}\mutation_i \cdot \prod_{i\in I_-} \mutation_i,&
\mutation_\quiver^{-1}&=\prod_{i\in I_-}\mutation_i \cdot \prod_{i\in I_+} \mutation_i.
\end{align*}
Note that $\prod_{i\in I_+}\mutation_i$ does not depend on the order of composition of mutations $\mutation_i$ among $i\in I_+$, and the same holds for $I_-$.

\begin{remark}\label{rmk_mutation_convention}
For any sequence $\mutation$ of mutations, we will use the right-to-left convention. Namely, the rightmost mutation will be applied first on the quiver $\quiver$.
\end{remark}

Let us say that a pair $(\ngraph, \nbasis)$ is \emph{bipartite} if so is $\quiver= \quiver(\ngraph,\nbasis)$.
In this case, we decompose $\nbasis$ into $\nbasis_+$ and $\nbasis_-$ corresponding to sets $I_+$ and $I_-$ of vertices in $\quiver$.

Then similarly, we define the Legendrian Coxeter mutation, which will be denoted by $\ncoxeter$, on a bipartite $N$-graph $\ngraph$ as follows:
\begin{definition}[Legendrian Coxeter mutation]\label{def:Legendrian Coxeter mutation}
For a bipartite $N$-graph $\ngraph$ with decomposed sets of cycles $\nbasis=\nbasis_+\cup\nbasis_-$, we define the \emph{Legendrian Coxeter mutation} $\ncoxeter$ and its inverse $\ncoxeter^{-1}$ as the compositions of Legendrian mutations
\begin{align*}
\mu_\ngraph&=\prod_{\gamma\in \nbasis_+}\mutation_\gamma \cdot \prod_{\gamma\in \nbasis_-}\mutation_\gamma,&
\mu_\ngraph^{-1}&=\prod_{\gamma\in \nbasis_-}\mutation_\gamma \cdot \prod_{\gamma\in \nbasis_+}\mutation_\gamma.
\end{align*}
\end{definition}

It is worth mentioning that the Legendrian Coxeter mutations make sense only when each Legendrian mutation $\mutation_\cycle$ \emph{exists}. Also note that each $\mutation_\ngraph^{\pm1}$ does not depend on the order of mutations if cycles in each of $\nbasis_\pm$ are disjoint. 
This directly implies that $\mutation_\ngraph^{-1}$ is indeed the inverse of 
$\mutation_\ngraph$. Note that all cycles in each of $\nbasis_\pm(\dynX)$ for $\dynX =\dynA,\dynD,\dynE,\exdynD, \exdynE$ are disjoint as seen in Figures~\ref{figure:linear N-graph}, \ref{figure:tripod N-graph}, and \ref{figure:4-graph of type affine Dn}.

\subsubsection{Legendrian Coxeter mutation for linear $N$-graphs}

\begin{lemma}\label{lemma:Legendriam Coxeter mutation of type An}
The effect of the Legendrian Coxeter mutation on $(\ngraph(\dynA_n),\nbasis(\dynA_n))$ is the clockwise $\frac{2\pi}{n+3}$-rotation and therefore
\[
\ncoxeter(\ngraph(\dynA_n),\nbasis(\dynA_n))=
\coxeterpadding(\dynA_n)(\ngraph(\dynA_n),\nbasis(\dynA_n)),
\]
where $\coxeterpadding(\dynA_n)$ is an annular $N$-graph called the \emph{Coxeter padding} of type $\dynA_n$ as follows:
\begin{equation}\label{equation:Coxeter padding of type An}
\coxeterpadding(\dynA_n)=
\begin{tikzpicture}[baseline=-.5ex,scale=0.4]
\draw[thick] (0,0) circle (5) (0,0) circle (3);
\foreach \i in {45, 90, ..., 360} {
\draw[blue, thick] (\i:5) to[out=\i-180,in=\i+45] (\i+45:3);
}
\end{tikzpicture}
\end{equation}
\end{lemma}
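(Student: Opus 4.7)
The plan is to compute $\mutation_\ngraph = \prod_{\gamma\in\nbasis_+}\mutation_\gamma \cdot \prod_{\gamma\in\nbasis_-}\mutation_\gamma$ applied to $(\ngraph(\dynA_n),\nbasis(\dynA_n))$ directly, and then identify the output, after a sequence of $N$-graph moves, with the concatenation $\coxeterpadding(\dynA_n)\cdot(\ngraph(\dynA_n),\nbasis(\dynA_n))$. Since $\coxeterpadding(\dynA_n)$ is a tame annular $N$-graph whose $n+3$ strands each rotate by $\frac{2\pi}{n+3}$ between the inner and outer boundaries, this concatenation represents exactly the rotation of $\ngraph(\dynA_n)$ by $-\frac{2\pi}{n+3}$, which is the geometric content of the lemma.

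First I would fix notation: label the cycles along the zig-zag blue path as $\gamma_1,\ldots,\gamma_n$ where $\gamma_i\in\nbasis_{\varepsilon(i)}$ and $\varepsilon(i)$ alternates. Because the quiver $\quiver(\dynA_n)$ is bipartite with only nearest-neighbor arrows, the cycles within each part $\nbasis_\pm$ are pairwise disjoint, so $\mutation_{\nbasis_\pm} = \prod_{\gamma\in\nbasis_\pm}\mutation_\gamma$ is a well-defined simultaneous operation (independent of the order inside each part). The main computation proceeds in two stages.

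Stage one: apply $\mutation_{\nbasis_-}$. Each $\gamma\in\nbasis_-$ is an $\sfI$-cycle on an isolated edge, so by Figure~\ref{figure:I-mutation} the mutation at $\gamma$ rotates that local edge by $90^\circ$. I would explicitly draw the resulting $N$-graph: the blue path still zig-zags through the disk, but every other edge has been flipped, and the cycles of $\nbasis_+$ become new $\sfI$-cycles on the flipped configuration (this uses that $\sfI$-cycles transform to $\sfI$-cycles under mutation at disjoint $\sfI$-cycles, as visible in Figure~\ref{fig:cycles under moves}). Stage two: apply $\mutation_{\nbasis_+}$ to the result, flipping those edges as well. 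After both stages, I would apply a controlled sequence of Moves \Move{I} and \Move{II} from Figure~\ref{fig:move1-6} to redraw the $N$-graph in a standard form.

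Second, I would recognize the resulting picture: peel off the outer annular neighborhood of $\partial\disk^2$ and show that the attached annulus coincides with the diagram of $\coxeterpadding(\dynA_n)$ given in \eqref{equation:Coxeter padding of type An}, while the inner disk is an exact copy of $(\ngraph(\dynA_n),\nbasis(\dynA_n))$. This identification gives the stated equality, and also shows that the cycle pattern $\nbasis(\dynA_n)$ is preserved under $\ncoxeter$ (up to the rotation). Finally, since $\coxeterpadding(\dynA_n)$ is manifestly tame and rotationally concatenates the $n+3$ boundary cusp points through one $\frac{2\pi}{n+3}$-click, the identity $\ncoxeter = \coxeterpadding(\dynA_n)\cdot(-)$ translates into the geometric statement that $\ncoxeter$ acts on $\ngraph(\dynA_n)$ as a clockwise rotation by $\frac{2\pi}{n+3}$.

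The main obstacle I anticipate is purely bookkeeping: carefully tracking, through both stages of mutations, how the zig-zag bends and how the cycles of $\nbasis_\pm$ reincarnate as $\sfI$-cycles at each step, so that the final matching with $\coxeterpadding(\dynA_n)\cdot(\ngraph(\dynA_n),\nbasis(\dynA_n))$ can be made rigorous. All the individual moves are local, but stringing together $n$ of them requires an inductive check; the base case $n=1$ or $n=2$ serves as a sanity check, and the inductive step reduces to a single local \Move{II}-\Move{I} cleanup on the boundary of the annulus. No deep new input is needed beyond Figures~\ref{fig:move1-6},~\ref{figure:I-mutation}, and the freeness of $\ngraph(\dynA_n)$ guaranteed by Lemma~\ref{lemma:tree Ngraphs are free}.
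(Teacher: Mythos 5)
Your proposal is correct and follows essentially the same route as the paper: apply $\mutation_{\nbasis_-}$ first (flipping the zig-zag into its mirror image), then $\mutation_{\nbasis_+}$, observe that the composite is the clockwise $\frac{2\pi}{n+3}$-rotation of $(\ngraph(\dynA_n),\nbasis(\dynA_n))$, and read off the equality with $\coxeterpadding(\dynA_n)\cdot(\ngraph(\dynA_n),\nbasis(\dynA_n))$ from the picture. The only cosmetic difference is that you frame the final identification as an inductive bookkeeping of Moves \Move{I}--\Move{II}, whereas the paper notes that $\coxeterpadding(\dynA_n)$ is a vertex-free rotational annulus so the concatenation is literally the rotated graph up to planar isotopy.
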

\begin{proof}
We may assume that the Coxeter element $\ncoxeter$ can be represented by the sequence
\[
\ncoxeter=\mutation_+\mutation_-=(\mutation_{\cycle_2}\mutation_{\cycle_4}\mutation_{\cycle_6}\cdots)(\mutation_{\cycle_1}\mutation_{\cycle_3}\mutation_{\cycle_5}\dots).
\]
Then the action of $\ncoxeter$ on $\ngraph(\dynA_n)$ is as depicted in Figure~\ref{figure:Legendrian Coxeter mutation on An}, which is nothing but the clockwise $\frac{2\pi}{n+3}$-rotation of the original $N$-graph $(\ngraph(\dynA_n),\nbasis(\dynA_n))$ as claimed.

The last statement is obvious as seen in Figure~\ref{figure:coxeter padding of type An}.
\end{proof}

\begin{figure}[ht]
\subfigure[Legendrian Coxeter mutation for $\ngraph(\dynA_n)$\label{figure:Legendrian Coxeter mutation on An}]{
\begin{tikzcd}[ampersand replacement=\&]

$}
\caption{Legendrian Coxeter mutation $\ncoxeter$ on $(\ngraph(\dynA_n), \nbasis(\dynA_n))$}
\end{figure}

\begin{remark}\label{rmk_order_of_Coxeter_mutation}
The order of the Coxeter mutation is either $(n+3)/2$ if $n$ is odd or $n+3$ otherwise.
Since the Coxeter number $h=n+1$ for $\dynA_n$, this verifies Lemma~\ref{lemma:order of coxeter mutation} in this case.
\end{remark}

\subsubsection{Legendrian Coxeter mutation for tripod $N$-graphs}

Let us consider the Legendrian Coxeter mutation for tripod $N$-graphs.
By the mutation convention mentioned in Remark~\ref{rmk_mutation_convention}, for each tripod $\ngraph(a,b,c)$, we always take a mutation at the central $\sfY$-cycle $\cycle$ first.
After the Legendrian mutation on $(\ngraph(a,b,c),\nbasis(a,b,c))$ at $\cycle$, we have the $N$-graph on the left in Figure~\ref{figure:center mutation}.
Then there are three shaded regions that we can apply the generalized push-through moves, see Appendix~\ref{appendix:tripod with a=1 is of type An}, so that we obtain the $N$-graph on the right in Figure~\ref{figure:center mutation}.
\begin{figure}[ht]
\subfigure[\label{figure:center mutation}After the mutation at the central vertex]{
\begin{tikzcd}[ampersand replacement=\&]

$}
\caption{Legendrian Coxeter mutation for $(\ngraph(a,b,c),\nbasis(a,b,c))$} 
\end{figure}
Notice that in each triangular shaded region, the $N$-subgraph looks like the $N$-graph of type $\dynA_{a-1}, \dynA_{b-1}$, or $\dynA_{c-1}$.
Moreover, the mutations corresponding to the rest sequence is just a composition 
of Legendrian Coxeter mutations of type $\dynA_{a-1},\dynA_{b-1}$, and $\dynA_{c-1}$, 
which are essentially the same as the clockwise rotations by Lemma~\ref{lemma:Legendriam Coxeter mutation of type An}.
Therefore, the result of the Legendrian Coxeter mutation will be given as depicted in 
Figure~\ref{figure:coxeter mutation}.

Then the resulting $N$-graph becomes very similar to the original $N$-graph $\ngraph(a,b,c)$.
Indeed, the inside is identical to $\ngraph(a,b,c)$ but the colors are switched, which is the conjugation $\overline{\ngraph(a,b,c)}$ by definition.
The complement of $\overline{\ngraph(a,b,c)}$ in $\qcoxeter(\ngraph(a,b,c),\nbasis(a,b,c))$ is an annular $N$-graph.

\begin{definition}[Coxeter padding of type $(a,b,c)$]
For each triple $a,b,c$, the annular $N$-graph depicted in Figure~\ref{figure:coxeter padding} is denoted by $\coxeterpadding(a,b,c)$ and called the \emph{Coxeter padding} of type $(a,b,c)$.
We also denote the Coxeter padding with color switched by $\overline{\coxeterpadding(a,b,c)}$, which is the conjugation of $\coxeterpadding(a,b,c)$.
\end{definition}

\begin{figure}[ht]
\subfigure[$\coxeterpadding(a,b,c)$]{\makebox[0.48\textwidth]{
$
$
}}
\caption{Coxeter paddings $\coxeterpadding(a,b,c)$, $\bar\coxeterpadding(a,b,c)$ and their inverses.}
\label{figure:coxeter padding}
\end{figure}

Notice that two Coxeter paddings $\coxeterpadding(a,b,c)$ and $\overline{\coxeterpadding(a,b,c)}$ can be glued without any ambiguity
and so we can also pile up Coxeter paddings $\coxeterpadding(a,b,c)$ and $\overline{\coxeterpadding(a,b,c)}$ alternatively as many times as we want.

We also define the concatenation of the Coxeter padding $\overline{\coxeterpadding(a,b,c)}$ on the pair $(\ngraph(a,b,c),\nbasis(a,b,c))$ as the pair $(\ngraph', \nbasis')$ such that
\begin{enumerate}
\item the $N$-graph $\ngraph'$ is obtained by gluing $\overline{\coxeterpadding(a,b,c)}$ on $\ngraph(a,b,c)$, and 
\item the set $\nbasis'$ of cycles is the set of $\sfI$- and $\sfY$-cycles identified with $\nbasis(a,b,c)$ in a canonical way.
\end{enumerate}

\begin{proposition}\label{proposition:effect of Legendrian Coxeter mutation}
Let $(\ngraph, \nbasis) = (\ngraph(a,b,c), \nbasis(a,b,c))$.
The Legendrian Coxeter mutation on $(\ngraph, \nbasis)$ or $\overline{(\ngraph,\nbasis)}$ is given as the concatenation
\begin{align*}
\ncoxeter(\ngraph, \nbasis) &= \coxeterpadding\overline{(\ngraph,\nbasis)},&
\ncoxeter^{-1}(\ngraph, \nbasis) &= \bar\coxeterpadding^{-1}\overline{(\ngraph,\nbasis)},&
\ncoxeter\overline{(\ngraph,\nbasis)} &= \bar \coxeterpadding (\ngraph, \nbasis),&
\ncoxeter^{-1}\overline{(\ngraph,\nbasis)} &= \coxeterpadding^{-1} (\ngraph, \nbasis),
\end{align*}
where $\coxeterpadding=\coxeterpadding(a,b,c)$, $\bar\coxeterpadding=\overline{\coxeterpadding(a,b,c)}$.

In general, for $r\ge 0$, we have
\begin{align*}
\ncoxeter^r(\ngraph,\nbasis) &= \begin{cases}
\coxeterpadding\bar\coxeterpadding\cdots \bar\coxeterpadding (\ngraph,\nbasis)& \text{ if }r\text{ is even},\\
\coxeterpadding\bar\coxeterpadding\cdots \coxeterpadding \overline{(\ngraph,\nbasis)}& \text{ if }r\text{ is odd}.
\end{cases}\\
\ncoxeter^{-r}(\ngraph,\nbasis) &= \begin{cases}
\bar\coxeterpadding^{-1}\coxeterpadding^{-1}\cdots \coxeterpadding^{-1} (\ngraph,\nbasis)& \text{ if }r\text{ is even},\\
\bar\coxeterpadding^{-1}\coxeterpadding^{-1}\cdots \bar\coxeterpadding^{-1} \overline{(\ngraph,\nbasis)}& \text{ if }r\text{ is odd}.
\end{cases}
\end{align*}
\end{proposition}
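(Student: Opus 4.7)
The plan is to carry out the proof as a sequence of pictorial reductions that the preceding paragraphs have already essentially laid out, verifying that the Legendrian Coxeter mutation $\ncoxeter$ on the tripod pair $(\ngraph(a,b,c),\nbasis(a,b,c))$ decomposes cleanly into (i) the central $\sfY$-mutation together with a single generalized push-through of Type~\Move{II} and (ii) three independent linear Coxeter mutations along the three arms. Observe first that under the bipartite decomposition $\nbasis=\nbasis_+\cup\nbasis_-$, the unique $\sfY$-cycle at the central hexagonal vertex lies in one part (say $\nbasis_+$), and every other cycle is an $\sfI$-cycle supported on exactly one of the three arms. Since cycles on distinct arms are disjoint, the mutations on different arms commute and can be carried out independently; moreover the three arms are Legendrian-isotopic to the initial segments of the linear $N$-graphs $\ngraph(\dynA_{a-1})$, $\ngraph(\dynA_{b-1})$, $\ngraph(\dynA_{c-1})$, with the bipartite splitting matching.

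First I would carry out the central $\sfY$-mutation, which by inspection produces the $N$-graph drawn in Figure~\ref{figure:center mutation} (left), where the innermost hexagonal vertex is replaced by its dual configuration and the inner colors are swapped. Next, apply three simultaneous generalized push-through moves of Type~\Move{II} in the three shaded regions to arrive at Figure~\ref{figure:center mutation} (right); after this move, the diagram splits cleanly into (a) an inner disk carrying $\overline{(\ngraph(a,b,c),\nbasis(a,b,c))}$, together with (b) three triangular ``arm'' regions each isomorphic, as an $N$-graph-with-cycles, to an initial segment of $(\ngraph(\dynA_{a-1}),\nbasis(\dynA_{a-1}))$ (resp.\ $\dynA_{b-1}$, $\dynA_{c-1}$). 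Then I would apply Lemma~\ref{lemma:Legendriam Coxeter mutation of type An} to each arm: by that lemma, the composition of remaining mutations on each arm is the clockwise rotation by the angle appropriate for that arm, and produces precisely the linear Coxeter padding \eqref{equation:Coxeter padding of type An} wrapped around it. Assembling the three arms together with the inner conjugated tripod yields exactly the decomposition $\ncoxeter(\ngraph,\nbasis)=\coxeterpadding(a,b,c)\cdot\overline{(\ngraph,\nbasis)}$ depicted in Figure~\ref{figure:coxeter mutation}.

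The second identity $\ncoxeter\,\overline{(\ngraph,\nbasis)}=\bar\coxeterpadding(a,b,c)(\ngraph,\nbasis)$ is obtained by applying the same argument to the conjugated pair, noting that the roles of the two bipartite classes are swapped (so the central mutation now uses the $\sfY$-cycle in $\nbasis_-$), which is reflected precisely by the color-reversed padding $\bar\coxeterpadding$. The formulas for $\ncoxeter^{-1}$ follow from the involutivity of each elementary mutation $\mutation_\gamma$: running the sequence backward reverses the order of $\mu_\pm$, so the outer annulus is the inverse padding and the inside is again the conjugated tripod. The general iterated formulas then follow by a straightforward induction on $r$, using that the concatenation of $\coxeterpadding$ and $\bar\coxeterpadding$ along a common boundary circle is well-defined (as remarked before the proposition) and that boundary data are matched at each stacking.

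The main technical obstacle will be justifying the simultaneous push-through step labeled \Move{II^*} in Figure~\ref{figure:center mutation}: one must verify that the sequence of Moves \Move{I}--\Move{III} that realizes it is legitimate even when the three arms are very unbalanced (e.g.\ when one of $a,b,c$ equals $1$, so that one arm is empty), and that the resulting boundary pattern on the inner dashed circle exactly matches that of $\overline{(\ngraph(a,b,c),\nbasis(a,b,c))}$ including the labeling of cycles. Once this compatibility is checked case-by-case on the three arm types, the remainder of the proof is a bookkeeping exercise in which Lemma~\ref{lemma:Legendriam Coxeter mutation of type An} is applied three times in parallel and the outer annular region is identified with the Coxeter padding by construction. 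A pictorial verification of this key step can be relegated to the appendix, in line with how similar equivalences are treated elsewhere in the paper.
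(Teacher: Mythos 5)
Your proposal is correct and follows essentially the same route as the paper: the paper's proof is literally the one-line remark that the statement "follows directly from the above observation," where that observation is precisely the chain you describe — mutate at the central $\sfY$-cycle, apply the generalized push-through moves of Figure~\ref{figure:center mutation}, and reduce the three arms to linear Coxeter mutations handled by Lemma~\ref{lemma:Legendriam Coxeter mutation of type An}, yielding Figure~\ref{figure:coxeter mutation}. The only (cosmetic) discrepancy is your labeling of the central cycle as lying in $\nbasis_+$; with the paper's right-to-left convention $\ncoxeter=\mutation_+\mutation_-$ and the central cycle mutated first, it belongs to $\nbasis_-$.
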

\begin{proof}
This follows directly from the above observation.
\end{proof}

It is important that this proposition holds only when we take the Legendrian Coxeter mutation on the very standard $N$-graph $\ngraph(a,b,c)$ with the cycles $\nbasis(a,b,c)$.
Otherwise, the Legendrian Coxeter mutation will not be expressed as simple as above.

Let $(\ngraph, \nbasis)$ be a pair of a deterministic $N$-graph, a set of good cycles.
Suppose that the quiver~$\quiver(\ngraph,\nbasis)$ is bipartite and the Legendrian Coxeter mutation $\ncoxeter(\ngraph,\nbasis)$ is realizable.
Then, by Proposition~\ref{proposition:equivariance of mutations}, we have
\[
\Psi(\ncoxeter(\ngraph,\nbasis)) = \qcoxeter(\Psi(\ngraph,\nbasis)).
\]
In particular, for quivers of type $\dynA_n$ or tripods, we have the following corollary.
\begin{corollary}\label{corollary:Coxeter mutations}
For each $n\ge 1$ and $a,b,c\ge 1$, the Legendrian Coxeter mutation $\ncoxeter$ on $(\ngraph(\dynA_n),\nbasis(\dynA_n))$ or $(\ngraph(a,b,c),\nbasis(a,b,c))$ corresponds to the Coxeter mutation $\qcoxeter$ on $\quiver(\dynA_n)$ or $\quiver(a,b,c)$, respectively.
In other words,
\begin{align*}
\Psi(\ncoxeter(\ngraph(\dynA_n),\nbasis(\dynA_n))) &= \qcoxeter(\Psi(\ngraph(\dynA_n),\nbasis(\dynA_n)));\\
\Psi(\ncoxeter(\ngraph(a,b,c),\nbasis(a,b,c))) &= \qcoxeter(\Psi(\ngraph(a,b,c),\nbasis(a,b,c))).
\end{align*}
\end{corollary}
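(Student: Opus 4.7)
The plan is to deduce this corollary as a direct application of Proposition~\ref{proposition:equivariance of mutations}, which asserts that every individual Legendrian mutation at a one-cycle realizes the corresponding $Y$-seed mutation. Since $\ncoxeter$ is by definition a composition of Legendrian mutations and $\qcoxeter$ is the analogous composition of $Y$-seed mutations, the only genuine content is to check that the composition is well-defined on both sides and that every intermediate Legendrian mutation is realizable.

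First, I would record the two well-definedness points. (1)~The quivers $\quiver(\dynA_n)$ and $\quiver(a,b,c)$ are bipartite with decomposition $I = I_+ \sqcup I_-$, as displayed in Figures~\ref{figure:linear quiver} and~\ref{figure:tripod quiver}; this makes $\qcoxeter = \prod_{i \in I_+}\mutation_i \cdot \prod_{i\in I_-}\mutation_i$ unambiguous. (2)~The corresponding decomposition $\nbasis = \nbasis_+ \sqcup \nbasis_-$ in Figures~\ref{figure:linear N-graph} and~\ref{figure:tripod N-graph} has the property that cycles within each $\nbasis_\pm$ are pairwise disjoint in $\Legendrian(\ngraph)$, so the Legendrian mutations within each $\prod_{\gamma \in \nbasis_\pm} \mutation_\gamma$ commute and $\ncoxeter$ is a well-defined sequence of Legendrian mutations on $(\ngraph,\nbasis)$.

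Next, I would apply Proposition~\ref{proposition:equivariance of mutations} inductively along the composition. At each step the current pair $(\ngraph',\nbasis')$ and its assigned $Y$-seed $\Psi(\ngraph',\nbasis')$ sit in a commuting square with a single Legendrian mutation $\mutation_\gamma$ on the left and the $Y$-seed mutation $\mutation_i$ at the matching vertex on the right. Composing these squares in the order prescribed by $\ncoxeter$ yields the identity
\[
\Psi(\ncoxeter(\ngraph,\nbasis)) = \qcoxeter(\Psi(\ngraph,\nbasis)),
\]
which is exactly the statement in both the linear and tripod cases.

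The main obstacle is the realizability of each successive Legendrian mutation along the chain, since after a partial composition of $\mutation_-$ the cycles in $\nbasis_+$ need not remain $\sfI$-cycles but may appear as $\sfY$- or long $\sfI$-cycles in the intermediate $N$-graph. For the linear case, Lemma~\ref{lemma:Legendriam Coxeter mutation of type An} already identifies the full composition with a rigid rotation, so realizability is transparent. For the tripod case, the explicit computations summarized in Figures~\ref{figure:center mutation} and~\ref{figure:coxeter mutation} together with Proposition~\ref{proposition:effect of Legendrian Coxeter mutation} show step by step that each Legendrian mutation in the sequence is carried out at a legitimate $\sfI$- or $\sfY$-cycle; the geometric obstructions that might prevent a Legendrian mutation therefore never appear, and the inductive argument goes through.
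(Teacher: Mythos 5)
Your proposal is correct and follows essentially the same route as the paper: the identity is an immediate consequence of Proposition~\ref{proposition:equivariance of mutations} applied step by step, with the only substantive content being the realizability of each intermediate Legendrian mutation, which the paper likewise settles via Lemma~\ref{lemma:Legendriam Coxeter mutation of type An} in the linear case and the explicit computations of Figures~\ref{figure:center mutation}--\ref{figure:coxeter mutation} together with Proposition~\ref{proposition:effect of Legendrian Coxeter mutation} in the tripod case. Your observations about the bipartite decomposition and the disjointness of cycles within each $\nbasis_\pm$ match the paper's setup exactly.
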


\begin{theorem}\label{theorem:infinite fillings}
For $a,b,c\ge 1$ with $\frac 1a+\frac1b+\frac1c\le 1$,
The Legendrian knot or link $\legendrian(a,b,c)$ in $J^1\sphere^1$ admits infinitely many distinct exact embedded Lagrangian fillings.
\end{theorem}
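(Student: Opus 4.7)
The plan is to construct infinitely many distinct Lagrangian fillings of $\legendrian(a,b,c)$ by iteratively applying the Legendrian Coxeter mutation $\ncoxeter$ to the tripod $N$-graph $(\ngraph(a,b,c),\nbasis(a,b,c))$ and distinguishing the resulting fillings via their induced $Y$-seeds. First I would set up the initial data: the tripod $N$-graph $\ngraph(a,b,c)$ is free by Lemma~\ref{lemma:tree Ngraphs are free} and deterministic by construction, while $\nbasis(a,b,c)$ consists of an $\sfY$-cycle together with $\sfI$-cycles whose associated quiver $\quiver(a,b,c)$ is bipartite with disjoint cycle classes $\nbasis_+\cup\nbasis_-$, so $\ncoxeter^{\pm 1}$ is well-defined. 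Writing $(\ngraph_r,\nbasis_r)\colonequals\ncoxeter^r(\ngraph(a,b,c),\nbasis(a,b,c))$, Proposition~\ref{proposition:effect of Legendrian Coxeter mutation} describes each iterate explicitly as an alternating stack of Coxeter paddings $\coxeterpadding(a,b,c)$ and $\overline{\coxeterpadding(a,b,c)}$ placed on the initial tripod (or its conjugate). Each $(\ngraph_r,\nbasis_r)$ has the same boundary $\legendrian(a,b,c)$; freeness is preserved under $\sfI$- and $\sfY$-cycle mutations, and the deterministic property persists by Proposition~\ref{prop_mutation_preserves_deterministic}, so the associated Legendrian weaves project to exact embedded Lagrangian fillings of $\iota(\legendrian(a,b,c))$ via the discussion of Section~\ref{sec:geometric setup} and Lemma~\ref{lem:legendrian and lagrangian}.

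Next I would distinguish these fillings by their $Y$-seeds. Corollary~\ref{corollary:Coxeter mutations} gives the intertwining identity $\Psi(\ngraph_r,\nbasis_r)=\qcoxeter^r(\Psi(\ngraph(a,b,c),\nbasis(a,b,c)))$, and by Corollary~\ref{corollary:algebraic independence} the coefficients of the initial $Y$-seed are algebraically independent, so the hypotheses of Lemma~\ref{lemma:order of coxeter mutation} are satisfied. When $\frac1a+\frac1b+\frac1c\le 1$, the tripod quiver $\quiver(a,b,c)$ is either of affine type (equality; the three cases $(3,3,3),(2,4,4),(2,3,6)$ yielding $\exdynE_6,\exdynE_7,\exdynE_8$) or of indefinite (hyperbolic) type; in either case $\quiver(a,b,c)$ is \emph{not} of finite type. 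Lemma~\ref{lemma:order of coxeter mutation} then forces the orbit $\{\qcoxeter^r(\Psi(\ngraph(a,b,c),\nbasis(a,b,c)))\}_{r\ge 0}$ to be infinite, so the $Y$-seeds $\Psi(\ngraph_r,\nbasis_r)$ are pairwise distinct. Corollary~\ref{corollary:distinct seeds imples distinct fillings} promotes this seed-level distinctness to non-isotopy of the corresponding exact Lagrangian fillings, giving infinitely many.

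The main obstacle to watch in writing this out is the classification step — verifying that the numerical condition $\frac1a+\frac1b+\frac1c\le 1$ actually forces $\quiver(a,b,c)$ to be non-finite type in the precise sense required by Lemma~\ref{lemma:order of coxeter mutation}. This reduces to the classical calculation that the Cartan counterpart of the $\dynE$-type star with legs of lengths $a,b,c$ has signature governed by $\frac1a+\frac1b+\frac1c-1$, yielding finite, affine, or indefinite Dynkin type precisely when this quantity is positive, zero, or negative; one must also confirm that $\quiver(a,b,c)$ is bipartite and acyclic so that Definition~\ref{def_quiver_of_type_X} applies. Once this type identification is in hand, every other ingredient — realizability of $\ncoxeter^r$, its cluster-theoretic shadow, freeness and determinism of the iterates, and the passage from distinct $Y$-seeds to non-isotopic fillings — is already encapsulated in the propositions and corollaries proved earlier in the paper, so no further geometric construction is required beyond iterating $\ncoxeter$.
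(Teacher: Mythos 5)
Your proposal is correct and follows essentially the same route as the paper's proof: iterate the Legendrian Coxeter mutation, realize the iterates concretely via stacked Coxeter paddings (Proposition~\ref{proposition:effect of Legendrian Coxeter mutation}), and distinguish the resulting fillings through the infinite $\qcoxeter$-orbit of $Y$-seeds (Corollary~\ref{corollary:Coxeter mutations}, Lemma~\ref{lemma:order of coxeter mutation}, Corollary~\ref{corollary:distinct seeds imples distinct fillings}). Your additional care in invoking Corollary~\ref{corollary:algebraic independence} for the algebraic independence hypothesis and in justifying that $\frac1a+\frac1b+\frac1c\le 1$ forces $\quiver(a,b,c)$ to be of non-finite type only makes explicit what the paper leaves implicit.
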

\begin{proof}
By Proposition~\ref{proposition:effect of Legendrian Coxeter mutation}, the effect of the Legendrian Coxeter mutation on $(\ngraph(a,b,c), \nbasis(a,b,c))$ is just to attach the Coxeter padding on $(\bar\ngraph(a,b,c),\bar\nbasis(a,b,c))$.
In particular, as mentioned earlier, the iterated Legendrian Coxeter mutation
\[
\ncoxeter^r(\ngraph(a,b,c), \nbasis(a,b,c))
\]
is well-defined for each $r\in\mathbb{Z}$.
Each of these $N$-graphs defines a Legendrian weave $\Legendrian(\ncoxeter^r(\ngraph(a,b,c), \nbasis(a,b,c)))$, whose Lagrangian projection is a Lagrangian filling 
\[
L_r(a,b,c)\colonequals(\pi\circ\iota)(\Legendrian(\ncoxeter^r(\ngraph(a,b,c), \nbasis(a,b,c)))
\]
as desired. Therefore it suffices to prove that Lagrangians $L_r(a,b,c)$ for $r\ge 0$ are pairwise distinct up to exact Lagrangian isotopy when $\frac1a+\frac1b+\frac1c\le 1$.

Now suppose that $\frac1a+\frac1b+\frac1c\le1$, or equivalently, $\quiver(a,b,c)$ is of infinite type, that is, it is not of finite Dynkin type (cf. Definition~\ref{def_quiver_of_type_X}(1)).
Then the order of the Coxeter mutation is infinite by Lemma~\ref{lemma:order of coxeter mutation} and so is the order of the Legendrian Coxeter mutation by Corollary~\ref{corollary:Coxeter mutations}.
In particular, the set 
\[
\left\{\Psi(\ncoxeter^r(\ngraph(a,b,c), \nbasis(a,b,c)))\mid r\in\mathbb{Z}\right\}
\]
is a set of infinitely many pairwise distinct $Y$-seeds in the $Y$-pattern for $\quiver(a,b,c)$. 
Hence, by Lemma~\ref{lemma:admissible extension} and Proposition~\ref{prop:distinct seeds imples distinct fillings}, we have pairwise distinct Lagrangian fillings $L_r(a,b,c)$.
\end{proof}

\begin{remark}
For Legendrian links of non $\dynADE$-type, there are lots of examples having infinitely many distinct Lagrangian fillings given by a number of different researchers and groups. A non-exhaustive list includes \cite{CG2020, CN2021, CZ2020, GSW2020b}.
\end{remark}

\subsubsection{Legendrian Coxeter mutations for $N$-graphs of type $\exdynD_n$}\label{sec:ngraph for exdynDn}
We will perform the Legendrian Coxeter mutation $\mutation_\ngraph$ on $(\ngraph(\exdynD_n), \nbasis(\exdynD_n))$ in order to provide the pictorial proof of Proposition~\ref{proposition:coxeter realization D-type}.

Before we take mutations, we first introduce a useful operation on $N$-graphs 
described below, called the \emph{move} $\mathrm{(Z)}$.
\[
\begin{tikzcd}
\arrow[l,"\mathrm{(II)}"]
\end{tikzcd}
\]

\begin{remark}\label{rmk:moveZ}
The reader should not confuse that even though we call this operation the 
\emph{move}, it does not induce any equivalence on $N$-graphs since it involves 
a mutation $\mutation_\cycle$.
\end{remark}

One important observation is that one can take the move $\mathrm{(Z)}$ instead of the Legendrian mutation~$\mutation_\cycle$ on the $\sfY$-like cycle\footnote{We use an ambiguous terminology `$\sfY$-like cycle' since the global shape of $\cycle$ is unknown.
	However, the meaning is obvious and we omit the detail.
}~$\cycle$, and after the move, the $\sfY$-like cycle becomes the $\sfY$-like cycle and $\sfI$-cycles become $\sfI$-cycles again.

For example, let us consider $(\ngraph(\exdynD_4), \nbasis(\exdynD_4))$. Then the Legendrian Coxeter mutation $\ncoxeter(\ngraph(\exdynD_4), \nbasis(\exdynD_4))$ is obtained by the composition $(\mutation_{\cycle_2}\mutation_{\cycle_3}\mutation_{\cycle_4}\mutation_{\cycle_5})$ followed by the mutation $\mutation_{\cycle_1}$. See Figure~\ref{figure:Legendrian Coxeter mutation for affine D4}.

\begin{figure}[ht]
\[
\begin{tikzcd}

}}
\caption{Coxeter paddings $\coxeterpadding(\exdynD_n)^{\pm1}$}
\label{figure:coxeter paddings for affine D}
\end{figure}

\begin{proposition}\label{proposition:coxeter realization D-type}
For any $r\in\Z$, the Legendrian Coxeter mutation $\mutation_\ngraph^r$ on the pair $(\ngraph(\exdynD_n),\nbasis(\exdynD_n))$ is given by piling the Coxeter paddings $\coxeterpadding(\exdynD_n)^{\pm1}$. That is,
\begin{align*}
\mutation_\ngraph^{r}(\ngraph(\exdynD_n),\nbasis(\exdynD_n))
=
\begin{cases}
\coxeterpadding(\exdynD_n)\coxeterpadding(\exdynD_n)\cdots\coxeterpadding(\exdynD_n)(\ngraph(\exdynD_n),\nbasis(\exdynD_n)) & r\ge 0;\\
\coxeterpadding(\exdynD_n)^{-1}\coxeterpadding(\exdynD_n)^{-1}\cdots \coxeterpadding(\exdynD_n)^{-1}(\ngraph(\exdynD_n),\nbasis(\exdynD_n)) & r<0.
\end{cases}
\end{align*}
\end{proposition}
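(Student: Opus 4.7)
The plan is to reduce the general statement to the case $r=\pm 1$ and then iterate. Concretely, I would prove first the base identity
\[
\ncoxeter(\ngraph(\exdynD_n),\nbasis(\exdynD_n)) = \coxeterpadding(\exdynD_n)\cdot(\ngraph(\exdynD_n),\nbasis(\exdynD_n))
\]
together with its inverse analogue, and then observe that the inner disk region of $\coxeterpadding(\exdynD_n)\cdot(\ngraph(\exdynD_n),\nbasis(\exdynD_n))$ is $\boundary$-Legendrian isotopic (as a pair of $N$-graph and cycles) to $(\ngraph(\exdynD_n),\nbasis(\exdynD_n))$ itself, where the $\boundary$-Legendrian isotopy is realized by the outer boundary of $\coxeterpadding(\exdynD_n)$. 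Granting this reproduction of the initial configuration, the formula for general $r$ follows by a straightforward induction: each further application of $\ncoxeter^{\pm 1}$ simply prepends another copy of $\coxeterpadding(\exdynD_n)^{\pm 1}$, which is possible since $\ncoxeter$ commutes with $\boundary$-Legendrian isotopy (Remark~\ref{remark:boundary-Legendrian isotopy}).

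For the base case, I would work with the explicit pictures in Figure~\ref{figure:4-graph of type affine Dn} and follow exactly the strategy demonstrated for $\exdynD_4$ in Figure~\ref{figure:Legendrian Coxeter mutation for affine D4}. The bipartition $\nbasis(\exdynD_n)=\nbasis_+\cup\nbasis_-$ places the two central $\sfY$-cycles on one side and the chain of outer $\sfI$-cycles on the other. After applying $\mutation_-$ (the mutations at the $\sfI$-cycles), a single application of each $\sfY$-mutation via the move $\mathrm{(Z)}$ produces the desired annular region precisely of the form $\coxeterpadding(\exdynD_n)$, while leaving the inner $N$-graph in the same shape. In the process I would repeatedly use Moves $\Move{I}$, $\Move{II}$ and the generalized push-through $\Move{II^*}$ to simplify the intermediate pictures.

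The key structural observation that enables the induction is that the move $\mathrm{(Z)}$ not only realizes a mutation at a $\sfY$-like cycle, but also sends $\sfY$-like cycles to $\sfY$-like cycles and $\sfI$-cycles to $\sfI$-cycles, so that the class of the configuration $\nbasis(\exdynD_n)$ (six $\sfI$-cycles plus two central $\sfY$-cycles) is preserved after the full Coxeter mutation. Combined with the fact that the quiver $\quiver(\exdynD_n)$ is bipartite and is fixed by the Coxeter mutation as a $Y$-seed structure (Corollary~\ref{corollary:Coxeter mutations}), this gives a precise match between the algebraic and geometric sides.

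The main obstacle is the bookkeeping in the pictorial verification for general $n$. Unlike the $\exdynD_4$ case, where only two $\sfY$-cycles and four outer $\sfI$-cycles are involved, the general $\ngraph(\exdynD_n)$ contains a linear chain of $\sfI$-cycles in the middle whose mutations must be carried out in a consistent order, and each intermediate configuration must be shown to reduce, via Moves $\Move{I}$ and $\Move{II}$ and the move $\mathrm{(Z)}$, to the next one without destroying the $\sfI$- or $\sfY$-type of any remaining cycle. I anticipate that the cleanest way to present this is by separating the argument into a local verification near each trivalent-vertex region (two copies, by the symmetry of $\ngraph(\exdynD_n)$) and a trivial straight-chain region in the middle, the latter behaving essentially as in the linear $\dynA$-type calculation of Lemma~\ref{lemma:Legendriam Coxeter mutation of type An}. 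The detailed pictures are deferred to the appendix, with the induction step being immediate once the base case is established.
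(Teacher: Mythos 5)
Your proposal matches the paper's proof: the paper likewise establishes the $r=\pm1$ case by an explicit pictorial computation modeled on the $\exdynD_4$ figure, using the move $\mathrm{(Z)}$ at the $\sfY$-like cycles together with the observation that $\mathrm{(Z)}$ sends $\sfY$-like cycles to $\sfY$-like cycles and $\sfI$-cycles to $\sfI$-cycles, and then iterates because the inner disk of $\coxeterpadding(\exdynD_n)\cdot(\ngraph(\exdynD_n),\nbasis(\exdynD_n))$ reproduces the initial pair. The only slip is descriptive: the bipartition $\nbasis_+(\exdynD_n)\cup\nbasis_-(\exdynD_n)$ does not place all of the middle-chain $\sfI$-cycles opposite the two $\sfY$-like cycles (they alternate between the two classes, as in Figure~\ref{figure:4-graph of type affine Dn}), but this does not affect the argument.
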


\begin{corollary}\label{cor:coxeter realization D-type}
For any $r\in \Z$, 
the Legendrian Coxeter mutation $\mutation_\ngraph^r(\ngraph(\exdynD_n),\nbasis(\exdynD_n))$ is realizable by $N$-graphs and set of good cycles.
\end{corollary}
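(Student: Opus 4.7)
The plan is to derive the corollary as a direct consequence of Proposition~\ref{proposition:coxeter realization D-type}, which explicitly identifies each iterate $\mutation_\ngraph^r(\ngraph(\exdynD_n),\nbasis(\exdynD_n))$ with a concatenation of $|r|$ copies of the annular $N$-graph $\coxeterpadding(\exdynD_n)^{\pm 1}$ stacked onto the base pair $(\ngraph(\exdynD_n),\nbasis(\exdynD_n))$. Since both $\coxeterpadding(\exdynD_n)$ and $\coxeterpadding(\exdynD_n)^{-1}$ are explicit $N$-graphs on the annulus (Figure~\ref{figure:coxeter paddings for affine D}) whose inner boundary matches the outer boundary of $\ngraph(\exdynD_n)$, such concatenations automatically yield well-defined $N$-graphs on $\disk^2$ with the same Legendrian link on the boundary.

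First I would verify realizability at the level of $N$-graphs: the concatenation operation used in the proposition depends only on the compatibility of the boundary data on the gluing circle, and this compatibility is evident from the construction of $\coxeterpadding(\exdynD_n)^{\pm 1}$. By induction on $|r|$, each successive padding glues to the previously assembled $N$-graph without introducing any new vertex, edge crossing, or bichromatic configuration that violates Definition~\ref{definition:N-graph}. Next I would check freeness: the base $\ngraph(\exdynD_n)$ is free by Lemma~\ref{lemma:tree Ngraphs are free}, and inspection of the Coxeter padding shows it consists of strands and tree-like pieces connecting the inner circle to the outer circle without creating enclosed faces, so the glued $N$-graph still has no face entirely in the interior, hence is free by Example~\ref{ex:free N-graph}.

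Finally I would transport the set of good cycles: under the canonical identification between the outer boundary of the $r$-th padding and the original boundary, the cycles in $\nbasis(\exdynD_n)$ pull back to cycles in the enlarged Legendrian weave, and each of them is still an $\sfI$- or $\sfY$-cycle sitting in the innermost region, hence good in the sense of Definition~\ref{def:good cycle}. This is the same mechanism that appears in Proposition~\ref{proposition:effect of Legendrian Coxeter mutation} for the tripod case.

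The main obstacle I anticipate is bookkeeping rather than genuine difficulty: one must confirm that the innermost region of the stacked $N$-graph is literally a copy of the original $(\ngraph(\exdynD_n),\nbasis(\exdynD_n))$ (so that the identification of cycles is canonical), and that no Reeb chord can be created in the annular regions where the paddings are attached. Both issues are handled by noting that $\coxeterpadding(\exdynD_n)^{\pm 1}$ has the same type of tree-like structure as the base and that each annular layer retracts onto a tame piece in the sense of Section~\ref{section:annular Ngraphs}, so freeness and goodness propagate through the iteration.
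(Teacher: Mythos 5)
Your proposal is correct and follows the same route as the paper: the corollary is stated there as an immediate consequence of Proposition~\ref{proposition:coxeter realization D-type}, since concatenating the explicit annular Coxeter paddings $\coxeterpadding(\exdynD_n)^{\pm1}$ onto $(\ngraph(\exdynD_n),\nbasis(\exdynD_n))$ manifestly produces a well-defined $N$-graph on $\disk^2$ with the same boundary and a canonically identified set of good cycles. The extra checks you flag (boundary compatibility, freeness, goodness of the transported cycles) are the right ones and are handled exactly as you describe.
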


For the notational clarity, it is worth mentioning that $\coxeterpadding(\exdynD_n)$ and $\coxeterpadding(\exdynD_n)^{-1}$ are the inverse to each other with respect to the concatenation introduced in Section~\ref{section:annular Ngraphs}.

For example, one can present the Coxeter padding $\coxeterpadding(\exdynD_n)^{\pm1}$ as follows:
\begin{align*}
\coxeterpadding(\exdynD_n)&=

\end{align*}
Then it is direct to check that the concatenations $\coxeterpadding(\exdynD_n) \coxeterpadding(\exdynD_n)^{-1}$ and $\coxeterpadding(\exdynD_n)^{-1} \coxeterpadding(\exdynD_n)$ become trivial annulus $N$-graphs after a sequence of Move (I) for all $n\geq 4$.

\subsubsection{Legendrian Coxeter mutations for degenerate $N$-graphs}

For degenerate $N$-graphs $\ngraph_\degen(p,q,1)$ and $\ngraph_\degen(\exdynD_4)=\ngraph_\degen(2,2,2)$, the Legendrian Coxeter mutations are as depicted in Figure~\ref{figure:Legendrian Coxeter mutations for degenerate Ngraphs}.

\begin{figure}[ht]
\subfigure[$\mutation_\ngraph(\ngraph_\degen(p,q,1))$]{$
\begin{tikzcd}[ampersand replacement=\&, column sep=1pc]

\]
Therefore one can conclude that the effect of the Legendrian Coxeter mutation on each degenerate $N$-graph $\ngraph_\degen(p,q,1)$ or $\ngraph_\degen(\exdynD_4)$ is equivalent to attaching an annular $N$-graph which defines the Coxeter padding $\coxeterpadding_\degen(p,q,1)$ or $\coxeterpadding_\degen(\exdynD_4)$.

\begin{proposition}\label{proposition:coxeter realization denegerate type}
Let $(\ngraph_\degen, \nbasis_\degen)$ be either $(\ngraph_\degen(p,q,1), \nbasis_\degen(p,q,1))$ or $(\ngraph_\degen(\exdynD_4), \nbasis_\degen(\exdynD_4))$.
Then for each $r\in\Z$, the Legendrian Coxeter mutation $\mutation_\ngraph^r$ on the pair $(\ngraph_\degen, \nbasis_\degen)$ is given as
\[
\mutation_\ngraph^{r}(\ngraph_\degen,\nbasis_\degen)
=
\begin{cases}
\coxeterpadding_\degen\coxeterpadding_\degen\cdots\coxeterpadding_\degen(\ngraph_\degen,\nbasis_\degen)& r\ge 0;\\
\coxeterpadding_\degen^{-1}\coxeterpadding_\degen^{-1}\cdots\coxeterpadding_\degen^{-1}(\ngraph_\degen,\nbasis_\degen)& r< 0.
\end{cases}
\]
where $\coxeterpadding_\degen$ is either $\coxeterpadding_\degen(p,q,1)$ or $\coxeterpadding_\degen(\exdynD_n)$, which are degenerate annular $N$-graphs defined as follows:
\begin{align*}
\coxeterpadding_\degen(p,q,1)&=

\end{align*}
\end{proposition}

\subsection{Legendrian loops}\label{sec:legendrian loop}
Recall Legendrian loops defined in Definition~\ref{definition:Legendrian loops}.
The goal of this section is to interpret the Legendrian Coxeter paddings with tame Legendrian loops.

Obviously, the Legendrian Coxeter paddings for $\dynA_n$ depicted in \eqref{equation:Coxeter padding of type An} is tame.
Moreover, it corresponds to the tame $\boundary$-Legendrian isotopy which moves the very first generator $\sigma_1$ to the rightmost position along the closure part of $\legendrian(\dynA_n)$ as follows:
\[
\coxeterpadding(\dynA_n)=
\begin{tikzpicture}[baseline=-.5ex,scale=0.4]
\draw[thick] (0,0) circle (5) (0,0) circle (3);
\foreach \i in {45, 90, ..., 360} {
\draw[blue, thick] (\i:5) to[out=\i-180,in=\i+45] (\i+45:3);
}
\end{tikzpicture}
\longleftrightarrow
\begin{tikzpicture}[baseline=-.5ex, scale=1.5]
\draw[thick] (-3, -0.75) to[out=0,in=180] (-2.5, -0.25) -- (-2.25, -0.25);
\draw[white, line width=5] (-3, -0.25) to[out=0,in=180] (-2.5, -0.75);
\draw[thick] (-3, -0.25) to[out=0,in=180] (-2.5, -0.75) -- (-2.25, -0.75);
\draw[thick] (-2.25, -0.125) rectangle (-1.25, -0.875) (-1.75, -0.5) node {$n+1$};
\draw[thick] (-1.25, -0.25) -- (-1, -0.25);
\draw[thick] (-1.25, -0.75) -- (-1, -0.75);
\draw[thick] (-1, -0.25) to[out=0,in=180] (0,0.75) arc (90:-90:0.75);
\draw[thick] (-1, -0.75) to[out=0,in=180] (0,0.25) arc (90:-90:0.25);
\draw[white, line width=5] (-1,0.25) to[out=0,in=180] (0, -0.75);
\draw[white, line width=5] (-1,0.75) to[out=0,in=180] (0, -0.25);
\draw[thick] (-3, -0.25) arc (-90:-270:0.25) -- (-1,0.25) to[out=0,in=180] (0, -0.75);
\draw[thick] (-3, -0.75) arc (-90:-270:0.75) -- (-1,0.75) to[out=0,in=180] (0, -0.25);
\draw[thick, violet, dashed] (-2.75, -0.5) circle (0.25);
\draw[thick, violet, dashed, ->] (-3,-0.5) arc (-90:-270:0.5) -- (-1, 0.5) to[out=0,in=180] (0, -0.5) arc (-90:90:0.5) to[out=180,in=0] (-1, -0.5);
\end{tikzpicture}
=\legendrian(\dynA_n)
\]

\begin{lemma}
Legendrian Coxeter paddings of type $(a,b,c)$ and $\exdynD$ are tame.
\end{lemma}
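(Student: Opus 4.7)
The plan is to verify each Coxeter padding is $\boundary$-Legendrian isotopic to a concatenation of the elementary annular $N$-graphs $\ngraph_{\Move{R0}}$ and $\ngraph_{\Move{RIII}}$ from Figure~\ref{fig:elementary annulus N-graph}, since by definition (Section~\ref{section:annular Ngraphs}), tameness of the annular $N$-graph is equivalent to the corresponding Legendrian loop being tame in the sense of Definition~\ref{definition:Legendrian loops}. The template for this is already worked out for type $\dynA_n$ just before the lemma: the rotational $\coxeterpadding(\dynA_n)$ is manifestly a concatenation of $n+3$ copies of $\ngraph_{\Move{RIII}}$, realizing the Legendrian loop that slides the leftmost crossing $\sigma_1$ once around the rainbow closure.

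For $\coxeterpadding(a,b,c)$, the plan is to exploit the visible three-fold structure of Figure~\ref{figure:coxeter padding}. In each of the three ``petals'' of the annulus, the $N$-graph restricts to precisely a rotational padding of type $\dynA_{a-1}$, $\dynA_{b-1}$, or $\dynA_{c-1}$ respectively (this matches the induction appearing already in the derivation of $\ncoxeter$ on $\ngraph(a,b,c)$ in Figure~\ref{figure:coxeter mutation}, where the outer annulus was built by superimposing Coxeter paddings of type $\dynA$). Thus each petal is a concatenation of $\ngraph_{\Move{RIII}}$ moves by the $\dynA_n$ case. What remains is to absorb the three equatorial triple points at angles $0,\tfrac{2\pi}{3},\tfrac{4\pi}{3}$ where the petals meet the central tripod boundary; each such region is governed by a single application of $\ngraph_{\Move{RIII}}$ (the hexagonal-to-hexagonal exchange). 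Geometrically, this sequence realizes the tame $\boundary$-Legendrian isotopy that slides the half-twist $\Delta_3$ once along the three-strand braid band of $\legendrian(a,b,c)$, as advertised in the introduction.

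For $\coxeterpadding(\exdynD_n)$, I will carry out an analogous analysis on Figure~\ref{figure:coxeter paddings for affine D}. After removing the central white rectangle, the annulus decomposes into two symmetric halves, each of which slides a half-twist $\Delta_2$ along a two-strand braid band. Each half can be presented as a concatenation of $\ngraph_{\Move{R0}}$'s (for the linear portions with $k$ resp.\ $\ell$ parallel strands) and $\ngraph_{\Move{RIII}}$'s (at the turning points where the half-twist encounters the $\sigma_2,\sigma_3$ crossings near the endpoints of $\beta(\exdynD_n)$). Matching this decomposition cleanly against the braid word $\beta(\exdynD_n) = (\sigma_2\sigma_1^3\sigma_2\sigma_1^3\sigma_2\sigma_1^k\sigma_3)\cdot(\sigma_2\sigma_1^3\sigma_2\sigma_1^3\sigma_2\sigma_1^\ell\sigma_3)$ will give the tameness.

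The main obstacle, in both cases, is bookkeeping: at each ``corner'' of the Coxeter padding (where a trivalent vertex of the outer boundary connects to the tripod or $\exdynD_n$ core), one must check that the local picture genuinely matches one of the two elementary patches, and not a more exotic move involving interior vertices. I will handle this by producing an explicit sweep-out of the annulus by concentric circles meeting the $N$-graph generically, so that each intermediate slice remains the front of a closure of a positive braid. This sweep-out makes the decomposition into elementary annular pieces automatic, and at the same time certifies the Legendrian loops $\vartheta(a,b,c)$ and $\vartheta(\exdynD_n)$ as tame.
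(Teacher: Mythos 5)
Your proposal follows essentially the same route as the paper: the paper's proof consists precisely of exhibiting explicit decompositions of $\coxeterpadding(a,b,c)$ and $\coxeterpadding(\exdynD_n)$ into concatenations of the elementary annular $N$-graphs $\ngraph_{\Move{R0}}$ and $\ngraph_{\Move{RIII}}$ (Figures~\ref{fig:coxeter padding affine E is tame} and~\ref{fig:coxeter padding affine D4 is tame}), which is exactly what your concentric-circle sweep-out is designed to produce. One small caution: the petals of $\coxeterpadding(a,b,c)$ are not literally copies of the vertex-free rotational padding $\coxeterpadding(\dynA_{a-1})$ --- each petal carries a staircase of about $a+1$ hexagonal points, so the number of $\Move{RIII}$ patches per petal grows with $a$ rather than being a single equatorial move --- but this is precisely the bookkeeping your generic sweep-out resolves, so it does not affect the argument.
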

\begin{proof}
We provide decompositions of the Coxeter paddings $\coxeterpadding(a,b,c)$ and $\coxeterpadding(\exdynD_4)$ into sequences of elementary annular $N$-graphs in Figures~\ref{fig:coxeter padding affine E is tame} and~\ref{fig:coxeter padding affine D4 is tame}, respectively. We omit other cases.
\end{proof}

\begin{figure}[ht]
\subfigure[$\coxeterpadding(a,b,c)$\label{fig:coxeter padding affine E is tame}]{$
\coxeterpadding(a,b,c)=

\end{aligned}
$}

\caption{A sequence of elementary annulus $N$-graphs for Legendrian Coxeter paddings} 
\end{figure}

Then we may translate the sequence of Reidemeister moves induced by $\bar\coxeterpadding(a,b,c) \coxeterpadding(a,b,c)$ into the Legendrian loop $\vartheta(a,b,c)$ depicted as in Figure~\ref{fig:legendrian loop of E_intro}.
Note that the path of Legendrians from the bottom left to the top right Legendrian corresponds to $\coxeterpadding(a,b,c)$ while the path from the top right to  the bottom left one corresponds to $\bar\coxeterpadding(a,b,c)$.

\begin{figure}[ht]
\[
\begin{tikzcd}[ampersand replacement=\&]

\arrow[l]
\end{tikzcd}
\]
\caption{A Legendrian loop $\vartheta(a,b,c)$ induced from Legendrian Coxeter mutation $\mutation_\ngraph^{2}$ on $(\ngraph(a,b,c),\nbasis(a,b,c))$.}
\label{fig:legendrian loop of E_intro}
\end{figure}

In order to see the effect of Legendrian Coxeter mutation of type $\exdynD_n$ efficiently, let us present it by a sequence of braid moves together with keep tracking braid words shaded by violet color as follows:
\[
\begin{tikzcd}[column sep = -8pt, row sep = -5pt]
\beta({\exdynD}_n) 
&= &\2&\1&\1&\color{red}\1&\color{violet}\circled{\text{$\2$}}&\color{red}\1&\1&\1&\2&\color{violet}\circled{\text{$\1$}}&\1^{k-1}&\3&\2&\1&\1&\color{red}\1&\color{violet}\circled{\text{$\2$}}&\color{red}\1&\1&\1&\2&\color{violet}\circled{\text{$\1$}}&\1^{\ell-1}&\3
\\
&= &\2&\1&\color{red}\1&\color{red}\2&\color{violet}\circled{\text{$\1$}}&\2&\1&\color{red}\1&\color{red}\2&\color{violet}\circled{\text{$\1$}}&\1^{k-1}&\3&\2&\1&\color{red}\1&\color{red}\2&\color{violet}\circled{\text{$\1$}}&\2&\1&\color{red}\1&\color{red}\2&\color{violet}\circled{\text{$\1$}}&\1^{\ell-1}&\3
\\
&= &\color{red}\2&\color{red}\1&\color{violet}\circled{\text{$\2$}}&\1&\2&\color{red}\2&\color{red}\1&\color{violet}\circled{\text{$\2$}}&\1&\2&\1^{k-1}&\3&\color{red}\2&\color{red}\1&\color{violet}\circled{\text{$\2$}}&\1&\2&\color{red}\2&\color{red}\1&\color{violet}\circled{\text{$\2$}}&\1&\2&\1^{\ell-1}&\3
\\
&= &\color{violet}\circled{\text{$\1$}}&\2&\1&\1&\2&\color{violet}\circled{\text{$\1$}}&\2&\1&\1&\2&\1^{k-1}&\3&\color{violet}\circled{\text{$\1$}}&\2&\1&\1&\2&\color{violet}\circled{\text{$\1$}}&\2&\1&\1&\2&\1^{\ell-1}&\3
\\
&\mathrel{\dot{=}} &\2&\1&\1&\2&\color{violet}\circled{\text{$\1$}}&\2&\1&\1&\2&\1^{k-1}&\color{red}\3&\color{violet}\circled{\text{$\1$}}&\2&\1&\1&\2&\color{violet}\circled{\text{$\1$}}&\2&\1&\1&\2&\1^{\ell-1}&\color{red}\3&\color{violet}\circled{\text{$\1$}}
\\
&= &\2&\1&\1&\color{red}\2&\color{violet}\circled{\text{$\1$}}&\color{red}\2&\1&\1&\2&\1^{k-1}&\color{violet}\circled{\text{$\1$}}&\3&\2&\1&\1&\color{red}\2&\color{violet}\circled{\text{$\1$}}&\color{red}\2&\1&\1&\2&\1^{\ell-1}&\color{violet}\circled{\text{$\1$}}&\3
\\
&= &\2&\1&\1&\1&\color{violet}\circled{\text{$\2$}}&\1&\1&\1&\2&\1^{k-1}&\color{violet}\circled{\text{$\1$}}&\3&\2&\1&\1&\1&\color{violet}\circled{\text{$\2$}}&\1&\1&\1&\2&\1^{\ell-1}&\color{violet}\circled{\text{$\1$}}&\3
&=\beta(\exdynD_n)
\end{tikzcd}
\]
The corresponding annular $N$-graph is depicted in Figure~\ref{fig:coxeter padding affine D4 is tame}.
Finally, the effect of Coxeter padding $\coxeterpadding({\exdynD}_n)$ onto $\beta({{\exdynD}_n})$ can be presented as a Legendrian loop $\vartheta(\exdynD_n)$, which is a composition 
\[
\vartheta(\exdynD_n) = \varphi \vartheta_0(\exdynD_n) \varphi^{-1}
\]
as depicted in Figure~\ref{fig:legendrian loop of D_intro},
where $\varphi$ is a Legendrian Reidemeister move (III).

\begin{figure}[ht]
\[
\begin{tikzcd}[row sep=2pc]

\end{tikzcd}
\]

\caption{A Legendrian loop $\vartheta(\exdynD)=\varphi\vartheta_0(\exdynD_n)\varphi^{-1}$ induced from Legendrian Coxeter mutation $\mutation_\ngraph$ on $(\ngraph(\exdynD_n),\nbasis(\exdynD_n))$.}
\label{fig:legendrian loop of D_intro}
\end{figure}

\begin{theorem}\label{thm:legendrian loop}
The square $\mutation_\ngraph^{\pm 2}$ of the Legendrian Coxeter mutation 
 on $(\ngraph(a,b,c),\nbasis(a,b,c))$ 
and the Legendrian Coxeter mutation $\mutation_\ngraph^{\pm1}$ on 
$(\ngraph(\exdynD),\nbasis(\exdynD))$ induce 
tame Legendrian loops $\vartheta(a,b,c)$ and $\vartheta(\exdynD)$ in Figures~\ref{fig:legendrian loop of E_intro} and \ref{fig:legendrian loop of D_intro}, respectively. 
\end{theorem}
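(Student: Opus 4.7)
The plan is to deduce the theorem as a combination of three ingredients already established in the paper. First, by Proposition~\ref{proposition:effect of Legendrian Coxeter mutation}, we have
\[
\mutation_\ngraph^{2}(\ngraph(a,b,c),\nbasis(a,b,c)) \;=\; \coxeterpadding(a,b,c)\,\overline{\coxeterpadding(a,b,c)}\,(\ngraph(a,b,c),\nbasis(a,b,c)),
\]
so the difference between the source and target $N$-graphs of $\mutation_\ngraph^{2}$ is concentrated in the annular region $\coxeterpadding(a,b,c)\overline{\coxeterpadding(a,b,c)}$. Similarly, Proposition~\ref{proposition:coxeter realization D-type} gives
\[
\mutation_\ngraph^{\pm 1}(\ngraph(\exdynD_n),\nbasis(\exdynD_n)) \;=\; \coxeterpadding(\exdynD_n)^{\pm 1}(\ngraph(\exdynD_n),\nbasis(\exdynD_n)),
\]
so that the annular difference is $\coxeterpadding(\exdynD_n)^{\pm 1}$, which lies in $\Ngraphs_0(\beta(\exdynD_n),\beta(\exdynD_n))$.

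The next step is to verify tameness of the annular $N$-graphs $\coxeterpadding(a,b,c)\overline{\coxeterpadding(a,b,c)}$ and $\coxeterpadding(\exdynD_n)^{\pm 1}$, i.e. to decompose each of them into a concatenation of elementary annular $N$-graphs $\ngraph_{\mathrm{(R0)}}$ and $\ngraph_{\mathrm{(RIII)}}$ from \S\ref{section:annular Ngraphs}. For the $(a,b,c)$-case, this decomposition is read off Figure~\ref{fig:coxeter padding affine E is tame}: each crossing/arc rearrangement in the picture is either a plane isotopy of Legendrian strands or a local third Reidemeister move. For $\exdynD_n$, the corresponding decomposition is the one given in Figure~\ref{fig:coxeter padding affine D4 is tame}, and the same pattern works for general $n$ by iterating the local pieces across the $k$ and $\ell$ slots. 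Once tameness is established, Lemma~\ref{lemma:Legendrian loops and tame annular Ngraphs} produces tame Legendrian loops $\vartheta(a,b,c)\in\pi_1(\cL(\legendrian(a,b,c)),\legendrian(a,b,c))$ and $\vartheta(\exdynD_n)\in\pi_1(\cL(\legendrian(\exdynD_n)),\legendrian(\exdynD_n))$, respectively.

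The final step is to identify these loops with the ones drawn in Figures~\ref{fig:legendrian loop of E_intro} and~\ref{fig:legendrian loop of D_intro}. For $\vartheta(\exdynD_n)$, one tracks the violet-circled generator through the braid-word computation displayed right before Figure~\ref{fig:legendrian loop of D_intro}: after six braid-relation moves one recovers $\beta(\exdynD_n)$, and the resulting motion of the half-twist $\Delta_2$ along the two-strand band is exactly $\vartheta_0(\exdynD_n)$ conjugated by $\varphi$. For $\vartheta(a,b,c)$ one must apply $\mutation_\ngraph$ twice because a single Coxeter padding changes the coloring (Proposition~\ref{proposition:effect of Legendrian Coxeter mutation}); the resulting sequence of moves is the six-step cycle in Figure~\ref{fig:legendrian loop of E_intro}, where the half-twist $\Delta_3$ travels along the three-strand band.

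The potential obstacle is the identification in the last paragraph: the combinatorial data in the Coxeter padding decomposition must be matched carefully with the braid-move sequences shown in the Legendrian-loop figures. In particular one has to check that the intermediate Reidemeister moves one reads off the figures of $\coxeterpadding(a,b,c)$ and $\coxeterpadding(\exdynD_n)$ do not accidentally multiply the resulting element of $\pi_1(\cL(\legendrian))$ by an extra rotational annular loop; once the intermediate braids are displayed as in the proof sketch above, this is a direct verification but it requires carefully orienting every step, which is the main technical bookkeeping in the proof.
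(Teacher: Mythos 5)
Your proposal matches the paper's own argument: the paper likewise combines Propositions~\ref{proposition:effect of Legendrian Coxeter mutation} and~\ref{proposition:coxeter realization D-type} with the explicit decompositions of $\coxeterpadding(a,b,c)$ and $\coxeterpadding(\exdynD_n)$ into elementary annular $N$-graphs (Figures~\ref{fig:coxeter padding affine E is tame} and~\ref{fig:coxeter padding affine D4 is tame}) to get tameness, then invokes the correspondence of Lemma~\ref{lemma:Legendrian loops and tame annular Ngraphs} and reads off the loops $\vartheta(a,b,c)$ and $\vartheta(\exdynD_n)=\varphi\vartheta_0(\exdynD_n)\varphi^{-1}$ from the same braid-move bookkeeping you describe. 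Your observation that the square of $\mutation_\ngraph$ is needed in the $(a,b,c)$ case because a single Coxeter padding conjugates the colors is exactly the point the paper makes as well.
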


\subsection{Lagrangian fillings}
In this section, we will prove one of our main theorem on `as many exact embedded Lagrangian fillings as seeds' as follows:
\begin{theorem}\label{theorem:seed many fillings}
Let $\legendrian$ be a Legendrian knot or link of type~$\dynADE$ or type $\exdynD\exdynE$.
Then it admits at least as many distinct exact embedded Lagrangian fillings up to exact Lagrangian isotopy (rel boundary) as the number of seeds in the seed pattern of the same type.
\end{theorem}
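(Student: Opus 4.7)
The plan is to argue, by induction on the rank $n$ of the root system $\Roots(\dynX)$ with $\dynX\in\{\dynADE,\exdynD\exdynE\}$, that every $Y$-seed in the corresponding $Y$-pattern is realized as $\Psi(\ngraph',\nbasis')$ for some free $N$-graph pair reached from the initial pair $(\ngraph(\dynX),\nbasis(\dynX))$ by a finite sequence of Legendrian mutations. Once this is in hand, Corollary~\ref{corollary:distinct seeds imples distinct fillings} turns distinct seeds into pairwise non-isotopic exact embedded Lagrangian fillings (embeddedness is inherited from freeness, which is preserved under $\sfI$- and $\sfY$-mutations by~\cite[Lemma~7.4]{CZ2020}), and Corollary~\ref{corollary:algebraic independence} identifies the exchange graph of the $Y$-pattern determined by $(\ngraph(\dynX),\nbasis(\dynX))$ with $\exchange(\Roots(\dynX))$, so the count of fillings will automatically match the number of seeds.

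For the inductive step, fix a target $Y$-seed $(\bfy,\qbasispr)$. By Lemma~\ref{lemma:normal form} it admits a normal form
\[
(\bfy,\qbasispr)=(\mutation_{j_L}\cdots\mutation_{j_1})\,\qcoxeter^r\bigl(\Psi(\ngraph(\dynX),\nbasis(\dynX))\bigr)
\]
with $r\in\Z$, $\ell\in[n]$, and $j_1,\dots,j_L\in[n]\setminus\{\ell\}$. I first realize $\qcoxeter^r$ geometrically: Proposition~\ref{proposition:effect of Legendrian Coxeter mutation} for the tripod family (covering types $\dynADE$ and $\exdynE$) and Proposition~\ref{proposition:coxeter realization D-type} for $\dynX=\exdynD_n$ identify $\ncoxeter^r(\ngraph(\dynX),\nbasis(\dynX))$ with the concatenation of $|r|$ copies of the appropriate Coxeter padding $\coxeterpadding^{\pm1}$ onto $(\ngraph(\dynX),\nbasis(\dynX))$ (or its conjugate); Corollary~\ref{corollary:Coxeter mutations} then matches this operation with $\qcoxeter^r$ on seeds. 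The decisive feature is that the \emph{inner} part of $\ncoxeter^r(\ngraph(\dynX),\nbasis(\dynX))$ is a copy of $(\ngraph(\dynX),\nbasis(\dynX))$ (possibly conjugated), so the geometric intersection pattern among the distinguished cycles, together with the bipartite decomposition $\nbasis_+\sqcup\nbasis_-$, is preserved.

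Next I realize the tail $\mutation_{j_L}\cdots\mutation_{j_1}$. Because none of these indices equals $\ell$, the seed walks through the induced subgraph $\exchangesub{\Roots(\dynX)}{x_{\ell;r}}$, which by Proposition~\ref{prop_FZ_finite_type_Coxeter_element}(1) coincides with $\exchange(\Roots(\dynX\setminus\{\ell\}))$, a root system of strictly smaller rank. The sub-$N$-graph of $\ncoxeter^r(\ngraph(\dynX),\nbasis(\dynX))$ carrying the remaining cycles $\{\cycle_i\}_{i\neq\ell}$ is, up to $\boundary$-Legendrian isotopy and possible disconnection, again an initial model for $\Roots(\dynX\setminus\{\ell\})$: deleting a terminal cycle on an arm of $\ngraph(a,b,c)$ shortens that arm, deleting the central $\sfY$-cycle splits the tripod into three linear $\dynA$-pieces, and the analogous deletions on $\ngraph(\exdynD_n)$ yield either $\ngraph(\exdynD_{n-1})$ or a disjoint union of $\ngraph(\dynD)$ and $\ngraph(\dynA)$ type pieces, using Lemma~\ref{lemma:stabilized An} where stabilized $\dynA$ appears. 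The induction hypothesis applied componentwise then realizes $\mutation_{j_L}\cdots\mutation_{j_1}$ as a sequence of Legendrian mutations, producing the desired $N$-graph pair.

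The main obstacle is the case-by-case verification at the end of the inductive step: one must confirm, for every vertex $\ell$ of every Dynkin diagram of type $\dynADE$ or $\exdynD\exdynE$, that removing the cycle $\cycle_\ell$ yields a sub-$N$-graph $\boundary$-Legendrian isotopic to a disjoint union of our catalogued initial models, with the bipartite coloring inherited correctly so that the Coxeter paddings invoked by the inductive hypothesis remain available. This is a finite but slightly delicate check that must be performed type by type; the base cases (rank $\leq 2$) are handled directly because each such exchange graph is a short polygon whose vertices are reached by $\sfI$-mutations on a $2$- or $3$-graph.
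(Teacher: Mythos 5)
Your proposal follows essentially the same route as the paper: reduce to realizing every $Y$-seed by an $N$-graph (the paper's Proposition~\ref{proposition:realizability}), use Lemma~\ref{lemma:normal form} to put the seed in the form $\mutation'\qcoxeter^r(\text{initial})$, realize $\qcoxeter^r$ by Coxeter paddings, decompose along the omitted cycle to induct on rank, and conclude via Corollaries~\ref{corollary:distinct seeds imples distinct fillings} and~\ref{corollary:algebraic independence}. One harmless inaccuracy: deleting a vertex from $\exdynD_n$ can never leave a piece of type $\exdynD_{n-1}$ (proper subdiagrams of affine diagrams are of finite type); the actual pieces are of type $\dynA$ and $\dynD$, all of strictly smaller rank, so the induction goes through exactly as you set it up.
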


Indeed, this theorem follows from considering the following general question.
\begin{question}\label{question_CZ}
For a given $N$-graph $\ngraph$ with a chosen set $\nbasis$ of cycles, can we take a Legendrian mutation as many times as we want? Or equivalently, after applying a mutation $\mutation_k$ on $(\ngraph, \nbasis)$, is the set $\mutation_k(\nbasis)$ still good in $\mutation_k(\ngraph)$?
\end{question}

This question has been raised previously in \cite[Remark~7.13]{CZ2020}.
One of the main reason making the question nontrivial is that 
the potential difference of geometric and algebraic intersections between two cycles.

Instead of attacking Question~\ref{question_CZ} directly, we will prove the following:
\begin{proposition}\label{proposition:realizability}
For $\dynX = \dynA,\dynD,\dynE,\exdynD, \exdynE$, 
let $(\ngraph_{t_0},\nbasis_{t_0})=(\ngraph(\dynX), \nbasis(\dynX))$ as depicted in Figures~\ref{figure:linear N-graph}, \ref{figure:tripod N-graph}, and \ref{figure:4-graph of type affine Dn}.
Suppose that $(\bfy, \qbasispr)$ is a $Y$-seed in the $Y$-pattern given by the initial $Y$-seed $(\bfy_{t_0},\qbasispr_{t_0}) = \Psi(\ngraph_{t_0},\nbasis_{t_0})$. Then $\legendrian(\dynX)$ admits an $N$-graph $(\ngraph, \nbasis)$ on $\disk^2$ with $\boundary \ngraph = \legendrian(\dynX)$ such that 
\[
\Psi(\ngraph, \nbasis) = (\bfy, \qbasispr).
\]
\end{proposition}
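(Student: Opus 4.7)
The plan is to proceed by induction on the rank $n$ of the root system $\Roots$ associated with $\dynX$, with the base cases of small rank verified directly by enumerating seeds and performing explicit Legendrian mutations on the standard tripod or linear $N$-graphs.

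For the inductive step, given a $Y$-seed $(\bfy, \qbasispr)$ in the pattern generated by $(\bfy_{t_0},\qbasispr_{t_0}) = \Psi(\ngraph_{t_0},\nbasis_{t_0})$, I would first invoke Lemma~\ref{lemma:normal form} to express it in the normal form
\[
(\bfy, \qbasispr) = (\mutation_{j_L} \cdots \mutation_{j_1})\bigl(\qcoxeter^r(\bfy_{t_0}, \qbasispr_{t_0})\bigr)
\]
for some $r \in \Z$, some $\ell \in [n]$, and indices $j_1, \dots, j_L \in [n] \setminus \{\ell\}$. The Coxeter part is handled by Propositions~\ref{proposition:effect of Legendrian Coxeter mutation} and~\ref{proposition:coxeter realization D-type}: the iterated Legendrian Coxeter mutation $\ncoxeter^r(\ngraph_{t_0}, \nbasis_{t_0})$ is always $N$-graph realizable via stacking Coxeter paddings onto the initial pair, and by Corollary~\ref{corollary:Coxeter mutations} its associated $Y$-seed is precisely $\qcoxeter^r(\bfy_{t_0}, \qbasispr_{t_0})$.

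The second part requires realizing the mutation sequence $\mutation_{j_L} \cdots \mutation_{j_1}$ at the level of $N$-graphs, starting from $\ncoxeter^r(\ngraph_{t_0}, \nbasis_{t_0})$. Since no $j_i$ equals $\ell$, this sequence lives entirely inside the sub-seed pattern whose exchange graph, by Proposition~\ref{prop_FZ_finite_type_Coxeter_element}(1), is $\exchange(\Roots(\dynX \setminus \{\ell\}))$. Because a Dynkin diagram of type $\dynADE$ or $\exdynD\exdynE$ becomes, upon deletion of any vertex, a disjoint union of Dynkin diagrams of type $\dynADE$ of strictly smaller rank, the inductive hypothesis applies componentwise once one identifies, within $\ncoxeter^r(\ngraph_{t_0}, \nbasis_{t_0})$, a local sub-$N$-graph configuration in which the cycles $\nbasis \setminus \{\cycle_\ell\}$ realize the disjoint union of standard initial pairs $(\ngraph(\dynX'), \nbasis(\dynX'))$ for the connected components $\dynX'$ of $\dynX \setminus \{\ell\}$.

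The main obstacle is this last structural identification. Concretely, for each $\dynX \in \{\dynA,\dynD,\dynE,\exdynD,\exdynE\}$ and each choice of $\ell$, I must verify, by inspection of Figures~\ref{figure:tripod N-graph} and~\ref{figure:4-graph of type affine Dn}, that after omitting $\cycle_\ell$ and applying a sequence of Moves~\Move{I}\textendash\Move{IV} (together with stabilization where needed, as in Lemma~\ref{lemma:stabilized An}) the remaining cycles have geometric intersection numbers matching the algebraic ones demanded by $\qbasispr(\Roots(\dynX\setminus\{\ell\}))$, and that subsequent Legendrian mutations along these cycles remain geometrically realizable without interacting with~$\cycle_\ell$. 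Once this locality is established, concatenating the Coxeter-padding realization of $\qcoxeter^r$ with the inductively produced realization of $\mutation_{j_L} \cdots \mutation_{j_1}$ yields the desired $N$-graph $(\ngraph, \nbasis)$ with $\boundary\ngraph = \legendrian(\dynX)$ and $\Psi(\ngraph,\nbasis) = (\bfy, \qbasispr)$, completing the induction.
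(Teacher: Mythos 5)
Your proposal follows essentially the same route as the paper: induction on rank, the normal form from Lemma~\ref{lemma:normal form}, realizing the Coxeter power via Coxeter paddings, and then decomposing the $N$-graph along the omitted cycle $\cycle_\ell$ into standard sub-$N$-graphs of smaller rank to which the inductive hypothesis applies. The one refinement the paper makes is to observe that, because $\ncoxeter^r$ acts purely by concatenating annular paddings, the realizability of $\mutation_{j_L}\cdots\mutation_{j_1}$ on $\ncoxeter^r(\ngraph_{t_0},\nbasis_{t_0})$ is equivalent to its realizability on the untouched initial pair $(\ngraph_{t_0},\nbasis_{t_0})$ itself, so the case-by-case decomposition check you defer to ``inspection'' can be read off directly from the standard initial figures (and the paper does carry this out explicitly, including the one non-obvious case of $\exdynD_n$ cut at a leaf, which requires extra Moves~\Move{II} to exhibit the remainder as a stabilization of $\ngraph(\dynD_n)$).
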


Under the aid of this proposition, one can prove Theorem~\ref{theorem:seed many fillings}.
\begin{proof}[Proof of Theorem~\ref{theorem:seed many fillings}]
Let $\legendrian$ be given as above.
Then, by Proposition~\ref{proposition:realizability}, we have the set of pairs of $N$-graphs and set of good cycles which has a one-to-one correspondence via $\Psi$ with the set of $Y$-seeds in the $Y$-pattern of type $\dynX$.
Hence any pair of the Lagrangian fillings coming from these $N$-graphs is never exact Lagrangian isotopic by Lemma~\ref{lemma:admissible extension} and Proposition~\ref{prop:distinct seeds imples distinct fillings}.
Finally, by Corollary~\ref{corollary:algebraic independence}, there is a one-to-one correspondence between the set of $Y$-seeds and that of seeds, which completes the proof.
\end{proof}

\subsubsection{Proof of Proposition~\ref{proposition:realizability}}

We use an induction argument on the rank $n$ of the root system~$\Roots(\dynX)$.
The initial step is either
\[
(\ngraph_{t_0},\nbasis_{t_0})=(\ngraph(1,1,1),\nbasis(1,1,1))\quad\text{ or }\quad (\ngraph_{t_0},\nbasis_{t_0})=(\ngraph(\dynA_1),\nbasis(\dynA_1)).
\]
Since there are no obstructions for mutations on these $N$-graphs, we are done for the initial step of the induction.

Now suppose that $n\ge2$.
By the induction hypothesis, we assume that the assertion holds for each type $\dynX' =\dynA, \dynD, \dynE, \exdynD, \exdynE$ having rank strictly small than $n$.

Let $(\bfy, \qbasispr)$ be an $Y$-seed of type $\dynX$. By Lemma~\ref{lemma:normal form}, there exist $r\in\Z$ and a sequence $\mutation_{j_1},\dots,\mutation_{j_L}$ of mutations such that
\[
(\bfy, \qbasispr)=
\mutation'((\mutation_\quiver)^r(\bfy_{t_0},\qbasispr_{t_0})),\quad
\mutation'=\mutation_{j_L}\dots\mutation_{j_1},
\]
where indices $j_1,\dots,j_L$ miss at least one index $i$.
It suffices to prove that the $N$-graph
\[
(\ngraph, \nbasis)
=\mutation'((\ncoxeter)^r(\ngraph_{t_0},\nbasis_{t_0}))
\]
is well-defined.

Notice that by Lemma~\ref{lemma:Legendriam Coxeter mutation of type An}, Propositions~\ref{proposition:effect of Legendrian Coxeter mutation} and \ref{proposition:coxeter realization D-type}, the Legendrian Coxeter mutation $\ncoxeter^r(\ngraph_{t_0}, \nbasis_{t_0})$ is realizable so that
\[
\Psi(\ncoxeter^r(\ngraph_{t_0}, \nbasis_{t_0}))
=(\mutation_\quiver)^r(\bfy_{t_0},\qbasispr_{t_0}).
\]

Since $\ncoxeter^r(\ngraph_{t_0}, \nbasis_{t_0})$ is the concatenation of Coxeter paddings on the initial $N$-graph $(\ngraph_{t_0},\nbasis_{t_0})$, it suffices to prove that the Legendrian mutation
$\mutation'(\ncoxeter^r(\ngraph_{t_0}, \nbasis_{t_0}))$
is realizable, which is equivalent to the realizability of
$\mutation'(\ngraph_{t_0}, \nbasis_{t_0})$.

By assumption, the indices $j_1,\dots, j_L$ misses the index $i$ and therefore the sequence of mutations $\mutation_{j_1},\dots,\mutation_{j_L}$ can be performed inside the subgraph of the exchange graph $\exchange(\Roots(\dynX))$, which is isomorphic to $\exchange(\Roots(\dynX \setminus \{i\}))$.
Here, with abuse of notation, we denote by $\dynX$ the Dynkin diagram of type $\dynX$. Moreover, we denote by $\Roots(\dynX \setminus \{i\})$ the root system corresponding to the Dynkin diagram~$\dynX \setminus \{i\}$. 
Then the root system $\Roots(\dynX\setminus \{i\})$ is not necessarily irreducible and may be decomposed into $\Roots(\dynX^{(1)}), \dots, \Roots(\dynX^{(\ell)})$ for $\dynX\setminus\{i\} = \dynX^{(1)}\cup\cdots\cup\dynX^{(\ell)}$ so that
\begin{align*}
\Roots(\dynX\setminus\{i\}) &\cong
\Roots(\dynX^{(1)})\times\cdots\times\Roots(\dynX^{(\ell)}),\\
\exchange(\dynX\setminus\{i\}) &\cong
\exchange(\dynX^{(1)})\times\cdots\times\exchange(\dynX^{(\ell)}),\\
\quiver_{t_0}\setminus\{i\}&\cong \quiver^{(1)}\amalg\cdots\amalg\quiver^{(\ell)},
\end{align*}
where the subquiver $\quiver^{(k)}$ is of type $\dynX^{(k)}$.
Moreover, the composition $\mutation'$ of mutations can be decomposed into sequences $\mutation^{(1)},\dots, \mutation^{(\ell)}$ of mutations on $\quiver^{(1)},\dots,\quiver^{(\ell)}$.

Similarly, we may decompose the $N$-graph $(\ngraph_{t_0}, \nbasis_{t_0})$ into $N$-subgraphs 
\[
(\ngraph^{(1)}, \nbasis^{(1)}),\dots,(\ngraph^{(\ell)}, \nbasis^{(\ell)})
\]
along $\cycle_i\in\nbasis_{t_0}$, which are the restrictions of $(\ngraph_{t_0}, \nbasis_{t_0})$ onto $\disk^{(i)}\subset \disk^2$ as follows: 
\begin{enumerate}
\item For $\legendrian=\legendrian(\dynA_n)$, we have the following two cases (Figure~\ref{figure:decomposition of linear Ngraphs}):
\begin{enumerate}
\item If $\cycle_i$ corresponds to a leaf, then we have the $2$-subgraph $(\ngraph(\dynA_{n-1}), \nbasis(\dynA_{n-1}))$.
\item If $\cycle_i$ corresponds to a bivalent vertex, then for some $1\le r,s$ with $r+s+1=n$, we have two $2$-subgraphs $(\ngraph(\dynA_r),\nbasis(\dynA_r))$ and $(\ngraph(\dynA_s),\nbasis(\dynA_s))$.
\end{enumerate}
\begin{figure}[ht]
\subfigure[At a leaf]{\makebox[0.4\textwidth]{

}}
\caption{Decompositions of $\ngraph(\dynA_n)$}
\label{figure:decomposition of linear Ngraphs}
\end{figure}

\item For $\legendrian=\legendrian(a,b,c)$, we have the following three cases (Figure~\ref{figure:decomposition of tripod Ngraphs}):
\begin{enumerate}
\item If $\cycle_i$ corresponds to the central vertex, then we have three $3$-subgraphs $(\ngraph_{(3)}(\dynA_{a-1}), \nbasis_{(3)}(\dynA_{a-1}))$, $(\ngraph_{(3)}(\dynA_{b-1}),\nbasis_{(3)}(\dynA_{b-1}))$, and $(\ngraph_{(3)}(\dynA_{c-1}), \nbasis_{(3)}(\dynA_{c-1}))$.
\item  If $\cycle_i$ corresponds to a bivalent vertex, then for some $1\le r,s$ with $r+s+1=a$, up to permuting indices $a,b,c$, we have two $3$-subgraphs $(\ngraph_{(3)}(\dynA_s),\nbasis_{(3)}(\dynA_s))$ and $(\ngraph(r,b,c),\nbasis(r,b,c))$.
\item Otherwise,  if $\cycle_i$ corresponds to a leaf, then up to permuting indices $a,b,c$, we have the $3$-subgraph $(\ngraph(a-1,b,c), \nbasis(a-1,b,c))$.
\end{enumerate}

\begin{figure}[ht]
\subfigure[At the central vertex]{$
\begin{aligned}

\end{aligned}
$}
\caption{Decomposition of $\ngraph(a,b,c)$}
\label{figure:decomposition of tripod Ngraphs}
\end{figure}
\item For $\legendrian=\legendrian(\exdynD_n)$, we have the following four cases (Figure~\ref{figure:decomposition of Ngraph of type affine Dn}):
\begin{enumerate}
\item If $n=4$ and $\gamma_i$ corresponds to the central vertex, then we have four $4$-graphs of type $\dynA_1$.
\item If $n\ge 5$ and $\gamma_i$ corresponds to a trivalent vertex, then we have three $4$-graphs of type $\dynA_1, \dynA_1$ and $(2,2, n-4)$.
\item If $n\ge 6$, $\gamma_i$ corresponds to a bivalent vertex, then for some $r+s=n-3$, we have two $4$-graphs of type $(2,2,r)$ and $(2,2,s)$.
\item If $\gamma_i$ corresponds to a leaf, then we have the $4$-graph $(\ngraph'(\dynD_n),\nbasis'(\dynD_n))$.
\end{enumerate}

\begin{figure}[ht]
\subfigure[At the central vertex of $\exdynD_4$]{\makebox[0.47\textwidth]{

}}
\caption{Decompositions of $\ngraph(\exdynD_n)$}
\label{figure:decomposition of Ngraph of type affine Dn}
\end{figure}
\end{enumerate}

Here, $\ngraph_{(3)}(\dynA_{n'}), \ngraph_{(4)}(\dynA_{n'})$ and $\ngraph_{(4)}(a',b',c')$ are the $3$- and $4$-graphs obtained from $\ngraph(\dynA_{n'})$ and $\ngraph(a',b',c')$ by adding trivial planes at the top.
Hence, the realizabilities of Legendrian mutations on $\ngraph_{(3)}(\dynA_{n'}), \ngraph_{(4)}(\dynA_{n'})$ and $\ngraph_{(4)}(a',b',c')$ are the same as those on $\ngraph(\dynA_{n'})$ and $\ngraph(a',b',c')$.

Except for the very last case (3d), all the other cases are reduced to either linear and tripod $N$-graphs with strictly lower rank.
Hence, by the induction hypothesis, any composition $\mutation^{(k)}$ of mutations on $\quiver^{(k)}$ for $1\leq k\leq \ell$ can be realized as a composition of Legendrian mutations on $(\ngraph^{(k)},\nbasis^{(k)})$.
This guarantees the realizability of $\mutation'(\ngraph_{t_0}, \nbasis_{t_0})$.

For the case (3d), one can apply a sequence of Move $\Move{II}$ on $(\ngraph'(\dynD_n), \nbasis'(\dynD_n))$ as follows:
\[
\begin{tikzcd}

\]
which is a stabilization of $(\ngraph(\dynD_n), \nbasis(\dynD_n))=(\ngraph(2,2,n-2), \ngraph(2,2, n-2))$.
Therefore the induction hypothesis completes the proof.

\begin{remark}
It is not claimed above that two mutations $\mutation'$ and $\ncoxeter$ commute.
Indeed, if we first mutate $(\ngraph_{t_0},\nbasis_{t_0})$ via $\mutation'$, then the result may not look like either $(\ngraph_{t_0},\nbasis_{t_0})$ or $\overline{(\ngraph_{t_0},\nbasis_{t_0})}$ and hence $\ncoxeter$ will not work as expected. Besides it is not even clear whether $\ncoxeter \mutation'(\ngraph_{t_0},\nbasis_{t_0})$ is realizable.
\end{remark}

\section{Foldings}\label{section:folding}

In this section, we will consider cluster structures of type $\dynBCFG$ and all standard affine type on $N$-graphs with certain symmetry.

Recall that if a quiver of type $\dynX$ is globally foldable with respect to the $G$-action, then the folded cluster pattern is of type $\dynY$.
We consider the triples $(\dynX, G, \dynY)$ shown in Table~\ref{table:foldings}.

\begin{table}[ht]
\[
\renewcommand\arraystretch{1.5}
\begin{array}{c|cccc}
\hline
\multirow{2}{*}{\text{rotation}}
&(\dynA_{2n-1},\Z/2\Z, \dynB_n)
&(\dynD_4,\Z/3\Z, \dynG_2)
&(\exdynE_6,\Z/3\Z, \exdynG_2)\\
&(\exdynD_{2n\ge6},\Z/2\Z, \exdynB_n)
&(\exdynD_4,\Z/2\Z, \exdynC_2)
\\
\hline
\multirow{2}{*}{\text{conjugation}}
&(\dynD_{n+1},\Z/2\Z, \dynC_n)
&(\dynE_{6},\Z/2\Z, \dynF_4)\\
&(\exdynE_6,\Z/2\Z, \dynE_6^{(2)})
&(\exdynE_7,\Z/2\Z, \exdynF_4)
&(\exdynD_4,\Z/2\Z, \dynA_5^{(2)})\\
\hline
\end{array}
\]
\caption{Folding by rotation and conjugation}
\label{table:foldings}
\end{table}

\subsection{Group actions on \texorpdfstring{$N$}{N}-graphs}

For each triple $(\dynX, G, \dynY)$, we first consider the $G$-action on each $N$-graph of type $\dynX$.

\subsubsection{Rotation action}
Let $(\dynX, G, \dynY)$ be one of five cases in the first row of Table~\ref{table:foldings}.
We will denote the generator of $G=\Z/2\Z$ or $\Z/3\Z$ by $\tau$, which acts on $N$-graphs $\ngraph$ by $\pi$-rotation or $2\pi/3$-rotation, respectively.

Notice that for each $\dynX=\dynA_{2n-1}, \dynD_4, \exdynE_6, \exdynD_{2n\ge6},$ or $\exdynD_4$, we may assume that the Legendrian $\legendrian(\dynX)$ in $J^1\sphere^1$ is invariant under the $\pi$-rotation or $2\pi/3$-rotation since the braid $\beta(\dynX)$ representing $\legendrian(\dynX)$ has the rotation symmetry as follows:
\begin{align*}
\beta(\dynA_{2n-1})&=\left(\sigma_1^{n+1}\right)^2,&
\beta(\dynD_4) &=\left(\sigma_2\sigma_1^3\right)^3,&
\beta(\exdynE_6)&=\left(\sigma_2\sigma_1^4\right)^3,\\
\beta(\exdynD_{2n})&=\left(\sigma_2\sigma_1^3\sigma_2\sigma_1^3\sigma_2\sigma_3\sigma_1^{n-2}\right)^2,n\ge 3,&
\beta(\exdynD_4)&=\left(\sigma_2\sigma_1^3\sigma_2\sigma_1^3\sigma_2\sigma_3\right)^2.
\end{align*}

Now the generator $\tau$ acts on the set $\Ngraphs(\legendrian(\dynX))$ of equivalent classes of $N$-graphs with cycles whose boundary is precisely $\legendrian(\dynX)$.
Indeed, for each $(\ngraph, \nbasis)$ in $\Ngraphs(\legendrian(\dynX))$, we have
\[
\tau\cdot(\ngraph, \nbasis)=
\begin{cases}
R_\pi (\ngraph, \nbasis) & \text{ if }\tau\in \Z/2\Z;\\
R_{2\pi/3} (\ngraph, \nbasis) & \text{ if }\tau\in \Z/3\Z,
\end{cases}
\]
where $R_\theta$ is the induced action on $N$-graphs with cycles from the $\theta$-rotation on $\disk^2$. See Figure~\ref{figure:action on Ngraph of type A}.

\begin{figure}[ht]
\subfigure[$\Z/2\Z$-action on $(\ngraph, \nbasis)$\label{figure:action on Ngraph of type A}]{
\begin{tikzcd}[ampersand replacement=\&]

\arrow[ll,"\tau", bend left=35]
\end{tikzcd}
}
\caption{Rotation actions on $N$-graphs}
\label{figure:rotation action}
\end{figure}

\subsubsection{Conjugation action}
Assume that $(\dynX, G, \dynY)$ is one of five cases in the second row of Table~\ref{table:foldings}.
We denote the generator for $G=\Z/2\Z$ by $\eta$.
Then, as before, the Legendrian $\tilde\legendrian(\dynX)$ is represented by the braid $\tilde\beta(\dynX)$ which is invariant under the conjugation as follows:
\begin{align*}
\tilde\beta(\dynD_{n+1})&=\sigma_2^n \sigma_{1,3}\sigma_2\sigma_{1,3}^3 \sigma_2\sigma_{1,3}\sigma_2^2\sigma_{1,3},&
\tilde\beta(\dynE_6)&=\sigma_2^3 \sigma_{1,3}\sigma_2\sigma_{1,3}^4 \sigma_2\sigma_{1,3}\sigma_2^2\sigma_{1,3},\\
\tilde\beta(\exdynE_6)&=\sigma_2^4 \sigma_{1,3}\sigma_2\sigma_{1,3}^4 \sigma_2\sigma_{1,3}\sigma_2^2\sigma_{1,3},&
\tilde\beta(\exdynE_7)&=\sigma_2^3 \sigma_{1,3}\sigma_2\sigma_{1,3}^5 \sigma_2\sigma_{1,3}\sigma_2^2\sigma_{1,3},&
\tilde\beta(\exdynD_4)&=(\sigma_2\sigma_{1,3}\sigma_2\sigma_{1,3}^2)^2.
\end{align*}

Therefore, the generator $\eta$ acts on the set $\Ngraphs(\tilde\legendrian(\dynX))$ by conjugation.
That is, for each $(\ngraph, \nbasis)\in\Ngraphs(\tilde\legendrian(\dynX))$, we have
\[
\eta\cdot (\ngraph, \nbasis) = \overline{(\ngraph, \nbasis)}.
\]

\begin{remark}
One may consider the conjugation invariant degenerate $N$-graph $\tilde\ngraph(\dynA_{2n-1})$ instead of the rotation invariant $N$-graph $\ngraph(\dynA_{2n-1})$ as seen earlier in Remark~\ref{remark:degenerated Ngraph of type A}. 
Then it can be checked that these two actions are identical.
\end{remark}

\begin{remark}
The denegerated $N$-graph $\tilde\ngraph(\exdynD_4)$ admits the $\pi$-rotation action as well, which is essentially equivalent to the conjugation action on $\tilde\ngraph(\exdynD_4)$. We omit the detail.
\end{remark}

\subsection{Invariant \texorpdfstring{$N$}{N}-graphs and Lagrangian fillings}
Throughout this section, we assume that $(\dynX, G, \dynY)$ is one of the triples in Table~\ref{table:foldings}.
For an $N$-graph $\ngraph$ in $\Ngraphs(\legendrian(\dynX))$ or  $\Ngraphs(\tilde\legendrian(\dynX))$, we say that $(\ngraph, \nbasis)$ is \emph{$G$-invariant} if for each $g\in G$,
\[
g\cdot(\ngraph, \nbasis) = (\ngraph, \nbasis).
\]
Namely,
\begin{enumerate}
\item the $N$-graph $\ngraph$ is invariant under the action of $g$,
\item the sets of cycles $\nbasis$ and $g(\nbasis)$ are identical up to relabeling $\cycle \leftrightarrow g(\cycle)$ for $\cycle\in\nbasis$.
\end{enumerate}

The following statements are obvious but important observations.
\begin{lemma}\label{lemma:Lagrangian fillings with symmetry}
For a free $N$-graph $\ngraph$, let
\[
L(\ngraph)\colonequals(\pi\circ\iota)(\Legendrian(\ngraph))
\]
be the Langrangian surface defined by $\ngraph$ in $\mathbb{C}^2$.
\begin{enumerate}
\item If $\ngraph$ is invariant under the $\theta$-rotation, then $L(\ngraph)$ is invariant under the $\theta$-rotation in $\mathbb{C}^2$
\[
(z_1,z_2)\mapsto (z_1\cos(\theta) +z_2\sin(\theta), -z_1\sin(\theta)+z_2\cos(\theta)).
\]
\item If $\ngraph$ is invariant under the conjugation, then $L(\ngraph)$ is invariant under the antisymplectic involution in $\mathbb{C}^2$
\[
(z_1,z_2)\mapsto (\bar z_1, \bar z_2).
\]
\end{enumerate}
\end{lemma}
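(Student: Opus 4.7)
The plan is to track each symmetry through the composition of maps that produces $L(\ngraph)$, namely
\[
\ngraph \;\longmapsto\; \Legendrian(\ngraph) \subset J^1\disk^2 \;\longmapsto\; \widehat{\Legendrian}(\ngraph) \subset J^1\R^2 \;\stackrel{\pi_L}{\longmapsto\;} T^*\R^2 \;\cong\; \sphere^3 \times \R_u \;\cong\; \bbC^2 \setminus \{0\},
\]
where the first arrow is the Legendrian weave construction, the second attaches the cylindrical end over $\boundary\ngraph\subset\sphere^1$, and the final identification is provided by the embedding $\Psi$ constructed in \S\ref{sec:geometric setup}. It suffices to show that each symmetry of $\ngraph$ lifts to a contactomorphism of $J^1\disk^2$ fixing $\Legendrian(\ngraph)$ setwise, and that this contactomorphism intertwines with $R_{\theta_0}$ or $\eta$ on $\sphere^3\times\R_u$ under the embedding; the Lagrangian projection then automatically inherits the symmetry.

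For part (1), the $\theta$-rotation of $\disk^2$ extends to $\disk^2\times\R$ by acting trivially on the last coordinate, and lifts to a strict contactomorphism of $J^1\disk^2$ with contact form $dz - p_x\,dx - p_y\,dy$. Since the weave $\Legendrian(\ngraph)$ is assembled locally from an $\ngraph$-compatible cover with transition data determined purely by the combinatorics of $\ngraph$, any diffeomorphism of $\disk^2$ preserving $\ngraph$ extends canonically through the weave construction. Thus rotation-invariance of $\ngraph$ implies rotation-invariance of $\Legendrian(\ngraph)$, and after attaching the cylindrical end and transporting through $\Psi$ this rotation agrees with $R_{\theta_0}|_{J^1\sphere^1}$ computed in Lemma~\ref{lem:rotation and conjugation}. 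Hence $\widehat{\Legendrian}(\ngraph)$ is invariant under $R_{\theta_0}$ on $\sphere^3\times\R_u$, and part (1) follows.

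For part (2), the key observation is that color reversal on $\ngraph$ corresponds precisely to the involution $z\mapsto -z$ on the wavefront $\wavefront(\ngraph)\subset \disk^2\times\R$. This is a chart-by-chart check across the five local wavefront models of \S\ref{sec:N-graph} (trivial sheets, the $\dynA_1^2$, $\dynA_1^3$, and $\dynD_4^-$ germs, plus degenerate models): replacing the ordering of sheets $i\leftrightarrow N+1-i$ reflects every local model through the hyperplane $\{z=0\}$, since each germ is symmetric under this reflection once the sheet labeling is reversed. Consequently, conjugation invariance of $\ngraph$ implies that $\wavefront(\ngraph)$ is invariant under $(x,y,z)\mapsto(x,y,-z)$, which lifts to the contactomorphism $(\theta,p_\theta,z)\mapsto(\theta,-p_\theta,-z)$ of $J^1\sphere^1$ on the cylindrical end. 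By Lemma~\ref{lem:rotation and conjugation} this is exactly $\eta|_{J^1\sphere^1}$, and $\Psi$ carries it to the antisymplectic involution $(z_1,z_2)\mapsto(\bar z_1,\bar z_2)$ on $\bbC^2$, proving (2).

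The main routine obstacle is the chart-level verification in part (2) that color reversal really does implement the reflection $z\mapsto -z$ at the degenerate local models, since one must match the sheet labelings consistently with the labels in Figures~\ref{figure:degenerate type1}--\ref{figure:degenerate type2}; this is a direct but slightly tedious inspection. Part (1) is essentially a consequence of the equivariance of the weave construction under ambient disk diffeomorphisms and requires no case analysis beyond noting that $R_{\theta_0}$ on $\sphere^3\times\R_u$ lifts to rotation on the $(\theta,p_\theta,z)$ coordinates fixing $z$.
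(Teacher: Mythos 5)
Your proposal is correct, and it is essentially the argument the paper intends: the paper states this lemma without proof, introducing it only as one of several ``obvious but important observations,'' with all the needed coordinate computations already supplied in \S\ref{sec:geometric setup} and Lemma~\ref{lem:rotation and conjugation}. The only point worth flagging is that $\Legendrian(\ngraph)$ is defined only up to choices (of $\ngraph$-compatible cover and of weaving of the local wavefronts), so strict invariance of $L(\ngraph)$ — rather than invariance up to exact Lagrangian isotopy — requires making these choices equivariantly; this is routine and implicit in your appeal to the equivariance of the weave construction, but should be said.
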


\begin{lemma}\label{lemma:initial Ngraphs are G-invariant}
The $N$-graphs with cycles $(\ngraph(\dynX), \nbasis(\dynX))$ for $\dynX=\dynA_{2n-1}, \dynD_4, \exdynE_6, \exdynD_{2n\ge 6}, \exdynD_4$ and the degenerate $N$-graphs with cycles $(\tilde\ngraph(\dynX), \nbasis(\dynX))$ for $\dynX=\dynD_{n+1}, \dynE_6, \exdynE_6, \exdynE_7, \exdynD_4$ are all invariant under the $G$-action.
\end{lemma}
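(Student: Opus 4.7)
The plan is to verify $G$-invariance case by case, directly from the explicit pictures already drawn for each $N$-graph, relying on the fact that all the relevant symmetries act geometrically on $\disk^2$ (or its rectangular model) in a manifest way: rotations act as rotations of the underlying surface, and the conjugation $\eta$ acts, after the coordinate identifications of \S\ref{sec:geometric setup} and Lemma~\ref{lem:rotation and conjugation}, as the reflection $(\theta, p_\theta, z)\mapsto(\theta,-p_\theta,-z)$ which, at the level of fronts/$N$-graphs on the disk, is a reflection across the horizontal diameter.

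First I would dispatch the rotation cases. For $\ngraph(\dynA_{2n-1}) = \ngraph(1,n,n)$, Lemma~\ref{lemma:stabilized An} identifies this (up to stabilization) with the linear $N$-graph of Figure~\ref{figure:linear N-graph}, and the earlier observation already noted that when the index is odd this graph admits a $\pi$-rotation swapping the cycles $\cycle_i\leftrightarrow \cycle_{2n-i}$; this gives the claim. For $\ngraph(\dynD_4)=\ngraph(2,2,2)$ and $\ngraph(\exdynE_6)=\ngraph(3,3,3)$ we invoke directly the observation that $\ngraph(a,a,a)$ is invariant under $2\pi/3$-rotation, which is immediate from the symmetric picture Figure~\ref{figure:tripod N-graph}; the chosen cycles $\nbasis(a,a,a)$ are permuted cyclically by the rotation, which is exactly the allowed relabeling. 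The cases $\ngraph(\exdynD_{2n})$ with $n\ge 3$ and $\ngraph(\exdynD_4)$ are the $4$-graphs in Figure~\ref{figure:4-graph of type affine Dn}; the picture for $\exdynD_{2n}$ is drawn with a manifest $\pi$-rotation about its center, and the cycles $\nbasis(\exdynD_{2n})$ are distributed symmetrically about that center, so the $\pi$-rotation permutes them.

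Next I would handle the conjugation cases. Each of $\tilde\ngraph(\dynD_{n+1})$, $\tilde\ngraph(\dynE_6)$, $\tilde\ngraph(\exdynE_6)$, $\tilde\ngraph(\exdynE_7)$ is a degenerate $4$-graph of the form $\tilde\ngraph(a,b,b)$, drawn in Table~\ref{table:degenerated 4-graphs}. These pictures are manifestly symmetric under reflection across the horizontal axis through the central vertex; combined with the fact that $\tilde\beta(a,b,b)$ was shown in Corollary~\ref{corollary:invariance under conjugation} to be conjugation-invariant, this shows that the underlying $4$-graph is conjugation-invariant. The cycles in the perturbation (second row of Table~\ref{table:degenerated 4-graphs}) come in pairs related by the reflection, so $\eta$ permutes $\nbasis$ as required. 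Finally $\tilde\ngraph(\exdynD_4)$ is manifestly symmetric across both axes of the drawing, so in particular it is conjugation-invariant with the obvious pairing of cycles.

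The hard part will not be any individual verification, which is visual, but the bookkeeping: for each pair $(\ngraph,\nbasis)$ one must check not only that the underlying $N$-graph is preserved set-wise but also that the set $\nbasis$ of good cycles is sent to itself (up to the admissible relabeling $\cycle\leftrightarrow g\cdot\cycle$), and that this relabeling matches the orbit structure of the corresponding Dynkin diagram under the folding data in Table~\ref{table:foldings}. I would package this as a short table, one row per triple $(\dynX,G,\dynY)$, listing the explicit bijection $\nbasis\to g(\nbasis)$ induced by the symmetry and noting that it realizes the required orbit map on vertices of $\quiver(\dynX)$; the invariance of $\ngraph(\dynX)$ or $\tilde\ngraph(\dynX)$ then follows from Definition~\ref{def:equiv on N-graph and N-basis}.
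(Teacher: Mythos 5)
Your treatment of the rotation cases is fine and matches what the paper intends: $R_{\theta_0}$ genuinely acts by rotating $\disk^2$, so invariance of $\ngraph(1,n,n)$ (equivalently the linear graph with odd index), $\ngraph(a,a,a)$, and $\ngraph(\exdynD_{2n})$ follows from the earlier symmetry lemmas together with the observation that the chosen cycles $\nbasis$ are permuted as required by the orbit structure in Table~\ref{table:foldings}.

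The conjugation half, however, rests on a misidentification of the action. By Lemma~\ref{lem:rotation and conjugation} and the discussion in \S\ref{sec:geometric setup}, $\eta$ acts on the front by $(\theta,z)\mapsto(\theta,-z)$: it fixes the base $\disk^2$ pointwise and reverses the order of the sheets of the weave, so on an $N$-graph it sends $(\ngraph_1,\dots,\ngraph_{N-1})$ to $(\ngraph_{N-1},\dots,\ngraph_1)$ as subsets of the \emph{same} disk — for a $4$-graph it swaps $\ngraph_1\leftrightarrow\ngraph_3$ (blue $\leftrightarrow$ green) and fixes $\ngraph_2$. It is \emph{not} a reflection of the planar picture across a horizontal diameter, and in fact the drawings of $\tilde\ngraph(a,b,b)$ in Table~\ref{table:degenerated 4-graphs} have no such mirror symmetry (the red tree with $a$ legs sits in the lower-left only), so the "manifest reflection symmetry" you invoke is false as stated. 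The correct mechanism — and the entire reason the degenerate presentation is introduced — is that in $\tilde\ngraph(a,b,b)$ and $\tilde\ngraph(\exdynD_4)$ the subgraphs $\ngraph_1$ and $\ngraph_3$ coincide as subsets of $\disk^2$ (this is exactly what the doubled edges encode), so the swap $\ngraph_1\leftrightarrow\ngraph_3$ acts trivially on the underlying $N$-graph; the cycles of $\nbasis$ supported on doubled edges and on $\ngraph_2$ are then either fixed or exchanged in the pairs prescribed by the folding (e.g.\ the two short cycles at the branch of $\tilde\ngraph(\dynD_{n+1})$), which is the admissible relabeling. You should replace the reflection argument by this colour-swap argument; the bookkeeping table you propose at the end is then the right way to finish.
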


\begin{lemma}\label{lemma:mutation preserves invariance}
Suppose that $g\in G$ acts on $(\ngraph, \nbasis)$.
If the Legendrian mutation $\mutation_{\cycle}(\ngraph, \nbasis)$ is realizable, then
\[
\mutation_{g(\cycle)}\left(g\cdot(\ngraph,\nbasis)\right) = g\cdot\left(\mutation_{\cycle}(\ngraph,\nbasis)\right).
\]
In particular, for a $G$-orbit $I\subset\nbasis$ consists of pairwise disjoint cycles, if $(\ngraph,\nbasis)$ is $G$-invariant and the Legendrian orbit mutation $\mutation_{I}(\ngraph,\nbasis)$ is realizable, then $\mutation_I(\ngraph,\nbasis)$ is $G$-invariant as well.
\end{lemma}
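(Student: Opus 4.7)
The plan is to establish the equivariance statement by a purely local check on the $N$-graph, exploiting that both the rotation and conjugation actions are induced by contactomorphisms of the ambient contact manifold (as recorded in Lemma~\ref{lem:rotation and conjugation}) and therefore preserve all the combinatorial data that enters the definition of a Legendrian mutation.

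First, I would record how $g\in G$ acts on the basic pieces of an $N$-graph. For a rotation $g=R_\theta$, the action on $\disk^2$ is a planar rotation, so it carries each $\ngraph_i$ to $\ngraph_i$, sends trivalent vertices to trivalent vertices, hexagonal points to hexagonal points, and an $\sfI$- or $\sfY$-cycle $\cycle$ to a cycle $g(\cycle)$ of the same type. For conjugation $g=\eta$, Lemma~\ref{lem:rotation and conjugation} gives the $z$-flip $(\theta,p_\theta,z)\mapsto(\theta,-p_\theta,-z)$, which reverses the sheet labels, so $\ngraph=(\ngraph_1,\dots,\ngraph_{N-1})$ is sent to $\bar\ngraph=(\ngraph_{N-1},\dots,\ngraph_1)$; again trivalent vertices, hexagonal points, and (long) $\sfI$- and $\sfY$-cycles are preserved in type, with color labels inverted.

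Second, I would verify the local equivariance of the Legendrian mutation move itself. By Definition~\ref{def:legendrian mutation} and Figure~\ref{fig:Legendrian mutation on N-graphs}, the mutation at $\cycle$ is a local replacement whose pattern depends only on the local combinatorial picture (two consecutive color classes for an $\sfI$-cycle, three consecutive classes for a $\sfY$-cycle). For rotations this picture is rotation-invariant, so $\mutation_{g(\cycle)}(g\cdot\ngraph)=g\cdot\mutation_\cycle(\ngraph)$ is immediate. For conjugation, the color-swap $i\leftrightarrow N-i$ preserves the relation ``consecutive'', leaves the $\sfI$-cycle mutation picture (Figure~\ref{figure:I-mutation}) invariant, and exchanges the upper and lower $\sfY$-cycle pictures (Figures~\ref{figure:Y-cycle_1} and~\ref{figure:Y-cycle_2}); since the corresponding mutation moves (Figure~\ref{figure:Y-mutation}) are defined in matching fashion on both types, the same identity holds. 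Because $g$ is a contactomorphism, it preserves Reeb chords and freeness, so realizability of $\mutation_\cycle$ on $\ngraph$ transports to realizability of $\mutation_{g(\cycle)}$ on $g\cdot\ngraph$ without further hypothesis.

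For the second assertion, let $I\subset\nbasis$ be a mutable $G$-orbit of pairwise disjoint cycles in a $G$-invariant pair $(\ngraph,\nbasis)$. The disjointness of cycles in $I$ together with Remark~\ref{rmk_mutation_commutes} shows that $\mutation_I=\prod_{\cycle\in I}\mutation_\cycle$ is well-defined independent of ordering. Applying the first assertion cycle by cycle and using $g(I)=I$,
\[
g\cdot\mutation_I(\ngraph,\nbasis)=\mutation_{g(I)}(g\cdot(\ngraph,\nbasis))=\mutation_I(\ngraph,\nbasis),
\]
which gives the $G$-invariance of $\mutation_I(\ngraph,\nbasis)$. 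The main obstacle is the local verification under conjugation for $\sfY$-cycles, where the color inversion genuinely swaps the upper/lower pictures; one must carefully match the flag data at the boundary (in the sense of Section~\ref{sec:flag moduli spaces}) to confirm that the two resulting Legendrian surfaces — not just their underlying $N$-graph shapes — agree.
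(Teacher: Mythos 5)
The paper offers no proof of this lemma at all: it is grouped with Lemmas~\ref{lemma:Lagrangian fillings with symmetry} and~\ref{lemma:initial Ngraphs are G-invariant} under the heading ``obvious but important observations.'' Your write-up is therefore a legitimate filling-in of details rather than a divergence from the paper, and the argument you give is correct and is essentially the only natural one: the group element acts either by a planar rotation of $\disk^2$ or by the conjugation reversing the tuple $(\ngraph_1,\dots,\ngraph_{N-1})$, both of which preserve the local combinatorial pattern defining the mutation move, so the mutation at $g(\cycle)$ of $g\cdot(\ngraph,\nbasis)$ is the $g$-image of the mutation at $\cycle$; the orbit statement then follows from $g(I)=I$ together with order-independence of $\mutation_I$ for pairwise disjoint cycles (Remark~\ref{rmk_mutation_commutes}). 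You correctly isolate the one point that deserves care, namely that conjugation swaps upper and lower $\sfY$-cycles, so one must check the two $\sfY$-mutation local models are exchanged compatibly. One small caveat: since the $\sfY$-cycle mutation is only specified up to Moves~\Move{I} and~\Move{II} (it is defined as a composition of such moves with an $\sfI$-cycle mutation), the displayed equality is most safely read at the level of equivalence classes $[\ngraph,\nbasis]$ in the sense of Definition~\ref{def:equiv on N-graph and N-basis}; this is all that is used downstream, so it costs nothing. Your closing remark about matching flag data is not needed for the statement as written, which concerns only the pair $(\ngraph,\nbasis)$.
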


On the other hand, if we have a $G$-invariant $N$-graph $(\ngraph, \nbasis)$ with cycles, it gives us a $G$-admissible quiver $\quiver(\ngraph, \nbasis)$.
\begin{lemma}\label{lemma:G-invariant Ngraphs imply G-admissible quivers}
Let $(\ngraph, \nbasis)$ be a $G$-invariant $N$-graph with cycles.
Then the quiver $\quiver(\ngraph, \nbasis)$ is globally foldable.
\end{lemma}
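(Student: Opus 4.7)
The plan is to verify each of the three defining conditions of $G$-admissibility from Definition \ref{definition:admissible quiver} in turn, using the fact that the geometric $G$-action on $(\ngraph, \nbasis)$ is induced by either a rotation of $J^1\disk^2$ (a symplectomorphism) or the conjugation $\eta$ (an anti-symplectomorphism). In either case the induced map on $H_1(\Legendrian(\ngraph))$ preserves the algebraic intersection pairing up to a sign $\varepsilon(g) \in \{\pm 1\}$, so the adjacency matrix $\qbasis(\quiver(\ngraph,\nbasis)) = (b_{i,j})$ satisfies $b_{g(i),g(j)} = \varepsilon(g)\, b_{i,j}$ for every $g \in G$. In particular $\quiver(\ngraph,\nbasis)$ is itself $G$-invariant.

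Condition \eqref{mutable} of $G$-admissibility is automatic, since every cycle in $\nbasis$ contributes a mutable vertex to $\quiver(\ngraph,\nbasis)$ by the construction in \S\ref{sec:N-graphs and seeds}. Condition \eqref{bii'=0}, that $b_{i, g^k(i)} = 0$ for every $g^k \neq 1$, will be dispatched as follows. When $G$ has order two (for both the $\pi$-rotation and the conjugation cases in Table \ref{table:foldings}), the identity $b_{g(i),i} = \varepsilon(g)\, b_{i, g(i)}$ combined with the antisymmetry $b_{g(i),i} = -b_{i,g(i)}$ forces $b_{i,g(i)} = 0$ once one checks that the $\varepsilon(g) = -1$ case (conjugation) and the $\varepsilon(g) = +1$ case both land on a constraint of the form $b_{i,g(i)} = -b_{i,g(i)}$ after using $g^2 = 1$. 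For $G = \Z/3\Z$ the argument is instead geometric: the $G$-translates of any cycle in $\nbasis$ occupy disjoint angular sectors of $\disk^2$ in every $G$-invariant $N$-graph under consideration, so $b_{i, g(i)} = 0$ by disjointness of representatives.

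Condition \eqref{nonnegativity_of_bijbi'j}, the sign compatibility $b_{i,j}\, b_{i',j} \ge 0$ for $i \sim i'$ and every mutable $j$, is the substantive part. For the finite simply-laced types $\dynX \in \{\dynA_{2n-1}, \dynD_{n+1}, \dynE_6, \dynD_4\}$, the underlying quiver $\quiver(\ngraph,\nbasis)$ is of type $\dynX$, so we simply invoke Theorem \ref{theorem:G-invariance and G-admissibility}, which supplies precisely this conclusion for any $G$-invariant quiver of type $\dynADE$. For the affine types $\dynX \in \{\exdynD_{2n}, \exdynD_4, \exdynE_6, \exdynE_7\}$ appearing in Table \ref{table:foldings}, we reduce to the finite case by observing that the proof of Theorem \ref{theorem:G-invariance and G-admissibility} in Appendix \ref{section:invariance and admissibility} is a local argument on the Dynkin diagram that depends only on the shape of a $G$-orbit and its neighbours; the extra affine node and its incident edges carry the same $G$-invariant combinatorial structure as a subdiagram of finite type, so the same local check applies.

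The main obstacle is condition \eqref{nonnegativity_of_bijbi'j} in the affine case: conditions \eqref{mutable} and \eqref{bii'=0} are essentially formal, but the sign compatibility genuinely depends on the orientations of arrows at the boundary of a $G$-orbit. In finite type this is handled case-by-case in Appendix \ref{section:invariance and admissibility}; the affine extension requires only checking that no new configuration of arrows at the affine node violates the inequality, which can be read off directly from the affine Dynkin diagrams in Table \ref{table_standard_affine}.
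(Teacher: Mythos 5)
Your overall strategy (check the three conditions of Definition~\ref{definition:admissible quiver} directly, reducing condition~\eqref{nonnegativity_of_bijbi'j} to a structural result about $G$-invariant quivers) is the right shape, and your treatment of the finite types via Theorem~\ref{theorem:G-invariance and G-admissibility} matches what the paper does in spirit. However, there is a genuine gap in the affine case. You claim that the proof of Theorem~\ref{theorem:G-invariance and G-admissibility} in Appendix~\ref{section:invariance and admissibility} is ``a local argument on the Dynkin diagram'' that transfers to the affine node. It is not local: for $\dynA_{2n-1}$ it uses that every minimal cycle in a type-$\dynA$ quiver has length $3$ (a fact about triangulations of polygons), for $\dynD_4$ it uses the finite classification of the mutation class, and for $\dynE_6$ it uses $|b_{i,j}|\le 1$ (valid only in finite type by Fomin--Zelevinsky) together with explicit mutation sequences landing outside finite type. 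None of these transfers to $\exdynD_{2n}$, $\exdynD_4$, $\exdynE_6$, $\exdynE_7$, where mutation classes contain multiple arrows and the finite-type structure theory fails. The paper closes this gap differently: it observes only that $\quiver(\ngraph,\nbasis)$ is $G$-invariant and then invokes Theorem~\ref{thm_invariant_seeds_form_folded_pattern} (cited from \cite{AL2021}), which asserts $G$-invariance $\Rightarrow$ $G$-admissibility uniformly for every column of Table~\ref{figure:all possible foldings}, affine cases included. Without that external input (or a genuinely new affine argument), your proof of condition~\eqref{nonnegativity_of_bijbi'j} in the affine cases does not go through.

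Two smaller problems. First, your sign bookkeeping $b_{g(i),g(j)}=\varepsilon(g)\,b_{i,j}$ with $\varepsilon(g)=-1$ allowed for conjugation is inconsistent with the conclusion: if $\varepsilon(g)=-1$ and $j$ is $g$-fixed, then $b_{i,j}b_{g(i),j}=-b_{i,j}^2\le 0$, contradicting condition~\eqref{nonnegativity_of_bijbi'j} whenever $b_{i,j}\neq 0$ (e.g.\ the two branch vertices of $\dynD_{n+1}$); and in that case the relation $b_{g(i),i}=\varepsilon(g)b_{i,g(i)}$ coincides with skew-symmetry and yields no constraint, so your claim that ``both cases land on $b=-b$'' is false. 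The $G$-invariance of $(\ngraph,\nbasis)$ in the paper's sense forces $\varepsilon(g)=+1$ outright, and you should say so rather than branch on a sign. Second, your $\Z/3\Z$ argument for condition~\eqref{bii'=0} via ``disjoint angular sectors'' only applies to the initial $N$-graphs, whereas the lemma is stated for an arbitrary $G$-invariant $(\ngraph,\nbasis)$; after mutations the orbit of a cycle need not have disjoint representatives, and the correct argument is the combinatorial one (a nonzero $b_{i,\tau(i)}$ would force an oriented $3$-cycle on the orbit, excluded as in Proposition~\ref{proposition:admissibility for D4}), or again the citation of Theorem~\ref{thm_invariant_seeds_form_folded_pattern}.
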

\begin{proof}
By definition of $G$-invariance of $(\ngraph, \nbasis)$, it is obvious that the quiver $\quiver=\quiver(\ngraph, \nbasis)$ is $G$-invariant.
On the other hand, since $\dynX$ is either a finite or an affine Dynkin diagram, the $G$-invariance of the quiver $\quiver$ implies the globally foldability of $\quiver$ by Corollary~\ref{corollary_G-invariance_and_globally_foldable}. Hence the result follows.
\end{proof}

\begin{proposition}\label{proposition:G-invariant Ngraphs}
For each $Y$-seed $(\bfy',\qbasispr')$ of type $\dynY$, there exists a $G$-invariant $N$-graph with cycles $(\ngraph, \nbasis)$ of type $\dynX$ such that
\[
\Psi(\ngraph, \nbasis)^G = (\bfy',\qbasispr').
\]
\end{proposition}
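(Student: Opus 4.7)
The plan is to transport the folding statement on the cluster side (Theorem~\ref{thm_invariant_seeds_form_folded_pattern}) to the $N$-graph side using the equivariance of $\Psi$ under Legendrian mutation (Proposition~\ref{proposition:equivariance of mutations}) together with the $G$-equivariance of Legendrian mutations on $G$-orbits (Lemma~\ref{lemma:mutation preserves invariance}). Concretely, pick the initial $G$-invariant $N$-graph $(\ngraph_{t_0},\nbasis_{t_0})$ from Lemma~\ref{lemma:initial Ngraphs are G-invariant}: in the rotation cases this is $\ngraph(\dynX)$ for $\dynX\in\{\dynA_{2n-1},\dynD_4,\exdynE_6,\exdynD_{2n},\exdynD_4\}$, and in the conjugation cases this is the degenerate $N$-graph $\tilde\ngraph(\dynX)$ for $\dynX\in\{\dynD_{n+1},\dynE_6,\exdynE_6,\exdynE_7,\exdynD_4\}$. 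By Lemma~\ref{lemma:G-invariant Ngraphs imply G-admissible quivers} the initial $Y$-seed $(\bfy_{t_0},\qbasispr_{t_0})=\Psi(\ngraph_{t_0},\nbasis_{t_0})$ is $G$-admissible, and by inspection its folded quiver $\qbasispr_{t_0}^G$ is the bipartite skew-symmetrizable matrix of type $\dynY$. Hence $\Psi(\ngraph_{t_0},\nbasis_{t_0})^G$ is the initial $Y$-seed of the $\dynY$-type cluster pattern associated with $(\bfy',\qbasispr')$.

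Given an arbitrary $Y$-seed $(\bfy',\qbasispr')$ of type $\dynY$, Theorem~\ref{thm_invariant_seeds_form_folded_pattern} (via Proposition~\ref{proposition:folded cluster pattern}) guarantees a sequence of mutable $G$-orbits $I_1,\dots,I_L$ such that
\[
(\bfy',\qbasispr')=(\mutation_{I_L}\cdots\mutation_{I_1})\bigl(\Psi(\ngraph_{t_0},\nbasis_{t_0})^G\bigr),
\]
where $\mutation_{I_k}=\prod_{i\in I_k}\mutation_i$ is a composition of pairwise commuting mutations. I then aim to define
\[
(\ngraph,\nbasis)\colonequals (\mutation_{I_L}\cdots\mutation_{I_1})(\ngraph_{t_0},\nbasis_{t_0})
\]
by the same sequence of Legendrian orbit mutations. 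Three facts then combine to finish the argument: (i)~by Proposition~\ref{proposition:realizability} every single Legendrian mutation appearing in this sequence is realizable, so that the composition makes sense and each intermediate $N$-graph is well-defined; (ii)~by Lemma~\ref{lemma:mutation preserves invariance}, provided the cycles in each orbit $I_k$ are pairwise disjoint at the step they are applied, the result of each $\mutation_{I_k}$ is again $G$-invariant, so $(\ngraph,\nbasis)$ is $G$-invariant; and (iii)~by Proposition~\ref{proposition:equivariance of mutations} the assignment $\Psi$ intertwines Legendrian and $Y$-seed mutation, hence $\Psi(\ngraph,\nbasis)=(\mutation_{I_L}\cdots\mutation_{I_1})\Psi(\ngraph_{t_0},\nbasis_{t_0})$, and folding commutes with orbit mutation so that $\Psi(\ngraph,\nbasis)^G=(\bfy',\qbasispr')$.

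The main obstacle is the technical hypothesis in (ii): we must verify at every step that the cycles in the orbit $I_k$ remain pairwise disjoint (equivalently, that there is no arrow in the quiver between any two cycles of the orbit), for otherwise the composition $\prod_{i\in I_k}\mutation_i$ is not well-defined as a Legendrian operation. In the initial $N$-graph this disjointness is visible from the pictures in Figures~\ref{figure:N-graph with rotational symmetry} and~\ref{figure:N-graph with conjugation symmetry} (mutable $G$-orbits consist of well-separated $\sfI$- or $\sfY$-cycles), and is encoded algebraically by the $G$-admissibility condition $b_{i,i'}=0$ for $i\sim i'$ mutable. Since $G$-admissibility is preserved under orbit mutations along the folding procedure (by Theorem~\ref{thm_invariant_seeds_form_folded_pattern}) and since $\Psi$ identifies geometric with algebraic intersection numbers for the good cycles in our bipartite $N$-graphs, the disjointness persists after every $\mutation_{I_k}$. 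A secondary obstacle is that for the conjugation cases we begin with degenerate $N$-graphs $\tilde\ngraph(\dynX)$ rather than the generic $\ngraph(\dynX)$; this is handled by invoking Lemmas~\ref{lemma:tripod to degenerated Ngraph} (and its analogues for $\exdynD_4$, $\dynA_{2n-1}$), which show that $\tilde\ngraph(\dynX)$ is $\boundary$-Legendrian isotopic, after further Legendrian mutations, to a stabilization of $\ngraph(\dynX)$, so that the realizability of Proposition~\ref{proposition:realizability} transfers verbatim to the degenerate setting via Remarks~\ref{remark:boundary-Legendrian isotopy} and~\ref{remark:Stabilization}.
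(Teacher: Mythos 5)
The overall architecture is right — start from the $G$-invariant initial $N$-graph, lift a mutation path from the folded $\dynY$-pattern to a sequence of $G$-orbit mutations of type $\dynX$ via Theorem~\ref{thm_invariant_seeds_form_folded_pattern}, and push it through $\Psi$ using Proposition~\ref{proposition:equivariance of mutations} and Lemma~\ref{lemma:mutation preserves invariance}. But there is a genuine gap at your step (i). You claim that \emph{every} Legendrian mutation in the arbitrary orbit sequence $\mutation_{I_L}\cdots\mutation_{I_1}$ is realizable "by Proposition~\ref{proposition:realizability}". That proposition only asserts that each $Y$-seed is realized by \emph{some} $N$-graph; it does not assert that an arbitrary sequence of Legendrian mutations starting from $(\ngraph_{t_0},\nbasis_{t_0})$ is realizable step by step, nor that the $N$-graph realizing an intermediate seed is the one obtained by your chosen sequence (let alone that it is $G$-invariant). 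Whether a given cycle remains a good ($\sfI$- or $\sfY$-) cycle after mutation is exactly the open geometric issue the paper flags in the introduction and circumvents; you cannot outsource it to the statement of Proposition~\ref{proposition:realizability}, whose own proof only establishes realizability for sequences in a specific normal form. Relatedly, your claim that "disjointness persists after every $\mutation_{I_k}$ since $\Psi$ identifies geometric with algebraic intersection numbers" is asserted, not proved: that identification is one of the special properties verified only for the initial bipartite $N$-graphs and is not known to survive arbitrary mutations.

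What the paper does instead, and what your proof is missing, is the normal form from Lemma~\ref{lemma:normal form}: every $Y$-seed of type $\dynY$ can be written as $(\mutation_{j_L}\cdots\mutation_{j_1})\bigl(\qcoxeter^{\,r}(\bfy_{t_0},\qbasispr_{t_0})^G\bigr)$ with the indices $j_1,\dots,j_L$ missing at least one vertex. The lift then consists of (a) powers of the Legendrian Coxeter mutation, which are realized explicitly by stacking Coxeter paddings (Lemma~\ref{lemma:Legendriam Coxeter mutation of type An}, Propositions~\ref{proposition:effect of Legendrian Coxeter mutation}, \ref{proposition:coxeter realization D-type}, \ref{proposition:coxeter realization denegerate type} — the last one covering the degenerate $N$-graphs directly, without detouring through stabilizations), followed by (b) orbit mutations that avoid one whole $G$-orbit $I$, which lets one cut the $N$-graph along the cycles of $I$ into lower-rank pieces and conclude by induction on the rank of the root system. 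Without (a) and (b) your construction of $(\ngraph,\nbasis)$ is not known to be well-defined, so the proof does not close.
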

\begin{proof}
We use a similar argument as in the proof of Proposition~\ref{proposition:realizability}.
For each $\dynX$, let $(\ngraph_{t_0},\nbasis_{t_0})$ be the $N$-graph with cycles defined as follows:
\begin{align*}
(\ngraph_{t_0},\nbasis_{t_0}) \colonequals
\begin{cases}
(\ngraph(\dynX),\nbasis(\dynX)) & \text{ if } \dynX=\dynA_{2n-1}, \dynD_4, \exdynE_6, \exdynD_{2n\ge 6}, \exdynD_4, G\text{ acts as rotation};\\
(\tilde\ngraph(\dynX),\nbasis(\dynX)) & \text{ if } \dynX=\dynD_{n+1}, \dynE_6, \exdynE_6, \exdynE_7, \exdynD_4, G\text{ acts as conjugation}.
\end{cases}
\end{align*}

We regard the $Y$-seed defined by $(\ngraph_{t_0},\nbasis_{t_0})$ as the intial seed $(\bfy_{t_0}, \qbasispr_{t_0})$
\[
(\bfy_{t_0}, \qbasispr_{t_0})=\Psi(\ngraph_{t_0},\nbasis_{t_0}).
\]
As seen in Lemma~\ref{lemma:initial Ngraphs are G-invariant}, the tuple~$(\ngraph_{t_0},\nbasis_{t_0})$ of the $N$-graph with cycles is $G$-invariant. Therefore, the quiver $\quiver(\ngraph_{t_0},\nbasis_{t_0})$ is globally foldable by Lemma~\ref{lemma:G-invariant Ngraphs imply G-admissible quivers}.
Therefore, we have the folded seed $(\bfy_{t_0}, \qbasispr_{t_0})^G$ which plays the role of the initial seed of the $Y$-pattern of type~$\dynY$.

Let $(\bfy', \qbasispr')$ be an $Y$-seed of the $Y$-pattern of type $\dynY$.
By Lemma~\ref{lemma:normal form}, there exist $r\in \Z$ and a sequence of mutations $\mutation_{j_1}^\dynY, \dots, \mutation_{j_L}^\dynY$ such that 
\[
(\bfy', \qbasispr') = (\mutation_{j_L}^\dynY\cdots\mutation_{j_1}^\dynY)
((\mutation_\quiver^\dynY)^r((\bfy_{t_0}, \qbasispr_{t_0})^G)).
\]
Moreover, the indices $j_1,\dots, j_L$ misses at least one index, say $i$.

Then Theorem~\ref{thm_invariant_seeds_form_folded_pattern} implies the existence of the $G$-admissible $Y$-seed $(\bfy, \qbasispr)$ of type $\dynX$ such that $(\bfy,\qbasispr)^G=(\bfy',\qbasispr')$ and
\[
(\bfy,\qbasispr) = (\mutation_{I_L}^\dynX\cdots\mutation_{I_1}^\dynX)
((\mutation_\quiver^\dynX)^r(\bfy_{t_0},\qbasispr_{t_0})),
\]
where $I_k$ is $G$-orbit corresponding to $j_k$ for each $1\le k\le L$.
It suffices to prove that the $N$-graph
\[
(\ngraph, \nbasis)=(\mutation_{I_L}\cdots\mutation_{I_1})((\ncoxeter)^r(\ngraph_{t_0},\nbasis_{t_0}))
\]
is well-defined and $G$-invariant so that $(\bfy, \qbasispr)=\Psi(\ngraph,\nbasis)$ is $G$-admissible by Proposition~\ref{proposition:equivariance of mutations} as desired.

By Lemma~\ref{lemma:Legendriam Coxeter mutation of type An}, Propositions~\ref{proposition:effect of Legendrian Coxeter mutation}, \ref{proposition:coxeter realization D-type} and \ref{proposition:coxeter realization denegerate type}, the Legendrian Coxeter mutation $\ncoxeter^r(\ngraph_{t_0}, \nbasis_{t_0})$ is realizable so that
\[
\Psi(\ncoxeter^r(\ngraph_{t_0}, \nbasis_{t_0}))
=(\mutation_\quiver)^r(\bfy_{t_0},\qbasispr_{t_0}).
\]

Since $\ncoxeter^r(\ngraph_{t_0}, \nbasis_{t_0})$ is the concatenation of Coxeter paddings on the initial $N$-graph $(\ngraph_{t_0},\nbasis_{t_0})$,
the realizability of $(\mutation_{I_L}\cdots\mutation_{I_1})(\ngraph_{t_0}, \nbasis_{t_0})$ by Legendrian mutations implies that our desired mutation $(\mutation_{I_L}\cdots\mutation_{I_1})(\ncoxeter^r(\ngraph_{t_0}, \nbasis_{t_0}))$ is also realizable by Legendrian mutations because the corresponding mutations of the former do not interact with the Coxeter padding.  

On the other hand, since the indices $j_1,\dots, j_L$ misses the index $i$, the orbits $I_1,\dots, I_L$ misses one orbit $I$ corresponding to $i$.
In other words, the sequence of mutations $\mutation_{I_1},\dots,\mutation_{I_L}$ can be performed inside the subgraph of the exchange graph $\exchange(\Roots(\dynX))$, which is isomorphic to $\exchange(\Roots(\dynX \setminus I))$.
Then the root system $\Roots(\dynX\setminus I)$ is decomposed into $\Roots(\dynX^{(1)}), \dots, \Roots(\dynX^{(\ell)})$, where $\dynX\setminus I = \dynX^{(1)}\cup\cdots\cup\dynX^{(\ell)}$.
Moreover, the sequence of mutations $\mutation_{I_1},\dots,\mutation_{I_L}$ can be decomposed into sequences $\mutation^{(1)},\dots, \mutation^{(\ell)}$ of mutations on $\dynX^{(1)},\dots,\dynX^{(\ell)}$.

Similarly, we may decompose the $N$-graph $(\ngraph_{t_0}, \nbasis_{t_0})$ into $N$-subgraphs 
\[
(\ngraph^{(1)}, \nbasis^{(1)}),\dots,(\ngraph^{(\ell)}, \nbasis^{(\ell)})
\]
along cycles in $I\subset\nbasis_{t_0}$ as done in the previous section.
Then the Legendrian mutation $(\mutation_{I_L}\cdots\mutation_{I_1})(\ngraph_{t_0},\nbasis_{t_0})$ is realizable if and only if so is $\mutation^{(j)}(\ngraph^{(j)},\nbasis^{(j)})$ for each $1\le j\le \ell$.
This can be done by induction on rank of the root system and so the $N$-graph $(\ngraph, \nbasis)$ with $\Psi(\ngraph,\nbasis)=(\bfy,\qbasispr)$ is well-defined.

Finally, the $G$-invariance of $(\ngraph, \nbasis)$ follows from Lemma~\ref{lemma:mutation preserves invariance}.
\end{proof}

\begin{theorem}[Folding of $N$-graphs]\label{thm:folding of N-graphs}
The following holds:
\begin{enumerate}
\item The Legendrian $\lambda(\dynA_{2n-1})$ has at least $\binom{2n}{n}$ distinct Lagrangian fillings up to exact Lagrangian isotopy \textup{(}rel boundary\textup{)} which are invariant under the $\pi$-rotation and  admit the $Y$-pattern of type $\dynB_n$.
item The Legendrian $\lambda(\dynD_{4})$ has at least $8$ distinct Lagrangian fillings up to exact Lagrangian isotopy \textup{(}rel boundary\textup{)} which are invariant under the $2\pi/3$-rotation and admit the $Y$-pattern of type $\dynG_2$.
\item The Legendrian $\lambda(\exdynE_{6})$ has  infinitely many distinct Lagrangian fillings up to exact Lagrangian isotopy \textup{(}rel boundary\textup{)} which are invariant under the $2\pi/3$-rotation and admit the $Y$-pattern of type $\exdynG_2$.
\item The Legendrian $\lambda(\exdynD_{2n})$ with $n\ge 3$ has infinitely many distinct Lagrangian fillings up to exact Lagrangian isotopy \textup{(}rel boundary\textup{)} which are invariant under the $\pi$-rotation and admit the $Y$-pattern of type $\exdynB_n$.
\item The Legendrian $\lambda(\exdynD_4)$ has infinitely many distinct Lagrangian fillings up to exact Lagrangian isotopy \textup{(}rel boundary\textup{)} which are invariant under the $\pi$-rotation and admit the $Y$-pattern of type $\exdynC_2$.
\item The Legendrian $\tilde\lambda(\dynE_{6})$ has at least $105$ distinct Lagrangian fillings up to exact Lagrangian isotopy \textup{(}rel boundary\textup{)} which are invariant under the antisymplectic involution and admit the $Y$-pattern of type $\dynF_4$.
\item The Legendrian $\tilde\lambda(\dynD_{n+1})$ has  at least $\binom{2n}{n}$ Lagrangian fillings up to exact Lagrangian isotopy \textup{(}rel boundary\textup{)} which are invariant under the antisymplectic involution and admit the $Y$-pattern of type $\dynC_n$.
\item The Legendrian $\tilde\lambda(\exdynE_{6})$ has infinitely many distinct Lagrangian fillings up to exact Lagrangian isotopy \textup{(}rel boundary\textup{)} which are invariant under the antisymplectic involution and admit the $Y$-pattern of type $\dynE_6^{(2)}$.
\item The Legendrian $\tilde\lambda(\exdynE_{7})$ has infinitely many distinct Lagrangian fillings up to exact Lagrangian isotopy \textup{(}rel boundary\textup{)} which are invariant under the antisymplectic involution and admit the $Y$-pattern of type $\exdynF_4$.
\item The Legendrian $\tilde\lambda(\exdynD_4)$ has infinitely many distinct Lagrangian fillings up to exact Lagrangian isotopy \textup{(}rel boundary\textup{)} which are invariant under the antisymplectic involution and admit the $Y$-pattern of type $\dynA_5^{(2)}$.
\end{enumerate}
\end{theorem}
\begin{proof}
Let $(\dynX, G,\dynY)$ be one of the triples in Table~\ref{table:foldings}.
By Proposition~\ref{proposition:G-invariant Ngraphs}, each $Y$-seed of the $Y$-pattern of type $\dynY$ is realizable by a $G$-invariant $N$-graph, which gives us a Lagrangian filling with a certain symmetry by Lemma~\ref{lemma:Lagrangian fillings with symmetry}.
This completes the proof.
\end{proof}

\addtocontents{toc}{\protect\setcounter{tocdepth}{1}}
\appendix
\section{\texorpdfstring{$G$}{G}-invariance and \texorpdfstring{$G$}{G}-admissiblity of finite type}
\label{section:invariance and admissibility}

In this section, we will provide a proof of Theorem~\ref{theorem:G-invariance and G-admissibility}. 
Recall from Definition~\ref{def_quiver_of_type_X} that for a finite or affine Dynkin type $\dynX$, 
a quiver $\quiver$ is \emph{of type $\dynX$} if it is mutation equivalent to an acyclic quiver whose underlying graph is isomorphic to the Dynkin diagram of type $\dynX$. 
\begin{lemma}\label{lemma:no weird cycles in An}
Let $\quiver$ be a quiver of type $\dynA_{2n-1}$.
Suppose that $\quiver$ is invariant under an action of $G = \Z/2\Z$. 
Let $\tau \in \Z/2\Z$ be the generator of $G$.
Then there is no oriented cycle of the form
\[
j\to i\to\tau(j)\to\tau(i)\to j
\]
for any vertices $i$ and $j$ of $\quiver$ which are not invariant under $\tau$. 
Here, we are allowing any labelling of the vertices of $\quiver$.
\end{lemma}
\begin{proof}
It is well known that a quiver $\quiver$ of type $\dynA$ corresponds to a triangulation of a polygon, where diagonals and triangles define mutable vertices and arrows (cf.~\cite[Definition~2.2.1]{FWZ_chapter123}). More precisely, for a triangulation $T$ of an $(n+2)$-gon, the quiver $\quiver(T)$ is defined as follows. The frozen vertices of $\quiver(T)$ are labeled by the sides of the $(n+2)$-gon; and the mutable vertices of $\quiver(T)$ are labeled by the diagonals of $T$. If two diagonals, or a diagonal and a boundary segment, belong to the same triangle, we connect the corresponding vertices in $\quiver(T)$ by an arrow whose orientation is determined by the clockwise orientation of the boundary of the triangle. 
Therefore, any minimal cycle in $\quiver$ if exists is of length $3$, which is also proved in~\cite[\S2]{BuanVatne08}.
Hence if an oriented cycle $j\to i\to\tau(j)\to\tau(i)\to j$ of length $4$ exists, then there must be an edge connecting $i$ and $\tau(i)$; or an edge connecting $j$ and $\tau(j)$ in $\quiver$. Hence $b_{i,\tau(i)}\neq0$ or $b_{j,\tau(j)}\neq0$ for $\qbasispr=(b_{k,\ell})=\qbasispr(\quiver)$.

This is impossible because $\quiver$ is $\Z/2\Z$-invariant and so
\begin{equation}\label{equation_b_tauii_is_zero}
b_{i,\tau(i)} = b_{\tau(i),\tau(\tau(i))} = b_{\tau(i), i} = -b_{i,\tau(i)}\quad\Longrightarrow\quad
b_{i,\tau(i)}=0.
\end{equation}
Therefore we are done.
\end{proof}

\begin{proposition}\label{proposition:admissibility for An}
Let $\quiver$ be a quiver of type $\dynA_{2n-1}$, which is $\Z/2\Z$-invariant as above. Then $\quiver$ is $\Z/2\Z$-admissible.
\end{proposition}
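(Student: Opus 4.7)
The proposition asks us to verify the three conditions of Definition 2.18 for the quiver $\quiver$ under the action $\tau(i) = 2n-i$ on $[2n-1]$. Writing $\qbasispr(\quiver) = (b_{i,j})$, recall that $\Z/2\Z$-invariance of $\quiver$ translates into the identity $b_{\tau(i),\tau(j)} = b_{i,j}$ for all $i,j$. Since every vertex of a type-$\dynA_{2n-1}$ quiver is mutable, condition (a) holds trivially, so the work lies entirely in checking (b) and (c).

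For (b), I would argue exactly as in the proof of Lemma~A.1: for $i \neq n$, invariance gives $b_{i,\tau(i)} = b_{\tau(i),\tau(\tau(i))} = b_{\tau(i),i}$, and since $\qbasispr$ is skew-symmetric, this forces $b_{i,\tau(i)} = 0$. Hence no arrow joins a mutable pair $i \sim \tau(i)$.

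For (c), I would need to show $b_{i,j}\, b_{\tau(i),j} \ge 0$ for each pair $(i,j)$ with $j$ mutable. Invariance converts the second factor into $b_{i,\tau(j)}$, reducing the goal to showing that $b_{i,j}$ and $b_{i,\tau(j)}$ have a common sign (or one vanishes). I would dispatch the easy cases first: if $i = n$ then $\tau(i) = i$ and the product is a square; if $j = n$ then $\tau(j) = j$ and the two factors coincide; if $i \in \{j, \tau(j)\}$ then one of $b_{i,j}, b_{i,\tau(j)}$ is the diagonal entry $b_{j,j} = 0$. The remaining case is when $i, \tau(i), j, \tau(j)$ are four distinct indices, none equal to $n$. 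If $b_{i,j}$ and $b_{i,\tau(j)}$ were of opposite nonzero sign, then $\quiver$ would contain simultaneously the arrows $i \to j$ and $\tau(j) \to i$; applying $\tau$ and using invariance yields $\tau(i) \to \tau(j)$ and $j \to \tau(i)$, producing the oriented $4$-cycle
\[
j \to \tau(i) \to \tau(j) \to i \to j
\]
with $i, j \neq n$. This is precisely the configuration ruled out by Lemma~A.1, and the contradiction completes the verification of (c).

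The main subtlety is already absorbed in Lemma~A.1, whose proof exploits the fact that minimal cycles in a type-$\dynA$ quiver have length three (equivalently, triangulated-polygon combinatorics). Modulo that input, the remainder of the argument is a case analysis built entirely from invariance and skew-symmetry, so I anticipate no additional obstacles beyond bookkeeping to ensure that every pair $(i,j)$ with $j$ mutable is covered by one of the cases above.
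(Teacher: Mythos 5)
Your proof is correct and follows essentially the same route as the paper: condition (a) is trivial, condition (b) follows from combining invariance with skew-symmetry, and condition (c) reduces via invariance to ruling out the oriented $4$-cycle on $\{i,\tau(i),j,\tau(j)\}$, which is exactly what Lemma~\ref{lemma:no weird cycles in An} forbids. The only differences are cosmetic — you rewrite $b_{\tau(i),j}$ as $b_{i,\tau(j)}$ before the sign analysis and spell out the degenerate cases $i\in\{j,\tau(j)\}$, which the paper leaves implicit.
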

\begin{proof}
We will check the conditions~\eqref{mutable},~\eqref{bii'=0}, and~\eqref{nonnegativity_of_bijbi'j} for the admissibility according to Definition~\ref{definition:admissible quiver}.
Let $\tau$ be the generator of $\Z/2 \Z$ and $\qbasispr = (b_{i,j}) = \qbasispr(\quiver)$. 

\noindent \eqref{mutable} Since all vertices in $\quiver$ are mutable, the condition~\eqref{mutable} is obviously satisfied.

\noindent \eqref{bii'=0} The $\Z/2\Z$-invariance of $\quiver$ implies $b_{i,\tau(i)} = 0$ by~\eqref{equation_b_tauii_is_zero}. 

\noindent \eqref{nonnegativity_of_bijbi'j} Finally, we need to prove that for each $i, j$,
\[
b_{i,j}b_{\tau(i),j}\ge 0.
\]

If $j$ is invariant under the action of $\tau$, i.e., $\tau(j) = j$, then we have
\[
b_{i,j}b_{\tau(i),j}=b_{i,j}b_{\tau(i),\tau(j)} = b_{i,j}b_{i,j}\ge 0.
\]
Similarly, if $i$ is invariant under the action of $\tau$, i.e., $\tau(i) = i$, then 
\[
b_{i, j}b_{\tau(i),j} = b_{i,j}b_{i,j}\ge 0.
\]

Suppose that for some $i, j$, which are not invariant under $\tau$, we have
\[
b_{i,j}b_{\tau(i),j}<0.
\]
By changing the roles of $i$ and $\tau(i)$ if necessary, we may assume that $b_{i,j}<0<b_{\tau(i),j}$.
Then we also have
\[
b_{\tau(i),\tau(j)}<0<b_{i,\tau(j)},
\]
which implies that there is an oriented cycle in $\quiver$
\[
j\to i \to \tau(j) \to \tau(i) \to j.
\]
However, this contradicts to Lemma~\ref{lemma:no weird cycles in An} and therefore $\quiver$ satisfies all conditions in Definition~\ref{definition:admissible quiver}.
\end{proof}

\begin{proposition}\label{proposition:admissibility for D4}
Let $\quiver$ be a quiver of type $\dynD_{4}$, which is invariant under the $\Z/3\Z$-action given by
\begin{align*}
1&\stackrel{\tau}{\longleftrightarrow} 1,& 2&\stackrel{\tau}{\longrightarrow} 3\stackrel{\tau}{\longrightarrow} 4\stackrel{\tau}{\longrightarrow} 2.
\end{align*}
Here, we denote by $\tau$ the generator of $\Z/3\Z$ 
and we are allowing any labelling of the vertices of~$\quiver$.
Then the quiver $\quiver$ is $\Z/3\Z$-admissible.
\end{proposition}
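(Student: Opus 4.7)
The proof will verify the three conditions of Definition~\ref{definition:admissible quiver} in turn. Let $\qbasispr(\quiver) = (b_{i,j})$. Condition~\eqref{mutable} is trivial since all four vertices are mutable. By $\Z/3\Z$-invariance and skew-symmetry, the two integers
\[
b \colonequals b_{2,3} = b_{3,4} = b_{4,2}, \qquad c \colonequals b_{1,2} = b_{1,3} = b_{1,4}
\]
determine $\quiver$ completely. For condition~\eqref{nonnegativity_of_bijbi'j}, the case $j = 1$ reduces to $b_{i,1}b_{\tau(i),1} = b_{2,1}^2 \ge 0$, while the only nontrivial instance inside the orbit $\{2,3,4\}$ is $b_{2,4}b_{3,4} = -b^2$. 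Hence condition~\eqref{nonnegativity_of_bijbi'j} is equivalent to condition~\eqref{bii'=0}, namely $b = 0$, which is therefore the only real content of the proposition.

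The plan to show $b = 0$ proceeds by contradiction. If $b \ne 0$ and $c = 0$, then vertex $1$ is isolated in $\quiver$, contradicting the connectedness of any quiver of $\dynD_4$ type. So assume $b \ne 0$ and $c \ne 0$; then the underlying graph of $\quiver$ is the complete graph $K_4$, and the orientation is rigidly prescribed as a consistently-oriented tripod at vertex $1$ of weight $|c|$ superimposed on the oriented $3$-cycle on $\{2,3,4\}$ of weight $|b|$. The goal is to exhibit an explicit mutation sequence bringing $\quiver$ to some acyclic quiver $\quiver'$ whose Cartan counterpart $C(\qbasispr(\quiver'))$ is not isomorphic to the Cartan matrix of $\dynD_4$; Proposition~\ref{prop_quiver_of_same_type_are_mutation_equivalent} then yields the required contradiction.

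The main obstacle will be producing such an acyclic representative uniformly in $(b,c)$. For the base case $|b| = |c| = 1$, a direct computation shows that a short mutation sequence produces a representative $\quiver'$ with $|b'_{i,j}| = 2$ for some pair $(i,j)$, so that $C(\qbasispr(\quiver'))$ carries an off-diagonal entry $-2$ and cannot be the simply-laced $\dynD_4$ Cartan matrix. For larger $|b|$ or $|c|$ one may either lengthen the explicit sequence or invoke the uniform bound $|b_{i,j}| \le 2$ valid throughout the $\dynD_4$ mutation class. A conceptually cleaner alternative would exploit the finiteness of this mutation class ($50$ seeds, cf.~Table~\ref{table_seeds_and_cluster_variables}): starting from the bipartite quiver $\quiver(\dynD_4)$ (which trivially has $b=0$), both orbit mutations $\mutation_{\{1\}}$ and $\mutation_{\{2,3,4\}}$ preserve the condition $b=0$, since under $b=0$ the component mutations constituting $\mutation_{\{2,3,4\}}$ commute and introduce no edges within $\{2,3,4\}$, and the $\Z/3\Z$-invariance of the star at vertex $1$ forbids any two-arrow path through $1$ so that $\mutation_{\{1\}}$ merely reverses the orientations at $1$. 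A compactness argument inside the (finite) $\Z/3\Z$-invariant exchange subgraph would then show that these orbit-mutation-reachable quivers exhaust all $\Z/3\Z$-invariant quivers of $\dynD_4$ type, forcing $b = 0$.
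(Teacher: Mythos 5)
Your reduction of the proposition to the single claim $b_{2,3}=b_{3,4}=b_{4,2}=0$ is correct, and your observation that the nonnegativity condition restricted to the orbit $\{2,3,4\}$ already forces this (since $b_{2,4}b_{3,4}=-b^2$) is consistent with the paper, which likewise disposes of the mutability condition and the nonnegativity condition in one line each once $b=0$ is known. Where you genuinely diverge is in how $b=0$ is established. The paper argues that $b\neq 0$ makes the underlying graph of $\quiver$ either the complete graph $K_4$ (if $b_{1,2}\neq 0$) or disconnected (if $b_{1,2}=0$), and then simply cites the explicit list of quivers in the $\dynD_4$ mutation class due to Buan--Torkildsen, in which neither occurs. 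You instead propose to derive the contradiction internally by mutating the $K_4$ quiver to something visibly not of finite simply-laced type. That route does work and is more self-contained: for the quiver with arrows $1\to 2,3,4$ and $2\to 3\to 4\to 2$, all of weight $1$, a single mutation at vertex $2$ adds the arrows $1\to 3$ and $4\to 3$, producing a double arrow $1\Rightarrow 3$ after cancellation; since every seed of finite simply-laced type satisfies $|b_{i,j}|\le 1$ (the same fact the paper quotes from Fomin--Zelevinsky in the $\dynE_6$ lemma of this appendix), this contradicts Proposition~\ref{prop_quiver_of_same_type_are_mutation_equivalent}. You should actually exhibit this mutation rather than assert that ``a direct computation shows'' it; as written this is the one substantive step of your argument and it is left unexecuted.

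Two concrete corrections. First, the bound you invoke, $|b_{i,j}|\le 2$ ``throughout the $\dynD_4$ mutation class,'' is both false and too weak: the correct statement for a skew-symmetric seed of finite type is $|b_{i,j}|\le 1$, and it is this sharper bound that (i) makes the double arrow above an actual contradiction and (ii) eliminates the cases $|b|\ge 2$ or $|c|\ge 2$ outright, with no mutation needed. As you stated it, the bound would neither rule out $|b|=2$ directly nor make $|b'_{i,j}|=2$ a contradiction. Second, your ``conceptually cleaner alternative'' is circular: the assertion that orbit mutations from the bipartite quiver exhaust all $\Z/3\Z$-invariant quivers in the exchange graph is essentially the content of Theorem~\ref{thm_invariant_seeds_form_folded_pattern}, which presupposes the admissibility of invariant seeds --- the very statement being proved here. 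Drop that alternative and carry out the direct mutation argument.
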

\begin{proof}
\noindent \eqref{mutable} This is obvious as before.

\noindent \eqref{bii'=0} Let $\qbasispr=(b_{i,j})=\qbasispr(\quiver)$. Suppose that $b_{2,3}\neq0$. Since the quiver is $\Z/3\Z$-invariant,
\[
b_{2,3}=b_{3,4}=b_{4,2}\neq0
\]
and so $\quiver$ has a directed cycle either
\[
2\to3\to4\to 2\quad\text{or}\quad 2\to4\to3\to 2.
\]
Then according to the value $b_{1,2}$, the underlying graph of the quiver $\quiver$ is either the complete graph $K_4$ or a disconnected graph.
However, both are impossible as shown in~\cite[Figure~1]{BuanTorkildsen09}. 
Therefore, we obtain
\[
b_{2,3}=b_{3,4}=b_{4,2}=0.
\]

\noindent \eqref{nonnegativity_of_bijbi'j} The only entries we need to check are $b_{1,j}$'s, which are all equal by the $\Z/3\Z$-invariance of $\quiver$. Therefore
\[
b_{1,j}b_{1,j'}\ge 0.
\]
This completes the proof.
\end{proof}

\begin{lemma}\label{lemma:no weird cycles in E6}
Let $\quiver$ be a quiver on $[6]$ of type $\dynE_6$, which is invariant under the $\Z/2\Z$-action defined by
\begin{align*}
i&\stackrel{\eta}{\longleftrightarrow} i, i\le 2,&
3&\stackrel{\eta}{\longleftrightarrow} 5,&
4&\stackrel{\eta}{\longleftrightarrow} 6.
\end{align*}
Here, we denote by $\eta$ the generator of $\Z/2\Z$
and we are allowing any labelling of the vertices of~$\quiver$.
Then there is no oriented cycle, which is either
\begin{equation}\label{eq_weird_cycles_in_E6}
3\to4\to5\to6\to3\quad\text{or}\quad
3\to6\to5\to4\to3.
\end{equation}
\end{lemma}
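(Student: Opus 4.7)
The plan is to argue by contradiction. Suppose the $4$-cycle $3\to 4\to 5\to 6\to 3$ occurs in $\quiver$; the other case $3\to 6\to 5\to 4\to 3$ is handled identically by reversing all arrows. Writing $\qbasispr=(b_{i,j})=\qbasispr(\quiver)$, the assumption gives $b_{3,4},b_{4,5},b_{5,6},b_{6,3}>0$, and since $\quiver$ is of type $\dynE_6$ the exchange matrices in its mutation class all have entries in $\{0,\pm 1\}$, so each of these four equals $+1$.

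First I would exploit the $\eta$-invariance to pin down the shape of the full subquiver on $\{3,4,5,6\}$ together with identifications among the remaining entries. From $b_{3,5}=b_{\eta(3),\eta(5)}=b_{5,3}=-b_{3,5}$ one obtains $b_{3,5}=0$, and similarly $b_{4,6}=0$; thus $\quiver|_{\{3,4,5,6\}}$ is exactly the oriented $4$-cycle. The same invariance yields $b_{1,3}=b_{1,5}=:a$ and $b_{1,4}=b_{1,6}=:b$, and analogous identifications for the entries out of the vertex $2$.

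The main step is to show $a=b=0$. Here I would use that a full subquiver $\quiver|_S$ of a type-$\dynE_6$ quiver is itself of finite type, because $\mutation_k(\quiver|_S)=\mutation_k(\quiver)|_S$ for any $k\in S$, so the entire mutation class of $\quiver|_S$ also consists of matrices with entries in $\{0,\pm 1\}$. If exactly one of $a,b$ is nonzero, then either $\quiver|_{\{1,3,4,5\}}$ or $\quiver|_{\{1,4,5,6\}}$ has underlying graph a $4$-cycle carrying three arrows in one direction and one in the opposite direction, i.e.\ the affine quiver $\exdynA_{3,1}$, whose cluster algebra is of infinite type, a contradiction. If instead $a,b\in\{\pm 1\}$ are both nonzero, a direct computation shows that mutating $\quiver|_{\{1,3,4,5\}}$ at the vertex $4$ (when $ab>0$) or at the vertex $1$ (when $ab<0$) produces an entry of absolute value $2$; for instance when $a=b=1$ one finds $b'_{1,5}=b_{1,5}+\tfrac{1}{2}(|b_{1,4}|b_{4,5}+b_{1,4}|b_{4,5}|)=1+1=2$, and the three other sign combinations are handled analogously. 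This again violates the $\{0,\pm 1\}$-property and contradicts $\quiver$ being of type $\dynE_6$.

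Hence $a=b=0$, and the same argument applied with the vertex $2$ in place of $1$ yields $b_{2,3}=b_{2,4}=b_{2,5}=b_{2,6}=0$. But then $\quiver$ has no arrows between $\{1,2\}$ and $\{3,4,5,6\}$, so it is disconnected, contradicting the connectedness of every quiver of type $\dynE_6$. The main obstacle will be the case where both $a$ and $b$ are nonzero: rather than producing a single clean affine-type subquiver, one must enumerate the four sign patterns of $(a,b)$ and verify in each that a single mutation produces an edge of weight $2$.
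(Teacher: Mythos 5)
Your proof is correct and shares the paper's overall strategy: contradiction, the $2$-finiteness bound $|b_{i,j}|\le 1$ for every exchange matrix in the mutation class of a type-$\dynE_6$ quiver, the $\eta$-invariance identities ($b_{3,5}=b_{4,6}=0$, $b_{1,3}=b_{1,5}$, $b_{1,4}=b_{1,6}$, and likewise for the vertex $2$), and the detection of a forbidden full subquiver. Where you genuinely diverge is in the case organization. The paper analyzes the $5$-vertex full subquiver on $\{1,3,4,5,6\}$, sorts it into three configurations up to relabeling and $\mutation_1$, and destroys each with an explicit multi-step mutation sequence ending in a double arrow. You instead drop to $4$-vertex full subquivers: when exactly one of $a=b_{1,3}$, $b=b_{1,4}$ is nonzero you recognize the acyclic affine quiver $\exdynA_{3,1}$ outright, with no mutation at all, and when both are nonzero a single mutation (at $4$ if $ab>0$, at $1$ if $ab<0$) already creates an entry of absolute value $2$; I verified all four sign patterns and the computation checks out. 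This buys a shorter and more transparent argument at the cost of a slightly longer enumeration of sign cases. Your endgame (no arrows between $\{1,2\}$ and $\{3,4,5,6\}$ forces disconnectedness, which mutation preserves, contradicting irreducibility of type $\dynE_6$) and your reduction of the cycle $3\to6\to5\to4\to3$ to the other one via the opposite quiver are both valid; the paper handles these points by assuming at the outset, without loss of generality, that vertex $1$ meets the $4$-cycle.
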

\begin{proof}
We first recall from~\cite[Theorem~1.8]{FZ2_2003} that 
\begin{equation}\label{equation_bij_le_1}
|b_{i,j}|\le 1 \qquad \text{ for all }i,j\in[6].
\end{equation} 
Otherwise, $\quiver$ produces a cluster pattern of infinite type.
Hence, $\quiver$ is a simple directed graph.

Suppose that $\quiver$ contains an oriented cycle in~\eqref{eq_weird_cycles_in_E6}.
By relabeling if necessary, we may assume that the $\quiver$ contains an oriented cycle $3\to4\to5\to6\to 3$.
Then, since $\quiver$ is connected, at least one of vertices $1$ and $2$ is joined with one of vertices $3,4,5$ and $6$ by an edge.
Without loss of generality, we may assume that such a vertex is $1$.

Let $\quiver'$ be the quiver on $\{1,3,4,5,6\}$ obtained by forgetting the vertex $2$ in $\quiver$.
Then, by the invariance of $\quiver$ under $\Z/2\Z$-action, 
\begin{align*}\label{equation:squares}
\quiver' = 

\end{align*}
Here, the label $\mutation_5\mutation_3\mutation_4\mutation_6$ by mean that we are applying the mutation $\mutation_6$ first, then $\mutation_4$, and so on. 
Since any subquiver of a quiver mutation equivalent to $\quiver$ is of finite type, we get a contradiction which completes the proof.
\end{proof}
\begin{remark}\label{rmk_proof_of_E6}
Since there are only finitely many quivers of type $\dynE_6$, the above lemma can be verified by a computer but we gave here a combinatorial proof.
\end{remark}

\begin{proposition}\label{proposition:admissibility for Dn and E6}
Let $\quiver$ be a quiver of type $\dynX=\dynD_{n+1}$ or $\dynE_6$, which is invariant under $\Z/2\Z$-action defined by
\begin{align*}
i&\stackrel{\eta}{\longleftrightarrow} i, i < n,&
n&\stackrel{\eta}{\longleftrightarrow} n+1
\end{align*}
for $\dynX=\dynD_{n+1}$, or
\begin{align*}
i&\stackrel{\eta}{\longleftrightarrow} i, i\le 2,&
3&\stackrel{\eta}{\longleftrightarrow} 5,&
4&\stackrel{\eta}{\longleftrightarrow} 6,
\end{align*}
for $\dynX=\dynE_6$.
Here, $\eta$ is the generator of $\Z/2\Z$ and we are allowing any labelling of the vertices of~$\quiver$.
Then the quiver $\quiver$ is $\Z/2\Z$-admissible.
\end{proposition}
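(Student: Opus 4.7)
The plan is to verify the three conditions~\eqref{mutable},~\eqref{bii'=0},~\eqref{nonnegativity_of_bijbi'j} of Definition~\ref{definition:admissible quiver} in order, mimicking the proof strategy of Propositions~\ref{proposition:admissibility for An} and~\ref{proposition:admissibility for D4}, and reducing the delicate case to the structural Lemma~\ref{lemma:no weird cycles in E6}.

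Condition~\eqref{mutable} is immediate: every vertex of $\quiver$ is mutable. Condition~\eqref{bii'=0} follows from a short manipulation using the $\Z/2\Z$-invariance exactly as in Proposition~\ref{proposition:admissibility for An}. Namely, for any $i$ with $i\neq\eta(i)$, the relation $b_{\eta(k),\eta(\ell)}=b_{k,\ell}$ combined with skew-symmetry gives
\[
 b_{i,\eta(i)}=b_{\eta(i),\eta(\eta(i))}=b_{\eta(i),i}=-b_{i,\eta(i)},
\]
forcing $b_{i,\eta(i)}=0$. So the interesting content lies in condition~\eqref{nonnegativity_of_bijbi'j}.

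For condition~\eqref{nonnegativity_of_bijbi'j}, I would first dispose of the easy cases: if $i=\eta(i)$ or $j=\eta(j)$, then $\Z/2\Z$-invariance yields $b_{\eta(i),j}=b_{i,j}$, so $b_{i,j}b_{\eta(i),j}=b_{i,j}^2\geq0$. The remaining case has $i\neq\eta(i)$ and $j\neq\eta(j)$ with $j$ mutable. For $\dynX=\dynD_{n+1}$, the only pair of $\eta$-moved vertices is $\{n,n+1\}$, so necessarily $\{i,j\}\subseteq\{n,n+1\}$; if $i=j$ then $b_{i,j}=0$, and if $\{i,j\}=\{n,n+1\}$ then $j=\eta(i)$, and $b_{i,j}b_{\eta(i),j}=b_{i,\eta(i)}b_{\eta(i),\eta(i)}=0$ by condition~\eqref{bii'=0} combined with $b_{k,k}=0$. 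Hence~\eqref{nonnegativity_of_bijbi'j} holds automatically in type $\dynD_{n+1}$.

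The main obstacle, and the only place where a nontrivial input is required, is type $\dynE_6$. In this case, assume for contradiction that $b_{i,j}b_{\eta(i),j}<0$ for some $i\neq\eta(i)$ and $j\neq\eta(j)$ both mutable. By swapping $i\leftrightarrow\eta(i)$ if necessary, we may assume $b_{i,j}<0<b_{\eta(i),j}$. Applying $\eta$ to both inequalities yields $b_{\eta(i),\eta(j)}<0<b_{i,\eta(j)}$, so the four vertices $i,j,\eta(i),\eta(j)$ are pairwise distinct (they must all lie in $\{3,4,5,6\}$ since they are moved by $\eta$) and form an oriented $4$-cycle
\[
 j \to i \to \eta(j) \to \eta(i) \to j.
\]
Since $\{i,j,\eta(i),\eta(j)\}=\{3,4,5,6\}$, this is exactly one of the two oriented $4$-cycles forbidden in~\eqref{eq_weird_cycles_in_E6}, contradicting Lemma~\ref{lemma:no weird cycles in E6}. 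This completes the proposed proof. The key pressure point is the passage from the sign assumption to a forbidden oriented $4$-cycle; everything else is formal manipulation of the $\Z/2\Z$-invariance.
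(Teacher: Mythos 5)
Your proposal is correct and follows essentially the same route as the paper's proof: conditions (a) and (b) via $\Z/2\Z$-invariance plus skew-symmetry, the easy cases of (c) as squares, and the only nontrivial case (type $\dynE_6$ with both indices moved by $\eta$) by deriving a forbidden oriented $4$-cycle and invoking Lemma~\ref{lemma:no weird cycles in E6}. One small remark: the pairwise distinctness of $i,j,\eta(i),\eta(j)$ is not justified by their lying in $\{3,4,5,6\}$ as your parenthetical suggests, but it does follow immediately from your sign assumptions, since $i=j$ would force $b_{i,j}=0$ against $b_{i,j}<0$ and $j=\eta(i)$ would force $b_{\eta(i),j}=0$ against $b_{\eta(i),j}>0$.
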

\begin{proof}
\noindent \eqref{mutable} This is obvious as before.

\noindent \eqref{bii'=0} Let $\qbasispr=(b_{i,j})=\qbasispr(\quiver)$. Then, by the $\Z/2\Z$-invariance of $\quiver$, 
\[
b_{i,\eta(i)}=b_{\eta(i), \eta(\eta(i))} = b_{\eta(i), i} = -b_{i, \eta(i)}\quad
\Longrightarrow\quad
b_{i,\eta(i)}=0.
\]

\noindent \eqref{nonnegativity_of_bijbi'j} If $\dynX=\dynD_{n+1}$, then we only need to show
\[
b_{i,n}b_{i,n+1}\ge 0
\]
for $i<n$. This is obvious since 
\[
b_{i,n+1} = b_{\eta(i), \eta(n+1)} = b_{i,n}.
\]

If $\dynX=\dynE_6$, then all we need to show inequalities
\begin{align*}
b_{i,j}b_{i,j+2}&\ge 0,&
b_{3,4}b_{3,6}&\ge 0
\end{align*}
hold for $i=1,2$ and $j=3,4$.

The first inequality is obvious since
\begin{align*}
b_{i,j+2}&=b_{\eta(i),\eta(j+2)} = b_{i,j}.
\end{align*}
Suppose that $b_{3,4}b_{3,6}<0$. Then, since $b_{3,4}=b_{5,6}$ and $b_{3,6}=b_{5,4}$, the $\quiver$ has a loop either
\[
3\to 4\to 5\to 6 \to 3\quad\text{or}\quad
3\to 6\to 5\to 4 \to 3
\]
which yields a contradiction by Lemma~\ref{lemma:no weird cycles in E6}. This completes the proof.
\end{proof}

\begin{proof}[Proof of Theorem~\ref{theorem:G-invariance and G-admissibility}] 
Let $\quiver$ be a $G$-invariant quiver. Then it is $G$-admissible because of Propositions~\ref{proposition:admissibility for An}, \ref{proposition:admissibility for D4}, and \ref{proposition:admissibility for Dn and E6}. 
\end{proof}

\section{Supplementary pictorial proofs}\label{sec:supplementary pictorial proofs}
\subsection{Justifications of moves \Move{DI} and \Move{DII} for denegenerate $N$-graphs}\label{appendix:DI and DII}
\[
\begin{tikzcd}[row sep=-3pc, column sep=1pc]

\end{tikzcd}
\]

The innermost $N$-graph is the same as $\overline{\ngraph(a,b,c)}$ up to Legendrian mutations, which is $\boundary$-Legendrian isotopic to the Legendrian Coxeter mutation of $\ngraph(a,b,c)$ by Proposition~\ref{proposition:effect of Legendrian Coxeter mutation}.

\subsection{The proof of Lemma~\ref{lemma:Ngraphs of affine Dn}: Equivalence between $(\mathscr{G}^{\mathsf{brick}}(\exdynD_n),\nbasis^{\mathsf{brick}}(\exdynD_n))$ and $(\mathscr{G}(\exdynD_n),\nbasis(\exdynD_n))$}\label{appendix:Ngraph of type affine Dn}

We first show that $\ngraph^{\mathsf{brick}}(\exdynD_n)$ is Legendrian mutation equivalent to the following $N$-graph up to $\boundary$-Legendrian isotopy. Even though we omit the data of cycles for the pictorial simplicity, one can keep track $\nbasis(\exdynD_n)$ through the following ($\boundary$-)Legendrian isotopies:
\[
\begin{tikzcd}
\ngraph(\exdynD_n)'\coloneqq

\end{tikzcd}
\]
which flips up the downward leg.
Finally, the downward and upward legs can be interchanged via Legendrian mutations and therefore the $N$-graph $\ngraph(\exdynD_n)'$ is Legendrian mutation equivalent to $\ngraph(\exdynD_n)$ up to $\boundary$-Legendrian isotopy.

\subsection{A proof of Lemma~\ref{lem:exdynD_4}: Equivalence between $(\tilde{\mathscr{G}}(\exdynD_4),\tilde{\nbasis}(\exdynD_4))$ and $(\mathscr{G}(\exdynD_4),\nbasis(\exdynD_4))$}\label{appendix:affine D4}
It is enough to show the equivalence between $(\tilde\ngraph(\exdynD_4),\tilde\nbasis(\exdynD_4))$ and $(\ngraph^{\mathsf{brick}}(\exdynD_4),\nbasis^{\mathsf{brick}}(\exdynD_4))$ by Lemma~\ref{lemma:Ngraphs of affine Dn}.

\[
\begin{tikzcd}
(\ngraph^{\mathsf{brick}}(\exdynD_4),\nbasis^{\mathsf{brick}}(\exdynD_4))=

\end{tikzcd}
\]

\bibliographystyle{plain}
\bibliography{references}

\begin{thebibliography}{10}

\bibitem{AL2021}
Byung~Hee An and Eunjeong Lee.
\newblock On folded cluster patterns of affine type.
\newblock {\em Pacific J. Math.}, 318(2):401--431, 2022.

\bibitem{Arn1990}
Vladimir~Igorevich Arnold.
\newblock {\em Singularities of caustics and wave fronts}, volume~62 of {\em
  Mathematics and its Applications (Soviet Series)}.
\newblock Kluwer Academic Publishers Group, Dordrecht, 1990.

\bibitem{Aur2007}
Denis Auroux.
\newblock Mirror symmetry and {$T$}-duality in the complement of an
  anticanonical divisor.
\newblock {\em J. G\"{o}kova Geom. Topol. GGT}, 1:51--91, 2007.

\bibitem{BFZ3_2005}
Arkady Berenstein, Sergey Fomin, and Andrei Zelevinsky.
\newblock Cluster algebras. {III}. {U}pper bounds and double {B}ruhat cells.
\newblock {\em Duke Math. J.}, 126(1):1--52, 2005.

\bibitem{BFFH2018}
Lara Bossinger, Xin Fang, Ghislain Fourier, Milena Hering, and Martina Lanini.
\newblock Toric degenerations of {${\rm Gr}(2, n)$} and {${\rm Gr}(3, 6)$} via
  plabic graphs.
\newblock {\em Ann. Comb.}, 22(3):491--512, 2018.

\bibitem{Bourbaki02}
Nicolas Bourbaki.
\newblock {\em Lie groups and {L}ie algebras. {C}hapters 4--6}.
\newblock Elements of Mathematics (Berlin). Springer-Verlag, Berlin, 2002.
\newblock Translated from the 1968 French original by Andrew Pressley.

\bibitem{BuanTorkildsen09}
Aslak~Bakke Buan and Hermund~Andr\'{e} Torkildsen.
\newblock The number of elements in the mutation class of a quiver of
  type~{$D_n$}.
\newblock {\em Electron. J. Combin.}, 16(1):Research Paper 49, 23, 2009.

\bibitem{BuanVatne08}
Aslak~Bakke Buan and Dagfinn~F. Vatne.
\newblock Derived equivalence classification for cluster-tilted algebras of
  type~{$A_n$}.
\newblock {\em J. Algebra}, 319(7):2723--2738, 2008.

\bibitem{CalderoKeller06}
Philippe Caldero and Bernhard Keller.
\newblock From triangulated categories to cluster algebras. {II}.
\newblock {\em Ann. Sci. \'{E}cole Norm. Sup. (4)}, 39(6):983--1009, 2006.

\bibitem{CaoHuangLi20}
Peigen Cao, Min Huang, and Fang Li.
\newblock A conjecture on {$C$}-matrices of cluster algebras.
\newblock {\em Nagoya Math. J.}, 238:37--46, 2020.

\bibitem{Cas2020}
Roger Casals.
\newblock Lagrangian skeleta and plane curve singularities.
\newblock {\em J. Fixed Point Theory Appl.}, 24(2):Paper No. 34, 43, 2022.

\bibitem{CG2020}
Roger Casals and Honghao Gao.
\newblock Infinitely many {L}agrangian fillings.
\newblock {\em Ann. of Math. (2)}, 195(1):207--249, 2022.

\bibitem{CN2021}
Roger Casals and Lenhard Ng.
\newblock Braid loops with infinite monodromy on the {L}egendrian contact
  {DGA}.
\newblock {\em J. Topol.}, 15(4):1927--2016, 2022.

\bibitem{CZ2020}
Roger Casals and Eric Zaslow.
\newblock Legendrian weaves: {$N$}-graph calculus, flag moduli and
  applications.
\newblock {\em Geom. Topol.}, 26(8):3589--3745, 2022.

\bibitem{CW2024}
Roger Caslas and Daping Weng.
\newblock Microlocal theory of legendrian links and cluster algebras.
\newblock {\em Geom. Topol.}, 28(2):901--1000, 2024.

\bibitem{IKLP13}
Giovanni Cerulli~Irelli, Bernhard Keller, Daniel Labardini-Fragoso, and
  Pierre-Guy Plamondon.
\newblock Linear independence of cluster monomials for skew-symmetric cluster
  algebras.
\newblock {\em Compos. Math.}, 149(10):1753--1764, 2013.

\bibitem{Che2002}
Yuri Chekanov.
\newblock Differential algebra of {L}egendrian links.
\newblock {\em Invent. Math.}, 150(3):441--483, 2002.

\bibitem{Dupont08}
Grégoire Dupont.
\newblock An approach to non-simply laced cluster algebras.
\newblock {\em J. Algebra}, 320(4):1626--1661, 2008.

\bibitem{EHK2016}
Tobias Ekholm, Ko~Honda, and Tam\'{a}s K\'{a}lm\'{a}n.
\newblock Legendrian knots and exact {L}agrangian cobordisms.
\newblock {\em J. Eur. Math. Soc. (JEMS)}, 18(11):2627--2689, 2016.

\bibitem{EGH2000}
Yakov~M. Eliashberg, Alexander~B. Givental, and Helmut H.~W. Hofer.
\newblock Introduction to symplectic field theory.
\newblock {\em Geom. Funct. Anal.}, (Special Volume, Part II):560--673, 2000.
\newblock GAFA 2000 (Tel Aviv, 1999).

\bibitem{FeliksonShapiroTumarkin12_unfoldings}
Anna Felikson, Michael Shapiro, and Pavel Tumarkin.
\newblock Cluster algebras of finite mutation type via unfoldings.
\newblock {\em Int. Math. Res. Not. IMRN}, 2012(8):1768--1804, 2012.

\bibitem{FST08}
Sergey Fomin, Michael Shapiro, and Dylan Thurston.
\newblock Cluster algebras and triangulated surfaces. {I}. {C}luster complexes.
\newblock {\em Acta Math.}, 201(1):83--146, 2008.

\bibitem{FWZ_chapter123}
Sergey Fomin, Lauren Williams, and Andrei Zelevinsky.
\newblock Introduction to cluster algebras. {C}hapters 1--3.
\newblock arXiv:1608.05735v4, 2021.

\bibitem{FWZ_chapter45}
Sergey Fomin, Lauren Williams, and Andrei Zelevinsky.
\newblock Introduction to cluster algebras. {C}hapters 4--5.
\newblock arXiv:1707.07190v3, 2021.

\bibitem{FZ1_2002}
Sergey Fomin and Andrei Zelevinsky.
\newblock Cluster algebras. {I}. {F}oundations.
\newblock {\em J. Amer. Math. Soc.}, 15(2):497--529, 2002.

\bibitem{FZ2_2003}
Sergey Fomin and Andrei Zelevinsky.
\newblock Cluster algebras. {II}. {F}inite type classification.
\newblock {\em Invent. Math.}, 154(1):63--121, 2003.

\bibitem{FZ_Ysystem03}
Sergey Fomin and Andrei Zelevinsky.
\newblock {$Y$}-systems and generalized associahedra.
\newblock {\em Ann. of Math. (2)}, 158(3):977--1018, 2003.

\bibitem{FZ4_2007}
Sergey Fomin and Andrei Zelevinsky.
\newblock Cluster algebras. {IV}. {C}oefficients.
\newblock {\em Compos. Math.}, 143(1):112--164, 2007.

\bibitem{GSW2020b}
Honghao Gao, Linhui Shen, and Daping Weng.
\newblock Positive braid links with infinitely many fillings.
\newblock arXiv:2009.00499, 2020.

\bibitem{GSW2020a}
Honghao Gao, Linhui Shen, and Daping Weng.
\newblock Augmentations, fillings, and clusters.
\newblock {\em Geom. Funct. Anal.}, 34(3):798--867, 2024.

\bibitem{Gei2008}
Hansj{\"o}rg Geiges.
\newblock {\em An introduction to contact topology}, volume 109.
\newblock Cambridge University Press, 2008.

\bibitem{GKS2012}
St\'{e}phane Guillermou, Masaki Kashiwara, and Pierre Schapira.
\newblock Sheaf quantization of {H}amiltonian isotopies and applications to
  nondisplaceability problems.
\newblock {\em Duke Math. J.}, 161(2):201--245, 2012.

\bibitem{Hughes2021}
James Hughes.
\newblock Weave-realizability for {$D$}-type.
\newblock {\em Algebr. Geom. Topol.}, 23(6):2735--2776, 2023.

\bibitem{Humphreys}
James~E. Humphreys.
\newblock {\em Introduction to {L}ie algebras and representation theory},
  volume~9 of {\em Graduate Texts in Mathematics}.
\newblock Springer-Verlag, New York-Berlin, 1978.
\newblock Second printing, revised.

\bibitem{JT2017}
Xin Jin and David Treumann.
\newblock Brane structures in microlocal sheaf theory.
\newblock {\em J. Topol.}, 17(1), 2024.

\bibitem{Kac83}
Victor~G. Kac.
\newblock {\em Infinite-dimensional {L}ie algebras}, volume~44 of {\em Progress
  in Mathematics}.
\newblock Birkh\"{a}user Boston, Inc., Boston, MA, 1983.
\newblock An introduction.

\bibitem{Kal2006}
Tam\'{a}s K\'{a}lm\'{a}n.
\newblock Braid-positive {L}egendrian links.
\newblock {\em Int. Math. Res. Not.}, pages Art ID 14874, 29, 2006.

\bibitem{NRSSZ2015}
Lenhard Ng, Dan Rutherford, Vivek Shende, Steven Sivek, and Eric Zaslow.
\newblock Augmentations are sheaves.
\newblock {\em Geom. Topol.}, 24(5):2149--2286, 2020.

\bibitem{Pan2017}
Yu~Pan.
\newblock Exact {L}agrangian fillings of {L}egendrian {$(2,n)$} torus links.
\newblock {\em Pacific J. Math.}, 289(2):417--441, 2017.

\bibitem{Pol1991}
Leonid Polterovich.
\newblock The surgery of {L}agrange submanifolds.
\newblock {\em Geom. Funct. Anal.}, 1(2):198--210, 1991.

\bibitem{ReadingStella20}
Nathan Reading and Salvatore Stella.
\newblock An affine almost positive roots model.
\newblock {\em J. Comb. Algebra}, 4(1):1--59, 2020.

\bibitem{SW2019}
Linhui Shen and Daping Weng.
\newblock Cluster structures on double {B}ott--{S}amelson cells.
\newblock {\em Forum Math. Sigma}, 9:Paper No. e66, 89, 2021.

\bibitem{STWZ2019}
Vivek Shende, David Treumann, Harold Williams, and Eric Zaslow.
\newblock Cluster varieties from {L}egendrian knots.
\newblock {\em Duke Math. J.}, 168(15):2801--2871, 2019.

\bibitem{STZ2017}
Vivek Shende, David Treumann, and Eric Zaslow.
\newblock Legendrian knots and constructible sheaves.
\newblock {\em Invent. Math.}, 207(3):1031--1133, 2017.

\bibitem{TZ2018}
David Treumann and Eric Zaslow.
\newblock Cubic planar graphs and {L}egendrian surface theory.
\newblock {\em Adv. Theor. Math. Phys.}, 22(5):1289--1345, 2018.

\bibitem{Vatne10}
Dagfinn~F. Vatne.
\newblock The mutation class of {$D_n$} quivers.
\newblock {\em Comm. Algebra}, 38(3):1137--1146, 2010.

\end{thebibliography}

\end{document}